\newtheorem{theorem}{Theorem}
\newtheorem{remark}[theorem]{Remark}
\newtheorem{proposition}[theorem]{Proposition}
\newtheorem{corollary}[theorem]{Corollary}
\title{Modification to Darcy model for high pressure and 
high velocity applications and associated mixed finite 
element formulations}
\keywords{Flow through porous media; Darcy equation; Forchheimer 
model; pressure-dependent viscosity; ceiling flux; least-squares formalism; 
variational multi-scale formalism; enhanced oil recovery}
\author{$\mathrm{J.~Chang}^{*}$} \thanks{*{\tiny Graduate student}}
\author{$\mathrm{K.~B.~Nakshatrala}^{\#}$ \\ 
{\tiny Department of Civil \& Environmental Engineering, 
University of Houston, Houston 77204-4003.}}
\thanks{{\tiny ${}^{\#}$Correspondence to: Dr.~Kalyana Babu Nakshatrala,  
\textbf{\emph{e-mail:}} knakshatrala@uh.edu, \textbf{\emph{Phone:}}+1-713-743-4418.}}
\date{\today}
\begin{document}

\maketitle

\vspace{-0.3in} 

\begin{abstract}
The Darcy model is based on a plethora of assumptions. One 
of the most important assumptions is that the Darcy model 
assumes the drag coefficient to be constant. However, 
there is irrefutable experimental evidence that viscosities 
of organic liquids and carbon-dioxide depend on the 
pressure. Experiments have also shown that the drag 
varies nonlinearly with respect to the velocity at 
high flow rates. In important technological applications 
like enhanced oil recovery and geological carbon-dioxide 
sequestration, one encounters both high pressures 
and high flow rates. It should be emphasized that 
flow characteristics and pressure variation under 
varying drag are both quantitatively and qualitatively 
different from that of constant drag. 
Motivated by experimental evidence, we consider the 
drag coefficient to depend on both the pressure and 
velocity. We consider two major modifications to the
Darcy model based on the Barus formula and Forchheimer 
approximation. The proposed modifications to the Darcy 
model result in nonlinear partial differential equations, 
which are not amenable to analytical solutions. To this 
end, we present mixed finite element formulations based 
on least-squares (LS) formalism and variational multi-scale 
(VMS) formalism for the resulting governing equations. The 
proposed modifications to the Darcy model and its associated 
finite element formulations are used to solve realistic 
problems with relevance to enhanced oil recovery. 
We also study the competition between the nonlinear 
dependence of drag on the velocity and the dependence 
of viscosity on the pressure. To the best of the authors' 
knowledge such a systematic study has not been 
performed. 
\end{abstract}


\section{INTRODUCTION AND MOTIVATION}
\label{Ch:Intro}

Understanding the flow of fluids through porous 
media plays a crucial role in various technological 
applications (e.g., designing filters, enhanced oil 
recovery, geological carbon-dioxide sequestration) 
and for mathematical modeling in various branches 
of engineering (e.g., civil engineering, petroleum 
engineering, polymer engineering). 
Arguably, the most popular model in the studies 
on flow through porous media is the Darcy model, 
which is named after the French hydraulics engineer 
Henry Darcy who first proposed the equation in 1856 
\cite{Darcy_1856}. 
Darcy originally developed the model empirically 
based on the experiments on the flow of water 
in sand beds. 
However, the Darcy model can be given firm mathematical 
basis at least in two different ways. One approach is by 
applying the volume averaging theory on the Navier-Stokes 
equations \cite{Derivation_of_Darcys_Law}. The other 
approach is using the theory of interacting continua 
(also known as mixture theory). For example, see reference 
\cite[Introduction]{Nakshatrala_Rajagopal_IJNMF_2011_v67_p342}). 
In this paper, the latter approach will be employed. 
  
\subsection{Limitations of Darcy model, and its generalizations}
Darcy equations model the flow of an incompressible 
fluid in rigid porous media by stating that the 
(Darcy) velocity is linearly proportional to the 
gradient of the pressure. 
It is important to note that the Darcy equation is 
simply an approximation of the balance of linear 
momentum in the context of theory of interacting 
continua. It merely predicts the flux but cannot 
predict stresses in solids. That is, this model 
cannot be used with modification when there is 
deformation / damage of the porous solid (e.g., 
in the case of hydraulic fracture). For completeness 
and future reference, let us enumerate the key 
assumptions behind the Darcy model 
\cite{Nakshatrala_Rajagopal_IJNMF_2011_v67_p342}. 
$\bullet$ There is no mass production of individual 
constituents (i.e., there are no chemical reactions). 
$\bullet$ The porous solid is assumed to be rigid. 
Thus, the balance laws for the solid are 
trivially satisfied. In particular, the 
stresses in the solid are what they need 
to be to ensure that the balance of linear 
momentum is met. 
$\bullet$ The fluid is assumed to be homogeneous 
and incompressible.
$\bullet$ The velocity and its gradient are 
assumed to be small so that the inertial 
effects can be ignored.
$\bullet$ The partial stress in the fluid is 
that of an Euler fluid. That is, there is no 
dissipation of energy between fluid layers.
$\bullet$ The only interaction force is at 
the fluid and pore boundaries.
$\bullet$ Darcy model assumes that the drag 
coefficient is independent of the pressure 
and the velocity. 
Experimental studies have shown that the 
viscosity of organic liquids (and hence the 
drag coefficient) depend on the pressure 
\cite{Bridgman,CarlBarus} and the drag 
coefficient depends on the velocity at 
high velocities \cite{Forchheimer_1901_v45_p1782,JacobBear}. 
Therefore, Darcy model as it is not appropriate 
for applications involving high pressure and 
high flow velocities. Application of Darcy 
model in such situations can result in 
erroneous predictions of discharge fluxes 
and inaccurate pressure contours. 
While several generalizations of the standard Darcy 
model have been proposed in the literature, none of 
these studies addressed the study undertaken in this 
paper. \emph{In particular, the prior studies did not 
address the combined effect of pressure-dependent 
viscosity and the dependence of drag coefficient 
on the velocity}. 

\subsubsection{Enhanced oil recovery}
\begin{figure}[h]
  \centering
  \includegraphics[scale=0.55]{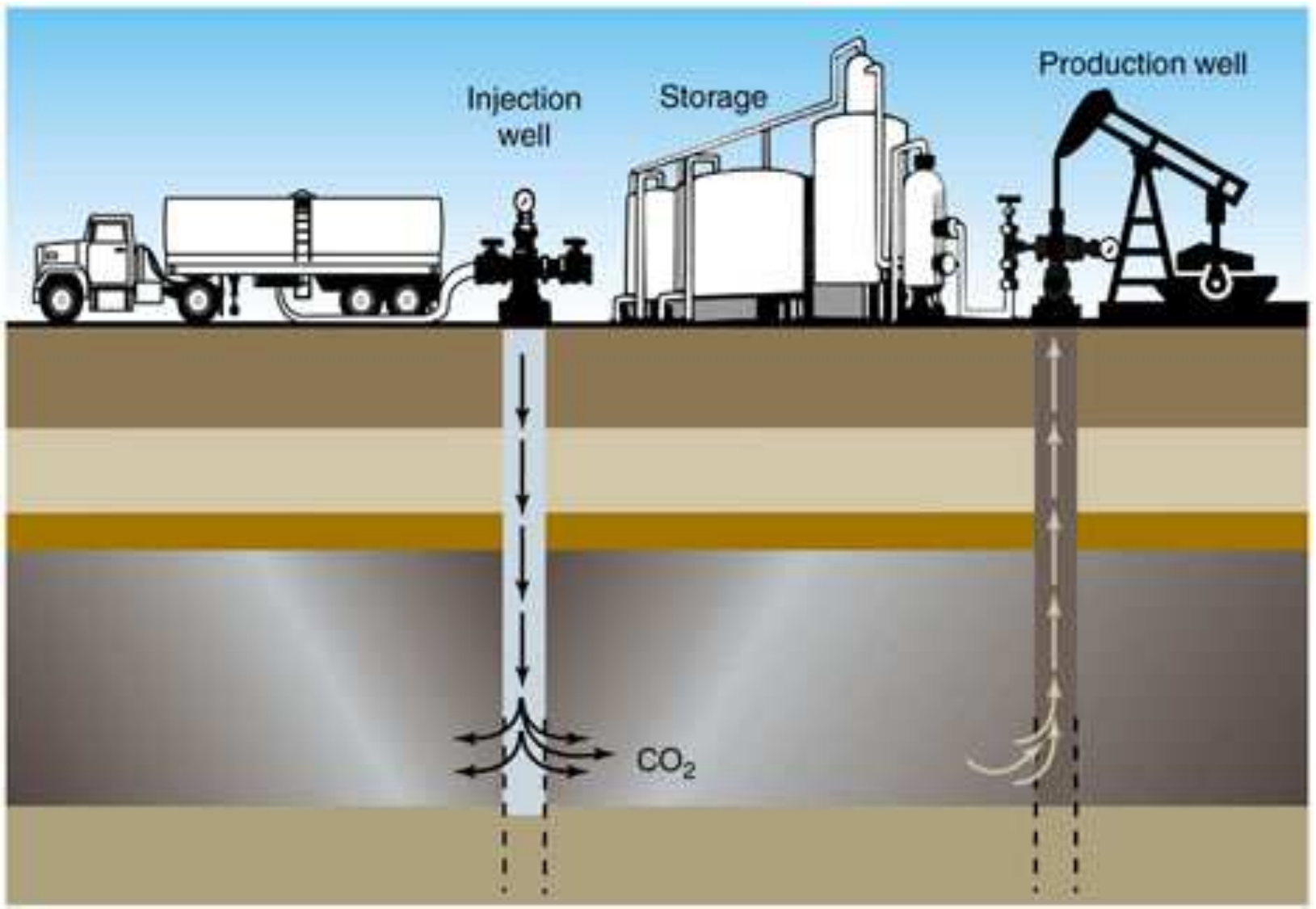}
  \caption{A pictorial description of enhanced oil recovery. 
  [Source:~https://www.llnl.gov/str/November01/Kirkendall.html]}
  \label{fig:Intro_eor}
\end{figure}
Over the years people have used Darcy model beyond its 
range of applicability. One example of misuse is in the 
modeling enhanced oil recovery (EOR) applications. As 
illustrated in Figure \ref{fig:Intro_eor}, steam / 
carbon-dioxide gas is injected into the ground through 
injection wells. The 
gas create a pressure build up in the ground (i.e., the 
porous media) and pushes the fluid (i.e., raw oil) out 
through the production wells. High pressures ranging from 
$10 - 100 \; \mathrm{MPa}$ are employed, and such high 
pressures can lead to inaccurate flow estimates or 
pressure contours if the original Darcy model is used. 
Oil reservoir simulations are tricky by nature because 
of the possibility of having varying permeability within 
layers, impervious zones, non-rigid rock and soil 
formations, and pockets of natural gases. Seismic 
imaging and field experimentation may not always 
return the most accurate data so one must be extremely 
cautious when providing parameters to run numerical 
models. Using the right Darcy modification(s) allows 
one to predict more accurate production rates, help 
industries determine where to allocate their resources, 
and prevent environmental damage from unintended 
cracking in the subsurface.

\subsection{Mixed formulations}
For the proposed model, we present mixed finite element 
formulations based on two different approaches: least-squares 
(LS) finite element method \cite{Jiang_LSFEM} and variational 
multi-scale (VMS) formalism \cite{Hughes_CMAME_1995_v127_p387}. 
It is well-known that care should be taken when working 
with mixed formulations. In order to get stable results, 
a mixed formulation should either satisfy or circumvent 
the Ladyzhenskaya-Babu\v ska-Brezzi (LBB) stability 
condition \cite{Brezzi_Fortin}. Both the proposed 
mixed formulations proposed in this paper circumvent 
the LBB condition. 

The least-squares finite element method (LSFEM) is 
based on the minimization of the residuals in a 
least-squares sense. One can always obtain a 
symmetric positive definite system of algebraic 
equations, even for non-self-adjoint problems. 
The LSFEM provides greater accuracy for the 
derivatives of primal variables when compared 
to single-field formulations, boundary conditions 
are easy to manage, the conformity of finite 
element spaces is sufficient to guarantee 
stability, and all variables can use the same 
finite element space. For further discussion on the
LSFEM, see references \cite{Bochev_Gunzburger_FEMLS,
Jiang_LSFEM}. 

The VMS formalism adds stabilization 
terms to the classical mixed formulation. The stabilization 
terms and the stabilization parameter can be derived in a 
consistent manner (e.g., see references 
\cite{Hughes_CMAME_1995_v127_p387,
Nakshatrala_Turner_Hjelmstad_Masud_CMAME_2006_v195_p4036}). 
A mixed formulation based on the VMS formalism falls under 
the category of stabilized methods, as in some sense the 
formulation is obtained by stabilizing the classical mixed 
formulation \cite{Donea_Huerta}. 
Several studies as shown in references 
\cite{KA_Mardal_Robust,Masud_IJNMF_2007_v54_p665,
VariationalHighPorous,MR_Correa_Velocity, Urquiza}
have proposed various stabilized formulations that 
provide accurate solutions of Darcy model through 
porous media, but none of these studies considered 
the proposed model. 
Some notable mixed formulation based on the VMS formalism 
for Darcy-type equations are references 
\cite{Masud_Hughes_CMAME_2002_v191_p4341,
Nakshatrala_Turner_Hjelmstad_Masud_CMAME_2006_v195_p4036,
Nakshatrala_Rajagopal_IJNMF_2011_v67_p342,
Nakshatrala_Turner_2013_arXiv}. We shall demonstrate in 
a subsequent section that the proposed mixed 
formulation encompasses these mixed formulations. 

\subsubsection{Local mass balance}
A common drawback of finite element formulations is 
that they need not possess local / element-wise mass 
balance property. In particular, the mixed formulations 
from both the LS and VMS formalisms do not possess the 
local mass balance property. It should be noted that while 
it is possible to achieve local mass conservation for the 
LS formalism, one would no longer be able to obtain continuous 
nodal quantities (e.g., see reference 
\cite{Bochev_Gunzburger_LocalConservative}). 
Several independent studies \cite{Petrov_Galerking_Darcy,
Local_conservative_Shuyu,Local_conservative_comparative} 
have successfully developed conservative finite element 
formulations for flow through porous media problems, but
none of them have been extended to modifications of 
Darcy's model. Another relevant work is reported in 
reference \cite{Turner_Nakshatrala_Subsurface} in 
which the effect of error in local mass balance on 
the transport of chemical species is studied. 
This study considered coupled flow and transport 
problems, and the comparison is made between a 
VMS-based formulation and the locally mass 
conservative Raviart-Thomas formulation. 
Herein, we shall perform a systematic study on the 
performance of the proposed mixed formulations for 
various flow models with respect to the local 
mass balance property. 
This study is intended to serve two purposes. First, 
it will guide users of the finite element method 
(FEM) on the extent of the violation of local mass balance 
under mixed formulations. Second, it will encourage 
researchers to improve the performance of finite 
element formulations with respect to local balance 
under arbitrary interpolation functions for the 
velocity and pressure. 

\subsection{Main contributions of this paper}
Several contributions have been made in this paper 
with respect to modeling of flow through porous 
media, associated mixed formulations, and numerical 
solutions of representative problems. Some of the 
main ones are as follows: 
\begin{enumerate}[(i)]
\item To propose a generalization of the Darcy model 
  by taking into account both the dependence of viscosity 
  on the pressure and the dependence of drag coefficient 
  on the velocity. The classical Darcy-Forchheimer and 
  the modified Darcy model that is considered in reference 
  \cite{Nakshatrala_Rajagopal_IJNMF_2011_v67_p342} will 
  be special cases of the generalized model considered 
  in this paper. The generalization is referred to as 
  the modified Darcy-Forchheimer model. 
\item To present some theoretical results pertaining to 
  the modified Darcy-Forchheimer model, and demonstrate 
  their utility in validating numerical implementations.
\item To develop a mixed formulation based on 
  LS formalism for the modified 
  Darcy-Forchheimer model and study the 
  effect of weighting on the convergence 
  and accuracy of the solutions. 
\item To construct a mixed formulation based on 
  the VMS formalism for the modified Darcy-Forchheimer 
  model, which encompasses as special cases some of the 
  existing mixed formulations proposed for simpler models. 
\item To compare the numerical performances of 
  VMS and LS based mixed formulations. 
\item To document the local mass balance error 
  under both these formalisms for the standard 
  Darcy model and for its generalizations.
\item It has been claimed in references 
  \cite{Nakshatrala_Turner_Hjelmstad_Masud_CMAME_2006_v195_p4036,
  Hughes_Masud_Wan_CMAME_2006_v195_p3347} 
  that VMS formulation is the only known mixed 
  formulation that satisfies three-dimensional 
  constant patch test under non-constant Jacobian 
  finite elements for the standard Darcy model. 
  Herein, we show that the proposed LS based 
  formulation (which is different from the 
  variational formulation proposed in these 
  references) also satisfies three-dimensional 
  constant flow patch tests.
\item To discuss the implications and applicability of 
  these modified models in numerical simulations of 
  enhanced oil recovery. It will also be show that 
  the pressure profiles of Darcy-Forchheimer are 
  qualitatively and quantitatively different from 
  that of a modification of Darcy model that takes 
  into account the dependence of viscosity on the 
  pressure. 
\item To illustrate an important competing effect 
  due to the dependence of viscosity on the pressure 
  and the dependence of drag coefficient on the 
  velocity. Specifically, to show the dependence 
  of drag coefficient on the velocity will create 
  steep gradients in the pressure near the projection 
  well. On the other hand, the dependence of viscosity 
  on the pressure creates steep pressure gradients 
  near the injection wells. However, both these 
  effects give rise to ceiling flux. 
\end{enumerate}

\subsection{Organization of the paper}
The remainder of the paper is organized as follows. 
In Section \ref{Ch:Governing} modifications to Darcy 
model using Barus formula and Forchheimer terms are 
presented. In Section \ref{Ch:Mixed}, mixed finite 
element formulations based on LS and VMS formalisms 
are proposed.
In Section \ref{Ch:benchmark}, several representative 
test problems are solved to show the performance and 
convergence of the proposed mixed finite element 
formulations, and  to illustrate the predictive 
capabilities of the modified Darcy-Forchheimer 
model. In Section \ref{Ch:EOR}, several representative 
problems with relevance to enhanced oil recovery 
are simulated, and the numerical solutions from 
the various models and formalisms are compared. 
Conclusions are drawn in Section \ref{Ch:Conclusions}.

\section{GOVERNING EQUATIONS:~DARCY MODEL AND ITS GENERALIZATION}
\label{Ch:Governing}

Let $\Omega \subset \mathbb{R}^{nd}$ be an open and bounded 
set, where ``$nd$'' denotes the number of spatial dimensions. 
Let $\partial \Omega := \mathrm{cl}(\Omega) - \Omega$ be the 
boundary (where $\mathrm{cl}(\Omega)$ is the set closure of 
$\Omega$), which is assumed to be piecewise smooth. A spatial 
point in $\mathrm{cl}(\Omega)$ is denoted by $\mathbf{x}$. 
The gradient and divergence operators with respect to 
$\mathbf{x}$ are, respectively, denoted by $\mathrm{grad}
[\cdot]$ and $\mathrm{div}[\cdot]$. Let $\mathbf{v}:\Omega 
\rightarrow \mathbb{R}^{nd}$ denote the ``Darcy" velocity 
vector field (which is a homogenized velocity), and let 
$p:\Omega \rightarrow \mathbb{R}$ denote the pressure 
field. 
The boundary is divided into two parts, denoted by $\Gamma^{v}$ 
and $\Gamma^{p}$, such that $\Gamma^{v} \cap \Gamma^{p} = \emptyset$ 
and $\Gamma^{v} \cup \Gamma^{p} = \partial \Omega$. $\Gamma^{v}$ 
is the part of the boundary on which the normal component of the 
velocity is prescribed, and $\Gamma^{p}$ is part of the boundary 
on which the pressure is prescribed. 

We now consider the flow of an incompressible fluid 
through rigid porous media based on modifications to 
the standard Darcy model. The governing equations take 
the following form: 
\begin{subequations}
  \label{Eqn:GE}
  \begin{align}
    \label{Eqn:GE_darcy}
    &\alpha(\mathbf{v},p, \mathbf{x}) \mathbf{v}(\mathbf{x}) + 
    \mathrm{grad}[p(\mathbf{x})] = \rho \mathbf{b}(\mathbf{x}) 
    \quad \mathrm{in} \; \Omega \\
    \label{Eqn:GE_Continuity}
    &\mathrm{div}[\mathbf{v}(\mathbf{x})] = 0 \quad \mathrm{in} 
    \; \Omega \\
    \label{Eqn:GE_Vn}
    &\mathbf{v}(\mathbf{x}) \cdot \mathbf{\hat{n}}(\mathbf{x}) = 
    v_{n}(\mathbf{x}) \quad \mathrm{on} \; \Gamma^{v} \\
    \label{Eqn:GE_p0}
    &p(\mathbf{x}) = p_0(\mathbf{x}) \quad \mathrm{on} 
    \; \Gamma^{p}
  \end{align}
\end{subequations}
where $\alpha$ is the drag coefficient (which can depend 
on the velocity and pressure, and can spatially vary), 
$v_n(\mathbf{x})$ is the prescribed normal component of 
the velocity, $p_0(\mathbf{x})$ is the prescribed pressure, 
$\rho$ is the density of the fluid, $\mathbf{b}(\mathbf{x})$ 
is the specific body force, and $\mathbf{\hat{n}}(\mathbf{x})$ 
is the unit outward normal vector to the boundary. 
It can be shown that equation \eqref{Eqn:GE_darcy} is an 
approximation to the balance of linear momentum under the 
mathematical framework offered by the theory of interacting 
continua (e.g., see reference \cite[Introduction]
{Nakshatrala_Rajagopal_IJNMF_2011_v67_p342}). 
A more thorough discussion on the theory of interacting 
continua can be found in the several appendices of 
reference \cite{Truesdell_rational_thermodynamics}, 
Atkin and Craine \cite{Atkin_Craine_QJMAM_1976_v29_p209}, 
and Bowen \cite{Bowen}. 

\subsection{Boundary conditions and well-posedness}
We now briefly discuss the well-posedness of the 
aforementioned boundary value problem given by 
equations \eqref{Eqn:GE_darcy}--\eqref{Eqn:GE_p0} 
in the sense of Hadamard \cite{McOwen}. If $\Gamma^{v} 
= \partial \Omega$ (i.e., the normal component of the 
velocity is prescribed on the entire boundary), one 
has to meet the following compatibility condition 
for well-posedness:
\begin{align}
  \int_{\Gamma^{v} = \partial \Omega} v_n(\mathbf{x}) 
  \; \mathrm{d} \Gamma = 0
  \end{align}
which is a direct consequence of the divergence theorem. 
To wit, 
\begin{align}
  0 = \int_{\Omega} \mathrm{div}[\mathbf{v}(\mathbf{x})] 
  \; \mathrm{d} \Omega = \int_{\partial \Omega} \mathbf{v}
  (\mathbf{x}) \cdot \hat{\mathbf{n}}(\mathbf{x}) \; 
  \mathrm{d} \Gamma = \int_{\partial \Omega} v_n(\mathrm{x}) 
  \; \mathrm{d} \Gamma
\end{align}
Moreover, if $\Gamma^{p} = \emptyset$ (i.e., $\partial 
\Omega = \Gamma^{v}$), one needs to augment the above 
equations \eqref{Eqn:GE_darcy}--\eqref{Eqn:GE_p0} with 
an additional condition for uniqueness of the solution. 
Otherwise, one cannot find the pressure uniquely. In 
the Mathematics literature, the uniqueness is typically 
achieved by meeting the condition
\begin{align}
  \int_{\Omega} p(\mathbf{x}) \; \mathrm{d} \Omega = 0
\end{align}
which basically fixes the datum for the pressure. However, 
this approach is seldom used in a computational setting as 
it is difficult to enforce the above condition numerically. 
An alternative is to fix the datum for the pressure by 
prescribing the pressure at a point, which is commonly 
employed in various computational settings and is also 
employed in this paper. 

It should also be noted that the no-slip boundary 
condition is not compatible with the Darcy model 
and the generalization that is considered in this 
paper. A simple mathematical explanation can be 
provided by noting that the inclusion of no-slip 
boundary condition (in addition to the no-penetration 
boundary condition) will make the boundary value 
problem over-determined. Also, it is noteworthy 
that the governing equations based on Darcy model 
are first-order (in terms of number of derivatives) 
with respect to the field variables $\mathbf{v}
(\mathbf{x})$ and $p(\mathbf{x})$. 

\subsection{Darcy model, experimental evidence, and its generalization}
The Darcy model assumes that the drag coefficient is 
independent of the pressure and velocity. In addition, 
the Darcy model assumes the drag coefficient to be of 
the form
\begin{align}
  \alpha = \frac{\mu}{k}
\end{align}
where $\mu$ is the coefficient of viscosity of the fluid, 
and $k$ is the coefficient of permeability. From the above 
discussion it is evident that Darcy model cannot be employed 
for situations in which the viscosity depends on the pressure, 
permeability depends on the (pore) pressure, or drag does not 
depend linearly on the velocity of the fluid (i.e., the drag 
coefficient depends on the velocity). Several experiments 
have shown unequivocally that these three situations occur 
in nature, which will now be discussed. 

\subsubsection{Pressure-dependent viscosity}
Bridgman \cite{Bridgman} has shown that the 
viscosity of several organic liquids depend on the 
pressure, and in fact, the dependence is exponential. 
Notable scientists such as Andrade \cite{Andrade} 
and Barus \cite{CarlBarus} have performed laboratory 
experiments on liquids to determine the relationship 
between pressure and viscosity. In recent years, 
research such as that in \cite{vanLeeuwen} has 
been able to obtain empirical evidence to delineate 
and confirm the dependency of viscosity on pressure. 
Furthermore, numerical studies have been performed 
in references \cite{M_Franta_Rajagopal,A_Him_FE_approximate} 
to record the differences these pressure dependent viscosity 
equations make for several fluid problems like the 
Navier-Stokes equation.

There are several ways one can generalized the standard 
Darcy model. For example, one can model the friction 
between the layers of the fluid, which the standard 
Darcy model neglects. This is approach taken by 
Brinkman (see references \cite{Brinkman_ASR_1947_vA1_p27,
Shriram_Nakshatrala}). The research conducted in this paper 
focuses on generalizing the standard Darcy model by modifying 
the drag to depend on the velocity and the pressure.  

To account for the dependence of the viscosity 
(and hence the drag) on the pressure, Barus' 
formula \cite{Szeri}will be used. The drag 
coefficient based on Barus' formula can be 
rewritten as
\begin{align}
  \label{Eqn:GE_Barus}
  &\alpha(p,\mathbf{x}) = \frac{\mu(p)}{k(\mathbf{x})} = 
  \frac{\mu_0}{k(\mathbf{x})} \exp[\beta_{\mathrm{B}} p]
\end{align}
where $\mu_0$ is the fixed viscosity of the fluid and 
$\beta_{\mathrm{B}}$ is the Barus coefficient that is 
obtained experimentally. This proposed modification 
states that the viscosity varies exponentially with 
pressure, and one can determine the Barus coefficient 
$\beta_{\mathrm{B}}$ using laboratory experiments, and 
its value for common organic liquids (e.g., Naphthenic 
mineral oil) has been documented in the literature. 
For example, see references 
\cite{Bridgman,Abramson_PRE_2009_v80_021201,
Vesovic_Wakeham_Olchowy_Sengers_Watson_Millat_JPhysChem_1990_v19_p763,
Hoglund_Wear_1999_v232_p176}). 

\subsubsection{High velocity flows and inertial effects}
It has been experimentally observed that for high velocity 
flows in porous media, the flux (and hence the flow rate) 
is not linearly proportional to the gradient of the pressure. 
This can be explained by noting that inertial effects can 
play a dominant role for high velocity flows. The standard 
Darcy model completely ignores inertial effects. To address 
the nonlinear dependence of the flux on the gradient of the 
pressure for high velocity flows, Philipp Forchheimer, an 
Austrian scientist (1852--1933), proposed that the drag 
coefficient to depend on the velocity of the fluid 
\cite{Forchheimer_1901_v45_p1782}. Herein, the model 
that is obtained after incorporating Forchheimer's 
modification will be referred to as the 
\emph{Darcy-Forchheimer model}. 

It is noteworthy that the Darcy-Forchheimer model can 
be obtained from the Navier-Stokes equations using the 
volume averaging method \cite{ISI:A1996VK96700002}. In 
typical geotechnical and civil engineering applications, 
one encounters low velocities so the inertial effects 
can be disregarded, and the standard Darcy model is 
adequate. However, in high pressure applications like 
enhanced oil recovery one may often encounter high 
velocities so inertial effects must be accounted for. 
The Darcy-Forchheimer model is written as 
\begin{align}
  \label{Eqn:GE_Forchheimer}
  &\alpha(\mathbf{v},\mathbf{x}) = 
  \frac{\mu_0}{k(\mathbf{x})}    
  + \beta_{\mathrm{F}} \|\mathbf{v}\| 
\end{align}
where $\beta_{\mathrm{F}}$ is the Forchheimer 
or inertial coefficient, and $\|\cdot\|$ 
is the 2-norm. That is, 
\begin{align}
  \|\mathbf{v}\| = \sqrt{\mathbf{v}(\mathbf{x}) 
    \cdot \mathbf{v}(\mathbf{x})}
\end{align}
Several people have proposed their own experimental, 
theoretical, or computational formulations for the 
Forchheimer coefficient (see reference \cite{W_sobieski}). 
For instance, one way to express $\beta_{\mathrm{F}}$ is
\begin{align}
  \label{Eqn:GE_inertial}
  \beta_{\mathrm{F}} = \frac{c_{\mathrm{F}}\rho}{\sqrt{k_{I}}}
\end{align}
where $c_{\mathrm{F}}$ is a dimensionless form-drag constant 
and $k_{I}$ is the inertial permeability, both of which can 
be obtained experimentally. Successful mixed finite element 
formulations have been performed on the Darcy-Forchheimer 
model in references \cite{EJ_Park,Hao_Pan_Forchheimer}, 
but they all use different variants of the Forchheimer 
coefficient. For the purpose of this paper, $\beta_{\mathrm{F}}$ 
shall remain as a user-defined parameter.

\begin{remark}
  The laws of (Newtonian) mechanics are Galilean 
  invariant. Therefore, one need to construct the 
  constitutive relations to be Galilean invariant 
  so as to be consistent with the laws of mechanics. 
  At first glance, it may look like the model 
  \eqref{Eqn:GE_Forchheimer} and equation 
  \eqref{Eqn:GE_darcy} are not Galilean invariant, 
  as the velocity and the 2-norm of the velocity 
  are not invariant under Galilean transformations. 
  However, it should be note that the velocity 
  $\mathbf{v}(\mathbf{x})$ in these cases is 
  the relative velocity between the velocity 
  of the fluid and the velocity of the porous 
  solid. In Darcy-type models, it is tacitly 
  assumed that the porous solid is rigid and 
  does not undergo any motion, which is also 
  the case in this paper. Therefore, the 
  velocity $\mathbf{v}(\mathbf{x})$ is 
  equal to the velocity of the fluid. 
  Noting that the velocity $\mathbf{v}(\mathbf{x})$ 
  is the relative velocity is important, as the 
  relative velocity and the norm of the relative 
  velocity are Galilean invariant. Hence, the model 
  \eqref{Eqn:GE_Forchheimer} is invariant under 
  Galilean transformations. This will be the case 
  even with the other models considered in this 
  paper.
\end{remark}

\begin{remark}
  Some porous solids exhibit strong correlation between 
  permeability and porosity, and studies presented in 
  reference \cite{PhysRevA.46.7680} show that the 
  porosity is affected by the (pore) pressure. For 
  these porous solids, one can conclude that the 
  pressure affects the permeability, which in turn 
  will give rise to the dependence of drag coefficient 
  on the pressure. In this paper we do not solve any 
  problem that involves permeability depending on the 
  pressure. However, the proposed mixed formulations 
  can be easily extended to handle such problems. 
\end{remark}

\subsubsection{Proposed model: Modified Darcy-Forchheimer model}
A major focus of this research will study the effects 
of incorporating pressure-dependent viscosity into 
the Darcy-Forchheimer model. The drag coefficient 
can then be rewritten as
\begin{align}
  \label{Eqn:GE_ModifiedForchheimer}
  &\alpha(\mathbf{v},p,\mathbf{x}) =  \frac{\mu(p)}{k(\mathbf{x})} 
  + \beta_{\mathrm{F}}\|\mathbf{v}\| = \frac{\mu_0}{k(\mathbf{x})} 
  \exp[\beta_{\mathrm{B}} p] + \beta_{\mathrm{F}} \|\mathbf{v}\| 
\end{align}
%
%
The proposed model is suitable for applications like 
enhanced oil recovery, geological carbon-dioxide 
sequestration, and filtration process. The terms $\mu(p)$ 
and $\beta_{F} \|\mathbf{v}\|$ (which are both nonlinear) 
can have competitive effects, and neglecting either of 
these can give erroneous results for these applications. 

It will now be shown that the modified Darcy-Forchheimer 
model is dissipative. That is, the proposed constitutive 
model satisfies the second law of thermodynamics. Within 
the context of theory of interacting continua for bodies 
undergoing isothermal processes \cite{Bowen}, the total 
rate of dissipation density at a spatial point $\mathbf{x} 
\in \Omega$, $\xi_{\mathrm{total}}(\mathbf{x})$, is written 
as
\begin{align}
  \xi_{\mathrm{total}}(\mathbf{x}) = \xi_{\mathrm{solid}}
  (\mathbf{x}) + \xi_{\mathrm{fluid}}(\mathbf{x}) + 
  \xi_{\mathrm{interaction}}(\mathbf{x})
\end{align}
where $\xi_{\mathrm{solid}}(\mathbf{x})$ and $\xi_{\mathrm{fluid}}
(\mathbf{x})$ are the bulk rate of dissipation densities within the 
solid and the fluid, and $\xi_{\mathrm{interaction}}(\mathbf{x})$ 
is the bulk rate of dissipation density due to interaction of the 
solid and the fluid at their corresponding interfaces. 
Since the solid is assumed to be rigid,  
\begin{align}
  \xi_{\mathrm{solid}}(\mathbf{x}) = 0
\end{align}
The fluid is assumed to be perfect (i.e., an Euler 
fluid), so there is no (internal) dissipation within 
the fluid. Thus we have 
\begin{align}
  \xi_{\mathrm{fluid}}(\mathbf{x}) = 0
\end{align}
However, it should be emphasized that there is 
dissipation at the interface between the solid 
and fluid, which is due to the drag. Hence the 
total rate of dissipation density at a spatial 
point $\mathbf{x}$ is given by 
\begin{align} 
  \xi_{\mathrm{total}}(\mathbf{x}) = \xi_{\mathrm{interaction}} 
  (\mathbf{x}) = \alpha(\mathbf{v},p,\mathbf{x}) \, \| 
  \mathbf{v}(\mathbf{x}) \| ^ 2
\end{align}
where $\| \cdot \|$ is the 2-norm norm and $\mathbf{v}
(\mathbf{x})$ is the relative velocity of the fluid 
with respect to the solid. By ensuring that $\alpha
(\mathbf{v},p,\mathbf{x}) > 0$ one can satisfy the 
second law of thermodynamics \emph{a priori}. For the 
modified Darcy-Forchheimer model given by equation 
\eqref{Eqn:GE_ModifiedForchheimer} $\alpha(\mathbf{v},p,\mathbf{x}) 
> 0$, as $\mu_0 > 0, k(\mathbf{x}) > 0$, $\beta_{\mathrm{F}} 
\geq 0$, $\|\mathbf{v}\| \geq 0$ and $\exp[\cdot] > 0$.
The total dissipation due to drag in the entire domain 
takes the following form:
\begin{align}
  \Phi := \int_{\Omega} \xi_{\mathrm{interaction}}
  (\mathbf{x}) \; \mathrm{d} \Omega = \int_{\Omega} 
  \alpha(\mathbf{v},p,\mathbf{x}) \mathbf{v}(\mathbf{x}) 
  \cdot \mathbf{v}(\mathbf{x}) \; \mathrm{d} \Omega
\end{align}
which is clearly non-negative. 

\begin{remark}
  A remark is warranted on the interpretation(s) 
  of the quantity $p(\mathbf{x})$, which was 
  referred to as the pressure earlier. 
  Within the theory of constraints 
  \cite{Carlson_Fried_Tortorelli_JE_2003_v70_p101,
    OReilly_Srinivasa_PRSLSA_2001_v457_p1307}, 
  the quantity $p(\mathbf{x})$ is the undetermined multiplier 
  that arises due to the incompressibility constraint, which 
  is given by equation \eqref{Eqn:GE_Continuity}. Note that 
  $p(\mathbf{x})$ is not referred to as a Lagrange multiplier 
  as there are no Lagrange multipliers under the mathematical 
  framework for constraints that is outlined in references 
  \cite{Carlson_Fried_Tortorelli_JE_2003_v70_p101,
    OReilly_Srinivasa_PRSLSA_2001_v457_p1307}. 
  Under the theory of interacting continua, the partial 
  (Cauchy) stress in the fluid for Darcy model takes the 
  form 
  \begin{align}
    \mathbf{T}^{(f)} = -p(\mathbf{x}) \mathbf{I},
  \end{align}
  where $\mathbf{I}$ is the second-order identity tensor. 
  Therefore, under the theory of interacting continua 
  framework, $p(\mathbf{x})$ can be considered as the 
  mechanical pressure in the fluid. Note that the 
  mechanical pressure is defined as the negative of 
  the mean normal stress (see Batchelor \cite{Batchelor}). 
  Therefore, for the modified Darcy-Forchheimer model, 
  $p(\mathbf{x})$ is both the mechanical pressure in the 
  fluid, and the undetermined multiplier to enforce the 
  incompressibility constraint. The above discussion 
  on the precise identity and role of $p(\mathbf{x})$ 
  will be extremely important if one wants to make 
  further generalizations / modifications to the 
  proposed model. In particular, to extend the 
  proposed model to incorporate degradation and 
  fracture of the porous solid, which will be 
  part of our future work. 
\end{remark}

\subsection{Some theoretical results for the modified 
  Darcy-Forchheimer model}
For the entire discussion in this subsection, assume that 
$\Gamma^{v} = \partial \Omega$. We shall also assume that 
the body force is a conservative vector field. That is, 
there exists a scalar potential field $\phi(\mathbf{x})$ 
such that $\rho \mathbf{b}(\mathbf{x}) = \mathrm{grad}
[\phi]$. 
We shall refer to a vector field $\widetilde{\mathbf{v}}
(\mathbf{x}):\Omega \rightarrow \mathbb{R}^{nd}$ as 
\emph{kinematically admissible} if it satisfies the 
following conditions: 
\begin{enumerate}[(i)]
\item $\widetilde{\mathbf{v}}(\mathbf{x})$ is solenoidal  
  (i.e., $\mathrm{div}[\widetilde{\mathbf{v}}(\mathbf{x})] 
  = 0 \; \mathrm{in} \; \Omega$)
\item $\widetilde{\mathbf{v}}(\mathbf{x})$ satisfies 
  the boundary conditions (i.e., $\widetilde{\mathbf{v}}
  (\mathbf{x}) \cdot \hat{\mathbf{n}}(\mathbf{x}) = v_n
  (\mathbf{x}) \; \mathrm{on} \; \partial \Omega$)
\end{enumerate}
Note that $\widetilde{\mathbf{v}}(\mathbf{x})$ 
need not satisfy the balance of linear momentum 
given by equation \eqref{Eqn:GE_darcy}. We now 
present an important property that the solutions 
of modified Darcy-Forchheimer equations 
\eqref{Eqn:GE_darcy}--\eqref{Eqn:GE_p0} 
satisfy:~the minimum dissipation inequality. 

\begin{proposition}{[Minimum dissipation inequality]}
  Let $\left\{\mathbf{v}(\mathbf{x}), p(\mathbf{x})
  \right\}$ be the solution of equations 
  \eqref{Eqn:GE_darcy}--\eqref{Eqn:GE_p0}. 
  Any kinematically admissible vector field 
  $\widetilde{\mathbf{v}}(\mathbf{x})$ has 
  to satisfy the following inequality:
  \begin{align}
    \int_{\Omega} \alpha(\mathbf{v}(\mathbf{x}),
    p(\mathbf{x}),\mathbf{x}) \mathbf{v}(\mathbf{x}) 
    \cdot \mathbf{v}(\mathbf{x}) \; \mathrm{d} \Omega \leq 
    \int_{\Omega} \alpha(\mathbf{v}(\mathbf{x}),
    p(\mathbf{x}),\mathbf{x}) \widetilde{\mathbf{v}}
    (\mathbf{x}) \cdot \widetilde{\mathbf{v}}(\mathbf{x}) 
    \; \mathrm{d} \Omega 
  \end{align}
\end{proposition}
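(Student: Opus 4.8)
The plan is to exploit the fact that in \emph{both} integrals the drag coefficient appears with its argument locked at the true solution $(\mathbf{v},p)$, so that upon writing $\alpha(\mathbf{x}) := \alpha(\mathbf{v}(\mathbf{x}),p(\mathbf{x}),\mathbf{x}) > 0$ the quantity to be compared is the fixed nonnegative quadratic functional $\mathbf{u} \mapsto \int_{\Omega} \alpha(\mathbf{x}) \, \mathbf{u}(\mathbf{x}) \cdot \mathbf{u}(\mathbf{x}) \, \mathrm{d}\Omega$ evaluated at $\mathbf{u} = \mathbf{v}$ versus $\mathbf{u} = \widetilde{\mathbf{v}}$. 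First I would introduce the difference field $\mathbf{w}(\mathbf{x}) := \widetilde{\mathbf{v}}(\mathbf{x}) - \mathbf{v}(\mathbf{x})$; since $\mathbf{v}$ solves \eqref{Eqn:GE_darcy}--\eqref{Eqn:GE_p0} with $\Gamma^{v} = \partial\Omega$ and $\widetilde{\mathbf{v}}$ is kinematically admissible, $\mathbf{w}$ is solenoidal in $\Omega$ and satisfies $\mathbf{w} \cdot \hat{\mathbf{n}} = 0$ on $\partial\Omega$. Expanding the right-hand-side integrand gives $\alpha \widetilde{\mathbf{v}} \cdot \widetilde{\mathbf{v}} = \alpha \mathbf{v} \cdot \mathbf{v} + 2\,\alpha \mathbf{v} \cdot \mathbf{w} + \alpha \mathbf{w} \cdot \mathbf{w}$, so the claim reduces to showing that the cross term $\int_{\Omega} \alpha \mathbf{v} \cdot \mathbf{w} \, \mathrm{d}\Omega$ vanishes, because $\int_{\Omega} \alpha \mathbf{w} \cdot \mathbf{w} \, \mathrm{d}\Omega \geq 0$ by the strict positivity of $\alpha$ established earlier in this section.

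The key step is to eliminate $\alpha \mathbf{v}$ using the balance of linear momentum. Since the body force is conservative, $\rho \mathbf{b} = \mathrm{grad}[\phi]$, equation \eqref{Eqn:GE_darcy} gives $\alpha(\mathbf{v},p,\mathbf{x}) \mathbf{v} = \mathrm{grad}[\phi - p]$, whence $\int_{\Omega} \alpha \mathbf{v} \cdot \mathbf{w} \, \mathrm{d}\Omega = \int_{\Omega} \mathrm{grad}[\phi - p] \cdot \mathbf{w} \, \mathrm{d}\Omega$. An integration by parts (the divergence theorem) rewrites this as $\int_{\partial\Omega} (\phi - p)\, \mathbf{w} \cdot \hat{\mathbf{n}} \, \mathrm{d}\Gamma - \int_{\Omega} (\phi - p)\, \mathrm{div}[\mathbf{w}] \, \mathrm{d}\Omega$, and both terms are zero: the boundary term because $\mathbf{w} \cdot \hat{\mathbf{n}} = 0$ on $\partial\Omega$, and the volume term because $\mathrm{div}[\mathbf{w}] = 0$ in $\Omega$. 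Substituting back yields $\int_{\Omega} \alpha \widetilde{\mathbf{v}} \cdot \widetilde{\mathbf{v}} \, \mathrm{d}\Omega = \int_{\Omega} \alpha \mathbf{v} \cdot \mathbf{v} \, \mathrm{d}\Omega + \int_{\Omega} \alpha \mathbf{w} \cdot \mathbf{w} \, \mathrm{d}\Omega \geq \int_{\Omega} \alpha \mathbf{v} \cdot \mathbf{v} \, \mathrm{d}\Omega$, which is the asserted inequality, with equality precisely when $\widetilde{\mathbf{v}} = \mathbf{v}$ almost everywhere.

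There is no serious technical obstacle; the one point requiring care is conceptual rather than computational — one must notice that the drag coefficient is frozen at $(\mathbf{v},p)$ in both integrals, so that the comparison is genuinely between two values of a single fixed nonnegative quadratic functional and not between two different functionals (which is why the linearity of the argument works despite $\alpha$ being nonlinear in its arguments). The regularity demands are mild: it suffices that $\mathbf{v}, \widetilde{\mathbf{v}} \in [L^2(\Omega)]^{nd}$ with divergence in $L^2(\Omega)$ and well-defined normal traces, and that $p$ and $\phi$ are such that $\mathrm{grad}[\phi - p] \in [L^2(\Omega)]^{nd}$, so that the integration by parts is legitimate on the piecewise-smooth domain $\Omega$; these are exactly the function spaces in which the mixed formulations of Section~\ref{Ch:Mixed} are posed. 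If one wished to drop the conservative-body-force hypothesis, one could instead test the weak form of \eqref{Eqn:GE_darcy}--\eqref{Eqn:GE_Continuity} against $\mathbf{w}$ directly, but the route above is the cleanest given the standing assumptions.
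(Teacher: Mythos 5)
Your proposal is correct and follows essentially the same route as the paper's own proof: the same difference field $\widetilde{\mathbf{v}}-\mathbf{v}$ (solenoidal with zero normal trace), substitution of $\alpha\mathbf{v}=\mathrm{grad}[\phi-p]$ from the momentum balance with conservative body force, and Green's identity to kill the cross term. The only cosmetic difference is that you show the cross term vanishes exactly (yielding the equality-case characterization), whereas the paper drops the nonnegative quadratic term first and bounds the remainder by zero.
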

\begin{proof}
  Let 
  \begin{align}
    \delta \mathbf{v}(\mathbf{x}) := 
    \widetilde{\mathbf{v}}(\mathbf{x}) - 
    \mathbf{v}(\mathbf{x})
  \end{align}
  From the hypothesis, $\delta \mathbf{v}(\mathbf{x})$ 
  satisfies the following equations:
  \begin{subequations}
    \begin{align}
      &\delta \mathbf{v}(\mathbf{x}) \cdot \hat{\mathbf{n}} 
      (\mathbf{x}) = 0 \quad \forall \mathbf{x} \in \partial 
      \Omega \\
      &\mathrm{div}[\delta \mathbf{v}] = 0 \quad 
      \forall \mathbf{x} \in \Omega 
    \end{align}
  \end{subequations}
  Let us simplify the expression for the difference 
  in the total dissipation due to drag:
  \begin{align}
    \delta \Phi &:= \int_{\Omega} \alpha(\mathbf{v}
    (\mathbf{x}),p(\mathbf{x}),\mathbf{x}) 
    \widetilde{\mathbf{v}} 
    (\mathbf{x}) \cdot \widetilde{\mathbf{v}}(\mathbf{x}) \; 
    \mathrm{d} \Omega - 
    \int_{\Omega} \alpha(\mathbf{v}(\mathbf{x}),
    p(\mathbf{x}),\mathbf{x}) \mathbf{v}(\mathbf{x}) 
    \cdot \mathbf{v}(\mathbf{x}) \; \mathrm{d} 
    \Omega \\
    &= \int_{\Omega} \alpha(\mathbf{v}(\mathbf{x}),
    p(\mathbf{x}),\mathbf{x}) 
    \delta \mathbf{v}(\mathbf{x}) 
    \cdot \left(\delta \mathbf{v}(\mathbf{x}) + 2 
    \mathbf{v}(\mathbf{x}) \right) \; \mathrm{d} \Omega \\
    &= \int_{\Omega} \alpha(\mathbf{v}(\mathbf{x}),
    p(\mathbf{x}),\mathbf{x}) \delta \mathbf{v}
    (\mathbf{x}) \cdot \delta \mathbf{v}(\mathbf{x}) 
    \; \mathrm{d} \Omega 
    + 2 \int_{\Omega} \alpha(\mathbf{v}(\mathbf{x}),
    p(\mathbf{x}),\mathbf{x}) \delta \mathbf{v}
    (\mathbf{x}) \cdot \mathbf{v}(\mathbf{x}) \; 
    \mathrm{d} \Omega \\
    &\geq 2 \int_{\Omega} \alpha(\mathbf{v}(\mathbf{x}),
    p(\mathbf{x}),\mathbf{x}) \delta \mathbf{v}
    (\mathbf{x}) \cdot \mathbf{v}(\mathbf{x}) 
    \; \mathrm{d} \Omega = 2 \int_{\Omega} \delta \mathbf{v}
    (\mathbf{x}) \cdot \mathrm{grad}[\phi(\mathbf{x}) - 
      p(\mathbf{x})] \; \mathrm{d} \Omega
  \end{align}
  Using Green's identity, we obtain the following inequality:
  \begin{align}
    \delta \Phi \geq  2 \int_{\partial \Omega} \delta \mathbf{v}
    (\mathbf{x}) \cdot \hat{\mathbf{n}}(\mathbf{x}) \left(\phi
    (\mathbf{x}) - p(\mathbf{x})\right) \; \mathrm{d} \Gamma
    - 2 \int_{\Omega} \mathrm{div}[\delta \mathbf{v}
      (\mathbf{x})] \left(\phi(\mathbf{x}) - 
      p(\mathbf{x})\right) \; \mathrm{d} \Omega = 0
  \end{align}
  This completes the proof.
\end{proof}
It is easy to obtain the following corollary for 
the case of constant drag coefficient.
\begin{corollary}
  Let $\mathbf{v}_1(\mathbf{x})$ and $\mathbf{v}_2(\mathbf{x})$ 
  be two Darcy velocities (i.e., they satisfy equations 
  \eqref{Eqn:GE_darcy}--\eqref{Eqn:GE_p0}) corresponding 
  to two different \emph{constant} drag coefficients but 
  for the same conservative body force and velocity 
  boundary conditions, and for a given domain. Then 
  the velocities satisfy 
  \begin{align}
    \int_{\Omega} \mathbf{v}_1(\mathbf{x}) \cdot 
    \mathbf{v}_1(\mathbf{x}) \; \mathrm{d} \Omega = 
    \int_{\Omega} \mathbf{v}_2(\mathbf{x}) \cdot 
    \mathbf{v}_2(\mathbf{x}) \; \mathrm{d} \Omega
  \end{align}
\end{corollary}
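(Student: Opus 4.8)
The plan is to invoke the minimum dissipation inequality (the preceding proposition) twice, once for each velocity field, exploiting the fact that for a constant drag coefficient the weight $\alpha$ appearing in the dissipation functional is simply a positive number that factors out of the integral.

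First I would observe that since $\mathbf{v}_2(\mathbf{x})$ solves equations \eqref{Eqn:GE_darcy}--\eqref{Eqn:GE_p0}, it is in particular solenoidal and satisfies $\mathbf{v}_2 \cdot \hat{\mathbf{n}} = v_n$ on $\partial\Omega$; hence $\mathbf{v}_2$ is a kinematically admissible vector field for the boundary value problem whose solution is $\mathbf{v}_1$ (here we use that the domain and the velocity boundary data are the same for both problems). Applying the minimum dissipation inequality to the pair $\{\mathbf{v}_1, p_1\}$ with the test field $\widetilde{\mathbf{v}} = \mathbf{v}_2$, and noting that $\alpha \equiv \alpha_1$ is constant so that it may be pulled outside both integrals, gives $\alpha_1 \int_{\Omega} \mathbf{v}_1 \cdot \mathbf{v}_1 \; \mathrm{d}\Omega \leq \alpha_1 \int_{\Omega} \mathbf{v}_2 \cdot \mathbf{v}_2 \; \mathrm{d}\Omega$; dividing by $\alpha_1 > 0$ yields one of the two inequalities.

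Next I would repeat the argument with the roles of the two fields exchanged: $\mathbf{v}_1$ is likewise kinematically admissible for the problem solved by $\mathbf{v}_2$, so the minimum dissipation inequality applied to $\{\mathbf{v}_2, p_2\}$ with $\widetilde{\mathbf{v}} = \mathbf{v}_1$ and $\alpha \equiv \alpha_2 > 0$ gives $\int_{\Omega} \mathbf{v}_2 \cdot \mathbf{v}_2 \; \mathrm{d}\Omega \leq \int_{\Omega} \mathbf{v}_1 \cdot \mathbf{v}_1 \; \mathrm{d}\Omega$. Combining the two inequalities forces equality, which is precisely the claimed identity.

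There is essentially no real obstacle here: the corollary is an immediate consequence of the proposition together with the positivity of the drag coefficient (guaranteed by the second-law discussion, since for constant drag $\alpha = \mu/k > 0$) and the elementary observation that being a Darcy velocity is a strictly stronger requirement than being kinematically admissible. The one point that warrants a moment's care is that the proposition evaluates $\alpha$ at the \emph{solution} field rather than at the test field; in the constant-drag setting this distinction disappears, which is exactly why the statement is confined to constant drag coefficients and does not extend verbatim to the pressure- or velocity-dependent case.
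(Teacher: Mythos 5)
Your proof is correct and follows exactly the route the paper intends: the corollary is stated as an immediate consequence of the minimum dissipation inequality, obtained by applying it symmetrically (each solution being kinematically admissible for the other's problem) and cancelling the constant, positive drag coefficients. Your closing remark about $\alpha$ being evaluated at the solution field---and why this is harmless only in the constant-drag case---is a correct and worthwhile observation.
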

%
\begin{remark}
  Let the drag coefficient be independent of 
  the velocity and the pressure. Let $\left\{
  \mathbf{v}_1(\mathbf{x}), p_1(\mathbf{x})
  \right\}$ and $\left\{\mathbf{v}_2(\mathbf{x}), p_2
  (\mathbf{x})\right\}$ be the solutions of equations 
  \eqref{Eqn:GE_darcy}--\eqref{Eqn:GE_p0} for the 
  prescribed data $\left\{\mathbf{b}_1(\mathbf{x}), 
  {v_n}_1(\mathbf{x})\right\}$ and $\left\{\mathbf{b}_2
  (\mathbf{x}), {v_n}_2(\mathbf{x})\right\}$, respectively. 
  These fields satisfy the following relation:
  \begin{align}
    \label{Eqn:MDF_reciprocal}
    \int_{\Omega} \rho \mathbf{b}_1(\mathbf{x}) \cdot 
    \mathbf{v}_2(\mathbf{x}) \; \mathrm{d} \Omega
    - \int_{\partial \Omega} p_1(\mathbf{x}) {v_n}_2(\mathbf{x}) 
    \; \mathrm{d} \Gamma
    = \int_{\Omega} \rho \mathbf{b}_2(\mathbf{x}) \cdot 
    \mathbf{v}_1(\mathbf{x}) \; \mathrm{d} \Omega
    - \int_{\partial \Omega} p_2(\mathbf{x}) {v_n}_1(\mathbf{x}) 
    \; \mathrm{d} \Gamma
  \end{align}
  The solutions to Darcy equations satisfy an identity 
  similar to the Betti reciprocal relations in the 
  theory of elasticity \cite{Sadd,Truesdell_Noll} 
  and creeping flows \cite{Guazzelli_Morris}.
  However, equation \eqref{Eqn:MDF_reciprocal} is 
  not valid for modified Darcy-Forchheimer model. 
\end{remark}
The above results not only have theoretical significance 
but can also be invaluable in testing a numerical 
implementation.

\section{MIXED TWO-FIELD WEAK FORMULATIONS}
\label{Ch:Mixed}

It is, in general, not possible to obtain analytical 
solutions for the mathematical models presented in 
the previous section. Hence, one may have to resort 
to numerical solutions. One of the main goals of 
this paper is to present mixed finite element 
formulations based on the least-squares (LS) and the 
variational multi-scale (VMS) formalisms for solving 
the boundary value problem arising from the 
modified Darcy-Forchheimer model. Note that 
the standard Darcy, Forchheimer and modified 
Darcy models are special cases of the proposed 
modified Darcy-Forchheimer model. Therefore, 
the proposed mixed formulations can be used 
to solve these models, and encompass some 
of the prior mixed formulations that have 
developed for these simpler models.

The following function spaces will be used 
in the remainder of this paper: 
\begin{subequations}
  \begin{align}
    \label{Eqn:DF_function_p_strong}
    &\mathcal{P} := \left\{p(\mathbf{x}) \in H^{1}(\Omega) 
    \; \vert \; p(\mathbf{x}) = p_0(\mathbf{x}) \; \mathrm{on} 
    \; \Gamma^{p} \right\} \\
    \label{Eqn:DF_function_q_strong}
    &\mathcal{Q} := \left\{q(\mathbf{x}) \in H^{1}(\Omega) 
    \; \vert \; q(\mathbf{x}) = 0 \; \mathrm{on} \; 
    \Gamma^{p} \right\} \\
     \label{Eqn:DF_function_q_weak}
    &\widetilde{\mathcal{Q}} := \left\{q(\mathbf{x}) 
     \in H^{1}(\Omega) \right\}, \\
    &\mathcal{V} := \left\{\mathbf{v}(\mathbf{x}) 
    \in \left(L_2(\Omega)\right)^{nd} \; \vert \; 
    \mathrm{div}[\mathbf{v}] \in L_2(\Omega), \; 
    \mathbf{v}(\mathbf{x}) \cdot \hat{\mathbf{n}}
    (\mathbf{x}) = v_n(\mathbf{x}) \; \mathrm{on} 
    \; \Gamma^{v} \right\} \\
    &\mathcal{W} := \left\{\mathbf{w}(\mathbf{x}) 
    \in \left(L_2(\Omega)\right)^{nd} \; \vert \; 
    \mathrm{div}[\mathbf{w}] \in L_2(\Omega), \; 
    \mathbf{w}(\mathbf{x}) \cdot \hat{\mathbf{n}} 
    (\mathbf{x}) = 0 \; \mathrm{on} \; \Gamma^{v} 
    \right\}
  \end{align}
\end{subequations}
where $L_2(\Omega)$ and $H^{1}(\Omega)$ are standard 
Sobolev spaces \cite{Brezzi_Fortin}. Note that two 
different function spaces are defined for the pressure 
trial function. If the pressure is prescribed strongly 
on $\Gamma^{p}$ then the function space given in equation 
\eqref{Eqn:DF_function_p_strong} will be used for the 
pressure trial function, and the function space given 
in equation \eqref{Eqn:DF_function_q_strong} will be 
used for the pressure test function. If the pressure 
is prescribed weakly on $\Gamma^{p}$ then the function 
space given in equation \eqref{Eqn:DF_function_q_weak} 
will be used for both trial and test functions of the 
pressure.
It should be emphasized that both $L_2(\Omega)$ and 
$H^1(\Omega)$ are Hilbert spaces under the standard 
$L_2$ inner-product \cite{Evans_PDE}. The standard 
$L_2$ inner-product over a set $K$ will be denoted 
as $(\cdot;\cdot)_{K}$, and is defined as
\begin{align}
  (a;b)_K := \int_{K} a \cdot b \; \mathrm{d} K
\end{align}
For simplicity, the subscript $K$ will be dropped 
if $K = \Omega$. Note that for volume integrals 
$K \subseteq \Omega$ and for surface integrals 
$K \subseteq \partial \Omega$. In a subsequent 
section on numerical results, the error will be 
measured in $L_2$ norm and $H^1$ seminorm. To 
this end, the $L_2$ norm on $\Omega$ is defined 
as 
\begin{align}
  \|a\|_{L_2(\Omega)} := \sqrt{\int_{\Omega} 
    a \cdot a \; \mathrm{d} \Omega}
\end{align}
The $H^1$ seminorm on $\Omega$ is defined as
\begin{align}
  |a|_{H^1(\Omega)} := \sqrt{\int_{\Omega} 
    \mathrm{grad}[a] \cdot \mathrm{grad}[a] 
    \; \mathrm{d} \Omega}
\end{align}
The $H^1$ norm on $\Omega$ can then be defined as 
\begin{align}
  \|a\|_{H^1(\Omega)} := \sqrt{\|a\|_{L_2(\Omega)}^2 
    + |a|^2_{H^1(\Omega)}}
\end{align}
For further details on inner-product spaces and normed 
spaces, see references \cite{Reddy_Functional_Analysis,
Oden_Demkowicz}.

The aforementioned modifications to the standard 
Darcy model result in nonlinear partial differential 
equations, as the drag coefficient depends on the 
pressure and/or the velocity. To solve the resulting 
nonlinear equations, linearization is first performed, 
and then the LS and VMS formalisms shall be utilized 
to construct mixed two-field weak formulations. 
To this end, let us define the following linearization 
functionals:
\begin{align}
  \label{Eqn:Linearization_next}
  &\mathcal{D}^{(i+1)} := \vartheta 
  \left(\frac{\partial \alpha}{\partial p}
  \mathbf{v}^{(i)}\right)p^{(i+1)} + \vartheta 
  \left(\frac{\partial \alpha}{\partial\mathbf{v}}
  \otimes\mathbf{v}^{(i)}\right)\mathbf{v}^{(i+1)} \\
  \label{Eqn:Linearization_current}
  &\mathcal{D}^{(i)} := \vartheta 
  \left(\frac{\partial \alpha}{\partial p} 
  \mathbf{v}^{(i)}\right) p^{(i)} + \vartheta 
  \left(\frac{\partial \alpha}{\partial\mathbf{v}}
  \otimes\mathbf{v}^{(i)}\right)\mathbf{v}^{(i)} \\
  &\mathcal{G} := \vartheta \left(
  \frac{\partial \alpha}{\partial p}
  \mathbf{v}^{(i)}\right) q + \vartheta 
  \left(\frac{\partial \alpha}{\partial\mathbf{v}}
  \otimes\mathbf{v}^{(i)}\right)\mathbf{w}
\end{align}
where superscripts $(i)$ and $(i+1)$ represent 
solutions for the current and next iteration 
respectively, $\otimes$ denotes the standard 
tensor product \cite{Chadwick}, and $\vartheta 
\in [0 ,1]$ is a user-defined parameter to 
choose the type of linearization. One can 
achieve Picard's linearization by choosing 
$\vartheta = 0$ and consistent linearization 
by choosing $\vartheta = 1$.
\begin{remark}
  It should be noted that $\mathbf{v}$, $p$, $\mathbf{w}$, 
  $q$, $\mu$, $\mathbf{b}$, $k$, and $\hat{\mathbf{n}}$ 
  are all functions of $\mathbf{x}$. The drag coefficient 
  and its derivatives will be functions of $p^{(i)}$, 
  $\mathbf{v}^{(i)}$ and $\mathbf{x}$. For notational 
  simplicity, these dependencies will not be explicitly 
  indicated. 
\end{remark}

\subsection{A mixed formulation based on least-squares formalism}
Consider an abstract mathematical problem 
defined by a set of partial differential 
equations in the form:
\begin{align}
  & \mathbf{L}\mathbf{u} = \mathbf{f} 
  \quad \mathrm{in} \quad \Omega \\
  & \mathbf{B}\mathbf{u} = \mathbf{0} 
  \quad \mathrm{in} \quad \Gamma
\end{align}
where $\mathbf{L}$ is the differential operator, 
$\mathbf{B}$ is the boundary operator, $\mathbf{u}$ 
is the unknown vector, and $\mathbf{f}$ is the 
forcing vector. A corresponding least-squares 
functional can be constructed as follows:
\begin{align}
  \Pi[\mathbf{u}] = \frac{1}{2}\int_{\mathrm{\Omega}}
  \left\|\mathbf{L}\mathbf{u} - \mathbf{f}\right\|^{2}
  \mathrm{d\Omega} + \frac{1}{2}\int_{\mathrm{\Gamma}}\left
  \|\mathbf{B}\mathbf{u} - \mathbf{0}\right\|^{2}\mathrm{d\Gamma}
\end{align}
A weak form based on least-squares formalism can 
be obtained by requiring the G\^{a}teaux variation 
to vanish along any $\mathbf{w}$ that satisfies 
the essential boundary conditions. That is,
\begin{align}
  \delta \Pi[\mathbf{w},\mathbf{u}] = 0 
  \quad \forall \mathbf{w}
\end{align}
where 
\begin{align}
  \delta\Pi[\mathbf{w},\mathbf{u}] := 
  \mathop{\mathrm{lim}}_{\epsilon\rightarrow 0}
  \frac{\Pi[\mathbf{w}+\epsilon\mathbf{u}] - 
    \Pi[\mathbf{w}]}{\epsilon} \equiv 
  \left[\frac{d}{d\epsilon}\Pi[\mathbf{w} 
      + \epsilon\mathbf{u}]\right]_{\epsilon=0}
\end{align}
provided the limit exists. For further details 
on the G\^{a}teaux variation see references 
\cite{Spivak,Holzapfel,Glowinski}.

Studies in reference \cite{Linearization_Payette} have 
shown that minimizing the problem after linearization 
produces more accurate results. Also, minimizing a 
least-squares-based functional before linearization will 
create additional terms in the resulting weak 
formulation and significantly increase the difficulty of 
implementation, so we shall employ the former approach. Inserting 
equations \eqref{Eqn:Linearization_next} and 
\eqref{Eqn:Linearization_current} into equation 
\eqref{Eqn:GE_darcy} results in the following governing 
equations:
\begin{subequations}
  \label{Eqn:governing_equations_linearized}
  \begin{align}
    \label{Eqn:GEL_darcy}
    &\alpha\mathbf{v}^{(i+1)} + \mathcal{D}^{(i+1)}- \mathcal{D}^{(i)} +\mathrm{grad}[p^{(i+1)}] = \rho\mathbf{b}
    \quad \mathrm{in} \; \Omega \\
    \label{Eqn:GEL_Continuity}
    &\mathrm{div}[\mathbf{v}^{(i+1)}] = 0 \quad \mathrm{in} \; \Omega \\
    &\mathbf{v}^{(i+1)} \cdot \mathbf{\hat{n}} = v_n \quad \mathrm{on} \; \Gamma^{v} \\
    &p^{(i+1)} = p_0 \quad \mathrm{on} \; \Gamma^{p}
  \end{align}
\end{subequations}
In reference \cite{LS_issues}, it has been shown that for the Navier-Stokes equation, 
an introduction of a mesh dependent variable in the LS formulation 
greatly improves the accuracy of the solution. Thus for the Darcy modifications, 
two variants of the LS formulation will be 
considered by employing the following weights:
\begin{align}
  \mathbf{A} = 
  \left\{\begin{array}{cl}
    \mathbf{I} & \mbox{weight 1} \\
    \alpha\mathbf{I}& \mbox{weight 2}
  \end{array}
  \right.
  \label{Eqn:GEL_weights}
\end{align}
For all the models considered in this paper, the 
second-order tensor $\mathbf{A}$ is symmetric and 
positive definite. This implies that the tensor is 
invertible. In addition, the square root theorem 
ensures that its square root exists \cite{Gurtin}. 
Employing the minimization approach on equations 
\eqref{Eqn:GEL_darcy} and \eqref{Eqn:GEL_Continuity} 
results in the functional
\begin{align}
  \label{Eqn:GEL_minimized}
  &\Pi_{\mathrm{LS}}[\mathbf{v}^{(i+1)},p^{(i+1)}] := \frac{1}{2}\int_{\mathrm{\Omega}}\left\|\mathbf{A}^{-1/2}(\alpha\mathbf{v}^{(i+1)}+\mathcal{D}^{(i+1)}- \mathcal{D}^{(i)}
	+ \mathrm{grad}[p^{(i+1)}]-\rho\mathbf{b})\right\|^2\mathrm{d\Omega} \nonumber \\
	& \qquad\qquad\qquad\quad + \frac{1}{2}\int_{\mathrm{\Omega}}
	\left\|\mathrm{div}[\mathbf{v}^{(i+1)}]\right\|^2\mathrm{d\Omega}
\end{align}
Let $\mathbf{v}^{(i+1)}\rightarrow\mathbf{v}^{(i+1)} + \epsilon\mathbf{w}$ and $p^{(i+1)}\rightarrow p^{(i+1)} + \epsilon q$ 
where $\mathbf{v}^{(i+1)}$ and $\mathbf{w}$ are the velocity trial and test functions respectively and $p^{(i+1)}$ and $q$ are the pressure trial and test functions respectively. 
Applying the 
G$\hat{\mathrm{a}}$teaux variation on equation \eqref{Eqn:GEL_minimized} results in the functional
\begin{align}
  &\delta\Pi_{\mathrm{LS}}[\mathbf{v}^{(i+1)},p^{(i+1)};\mathbf{w},q] = \left[\frac{\mathrm{d}}{\mathrm{d}\epsilon}\Pi_{\mathrm{LS}}[\mathbf{v}^{(i+1)}+\epsilon\mathbf{w},p^{(i+1)}+
  		\epsilon q]\right]_{\epsilon = 0} \nonumber\\
  	&\qquad = \int_{\Omega}\left(\alpha\mathbf{w} + \mathcal{G} + \mathrm{grad}[q]\right)\cdot\mathbf{A}^{-1}
  \left(\mathrm{\alpha}\mathbf{v}^{(i+1)} + \mathcal{D}^{(i+1)}- \mathcal{D}^{(i)} + \mathrm{grad}[p^{(i+1)}] - \rho \mathbf{b}\right) \nonumber\\
  	&\qquad+ \mathrm{div}[\mathbf{w}]\cdot\mathrm{div}[\mathbf{v}^{(i+1)}] \mathrm{d\Omega}
\end{align}
and setting it equal to zero gives the weak form. 
The final statement for the modified Darcy-Forchheimer 
model can be rearranged and written as follows: Given 
$\mathbf{v}^{(i)}$ and$p^{(i)}$ find $\mathbf{v}^{(i+1)}
\in \mathcal{V}$ and $p^{(i+1)} \in \mathcal{P}$ such 
that we have
\begin{align}
\label{Eqn:weak_least_squares}
  	&\left(\alpha\mathbf{w};\mathbf{A}^{-1}\alpha\mathbf{v}^{(i+1)}\right) + \left(\alpha\mathbf{w};\mathbf{A}^{-1}\mathcal{D}^{(i+1)}\right)
	+ \left(\alpha\mathbf{w};\mathbf{A}^{-1}\mathrm{grad}[p^{(i+1)}]\right) \nonumber \\
	&\quad+\left(\mathcal{G};\mathbf{A}^{-1}\alpha\mathbf{v}^{(i+1)}\right) + \left(\mathcal{G};\mathbf{A}^{-1}\mathcal{D}^{(i+1)}\right)
	+ \left(\mathcal{G};\mathbf{A}^{-1}\mathrm{grad}[p^{(i+1)}]\right) \nonumber \\
	&\quad+\left(\mathrm{grad}[q];\mathbf{A}^{-1}\alpha\mathbf{v}^{(i+1)}\right) + \left(\mathrm{grad}[q];\mathbf{A}^{-1}\mathcal{D}^{(i+1)}\right) + 
	\left(\mathrm{grad}[q];\mathbf{A}^{-1}\mathrm{grad}[p^{(i+1)}]\right) \nonumber \\
	&\quad+ \left(\mathrm{div}[\mathbf{w}];\mathrm{div}[\mathbf{v}^{(i+1)}]\right) = \left(\alpha\mathbf{w};\mathbf{A}^{-1}\rho\mathbf{b}\right) + 
	\left(\mathcal{G};\mathbf{A}^{-1}\rho\mathbf{b}\right) + \left(\mathrm{grad}[q];\mathbf{A}^{-1}\rho\mathbf{b}\right) \nonumber \\
	&\quad+ \left(\alpha\mathbf{w};\mathbf{A}^{-1}\mathcal{D}^{(i)}\right) + \left(\mathcal{G};\mathbf{A}^{-1}\mathcal{D}^{(i)}\right)
	+ \left(\mathrm{grad}[q];\mathbf{A}^{-1}\mathcal{D}^{(i)}\right) \nonumber \\
	&\qquad\quad\forall \mathbf{w} \in \mathcal{W}, \; \forall q \in \mathcal{Q}
\end{align}

\subsection{A mixed formulation based on the variational multi-scale formalism}
Following the derivation given in reference 
\cite{Nakshatrala_Turner_Hjelmstad_Masud_CMAME_2006_v195_p4036}, 
one can derive a mixed formulation based on VMS formalism. It 
should be noted that in the previous derivations, the governing 
equations were not linearized and were solved using a Newton-Raphson 
approach. It should also be noted that the pressure boundary 
condition is weakly prescribed (i.e., a Neumann boundary condition) 
and acts normal to the boundary so the function space in equation 
\eqref{Eqn:DF_function_q_weak} is utilized.
 
After incorporating linearization terms into the governing 
equations, the resulting weak form based on the VMS formalism 
can be written as follows: Given $\mathbf{v}^{(i)}$ and $p^{(i)}$ 
find $\mathbf{v}^{(i+1)} \in \mathcal{V}$ and $p^{(i+1)} \in 
\mathcal{P}$ such that we have 
\begin{align}
 & \left(\mathbf{w}; \alpha\mathbf{v}^{(i+1)}\right)
  +\left(\mathbf{w}; \mathcal{D}^{(i+1)}\right)
  -\left(\mathrm{div}[\mathbf{w}]; p^{(i+1)}\right)
  +\left(\mathbf{w} \cdot \mathbf{\hat{n}}; p_0\right)_{\Gamma^{p}} 
  -\left(q; \mathrm{div}[\mathbf{v}^{(i+1)}]\right) \nonumber \\
  &\qquad \underbrace{-\frac{1}{2} \left(\alpha \mathbf{w} + \mathrm{grad}[q]; 
  \alpha^{-1}\left(\alpha\mathbf{v}^{(i+1)} + \mathcal{D}^{(i+1)} + \mathrm{grad}[p^{(i+1)}]\right)\right)}_{\mbox{stabilization term}} \nonumber \\
  &\qquad = \left(\mathbf{w}; \rho\mathbf{b} +  \mathcal{D}^{(i)}\right) \underbrace{-\frac{1}{2} \left(\alpha \mathbf{w} + \mathrm{grad}[q]; 
  \alpha^{-1}\left(\rho\mathbf{b} + \mathcal{D}^{(i)}\right)\right)}_{\mbox{stabilization term}} \nonumber \\
  &\qquad\quad \forall \mathbf{w} \in \mathcal{W}, \; 
  \forall q \in \widetilde{\mathcal{Q}}
\end{align}
The proposed VMS formulation encompasses some of the 
existing mixed formulations proposed for simpler 
models. For the standard Darcy model, this weak 
formulation reduces to the one presented in reference 
\cite{Nakshatrala_Turner_Hjelmstad_Masud_CMAME_2006_v195_p4036}.
For the modified Darcy model, this formulation 
with $\vartheta = 0$ (i.e., Picard linearization) 
reduces to the one presented in reference 
\cite{Nakshatrala_Turner_2013_arXiv}. 
Reference \cite{Nakshatrala_Rajagopal_IJNMF_2011_v67_p342} 
considered the modified Darcy model. However, the mixed 
formulation proposed in reference 
\cite{Nakshatrala_Rajagopal_IJNMF_2011_v67_p342} 
took a different approach by first constructing 
a weak formulation based on VMS formalism before 
linearization. The resulting nonlinear equations 
are then solved using the Newton-Raphson method. 
Algorithm \ref{Algo:MDF_Pseudocode} outlines 
the steps in implementing the proposed mixed 
finite element formulations. 

\begin{algorithm}[h]
  \begin{algorithmic}
    \State Set $(i) = 1$;
    \State Initialize data $\mathbf{v}^{(i)} = \mathbf{1}$ and $p^{(i)} = 1$
    \While {true} \Comment{nonlinear solver}
	    \If {$(i) > $ maximum number of iterations}
	    	\State break
		\Comment{solution did not converge}
	    \EndIf
	    \State Get $\alpha$ using $\mathbf{v}^{(i)}$ and $p^{(i)}$
	    \State Assemble stiffness matrices and forcing vectors
	    \State Solve and obtain $\mathbf{v}^{(i+1)}$ and $p^{(i+1)}$
	    \If {$\left\|\mathbf{v}^{(i+1)}-\mathbf{v}^{(i)}\right\|$ and 
	    $\left\|p^{(i+1)}-p^{(i)}\right\| < \epsilon_{\mathrm{TOL}}$}
	    	\State break
		\Comment{solution has converged}
	    \Else
	    	\State $\mathbf{v}^{(i)} \leftarrow \mathbf{v}^{(i+1)}$  and $p^{(i)} \leftarrow p^{(i+1)}$
	    	\State $(i) \leftarrow (i+1)$ 
	    \EndIf
    \EndWhile
  \end{algorithmic}
  \caption{Pseudocode for the nonlinear FEA. \label{Algo:MDF_Pseudocode}}
\end{algorithm}

\section{NUMERICAL BENCHMARK TESTS}
\label{Ch:benchmark}

\subsection{Dimensionless form of equations}
Numerical studies for subsurface flows like enhanced 
oil recovery can be displayed in dimensionless form, 
thus allowing scaling to real flow conditions. The 
governing equations are non-dimensionalized by 
choosing primary variables that seem appropriate. 
This non-dimensional procedure is different from 
the standard non-dimensionalization procedure for 
incompressible Navier-Stokes in the choice of primary 
variables (in the standard non-dimensionalization of 
Navier-Stokes equations, one employs characteristic 
velocity $V$, characteristic length $L$ and density 
of the fluid $\rho$ as primary variables). Also, the 
present non-dimensionalization is different and seems 
more appropriate than the one employed in reference 
\cite{Nakshatrala_Rajagopal_IJNMF_2011_v67_p342} for 
the chosen applications. 

All non-dimensional quantities are denoted using 
a superposed bar. Let $L$ (reference length in 
the problem), $g$ (acceleration due to gravity) 
and $p_{\mathrm{atm}}$ (atmospheric pressure) be the 
reference quantities. The following non-dimensional 
quantities are then defined: 
\begin{align}
   &\bar{\mathbf{x}} = \frac{\mathbf{x}}{L}, \; 
  \bar{\mathbf{v}} = \frac{\mathbf{v}}{\sqrt{gL}}, \; 
  \bar{\mathbf{b}} = \frac{\mathbf{b}}{g}, \;
  \bar{p} = \frac{p}{p_{\mathrm{atm}}}, \;
  \bar{\rho} = \frac{\rho g L}{p_{\mathrm{atm}}}, \;
  \bar{k} = \frac{k}{L^{2}}\nonumber\\ 
  &\bar{\beta}_{\mathrm{B}} = \beta_{\mathrm{B}}p_{\mathrm{atm}}, \; 
  \bar{\beta}_{\mathrm{F}} = \frac{\beta_{\mathrm{F}}gL^{2}}{p_{\mathrm{atm}}}, \;
  \bar{\alpha} = \alpha \frac{\sqrt{gL^{3}}}{p_{\mathrm{atm}}}, \;
  \bar{\mu}_0 = \frac{\mu_0 \sqrt{g/L}}{p_{\mathrm{atm}}}
\end{align}
The scaled domain $\Omega_{\mathrm{scaled}}$ is defined 
as follows: a point in space with position vector $\bar{\mathbf{x}} \in 
\Omega_{\mathrm{scaled}}$ corresponds to the same point with position vector 
given by $\mathbf{x} = \bar{\mathbf{x}} L \in \Omega$. Similarly, one can 
define the scaled boundaries for $\Gamma^{v}_{\mathrm{scaled}}$ and $\Gamma^{p}_{\mathrm{scaled}}$. 
Using the above non-dimensionalization 
procedure, the governing equations 
\eqref{Eqn:GE_darcy}--\eqref{Eqn:GE_p0} 
can be written as follows:
\begin{subequations}
  \label{Eqn:DF_EXP_Governing}
  \begin{align}
    &\bar{\alpha}(\bar{\mathbf{v}},\bar{p},\bar{\mathbf{x}}) \bar{\mathbf{v}} + \overline{\mathrm{grad}} 
    [\bar{p}(\bar{\mathbf{x}})] = \bar{\rho} \; \bar{\mathbf{b}}(\bar{\mathbf{x}}) \quad 
    \; \mathrm{in} \; {\Omega}_{\mathrm{scaled}} \\
    &\overline{\mathrm{div}}[{\mathbf{\bar{v}}}(\bar{\mathbf{x}})]  = 0 \quad \; 
    \mathrm{in} \; {\Omega}_{\mathrm{scaled}} \\
    &\bar{\mathbf{v}}(\bar{\mathbf{x}})\cdot\mathbf{\hat{n}}(\bar{\mathbf{x}}) = 
    \bar{v}_{0}(\bar{\mathbf{x}})  \quad \; 
        \mathrm{on} \; {\Gamma}^{v}_{\mathrm{scaled}} \\
    &\bar{p}(\bar{\mathbf{x}}) = 
        \bar{p}_{0} (\bar{\mathbf{x}}) 
    \quad \; \mathrm{on} \; {\Gamma}^{t}_{\mathrm{scaled}}
  \end{align}
\end{subequations}

\subsection{Numerical $h$-convergence}
A finite element formulation is said to be convergent if the numerical solutions tend to the exact 
solution with mesh refinement. This section will perform an $h$-convergence analysis on all Darcy 
models where $h$ is taken to be the edge length for quadrilateral elements and the short-edge length 
for triangular elements.
\begin{figure}[t!]
\centering
  \includegraphics[scale=1]{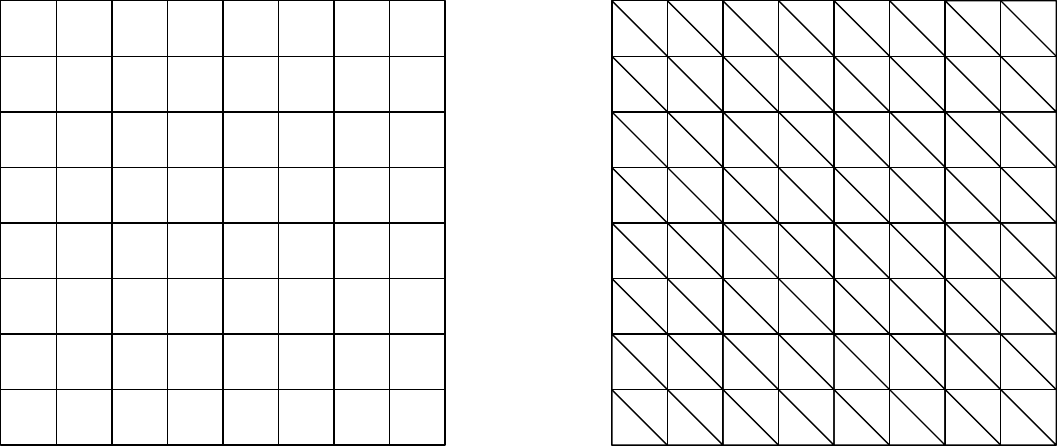}
  \caption{Typical structured meshes using 
    quadrilateral (left) and triangular (right) 
    finite elements, which are employed in the 
    $h$-numerical convergence studies. 
    \label{Fig:meshings}}
\end{figure}
\begin{figure}[h!]
  \centering
  \subfigure[x-velocity]{
  	\includegraphics[scale=0.46]
    {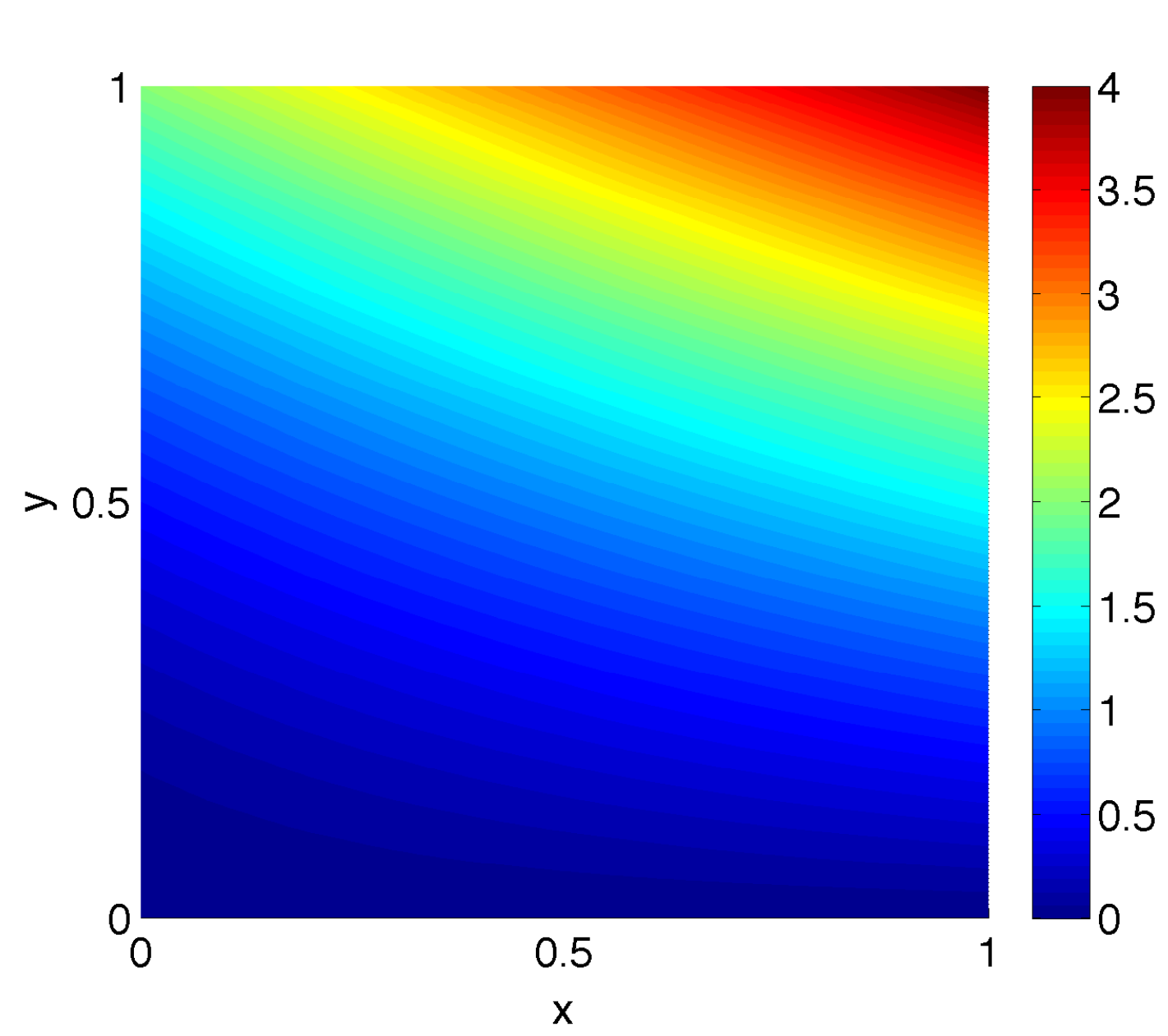}}
  \subfigure[y-velocity]{
  	\includegraphics[scale=0.46]
    {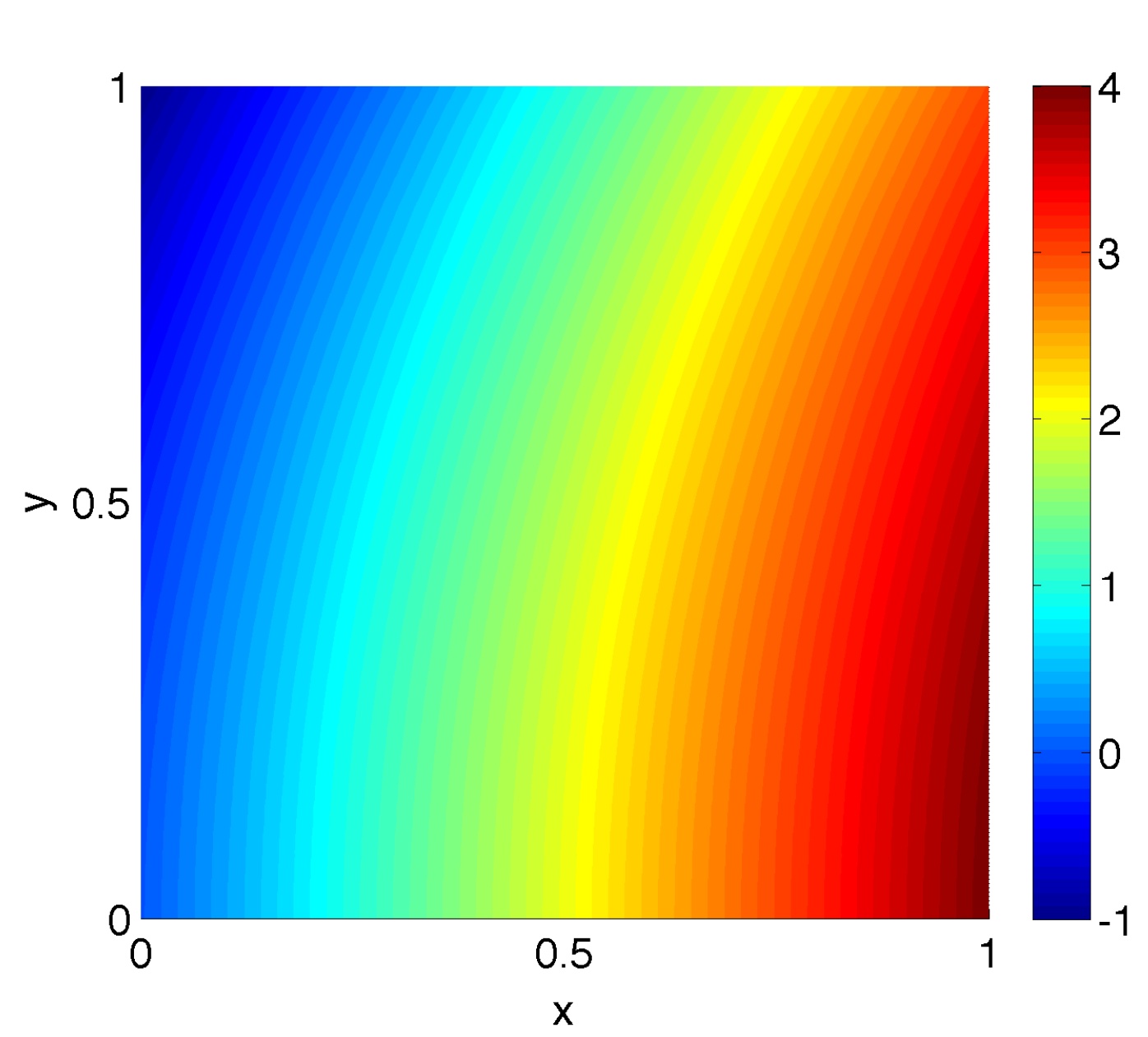}}
  \subfigure[velocity field]{
  	\includegraphics[scale=0.46]
    {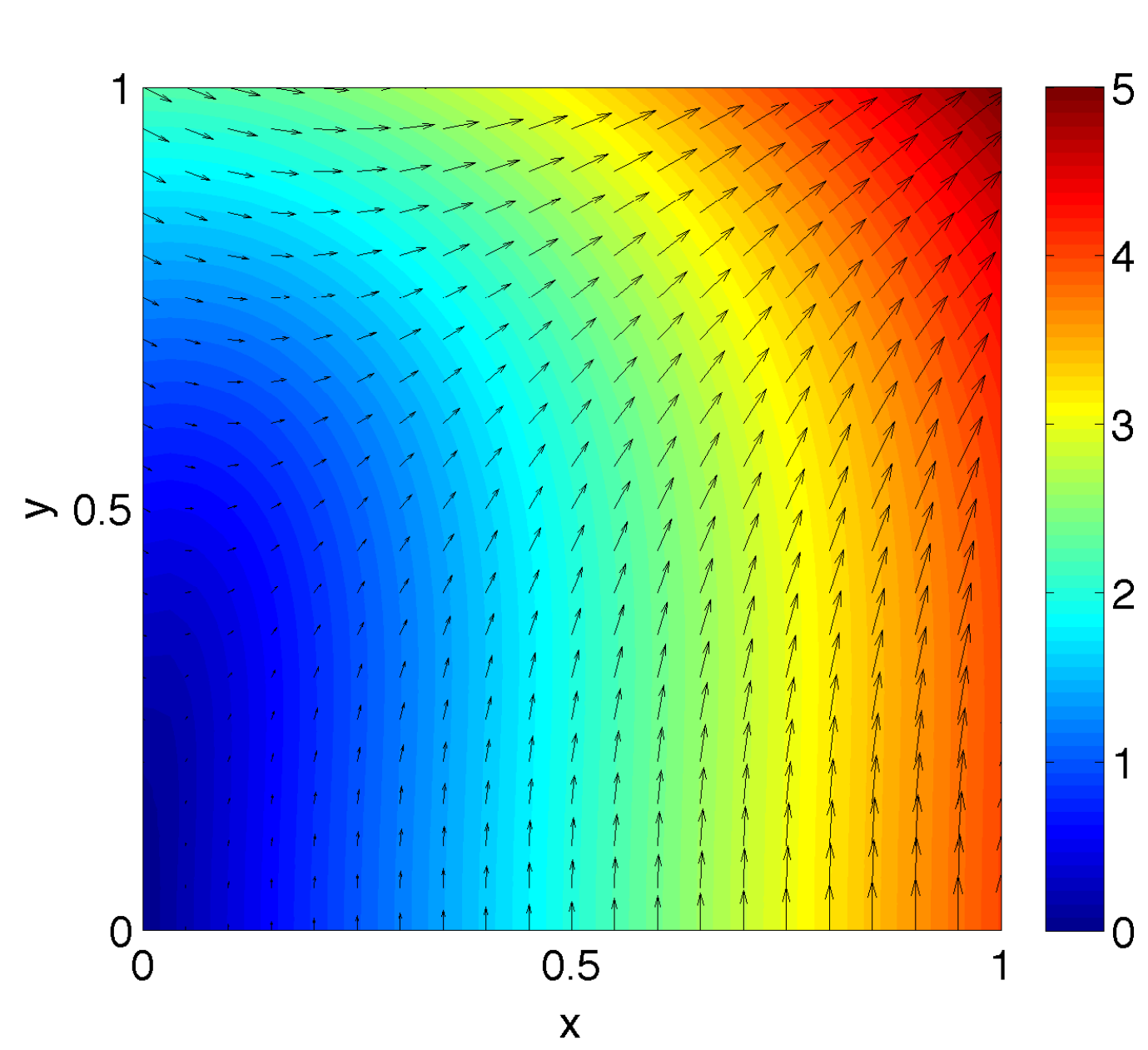}}
  \subfigure[pressure contour]{
  	\includegraphics[scale=0.46]
    {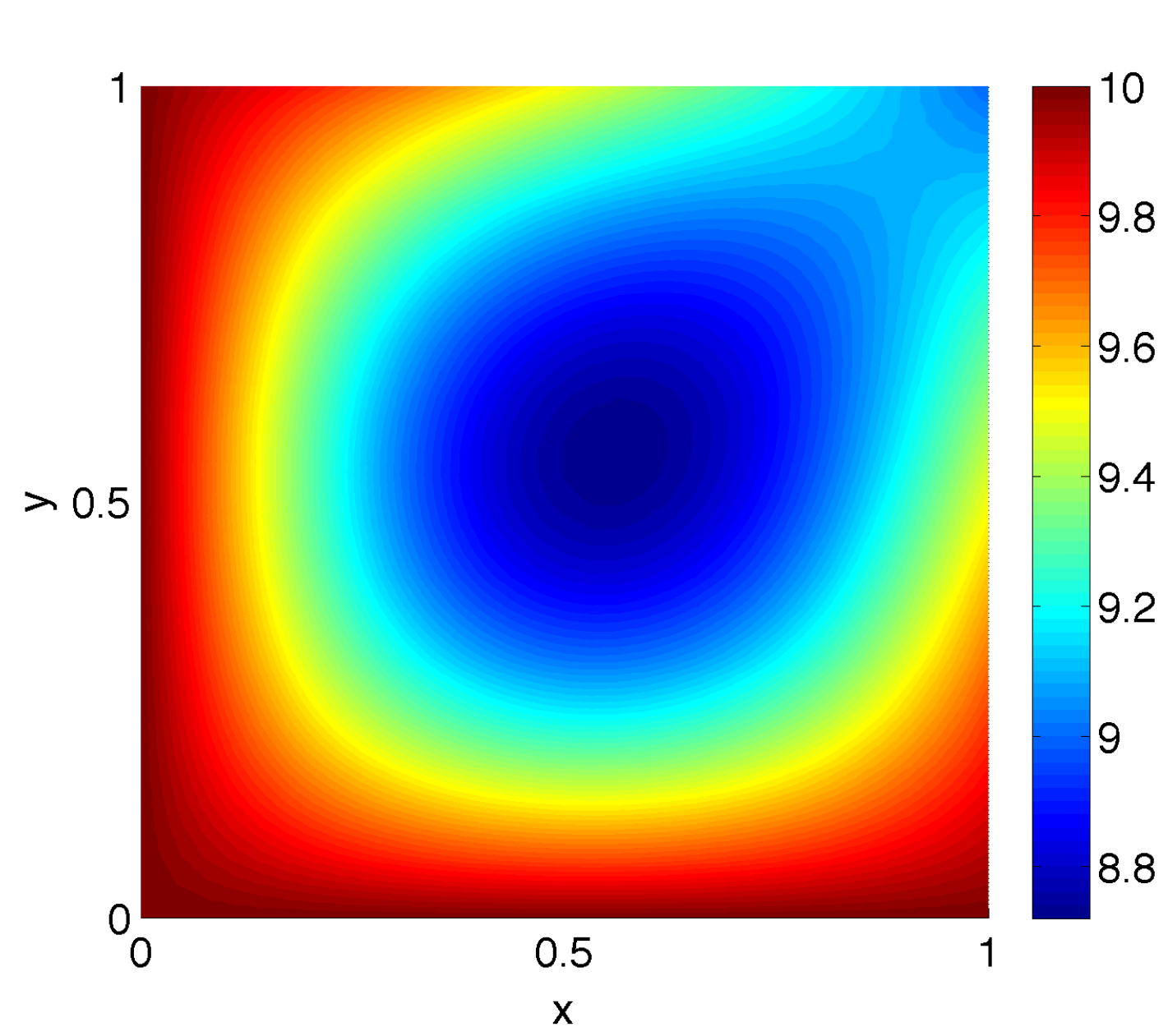}}
  \caption{Numerical $h$-convergence: contours of analytical solution.}
  \label{Fig:Numerical_analytical_solution}
\end{figure}

Consider a unit square as the computational domain. For this and all subsequent 
numerical studies, the FEM utilizes structured meshes as depicted in 
Figure \ref{Fig:meshings}. The velocity and pressure functions for this problem are:
\begin{subequations}
\begin{align}
  & \mathbf{\bar{v}}(x,y) = 
  \begin{Bmatrix}
	2y(x+y), \\
	4x-y^{2}.
  \end{Bmatrix} \\
  & \bar{p}(x,y) = 10-xy-\mathrm{sin}(\pi x )\mathrm{sin}(\pi y)
\end{align}
\label{Eqn:numerical_h}
\end{subequations}
Inserting the velocity and pressure functions back into the Darcy equation results in the following specific body force function:
\begin{align}
  & \mathbf{\bar{b}}(x,y) = \frac{1}{\bar{\rho}}
  \begin{Bmatrix}
    \bar{\alpha} 2y(x+y)-\pi\mathrm{cos}(\pi x )\mathrm{sin}(\pi y)-y\\
    \bar{\alpha}(4x-y^{2})-\pi\mathrm{cos}(\pi y )\mathrm{sin}(\pi x)-x
  \end{Bmatrix}
\end{align}
\begin{table}[h!]
  \centering
  \caption{User-defined inputs for the numerical $h$-convergence problem.}
  \begin{tabular}{cc}
    \hline
    Parameters & Value \\
    \hline
    $\bar{\beta}_{\mathrm{B}}$ & 0.1 \\
    $\bar{\beta}_{\mathrm{F}}$ & 0.5 \\
    $\bar{\alpha}$ & 1 \\
    $\bar{\rho}$ & 1 \\
    $\vartheta$ & 1 \\
    $h$-sizes & 1/4, 1/8, 1/16, 1/32, 1/64 \\
    \hline
  \end{tabular}
  \label{Tab:h_converge}
\end{table}
\begin{figure}[t!]
  \centering
  \subfigure[LS: Q4 elements]{
  	\includegraphics[scale=0.45]
    {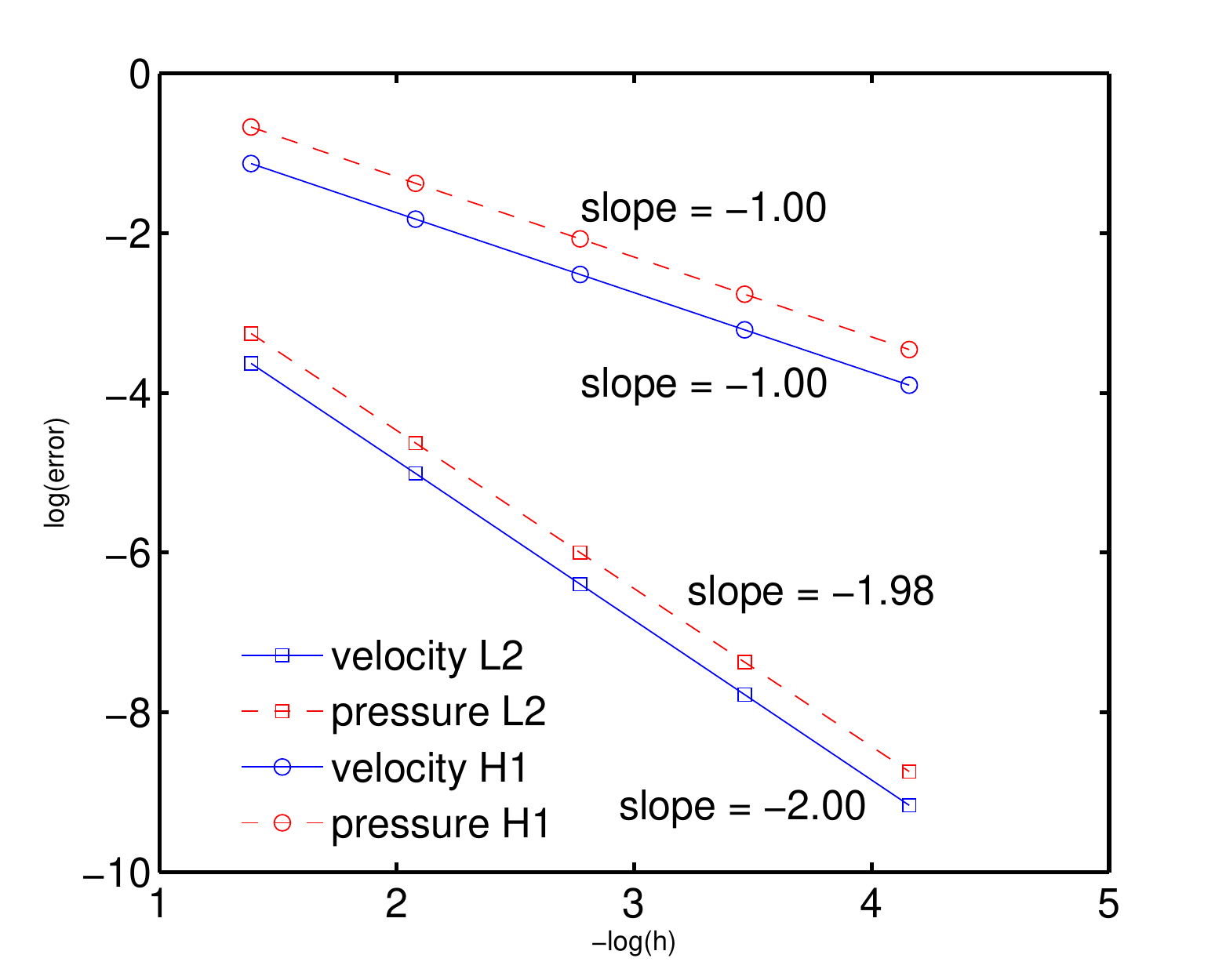}}
  \subfigure[LS: T3 elements]{
  	\includegraphics[scale=0.45]
    {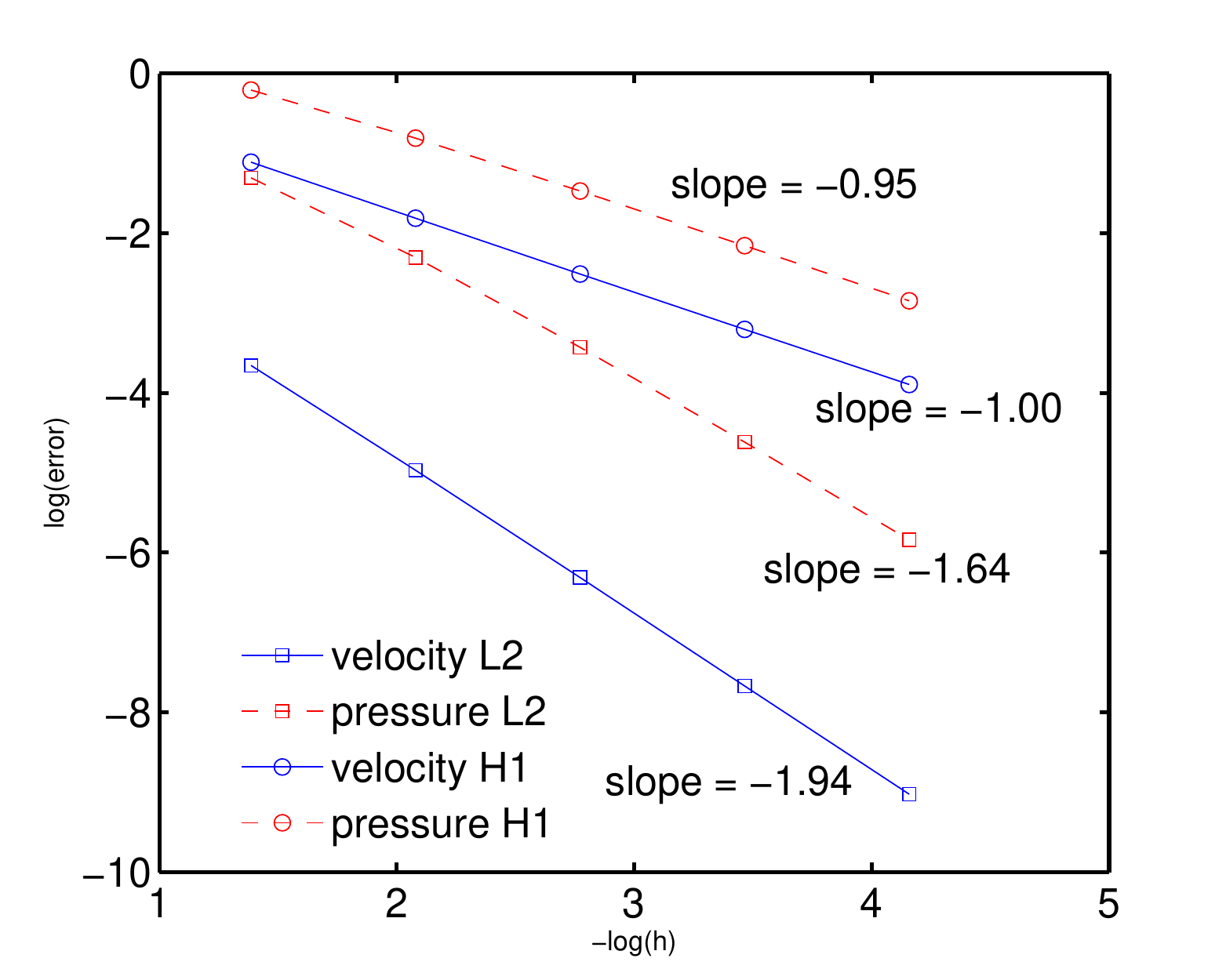}}
  \subfigure[VMS: Q4 elements]{
  	\includegraphics[scale=0.45]
    {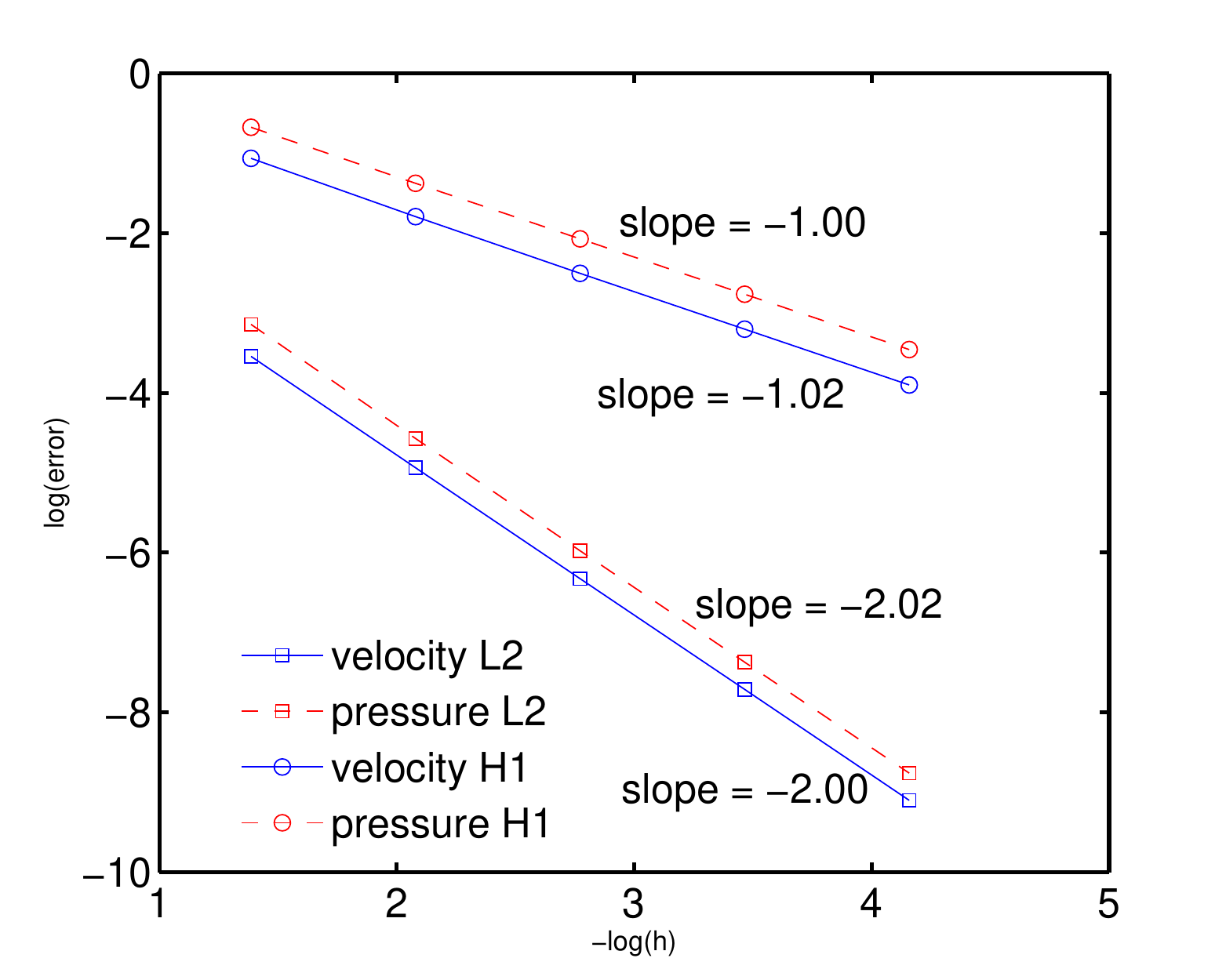}}
  \subfigure[VMS: T3 elements]{
  	\includegraphics[scale=0.45]
    {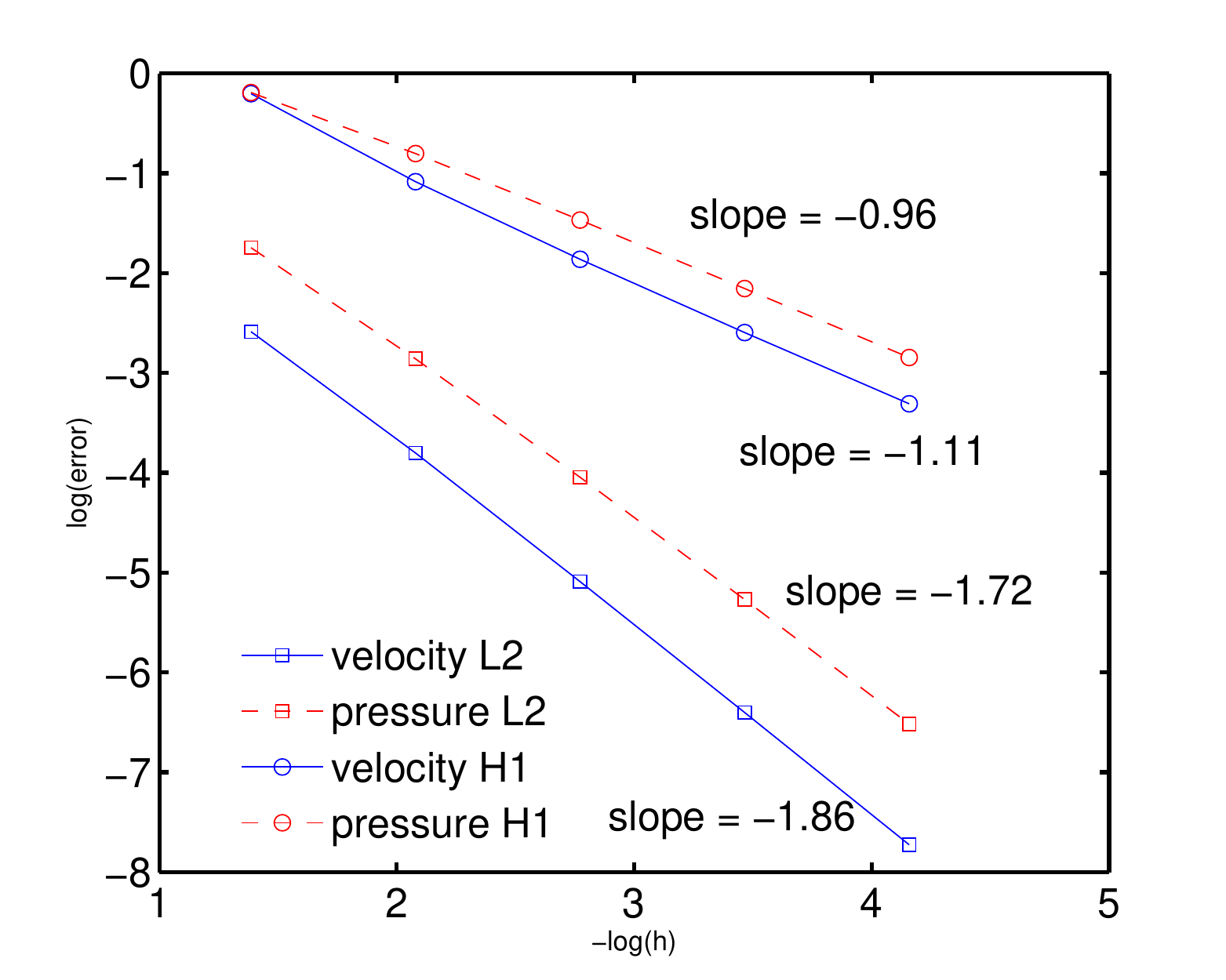}}
  \caption{Numerical $h$-convergence: error slopes for the MBF model}
  \label{Fig:Numerical_slopes_MBF}
\end{figure}
\begin{table}[h!]
  \centering
  \caption{Numerical $h$-convergence slopes for various Darcy models.}
  \begin{tabular}{rcccc|cccc}
      \hline
      $\;$ & \multicolumn{4}{c|}{LS formalism} & \multicolumn{4}{c}{VMS formalism} \\
      $\;$ & D & MB & F & MBF & D & MB & F & MBF \\
      \hline
      Q4 $L_2$ error $v$ & -2.00 & -2.00 & -1.99 & -2.00 & -1.99 & -2.00 & -1.99 & -2.00 \\
      Q4 $H^1$ error $v$ & -1.00 & -1.00 & -1.00 & -1.00 & -1.14 & -1.04 & -1.05 & -1.02 \\
      Q4 $L_2$ error $p$ & -1.95 & -1.96 & -1.99 & -1.98 & -2.01 & -2.03 & -2.02 & -2.02 \\
      Q4 $H^1$ error $p$ & -1.00 & -1.00 & -1.00 & -1.00 & -1.00 & -1.00 & -1.00 & -1.00 \\
      \hline
      T3 $L_2$ error $v$ & - 1.97 & -1.97 & -1.84 & -1.94 & -1.84 & -1.86 & -1.81 & -1.86 \\
      T3 $H^1$ error $v$ & -1.01 & -1.01 & -1.00 & -1.00 & -1.13 & -1.13 & -1.07 & -1.11 \\
      T3 $L_2$ error $p$ & -1.62 &  -1.65 & -1.62 & -1.64 & -1.69 & -1.71 & -1.70 & -1.72 \\
      T3 $H^1$ error $p$ & -0.95 & -0.95 & -0.95 & -0.95 & -0.96 & -0.96 & -0.96 & -0.96 \\
      \hline
  \end{tabular}
  \label{Tab:h_convergence_table}
\end{table}
Normal components of the velocity function in equation \eqref{Eqn:numerical_h} 
are prescribed as the boundary condition. A pressure of 10 is also prescribed
at the bottom left corner to ensure uniqueness in the solution.
Using the parameters listed in Table \ref{Tab:h_converge}, 
Figure \ref{Fig:Numerical_analytical_solution} depicts the analytical velocity 
and pressure solutions to which the finite element solutions shall be compared 
with. Since neither the pressure nor velocity functions depend on the drag 
coefficient, only the specific body force varies with respect to each Darcy model.
The four Darcy models used are: the original Darcy (D), modified Barus (MB), 
Darcy-Forchheimer (F), and modified Darcy-Forchheimer Barus (MBF) models. 
The $L_2$ norm and $H^1$ seminorm error slopes for the modified Darcy-Forchheimer 
Barus models are depicted in Figure \ref{Fig:Numerical_slopes_MBF}. Four-node 
quadrilateral (Q4) and three-node triangular (T3) elements are used to solve 
the problems, and Table \ref{Tab:h_convergence_table} 
lists all the error slopes for the rest of the models.

It can be seen that the numerical solutions perform well; converged solutions should 
have error slopes of approximately -2.00 and -1.00 for $L_2$ norm and $H^1$ seminorm 
respectively. Though not shown, it has been found that 
the error slopes for all six models are similar to one another. Quadrilateral elements 
tend to exhibit faster convergence rates than triangular elements, and one can 
expect even faster rates for higher order
elements like the nine-node quadrilateral Q9 and six-node triangular T6.

\subsection{Quarter five-spot problem}
\begin{figure}[t!]
  \centering
  \includegraphics[scale=1]{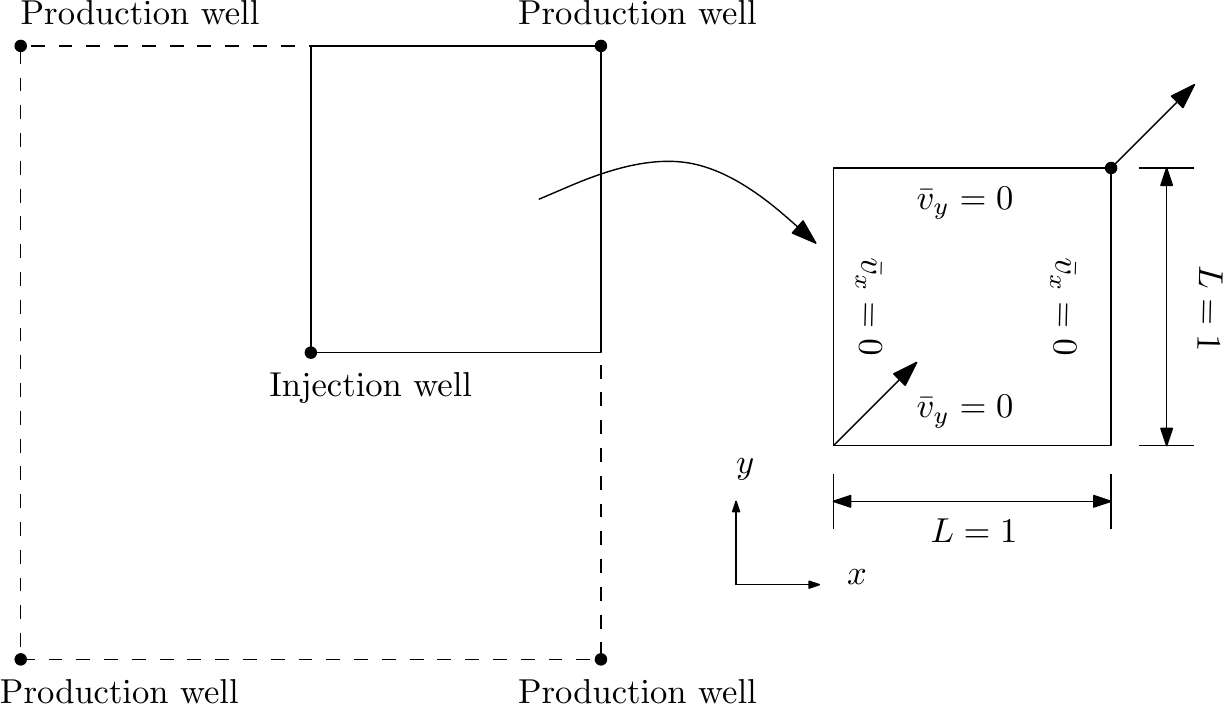}
  \caption{Quarter five-spot problem: A pictorial description.}
  \label{Fig:fivespot}
\end{figure}
This section presents numerical results for a quarter 
spot problem as depicted in Figure \ref{Fig:fivespot}. 
In many enhance oil recovery applications, there is an 
injection well centered around four production wells. 
\begin{table}[b!]
  \centering
  \caption{User-defined inputs for the quarter five-spot problem: Darcy model.}
  \begin{tabular}{cc}
    \hline
    Parameters & Value \\
    \hline
    $\bar{\alpha}$ & 1 \\
    $\bar{\rho}$ & 1 \\
    $\bar{\mathbf{b}}$ & 0 \\
    $Nele$ & 400 \\
    $\bar{p}(1,1)$ & 1 \\
    $\bar{v}_x(0,0)$, $\bar{v}_y(0,0)$ & 1 \\
    $\bar{v}_x(1,1)$, $\bar{v}_y(1,1)$ & 1 \\
    \hline
  \end{tabular}
  \label{Tab:quarter_spot}
\end{table}
When carbon-dioxide is injected into the ground, the pressure build up pushes oil out through the four
injection wells. This schematic forms what is often known as the five spot problem. Numerical results will 
exhibit elliptic singularities near the injection and production wells and provide a good benchmark to test 
the robustness of the finite element formulations. Due to the symmetric nature of the problem, only the top 
right quadrant is considered in the analysis. There is no specific body force or volumetric/sink source, and 
a pressure of $\bar{p}_0 = 1$ is prescribed at the production well or top right node.
Since it has been shown in previous sections that the FEM developed performs well for both Q4 and T3 elements, 
only quadrilateral elements Q4 and Q9 will be used to simulate all proceeding numerical simulations.
\begin{figure}[t!]
  \centering
  \subfigure[LS velocity vector field]{
  	\includegraphics[scale=0.46]
    {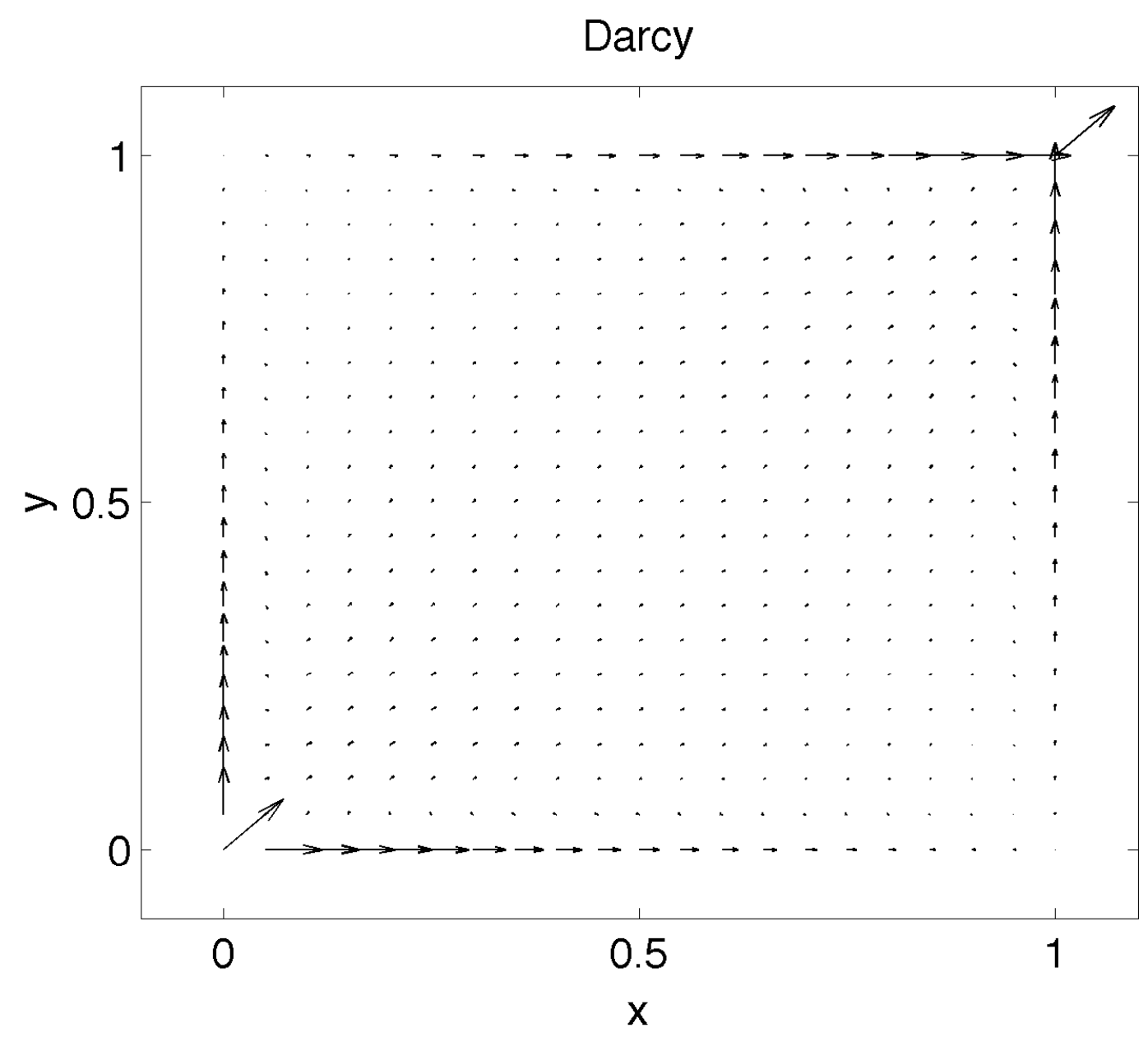}}
  \label{Fig:Quarter_spot_Q4_quiver}
  \subfigure[LS pressure contour]{
  	\includegraphics[scale=0.46]
    {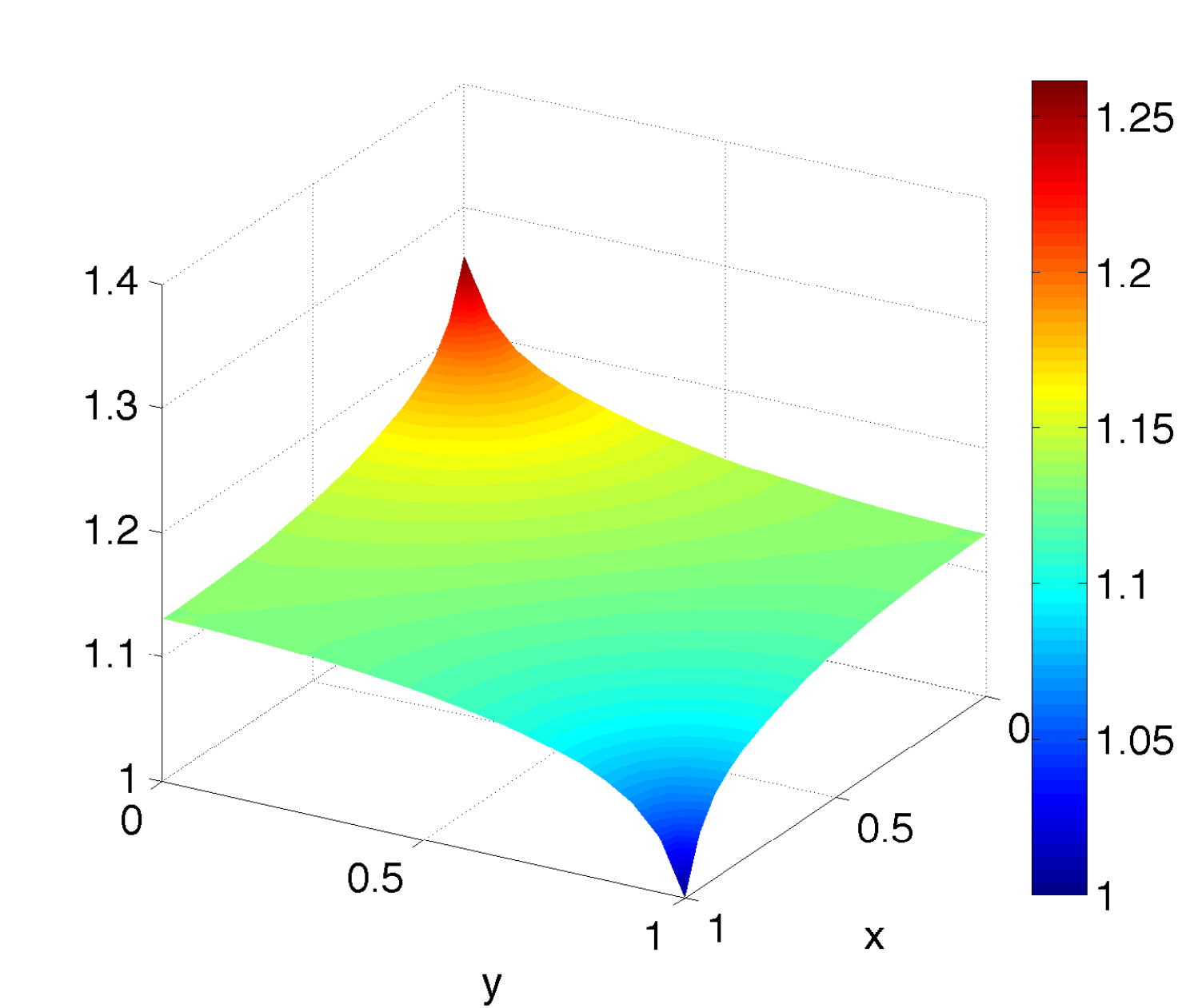}}
  \subfigure[VMS velocity vector field]{
  	\includegraphics[scale=0.46]
    {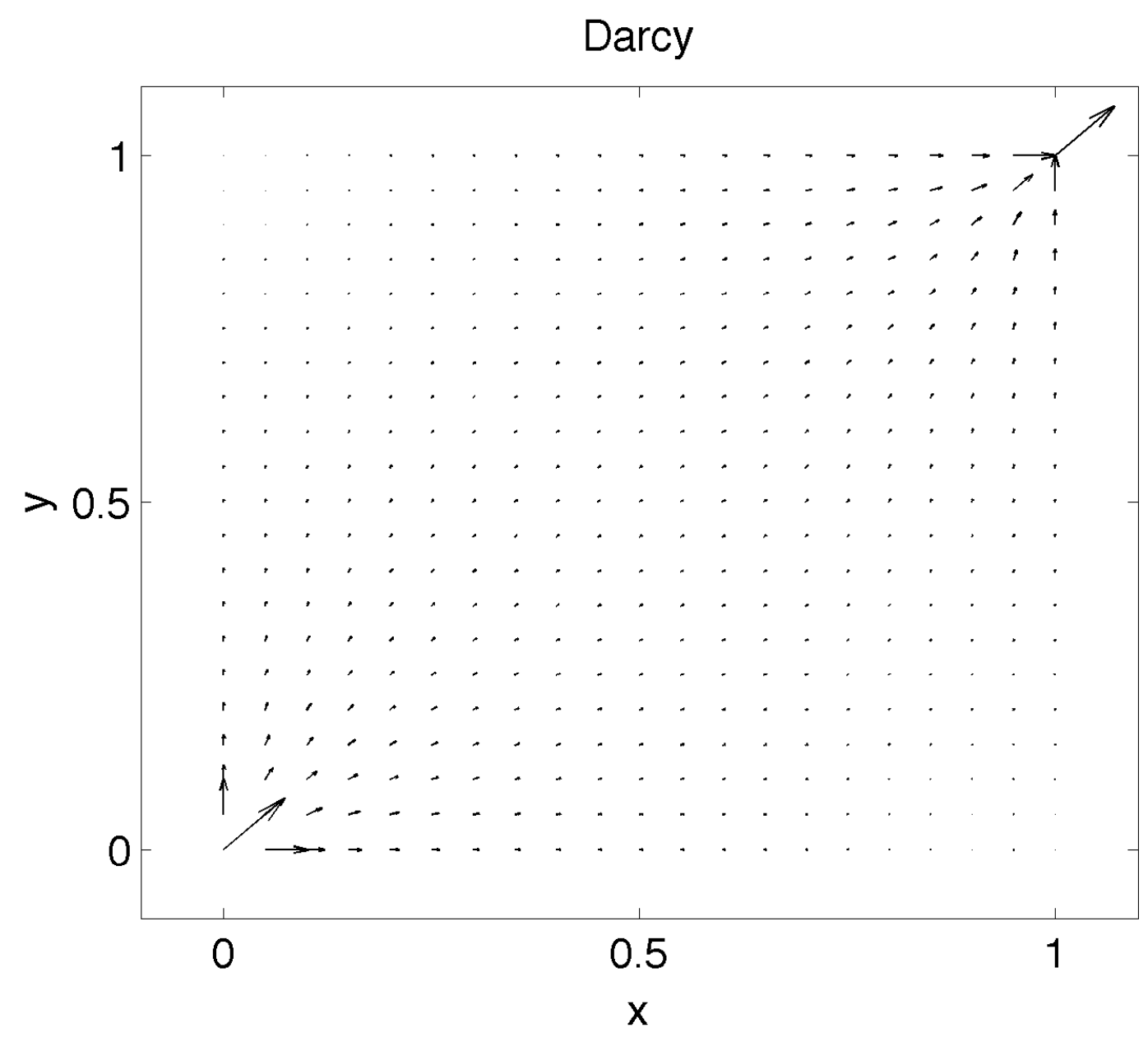}}
  \subfigure[VMS pressure contour]{
  	\includegraphics[scale=0.46]
    {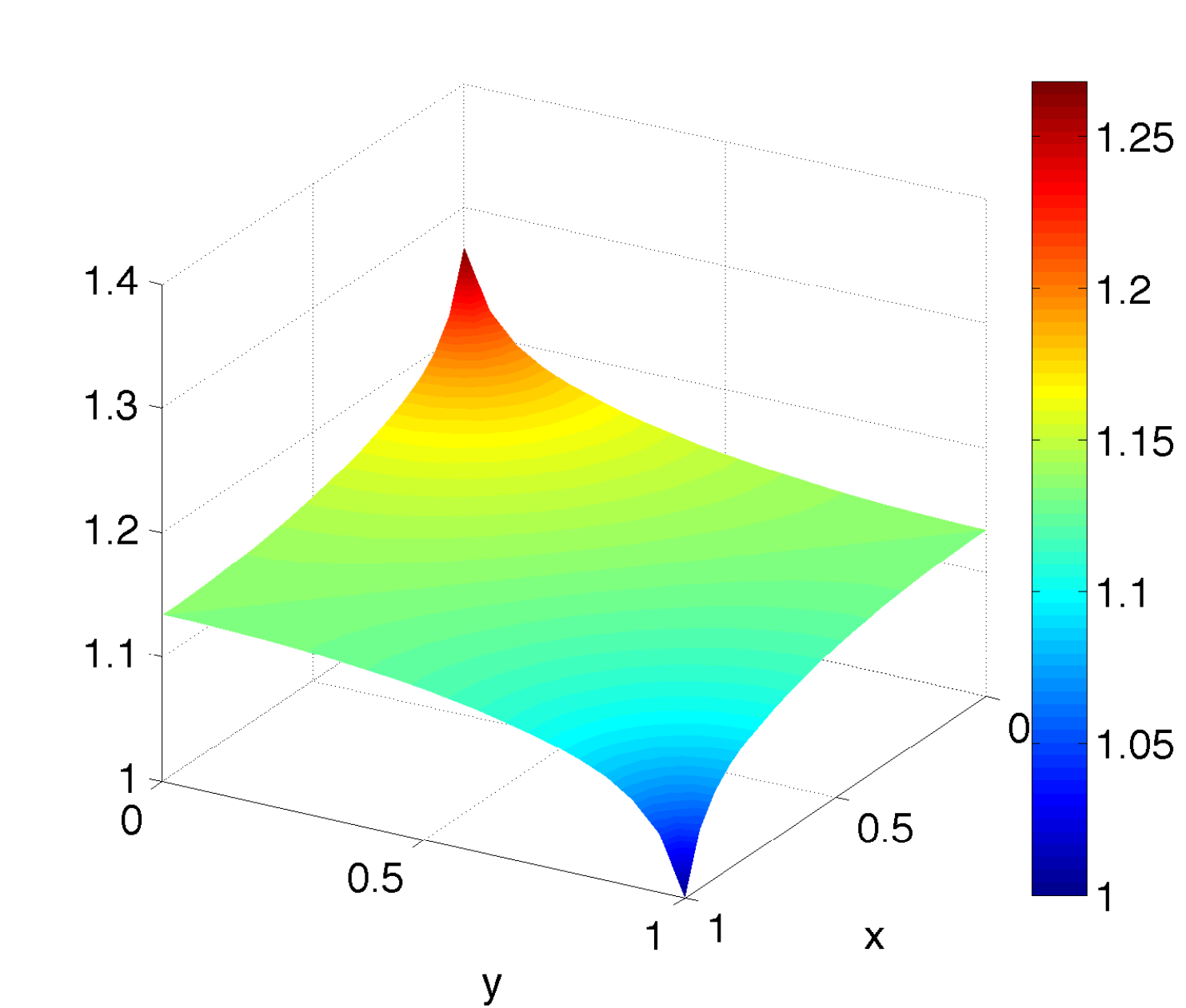}}
  \caption{Quarter five-spot problem: Q4 solutions for Darcy model}
  \label{Fig:Quarter_spot_Q4_D}
\end{figure}
\begin{figure}[t!]
  \centering
  \subfigure[LS velocity vector field]{
  	\includegraphics[scale=0.46]
    {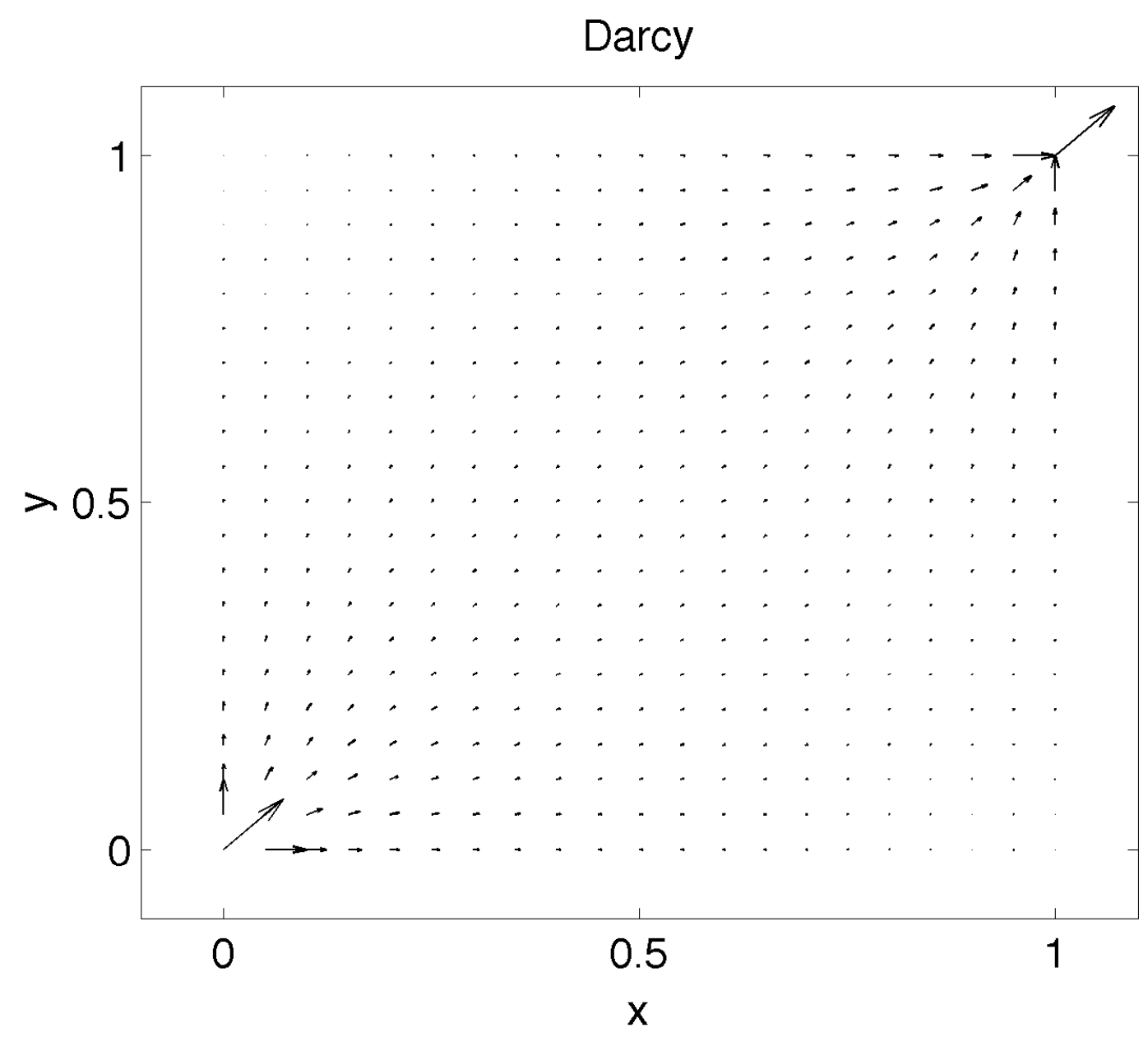}}
  \subfigure[LS pressure contour]{
  	\includegraphics[scale=0.46]
    {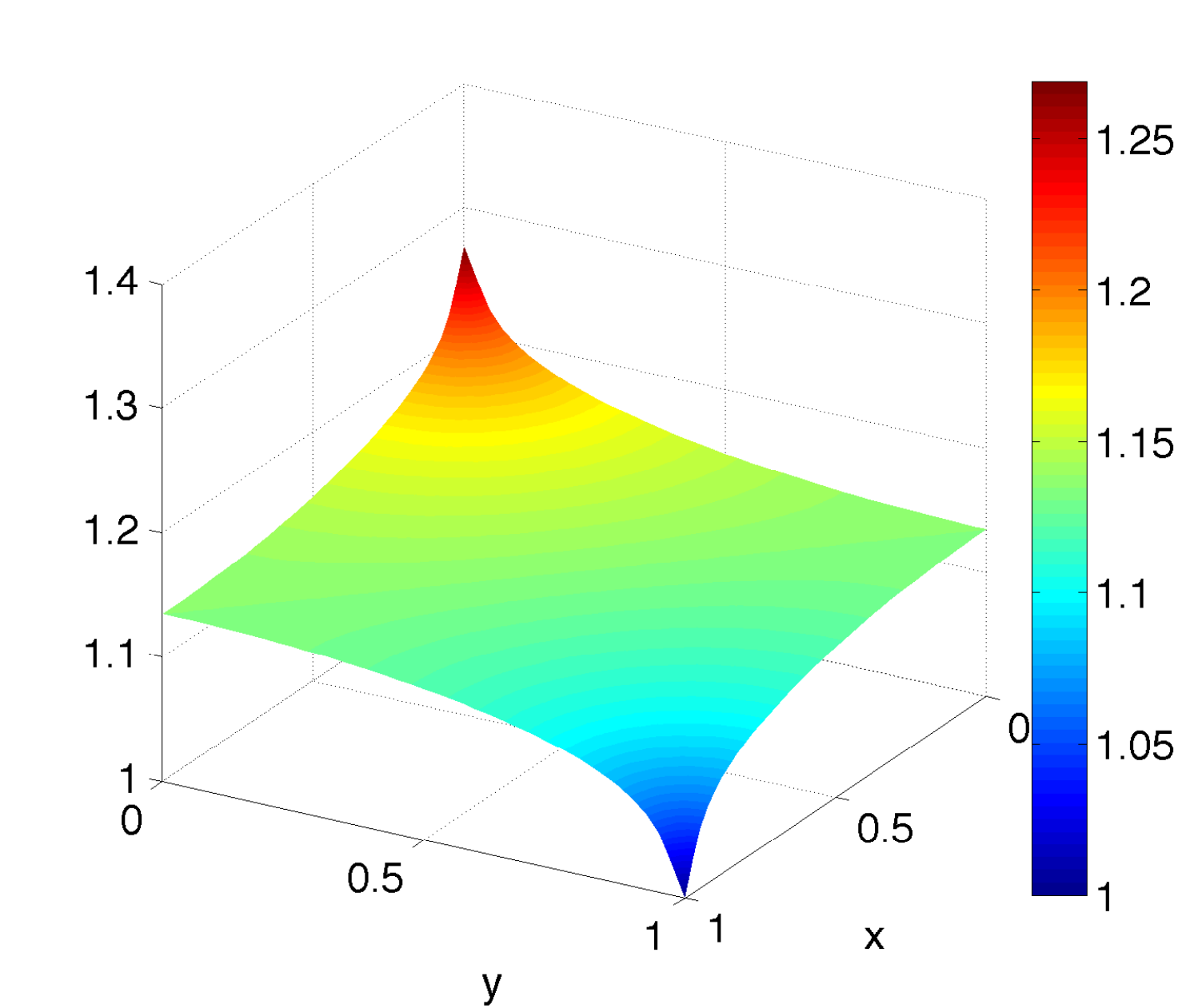}}
  \subfigure[VMS velocity vector field]{
  	\includegraphics[scale=0.46]
    {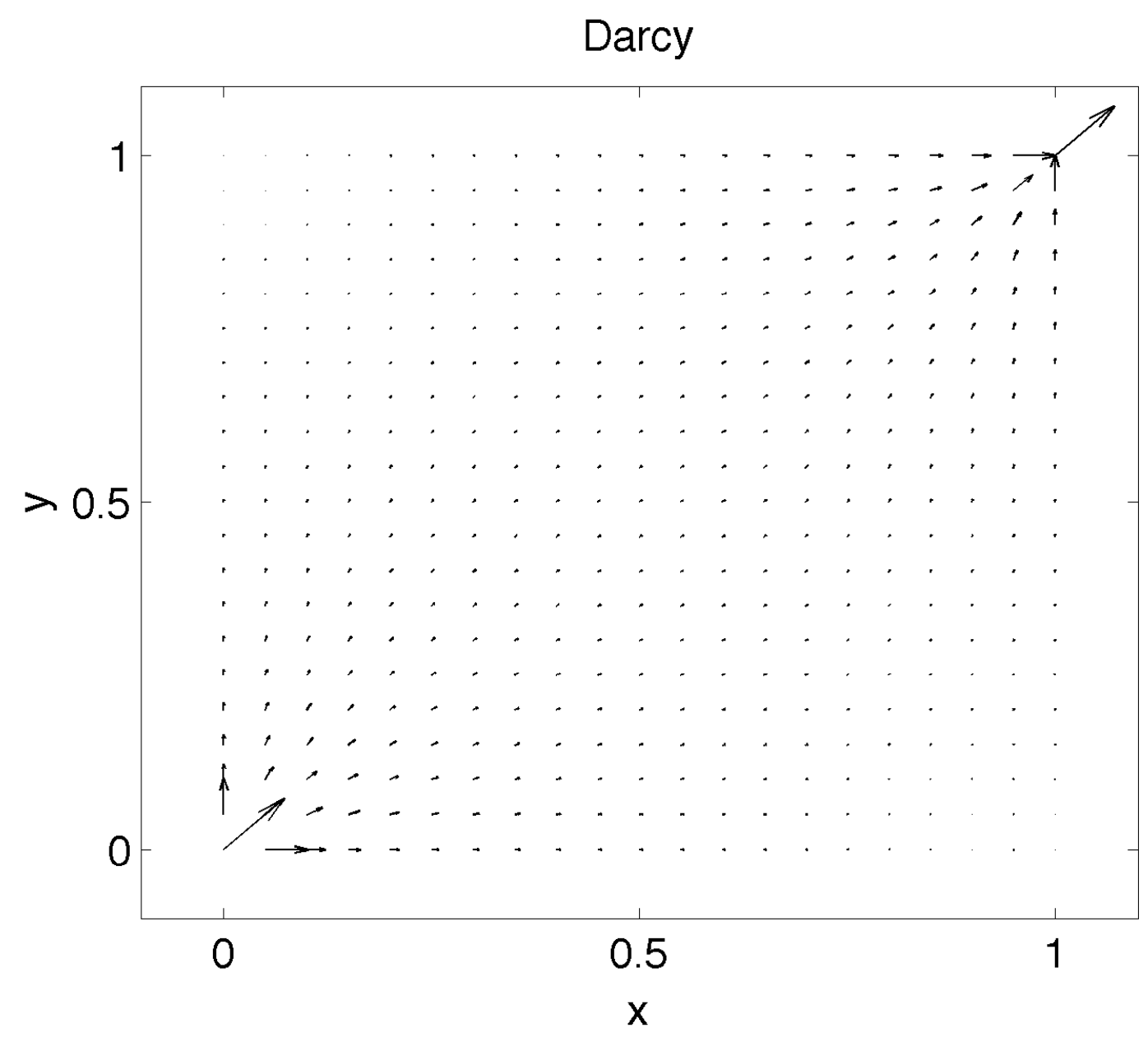}}
  \subfigure[VMS pressure contour]{
  	\includegraphics[scale=0.46]
    {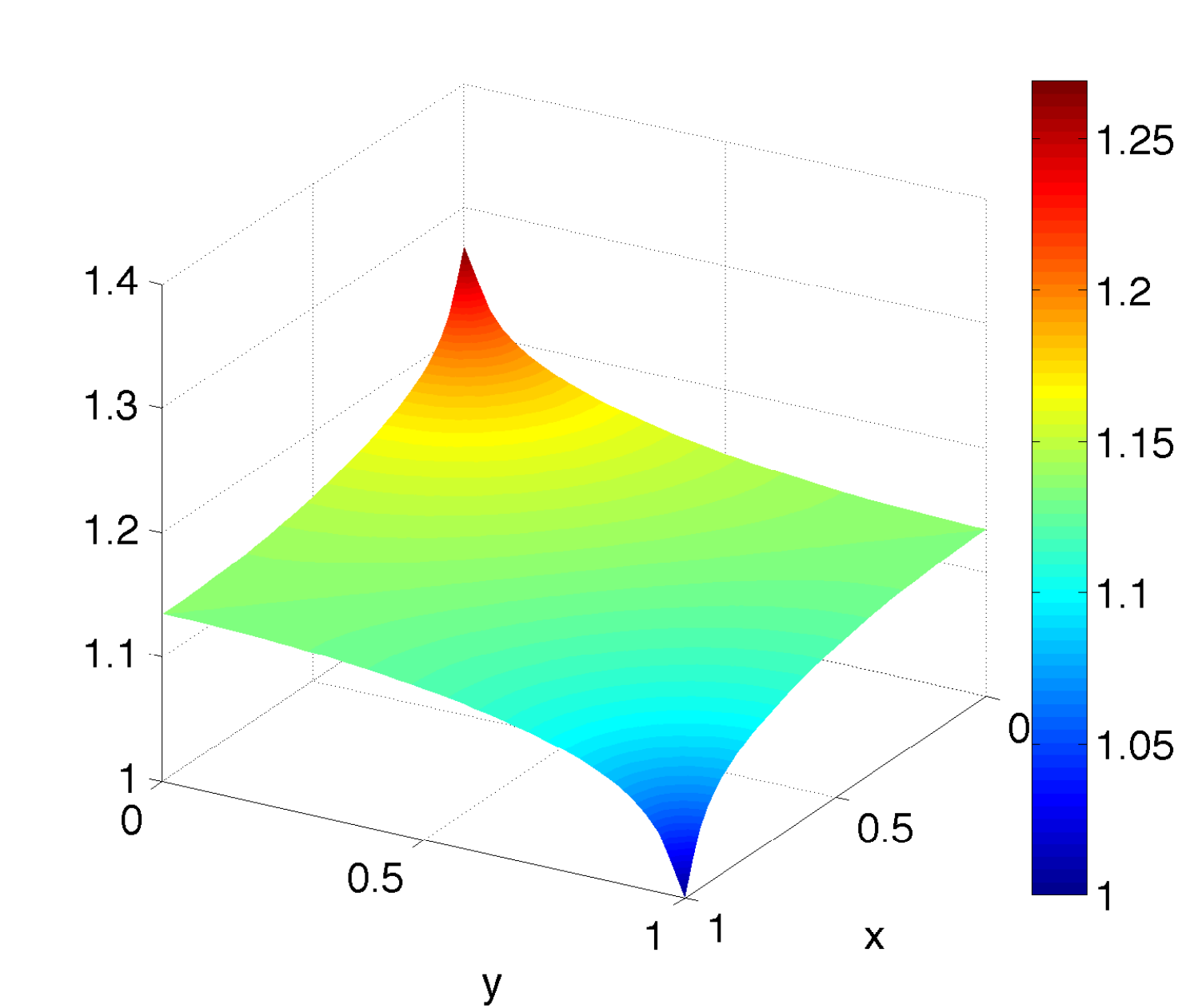}}
  \caption{Quarter five-spot problem: Q9 solutions for Darcy model}
  \label{Fig:Quarter_spot_Q9_D}
\end{figure}

Consider a case where there is $\sqrt{2}$ units of flow through the unit 
square quadrant. To attain this flow rate, one needs to know the
amount of pressure needed at the injection well (i.e., the bottom left node). 
Using Q4 elements and the parameters listed in Table \ref{Tab:quarter_spot}, 
Figure \ref{Fig:Quarter_spot_Q4_D} depicts the qualitative velocity vector field and the 
pressure contour. While both formalisms exhibit similar pressure contours, the velocity 
vector field generated from the LS method exhibits poor dispersion of flow concentration 
at both wells. Intuitively, the profile of Figure \ref{Fig:Quarter_spot_Q4_quiver} makes 
little to no physical sense so when using the LSFEM, neither Q4 nor any other first 
order elements can be used to accurately model velocity contours.

However, when higher order elements are used, the LS velocity vector field resembles 
that of the VMS. Figure \ref{Fig:Quarter_spot_Q9_D} depicts the results
using Q9 elements. It should be noted that the pressure contours remain the same 
regardless of the element order used. Realistic pressure profiles can be 
obtained using either formalism or element type, but obtaining velocity and flow 
solutions with the LSFEM necessitates the use of Q9 or higher ordered elements.

\subsubsection{Least-squares weighting}
For all original Darcy model problems up to this point, the non-dimensionalized drag 
coefficient equals one, so the two possible LS weightings $\mathbf{A}$ in equation 
\eqref{Eqn:GEL_weights} would be the same. When $\bar{\alpha}$ 
no longer equals one, weighting number 2 begins to have a significant impact on the 
numerical solutions. Herein, the injection pressure shall be obtained using various 
drag coefficients (all other user-defined parameters are as stated in Table 
\ref{Tab:quarter_spot}). The VMS formalism serves as a benchmark for the two LS 
weightings.
\begin{table}[t!]
  \centering
  \caption{Quarter five-spot problem:~Injection pressure comparison for different LS weightings.}
  \begin{tabular}{rccccccc}
      \hline
      $\bar{\alpha}:$ & 1 & 20 & 50 & 100 & 250 & 500 & 1000\\
      \hline
      LS weight 1 Q4: & 1.26 & 6.03 & 12.56 & 21.57 & 47.29 & 91.38 & 180.57 \\
      LS weight 1 Q9: & 1.27 & 6.38 & 14.44 & 27.76 & 67.17 & 132.63 & 263.77 \\
      LS weight 2 Q4: & 1.26 & 6.19 & 13.90 & 26.67 & 64.43 & 126.00 & 244.97 \\
      LS weight 2 Q9: & 1.27 & 6.38 & 14.46 & 27.92 & 68.31 & 135.60 & 269.37 \\
      VMS Q4: & 1.27 & 6.37 & 14.42 & 27.84 & 68.09 & 135.18 & 269.37 \\
      VMS Q9: & 1.27 & 6.38 & 14.46 & 27.93 & 68.32 & 135.63 & 270.27 \\
      \hline
  \end{tabular}
  \label{Tab:five_spot_weighting_compare}
\end{table}
From Table \ref{Tab:five_spot_weighting_compare}, it is seen that a divergence in the 
solutions occurs as the drag coefficient increases. LS formalism using Q9 elements has 
comparable stiffness to that of VMS formalism using Q4 elements, but as the drag 
increases, the VMS formalism using Q9 elements requires larger and larger pressures. 
Nonetheless, all the solutions show a linear relationship between drag and injection 
pressure. For highly viscous or lowly permeable reservoirs, one has to apply more pressure 
in order to attain or expect a certain flow. If drag is a function of pressure and/or 
velocity, one can expect even greater injection pressures.

\subsubsection{Comparison of beta coefficients, pressure profiles, and linearization types}
This next study shall illustrate the effect the Barus and Forchheimer coefficients have on the 
pressure profile and convergence of residuals. For pressure dependent viscosities, the Barus 
coefficient for most oils range between 15 to 35 GPa$^{-1}$ (see reference \cite{G_Stachowiak}) 
which translates to a non-dimensionalized coefficient of roughly 0.001 to 0.004. 
\begin{figure}[t!]
  \centering
  \subfigure[Modified Barus - LS Q9]{
  	\includegraphics[scale=0.46]
    {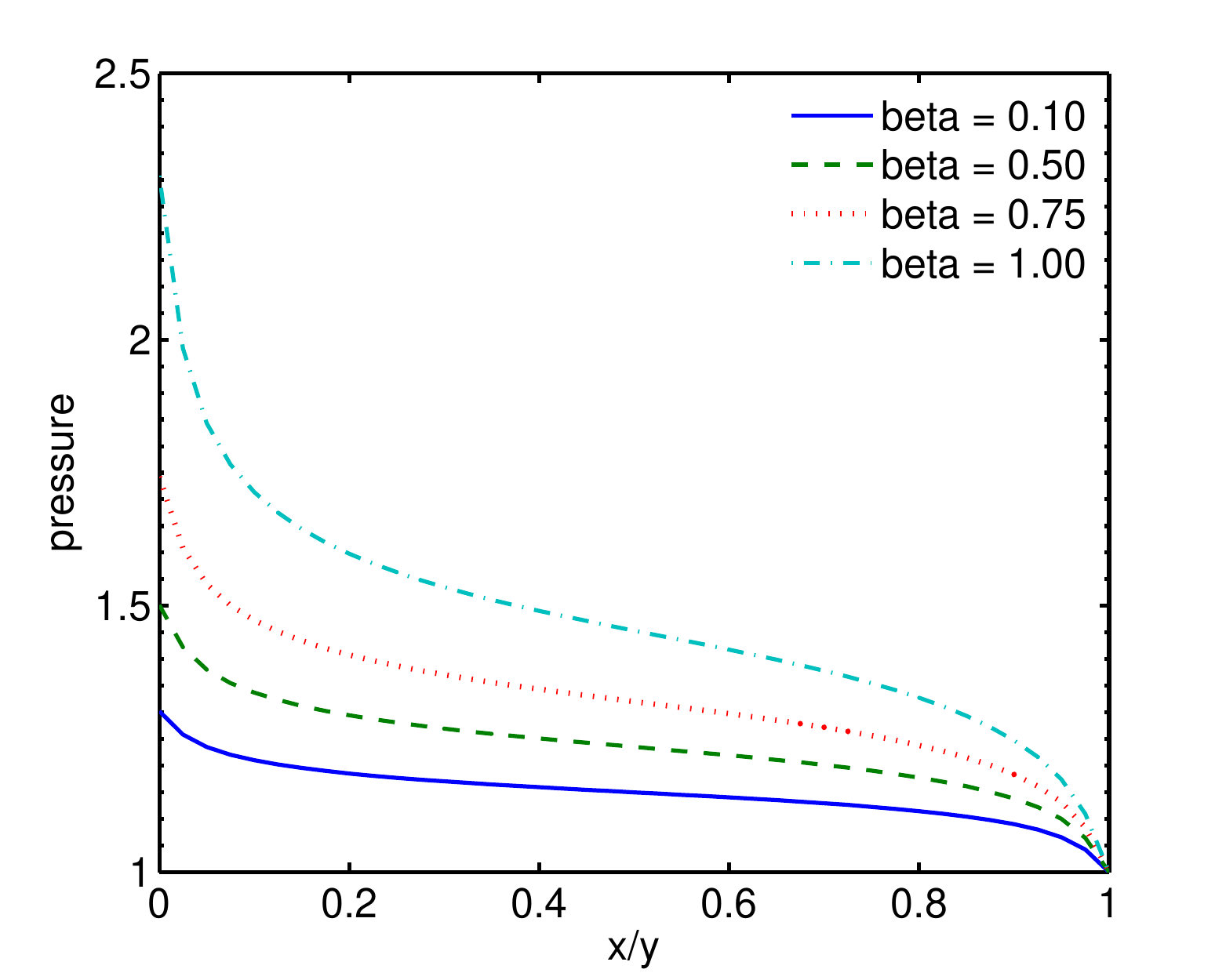}}
  \subfigure[Modified Barus - VMS Q9]{
  	\includegraphics[scale=0.46]
    {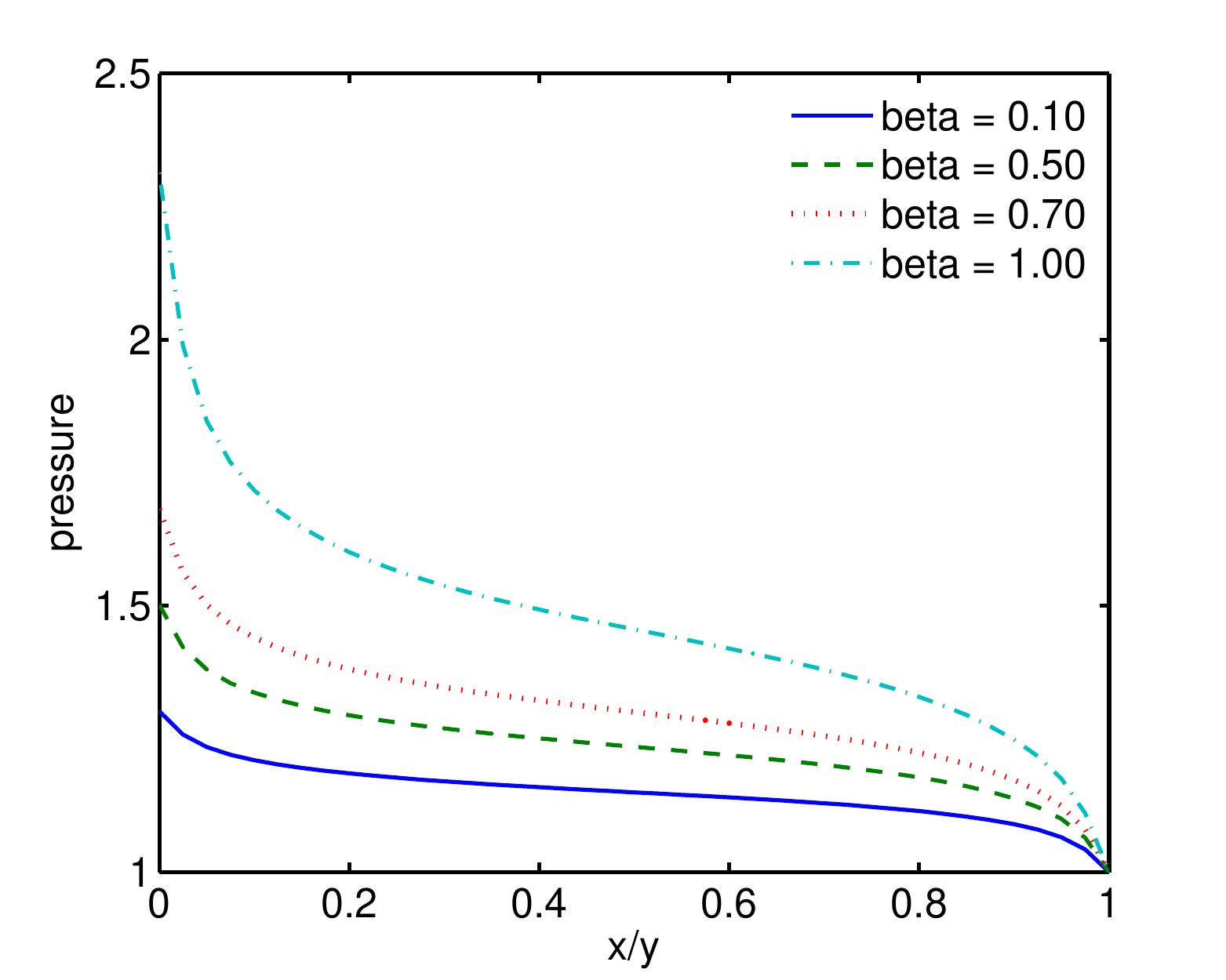}}
  \subfigure[Darcy-Forchheimer - LS Q9]{
  	\includegraphics[scale=0.46]
    {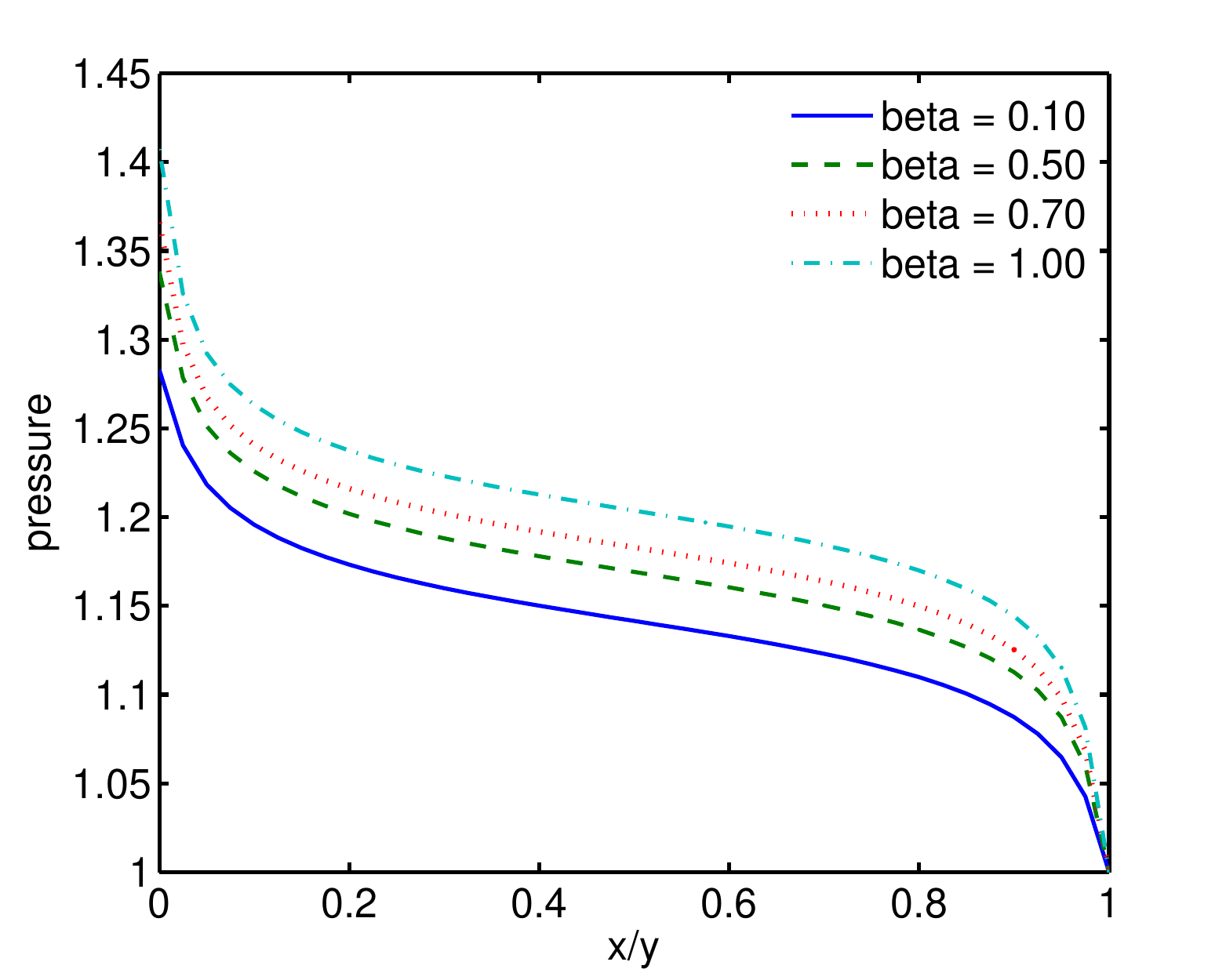}}
  \subfigure[Darcy-Forchheimer - VMS Q9]{
  	\includegraphics[scale=0.46]
    {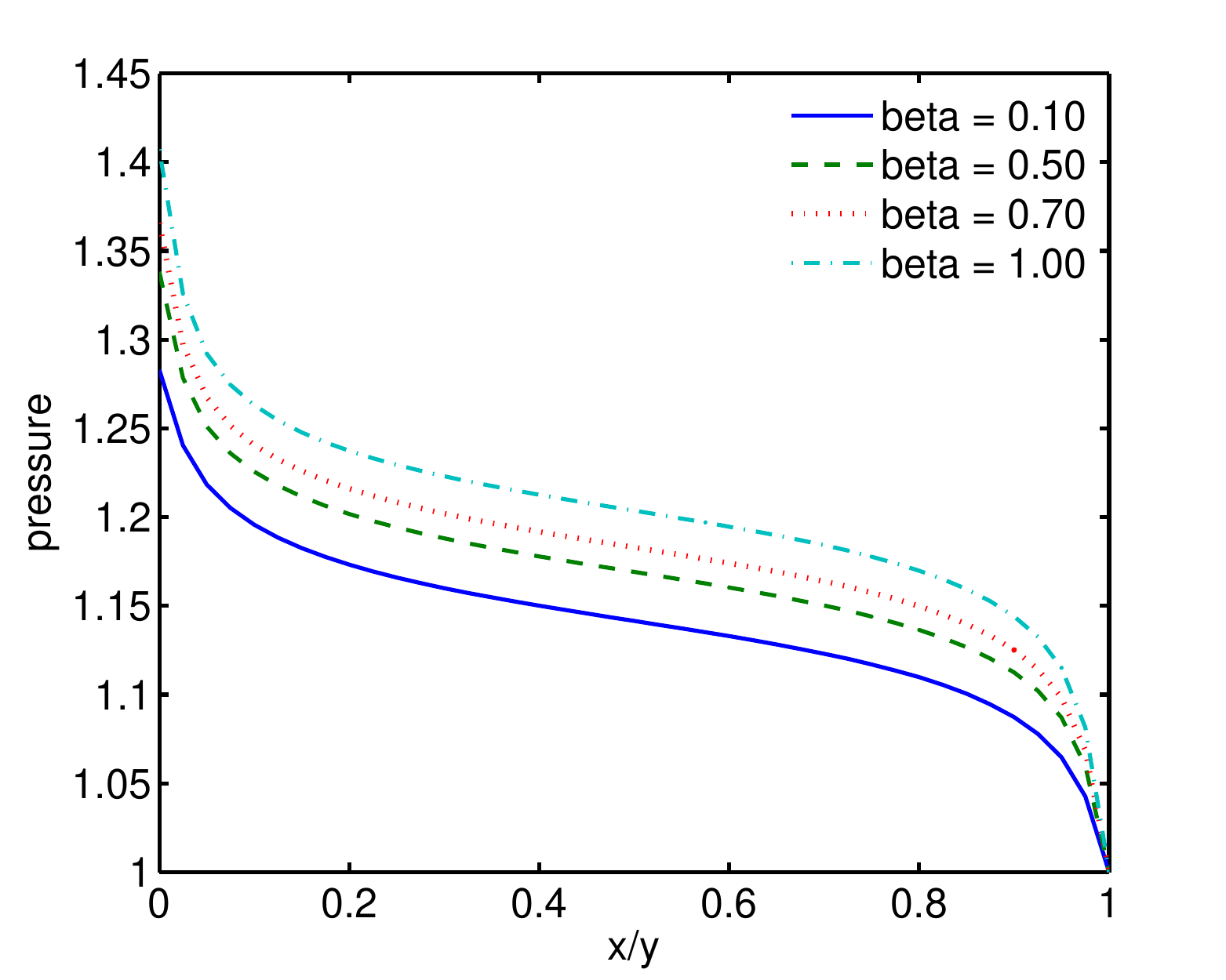}}
  \caption{Quarter five-spot problem: pressure profile vs various $\beta_{\mathrm{B}}$ and $\beta_{\mathrm{F}}$}
  \label{Fig:Quarter_spot_pressure_profile}
\end{figure}
However, for the purpose of this experiment, much higher Barus coefficients shall be used. 
The same Barus coefficients used will also be used for the Forchheimer coefficients. The 
relationship between the coefficients and the number of iterations needed to 
converge the residuals will also be shown for both linearization types.

Figure \ref{Fig:Quarter_spot_pressure_profile} depicts the pressure profile diagonally across 
the quarter region. Various beta values were used for the modified Barus and Darcy-Forchheimer 
models (assume both $\bar{\beta}_{\mathrm{B}}$ and $\bar{\beta}_{\mathrm{F}}$ to be denoted by 
the same $\bar{\beta}$). Overall the LS and VMS formalisms generate similar results.
\begin{table}[t!]
  \centering
  \caption{Quarter five-spot problem: Picard vs. consistent linearization iteration counts for modified Barus model with $\bar{\beta}_{\mathrm{B}} = 0.6$. The top
  table corresponds with LS formalism, and the bottom table corresponds with VMS formalism. Q9 elements are used}
  \begin{tabular}{lcc|cc}
      \hline
      LS formalism:&\multicolumn{2}{c|}{Picard's linearization}&\multicolumn{2}{c}{consistent linearization}\\
      Iteration no. $(i)$&$\mathbf{\bar{v}}$ residual&$\bar{p}$ residual&$\mathbf{\bar{v}}$ residual&$\bar{p}$ residual\\   
      \hline
      1 & 1.637285e+01 & 2.793981e-01 & 1.637323e+01 & 2.853694e-01 \\
      2 & 6.735667e-04 & 6.821094e-03 & 1.578203e-02 & 6.444641e-02 \\
      3 & 8.721274e-05 & 1.096770e-03 & 7.696835e-04 & 1.025242e-02 \\
      4 & 8.150045e-06 & 1.225905e-04 & 7.976516e-07 & 2.339612e-05 \\
      5 & 5.881603e-07 & 1.019954e-05 & 2.200057e-10 & 8.554086e-09 \\
      6 & 3.455235e-08 & 6.755041e-07 & 5.434345e-14 & 3.075479e-12 \\
      7 & 1.712434e-09 & 3.720906e-08 & & \\
      8 & 7.340079e-11 & 1.755375e-09 & & \\
      9 & 2.770668e-12 & 7.240625e-11 & & \\
      \\
       \hline
      VMS formalism:&\multicolumn{2}{c|}{Picard's linearization}&\multicolumn{2}{c}{consistent linearization}\\
      Iteration no. $(i)$&$\mathbf{\bar{v}}$ residual&$\bar{p}$ residual&$\mathbf{\bar{v}}$ residual&$\bar{p}$ residual\\   
      \hline
      1 & 6.474444e-02 & 1.376963e-01 & 2.086226e-01 & 1.376963e-01 \\
      2 & 5.513505e-03 & 3.454112e-03 & 1.506223e-01 & 3.675604e-02 \\
      3 & 4.187925e-04 & 5.555865e-04 & 4.681837e-03 & 1.500411e-02 \\
      4 & 2.726750e-05 & 6.208117e-05 & 6.326269e-06 & 9.377515e-05 \\
      5 & 1.551030e-06 & 5.160684e-06 & 9.996752e-10 & 5.528937e-08 \\
      6 & 7.728631e-08 & 3.412260e-07 & 1.985333e-13 & 1.116834e-11 \\
      7 & 3.389569e-09 & 1.874543e-08 & & \\
      8 & 1.318682e-10 & 8.807571e-10 & & \\
  \end{tabular}
  \label{Tab:five_spot_MB_iterations}
\end{table}
As the coefficient $\bar{\beta}$ increase, the pressure gradients at the two wells 
steepen. The modified Barus model exhibits the steepest gradients which is expected.
The Darcy-Forchheimer solutions also exhibit increases in the injection pressure, but the 
qualitative nature of the pressure gradients near the wells are slightly different. Since
the modified and Darcy-Forchheimer models rely on separate non-Darcy coefficients and 
different dependent variables, no true comparisons can be drawn. In the next 
Section however, distinction of results from pressure dependent and velocity dependent 
drag coefficients will become more evident.

It should be noted that the pressure profiles in Figure 
\ref{Fig:Quarter_spot_pressure_profile} were generated using Picard's 
linearization (i.e. $\vartheta = 0$). While Picard's 
and consistent linearization theoretically yield the same results, the 
residual convergence schemes differ. Table \ref{Tab:five_spot_MB_iterations} 
contains the iteration count and residual norms for the modified Barus model 
evaluated at $\bar{\beta}_{\mathrm{B}} = 0.6$. Consistent linearization exhibits 
terminal quadratic convergence whereas Picard's linearization exhibits terminal
linear convergence. As the betas and/or applied pressure increases, the number
of iterations needed increases.

\subsubsection{Modified Darcy-Forchheimer numerical results}
So far it has been established in this section that quadratic elements and LS weight 2 
are preferred for the LS formalism. Numerical simulations have also 
shown that high Barus and Forchheimer coefficients yield results that differ 
quantitatively from the original Darcy model. This next example shall study
the effects of combining the Darcy models and employs a finer mesh.

\begin{table}[h!]
  \centering
  \caption{Quarter five-spot problem: expected injection pressures for various Darcy 
  models and mesh sizes}
  \begin{tabular}{cc|cccc}
      \hline
      $Nele$ & formalism & D & MB & F & MBF \\
      \hline
      400 & LS & 1.2692 & 1.5017 & 1.3382 & 1.5806 \\
      400 & VMS & 1.2693 & 1.5020 & 1.3382 & 1.5809 \\
      900 & LS & 1.1967 & 1.3538 & 1.2430 & 1.4045 \\
      900 & VMS & 1.1967 & 1.3539 & 1.2430 & 1.4047 \\
      \hline
  \end{tabular}
  \label{Tab:quarter_spot_pvm}
\end{table}
Using the same boundary conditions as before, a non-dimensionalized Barus and 
Forchheimer coefficient of 0.5 and 0.5 shall be used. Since the qualitative nature of 
the pressure contours are similar no matter which model is used, only the required 
injection pressure is listed. It can be seen from the results in Table 
\ref{Tab:quarter_spot_pvm} that refining the mesh lowers the pressure. For problems where 
flow quantities are fixed, coarse meshes over predict the required pressure needed. 
The Barus, linear, and Forchheimer models all predict pressures greater than that of the 
original Darcy model, and when the modified Darcy-Forchheimer models are employed, 
we get even higher pressures. The original Darcy model under predicts the amount of 
pressure needed so it is important to use the modified Darcy-Forchheimer models to 
attain an accurate visualization of the pressure contours.

\subsubsection{Minimum dissipation}
\begin{figure}[t!]
  \centering
   \subfigure[LS formalism]{
  	\includegraphics[scale=0.46]
  	{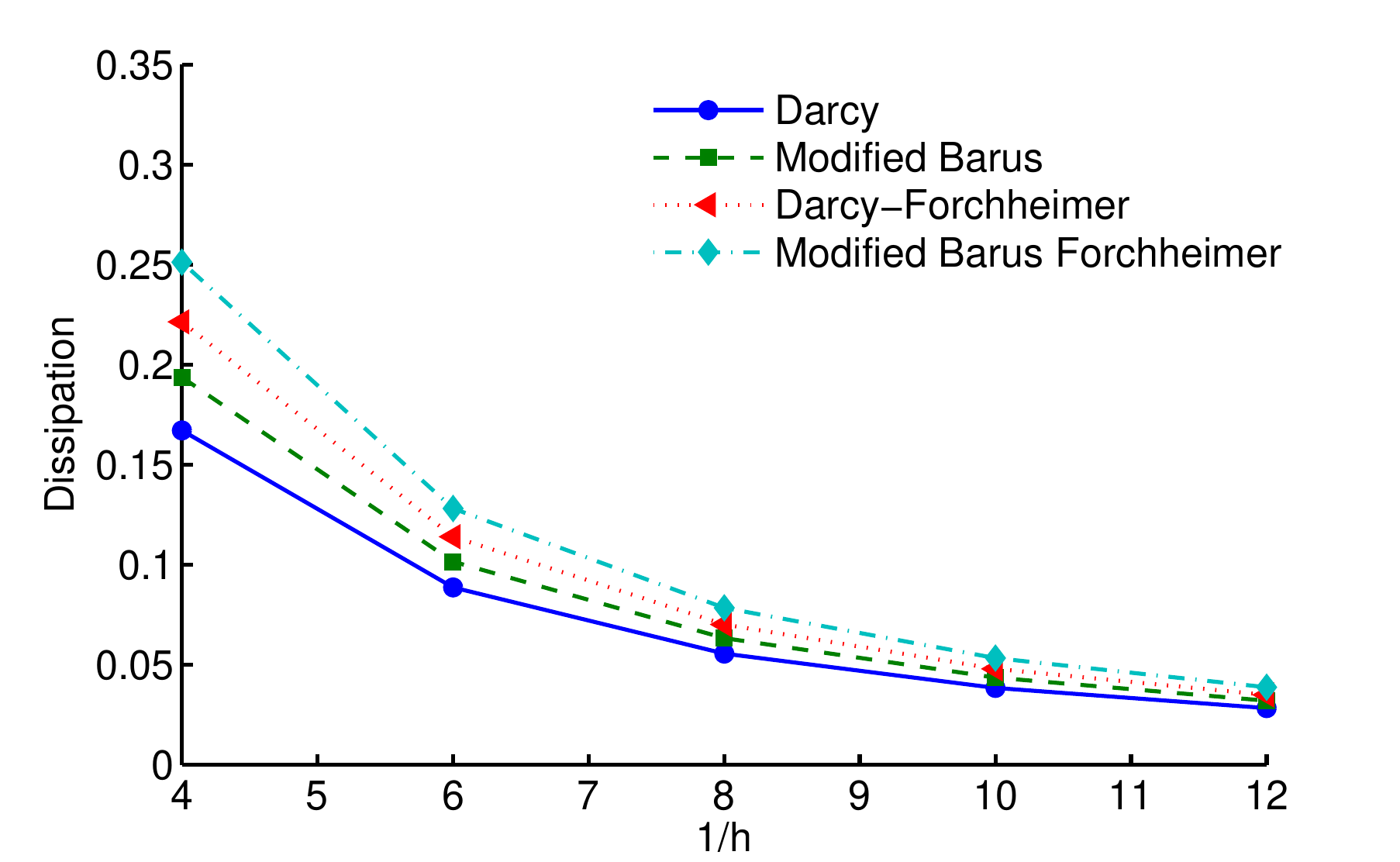}}
  \subfigure[VMS formalism]{
  	\includegraphics[scale=0.46]
  	{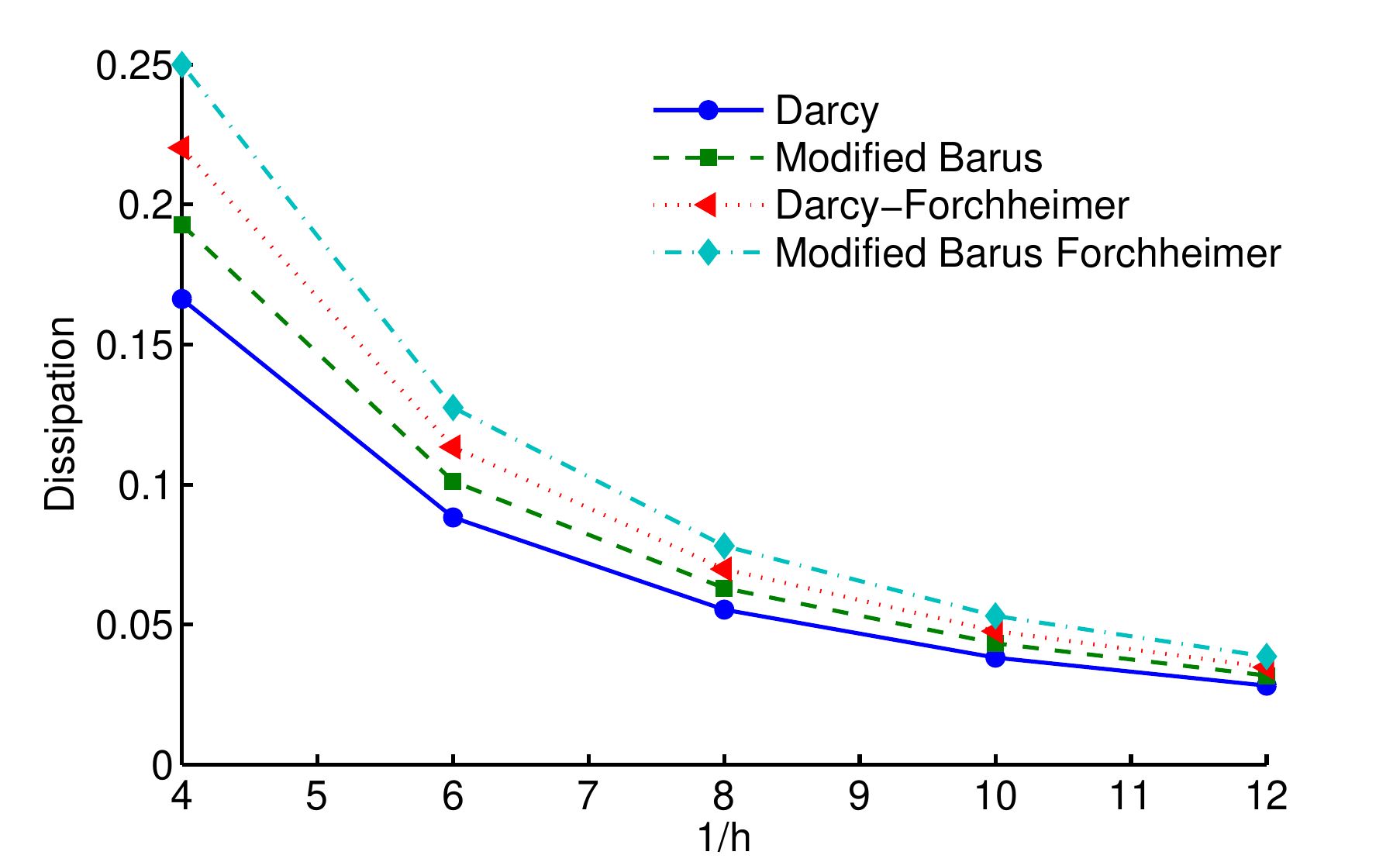}}
  \caption{Quarter five-spot problem: dissipation vs $h$-size}
  \label{Fig:quarter_spot_dissipation}
\end{figure}
Dissipation shall now be measured for this problem using various mesh sizes (Barus and
Forchheimer coefficients of 0.1 and 0.5 shall be used respectively). The 
$h$-sizes used for this problem are 1/4, 1/6, 1/8, 1/10, and 1/12. All four Darcy models are
evaluated using both formalisms, and the results can be found in Figure 
\ref{Fig:quarter_spot_dissipation}. It is seen that as the mesh gets finer the dissipation
becomes smaller hence satisfying the minimum dissipation inequality. Though the LS
formalism yields slightly higher dissipations, the results
of both formalisms are very similar.

\subsection{Three-dimensional constant flow}
It has been claimed in references 
\cite{Nakshatrala_Turner_Hjelmstad_Masud_CMAME_2006_v195_p4036,
Hughes_Masud_Wan_CMAME_2006_v195_p3347} that the VMS mixed 
formulation for Darcy model is the only known mixed 
formulation that satisfies constant flow patch test 
in three dimensions on non-constant Jacobian finite 
elements. 
We show that the mixed formulation based on least-squares 
formalism also satisfy the constant patch test in three 
dimensions for Darcy model. We shall also use the test 
problem to show that the proposed mixed formulations 
perform well even for other modifications of the Darcy 
model. (It should be emphasized that this problem can 
be considered as a patch test only for Darcy model, 
and not for modified Darcy-Forchheimer, as the 
exact solution under the modified Darcy-Forchheimer 
model will not be neither linear nor constant.)  

The computational domain is a unit cube, which is meshed 
using eight-node brick elements. Normal components of the 
velocity are prescribed as unity on the y-z planes at x = 
0 and x = 1. The other four planes have normal components 
of velocity equal to zero, and a pressure value of zero 
is prescribed at (0,0,0) to ensure uniqueness to the 
solution. Using the values defined in Table 
\ref{Tab:three_dimensional}, the LS results for the 
original Darcy, modified Barus, Darcy-Forchheimer, 
and modified Darcy-Forchheimer Barus models are shown 
in Figure \ref{Fig:Three_dimensional_pressure_LS}. 
Clearly, the LS-based mixed formulation satisfies 
the constant patch test. 

\begin{table}[b!]
  \centering
  \caption{User-defined inputs for the three-dimensional problem}
  \begin{tabular}{cc}
    \hline
    Parameter & Value \\ \hline
    $\bar{\beta}_{\mathrm{B}}$ & 0.5\\
    $\bar{\beta}_{\mathrm{F}}$ & 1\\	
    $\bar{k}$ & 1\\
    $\bar{\mu}_0$ & 1\\
    $\vartheta$ & 1 \\
    $\bar{\mathbf{b}}(\mathbf{x})$ & 0 \\
    $Nele$ & 216\\
    \hline
  \end{tabular}
  \label{Tab:three_dimensional}
\end{table}

\begin{figure}[t!]
  \centering
  \subfigure[Darcy model]{
  	\includegraphics[scale=0.46]
    {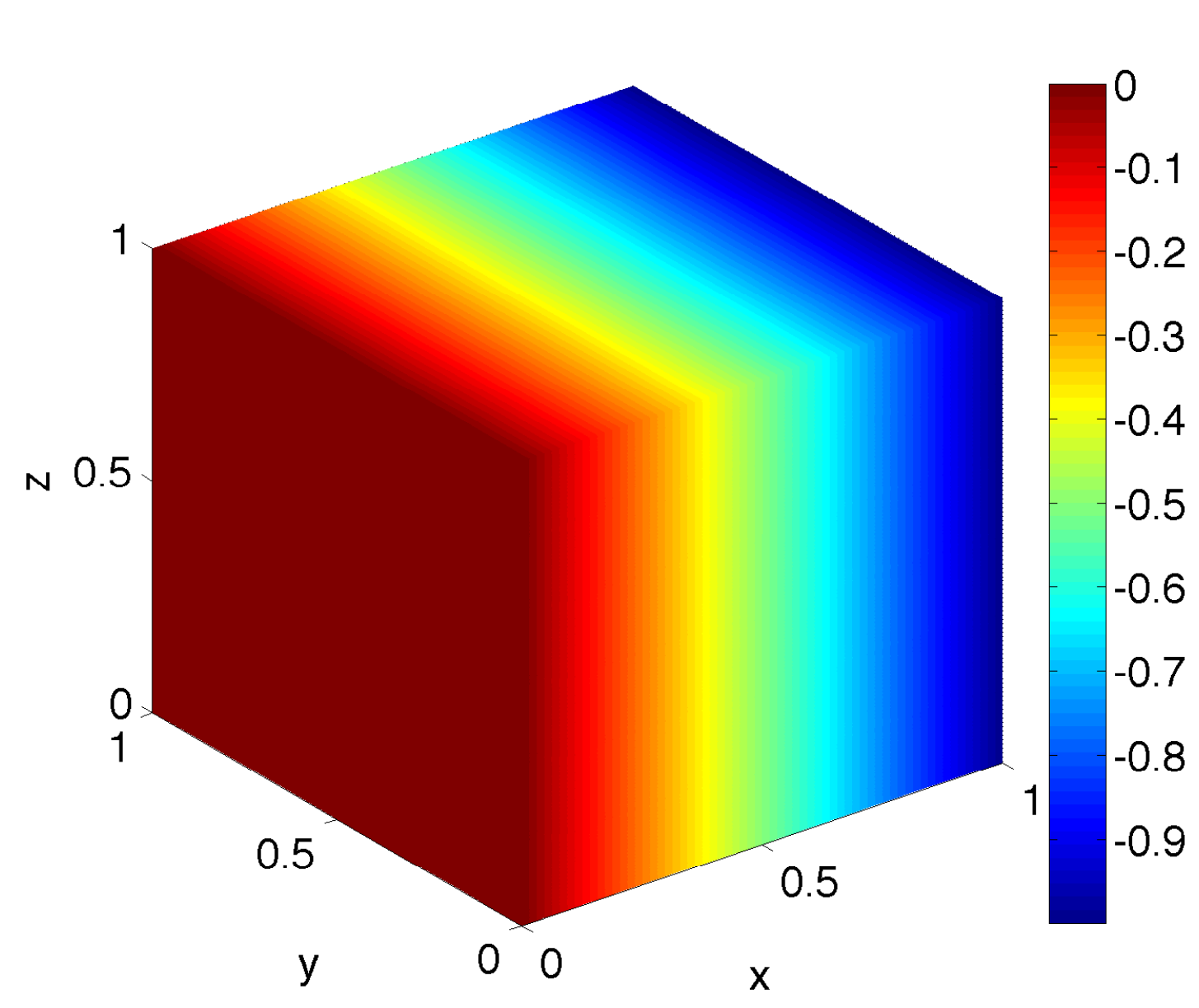}}
  \subfigure[Modified Barus]{
  	\includegraphics[scale=0.46]
    {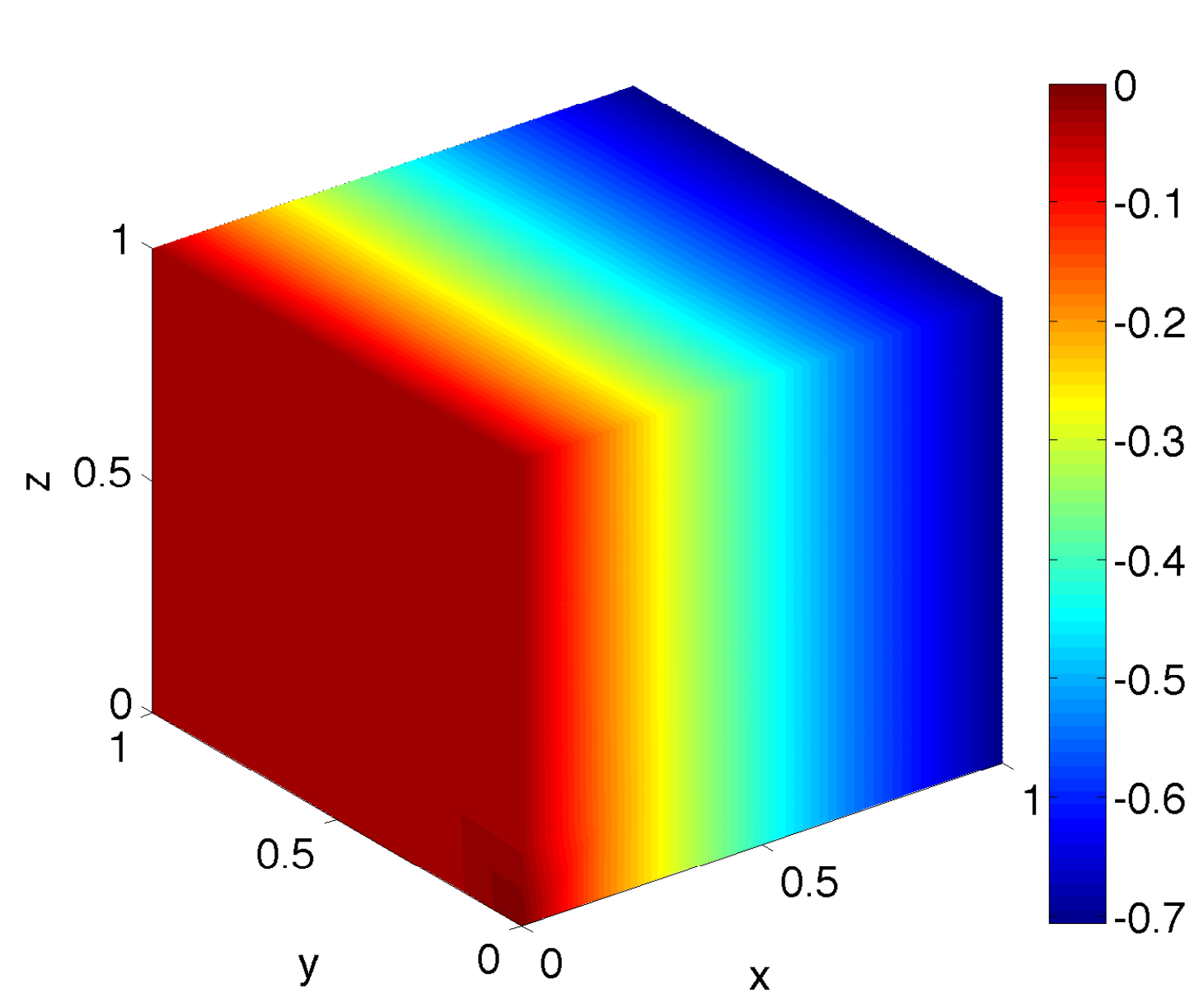}}
  \subfigure[Darcy-Forchheimer]{
  	\includegraphics[scale=0.46]
    {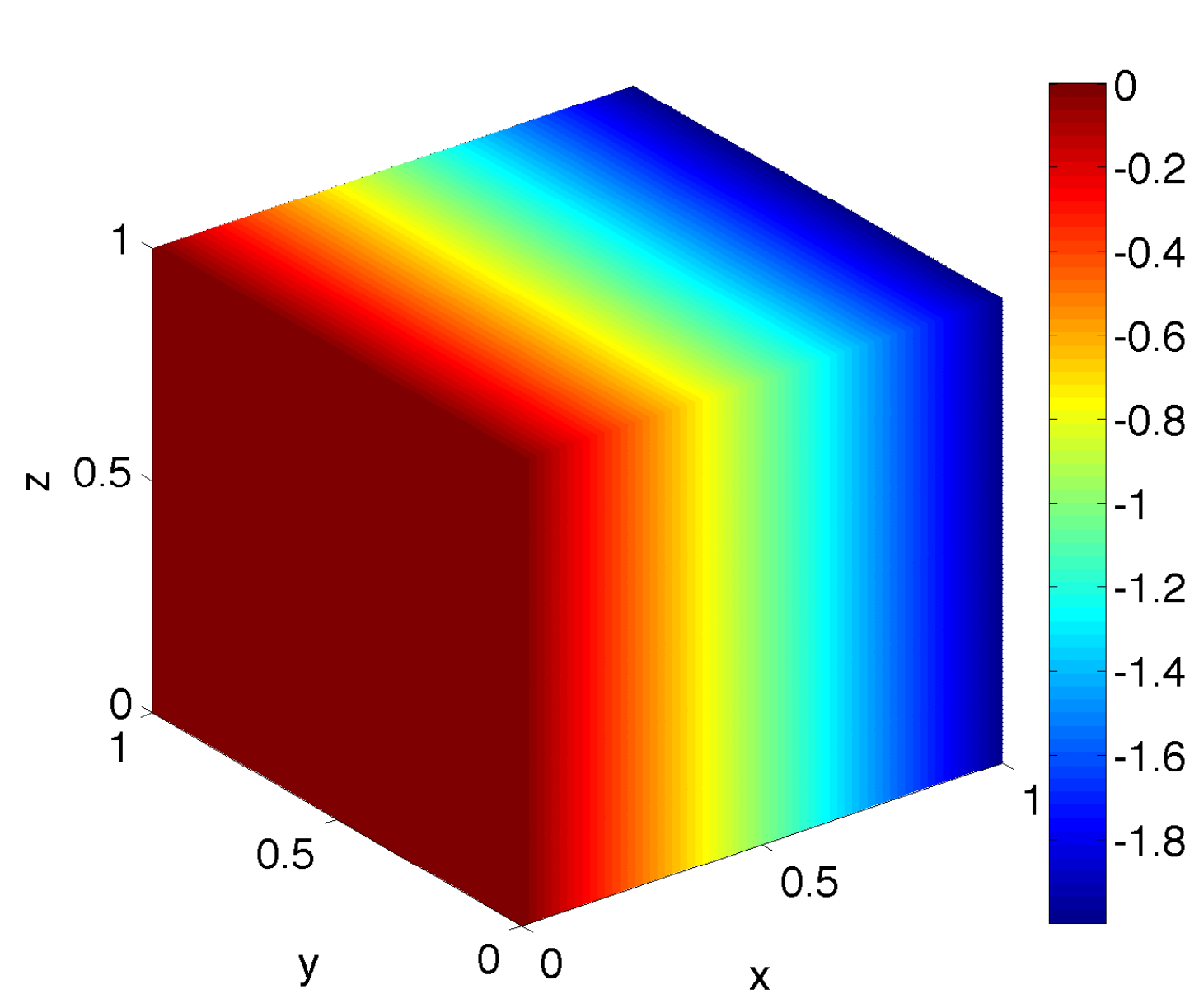}}
  \subfigure[Modified Darcy-Forchheimer Barus]{
  	\includegraphics[scale=0.46]
    {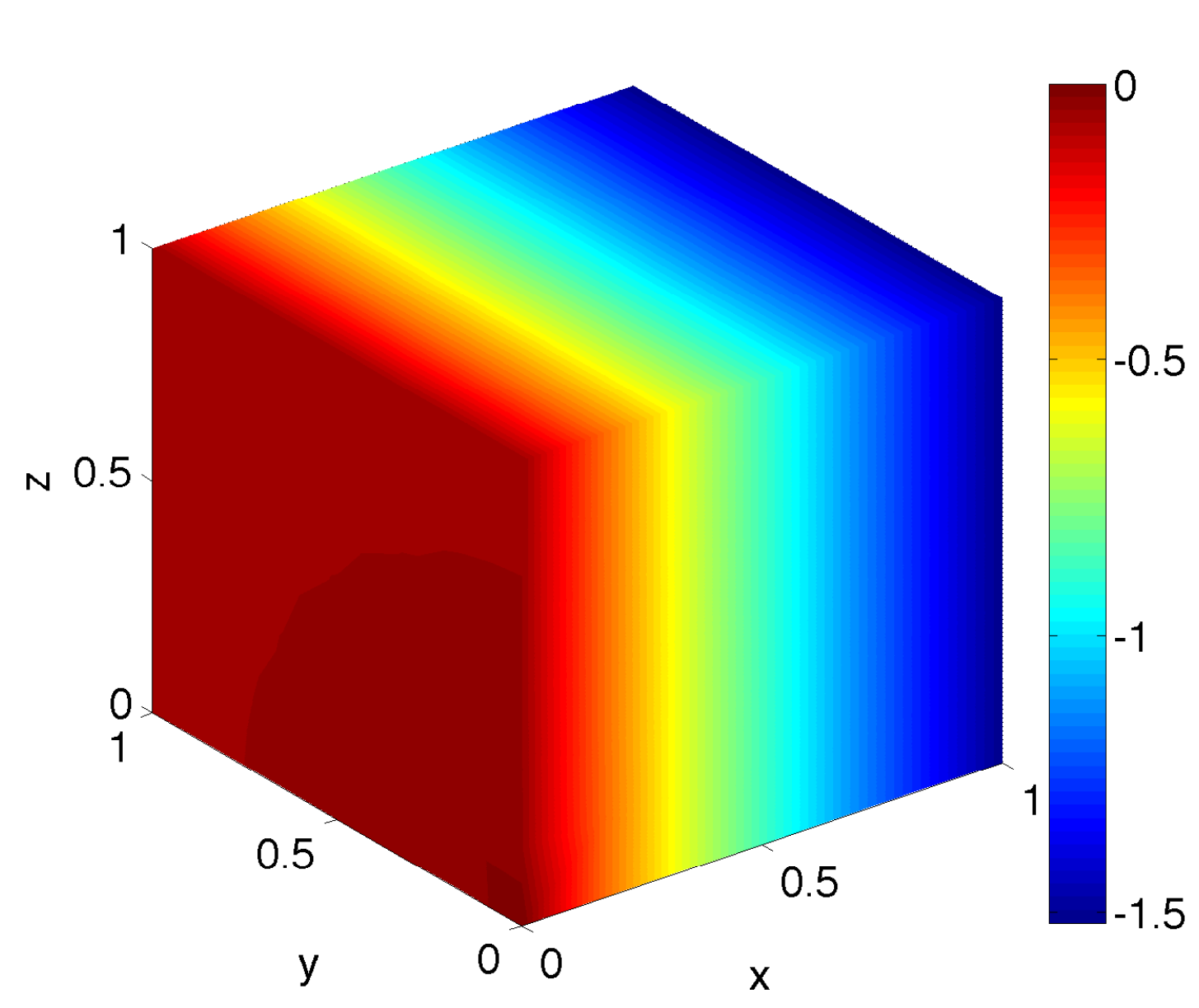}}
  \caption{Three dimensional problem: Pressure 
    contours using LS formalism.}
  \label{Fig:Three_dimensional_pressure_LS}
\end{figure}

\section{ENHANCED OIL RECOVERY APPLICATIONS}
\label{Ch:EOR}

It has been shown in the previous section that the 
proposed mixed formulations perform well for the 
benchmark tests and that various modifications to 
the Darcy model have a significant impact on the 
results. This section focuses on relevant enhanced 
oil recovery applications, which are more complex 
by nature. Pressure contours, flow rates, and 
errors in the local/element-wise mass balance. 
Q9 elements shall be used for each problem.
\subsection{Oil reservoir problem}
For high pressure applications like enhanced oil 
recovery, one is interested in the quantitative 
and qualitative nature of the pressure contours 
and velocities within the oil reservoir. The 
pictorial description of a typical oil reservoir 
is depicted in Figure \ref{Fig:reservoir}. Injection 
wells are located on either side the production well, 
and carbon-dioxide is pumped into the reservoir to 
ease the extraction of raw oil through the production 
well. The parameters used for this study are listed 
in Table \ref{Tab:oil_reservoir}. All Darcy models 
and finite element formulations are expected to yield 
differing flow patterns, but the general qualitative 
velocity vector can be depicted in Figure 
\ref{Fig:oil_reservoir_quiver}. As the oil fluid 
nears the production well, the Darcy velocities 
increases.
\begin{table}[h!]
  \centering
  \caption{User-defined inputs for the oil reservoir problem}
  \begin{tabular}{cc}
    \hline
    Parameter & Value \\ \hline
    $\bar{\beta}_{\mathrm{B}}$ & 0.005\\
    $\bar{\beta}_{\mathrm{F}}$ & 0.01\\	
    $\bar{k}$ & 1\\
    $\bar{\mu}_0$ & 1\\
    $\bar{\rho}$ & 1 \\
    $\vartheta$ & 1 \\
    $\bar{\mathbf{b}}(\mathbf{x})$ & $\begin{Bmatrix}0;-1\\ \end{Bmatrix} $\\
    $Nele$ & 1600\\
    $\bar{p}_{\mathrm{enh}}$ & 1000\\
    \hline
  \end{tabular}
  \label{Tab:oil_reservoir}
\end{table}
Pressure contours within the oil reservoirs are important to know because high pressures can result 
in cracking of the solid. Figures \ref{Fig:Oil_reservoir_pressure_LS} and 
\ref{Fig:Oil_reservoir_pressure_VMS} contain the pressure contours using the LS and VMS formalisms respectively.

It can be seen from each model that the pressure contours within the reservoirs vary both qualitatively 
and quantitatively. For the Barus model, there are steep pressure gradients near the 
injection well, and the pressures within the reservoir are generally smaller than that of the Darcy 
model. However, the Darcy-Forchheimer models exhibits steep pressure gradients near the production 
well, thus predicting higher pressures throughout the reservoir. While pressure dependent 
viscosity may yield favorable pressure contours, one has to account for increases in 
pressure due to inertial effects, so combining the Barus and Forchheimer models should 
yield the most accurate results. Figure \ref{Fig:oil_reservoir_surface} depicts the
pressure profiles of all models and formalisms at the top most interface of the reservoir.

It should be noted that there are some minor differences in the pressure profiles 
between the LS and VMS formalisms. While both formalisms have strongly prescribed velocity 
boundary conditions, the VMS boundary condition for pressures are weakly prescribed and 
consequently exhibit some oscillations. The oscillations diminish with mesh refinement, 
but one must recognize the potential ramifications oscillatory boundary conditions 
may have on the solutions, especially for more complex prescribed pressures.

In reservoir simulations, another quantity of interest is the outflow of raw oil. The flow 
rate or total flux at the production well is calculated using
\begin{align}
	&\int_{\Gamma^{p}}\mathbf{\bar{v}}\cdot\mathbf{\hat{n}}\; \mathrm{d}\Gamma,
\end{align}
where $\Gamma^{p}$ corresponds with the prescribed atmospheric pressure boundary. 
In Figure \ref{Fig:oil_reservoir_flowrate}, a comparison of flow rates versus prescribed
pressures is shown for both formalisms. The original Darcy models predict a linear relationship between 
prescribed pressures and flow rates but the non-linear Darcy models exhibit ceiling 
fluxes. As the pressure increases, the original Darcy models becomes increasingly 
unreliable as it over predicts the amount of oil production one can expect.  It is 
interesting to note that for both the Darcy and Barus models, the LS formalism predicts 
higher flows for a fixed injection pressure whereas the VMS formalism predicts higher Forchheimer 
flow rates. Nevertheless, the ceiling fluxes for the Barus and Forchheimer models differ for various 
betas, but combining the two models will always yield smaller flow rates.

As stated in Section \ref{Ch:Intro}, neither the LS nor VMS formalisms have local mass conservation. 
The ratios of local mass balance errors over the total predicted flux for the modified Darcy-Forchheimer 
Barus models are shown in Figure \ref{Fig:Oil_reservoir_mass_error_LS} and 
\ref{Fig:Oil_reservoir_mass_error_VMS}. When one encounters high velocity 
contours, one can also expect higher local mass balancing errors. The calculations 
show that all models exhibit the greatest errors near the production wells. It is 
interesting to note that while both formalisms predict roughly the same velocity flow rates, 
the VMS formalism shows greater local mass balancing error. Ratios of 0.25-0.35 are 
considered quite large, but for lower pressure and velocity applications, 
the ratios should be much smaller.

\subsection{Multilayer reservoir problem}
One may not always encounter constant permeability within the subsurface. Some layers 
within the oil reservoir may consist of coarse sands while others may consist of less permeable 
material. This numerical experiment shall study the effect varying permeability regions
has on the pressure contours, flow rates, and local mass balance errors. Consider the domain 
depicted in Figure \ref{Fig:layers} with the same boundary conditions
as that in Figure \ref{Fig:reservoir}. Regions with higher permeability have larger velocities as 
depicted in Figure \ref{Fig:layered_reservoir_quiver}. The parameters used for this problem are 
listed in Table \ref{Tab:layered_reservoir}, and the pressure contours for LS and VMS formalisms are depicted in Figures 
\ref{Fig:Layered_reservoir_pressure_LS} and \ref{Fig:Layered_reservoir_pressure_VMS} respectively.
\begin{table}[b!]
  \centering
  \caption{User-defined inputs for the layered reservoir problem}
  \begin{tabular}{cc}
    \hline
    Parameter & Value \\ \hline
    $\bar{\beta}_{\mathrm{B}}$ & 0.005\\
    $\bar{\beta}_{\mathrm{F}}$ & 0.01\\	
    $\bar{k}$ & varies\\
    $\bar{\mu}_0$ & 1\\
    $\bar{\rho}$ & 1 \\
    $\vartheta$ & 1 \\
    $\bar{\mathbf{b}}(\mathbf{x})$ & $\begin{Bmatrix}0;-1\\ \end{Bmatrix} $\\
    $Nele$ & 3200\\
    $\bar{p}_{\mathrm{enh}}$ & 1000\\
    \hline
  \end{tabular}
  \label{Tab:layered_reservoir}
\end{table}

Results show that the layers with higher permeability contain higher pressures and that 
steep gradients occur at the interfaces between the layers. The LS formalism predicts higher pressures 
and larger flow rates for the Darcy and modified Barus models as seen from Table 
\ref{Tab:layered_reservoir_flowrates}. Like with the previous oil reservoir problem, 
the VMS formalism predicts higher flow rates for the Darcy-Forchheimer model. 
The ratio of local mass balance errors and total predicted fluxes are depicted in 
Figures \ref{Fig:Layered_reservoir_mass_error_LS} and \ref{Fig:Layered_reservoir_mass_error_VMS}. 
While the VMS formalisms still have slightly higher errors, the overall error ratios for this problem 
are smaller despite having larger flow rates.
\begin{table}[t!]
  \centering
  \caption{Layered reservoir problem: flow rates for LS and VMS formalism at $\bar{p}_{\mathrm{enh}}$ = 1000}
  \begin{tabular}{l|cccc}
    \hline
    Darcy models: & D & MB & F & MBF \\ \hline
    LS & 1038 & 210 & 133 & 75 \\
    VMS & 1025 & 204 & 137 & 77 \\
    \hline
  \end{tabular}
  \label{Tab:layered_reservoir_flowrates}
\end{table}
\subsection{Flow in a porous media with staggered impervious zones}
Consider flow through a region with staggered impervious zones in Figure 
\ref{Fig:staggered}. In any heterogeneous 
flow through porous media applications, one may encounter domains where oil must flow 
through a complex domain with many impervious regions. The qualitative velocity vector field in Figure 
\ref{Fig:staggered_reservoir_quiver} indicates that higher flows occur around the sharp 
bends.
\begin{table}[b!]
  \centering
  \caption{User-defined inputs for the staggered impervious zones problem}
  \begin{tabular}{cc}
    \hline
    Parameter & Value \\ \hline
    $\bar{\beta}_{\mathrm{B}}$ & 0.005\\
    $\bar{\beta}_{\mathrm{F}}$ & 0.01\\	
    $\bar{k}$ & 1\\
    $\bar{\mu}_0$ & 1\\
    $\bar{\rho}$ & 1 \\
    $\vartheta$ & 1 \\
    $\bar{\mathbf{b}}$ & 0 \\
    Element type & Q9\\
    $Nele$ & 1696\\
    $\bar{p}_{\mathrm{enh}}$ & 1000\\
    \hline
  \end{tabular}
  \label{Tab:staggered_zones}
\end{table}
The pressure contours are depicted in Figures \ref{Fig:Staggered_reservoir_pressure_LS} and
\ref{Fig:Staggered_reservoir_pressure_VMS}. The same non-dimensionalized injection pressure has been prescribed for 
this problem (see Table \ref{Tab:staggered_zones} for key parameters used in this 
problem), and it can still be seen that the different Darcy models make an impact on the 
qualitative nature of the pressure contours. Again, the LS formalism yields higher pressures throughout the domain 
and predicts larger flow rates as seen in Table \ref{Tab:staggered_reservoir_flowrates}. Errors in the local 
mass balance tend to be greatest in regions with high velocities (i.e., the sharp bends around the impervious layers). 
Local mass balancing errors are shown in Figures \ref{Fig:Staggered_reservoir_mass_error_LS} and 
\ref{Fig:Staggered_reservoir_mass_error_VMS}.
\begin{table}[t!]
  \centering
  \caption{Staggered impervious zones problem: flow rates for LS and VMS formalism at $\bar{p}_{\mathrm{enh}}$ = 1000}
  \begin{tabular}{l|cccc}
    \hline
    Darcy models: & D & MB & F & MBF \\ \hline
    LS & 150.6 & 31.9 & 50.4 & 24.3 \\
    VMS & 131.1 & 26.5 & 47.9 & 20.3 \\
    \hline
  \end{tabular}
  \label{Tab:staggered_reservoir_flowrates}
\end{table}
 
\section{CONCLUDING REMARKS}
\label{Ch:Conclusions}

The work in this thesis proposes a modification to the 
standard Darcy model that takes into account both the 
dependence of the viscosity on the pressure and the 
inertial effects, which have been observed in many 
physical experiments. The current models in the 
literature consider either of the effects but not 
both. The proposed model will be particularly important 
for predictive simulations of applications involving 
high pressures and high velocities (e.g., enhanced oil 
recovery). This modification has been referred to as the 
\emph{modified Darcy-Forchheimer model}. 
It has been shown numerically that the results obtained by 
taking into account the dependence of drag coefficient 
on the pressure and on the velocity are both qualitatively 
and quantitatively different from that the results obtained 
using the standard Darcy model, Darcy-Forchheimer equation 
(which neglects the dependence of drag coefficient and 
viscosity on the pressure) or modified Darcy model 
\cite{Nakshatrala_Rajagopal_IJNMF_2011_v67_p342,
Nakshatrala_Turner_2013_arXiv} (which neglects the 
dependence of the drag coefficient on the velocity). 

This thesis has also developed stable mixed finite element formulations 
for the resulting governing equations using two different 
approaches: VMS formalism and LS formalism. Using numerical experiments, 
we have compared their merits and demerits.

The LS formulation has more terms to evaluate than the VMS formulation, 
and hence the LS formulation is slightly more computationally expensive 
than the VMS formulation. However, it should be emphasized that this is 
not significant in a parallel setting as element-level calculations are 
embarrassingly parallel. It is also observed that the LS formulation 
with $p$-refinement produces accurate results. Another point that is 
worth mentioning is that the VMS formalism weakly prescribes pressure 
boundary conditions, and it has been shown that minor oscillations occur 
when meshes are not adequately refined. The error in element-wise / local 
mass balance for various Darcy-type models is also quantified, and the error 
becomes significant when there are large pressures and velocities. 

There are several ways one can extend the research work 
presented in this thesis. \emph{On the modeling front}, 
a good but difficult research problem is to develop 
mathematical models that couple deformation and damage 
of the porous solid with the flow aspects and reactive 
transport across several spatial and temporal scales. 
The following are some possible future works 
\emph{on the numerical front}: 
\begin{enumerate}[(a)]
\item Develop mixed finite element formulations with 
  better local mass balance property under equal-order 
  interpolation for the pressure and the velocity.
\item Develop multi-scale models by coupling continuum / 
  macro-scale flow models with meso-scale models (e.g., 
  lattice Boltzmann models). The advantage is that 
  meso-scale models can easily handle complex pore 
  structure, which may not computationally feasible 
  if one uses only a macro-scale model.  
\item Another important but difficult problem is 
  to develop numerical upscaling techniques for 
  heterogeneous porous media. In layman terms, 
  numerical upscaling captures fine-scale features 
  on coarse computational grids. 
\item Develop stable and accurate coupling algorithms 
  for coupling flow, deformation and transport aspects. 
\end{enumerate}  
\emph{On the computer implementation front}, a possible 
work is to implement the mixed formulations taking the 
advantage of GPU processors, and implementing on 
heterogeneous parallel computing environment.

\section*{ACKNOWLEDGMENTS}
The authors acknowledges the financial support from the Department of Energy. 
The opinions expressed in this paper are those of the authors and do not necessarily reflect that of the sponsors.

\bibliographystyle{unsrt}
\bibliography{bibliography}
 
\clearpage
\begin{figure}
  \centering
  \includegraphics[scale=0.5]{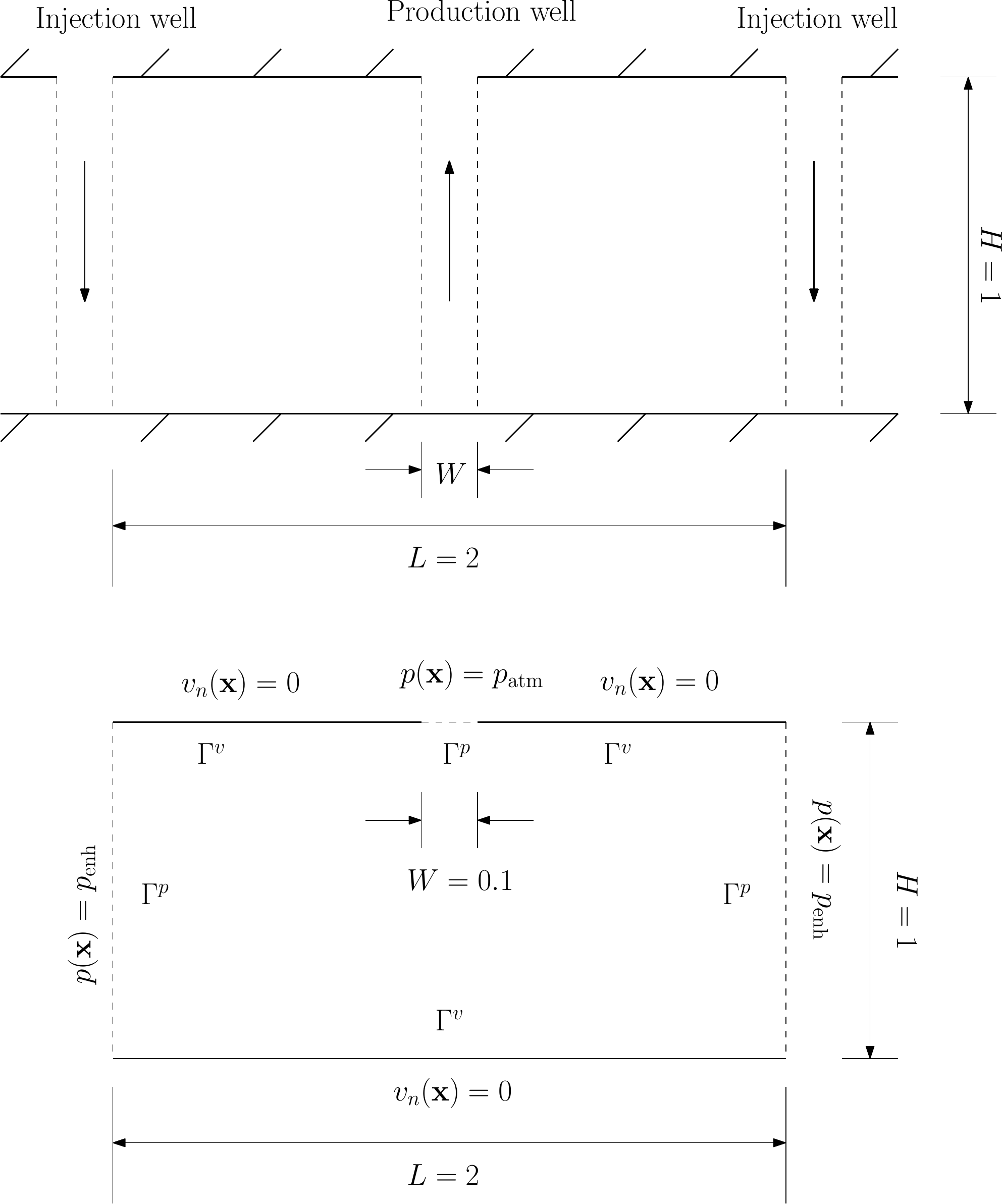}
  \caption{Oil reservoir problem. Top figure 
    is the pictorial description of enhanced 
    oil recovery, and the bottom figure is 
    the idealized computational domain with 
    appropriate boundary conditions. 
    \label{Fig:reservoir}}
\end{figure}
\begin{figure}
  \centering
  \includegraphics[scale=0.46]
  	{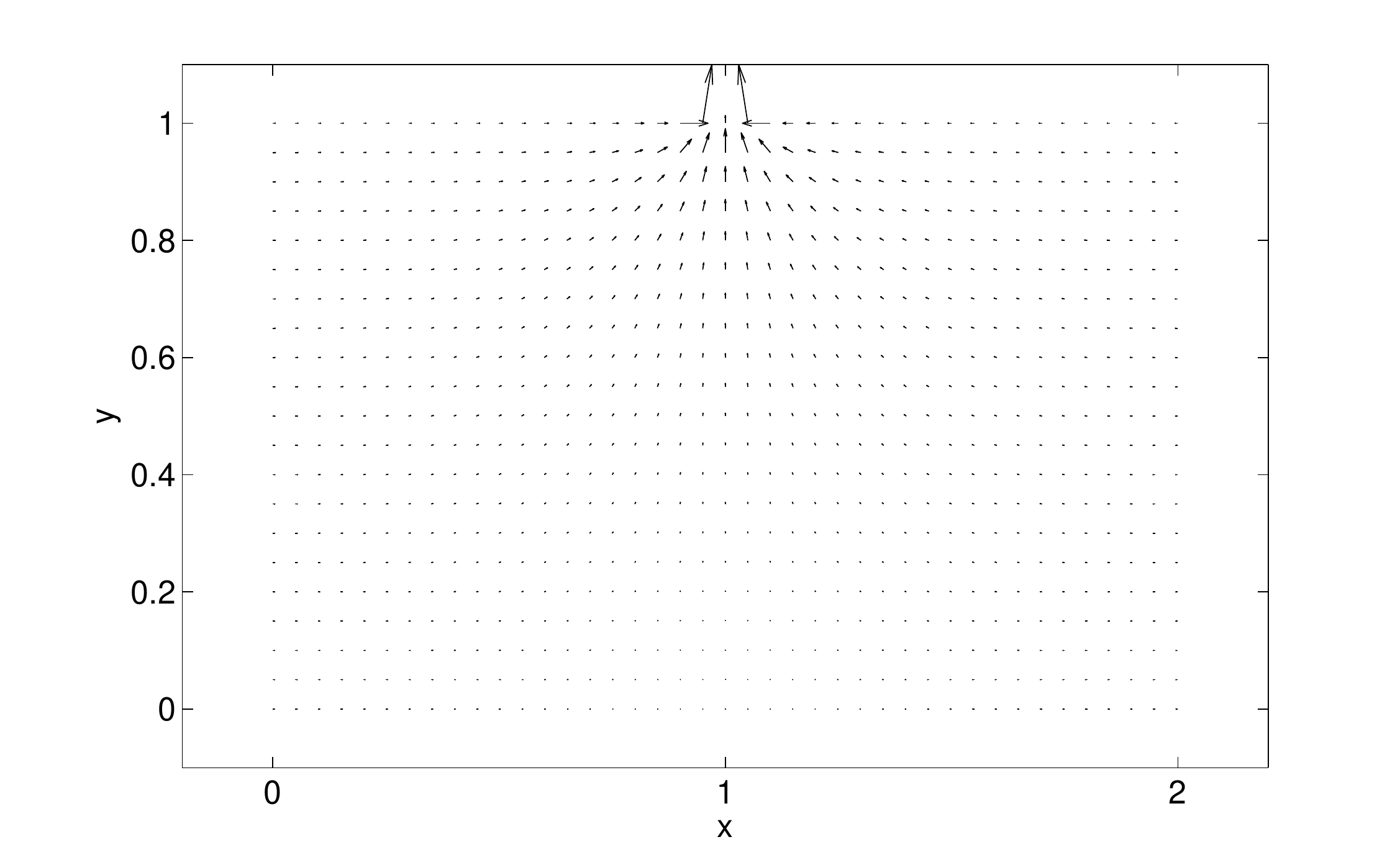}
  \caption{Oil reservoir problem: qualitative velocity vector field}
  \label{Fig:oil_reservoir_quiver}
\end{figure}
\begin{figure}
  \centering
  \subfigure[Darcy model]{
  	\includegraphics[scale=0.46]
    {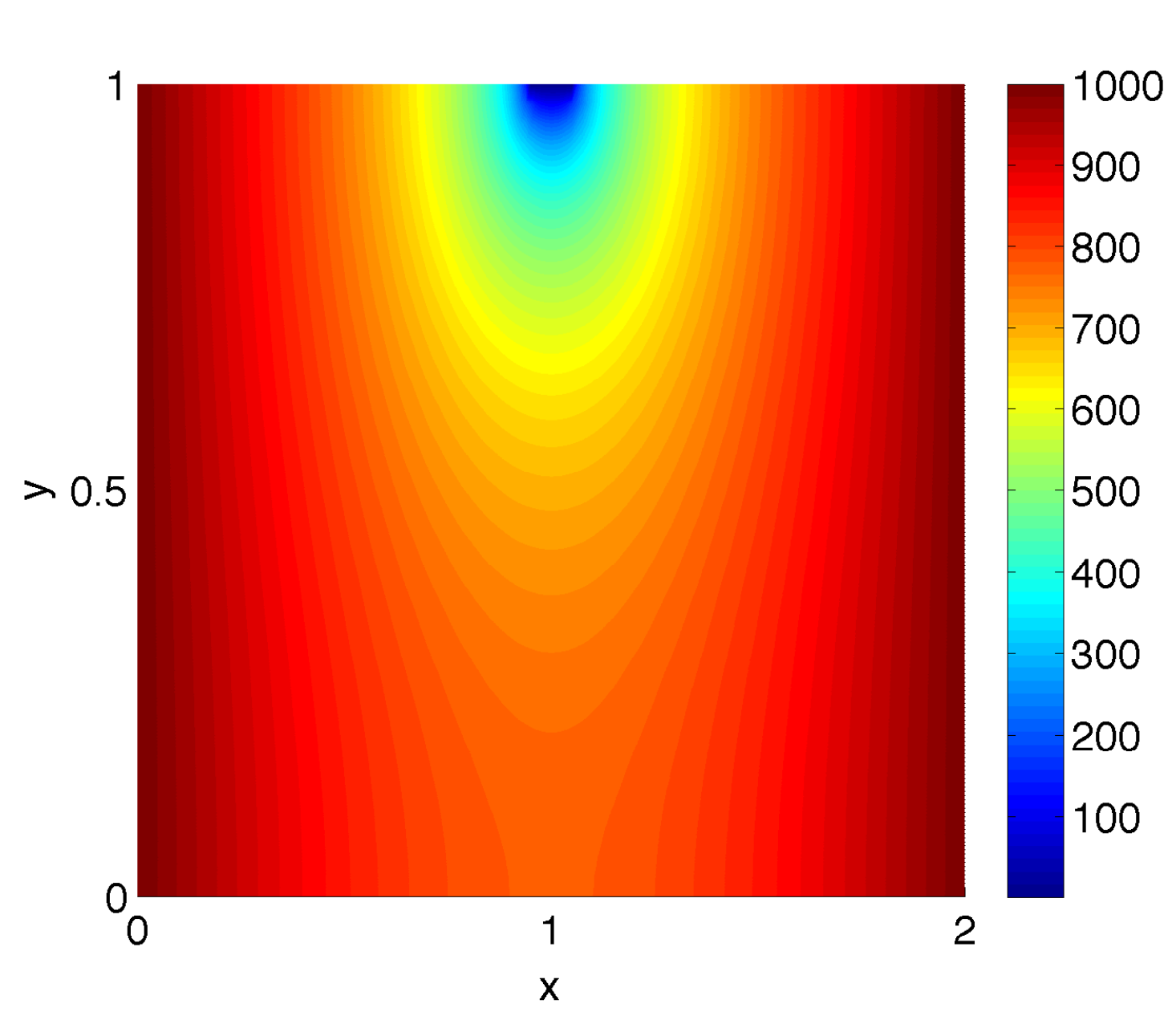}}
  \subfigure[Modified Barus]{
  	\includegraphics[scale=0.46]
    {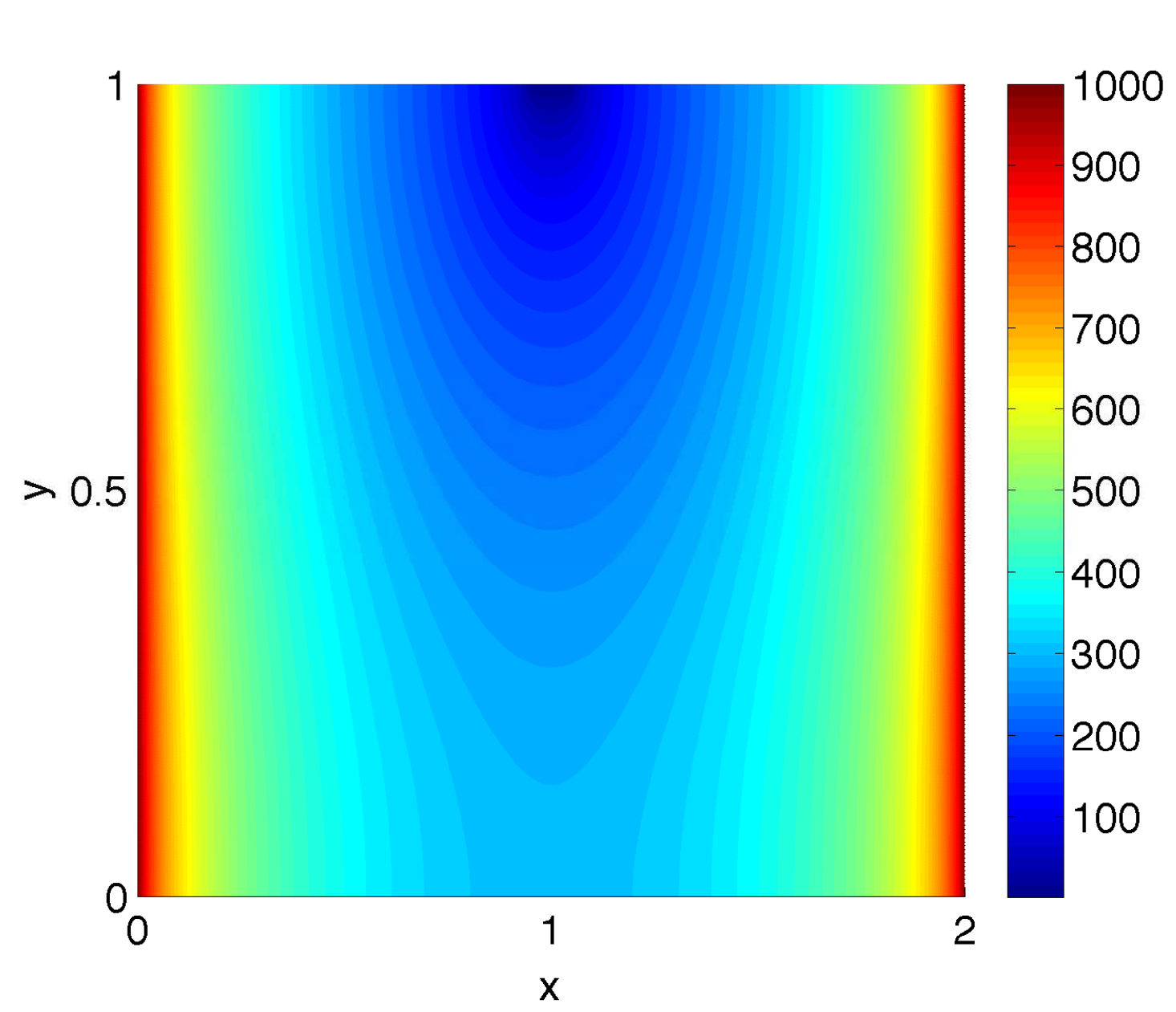}}
  \subfigure[Darcy-Forchheimer]{
  	\includegraphics[scale=0.46]
    {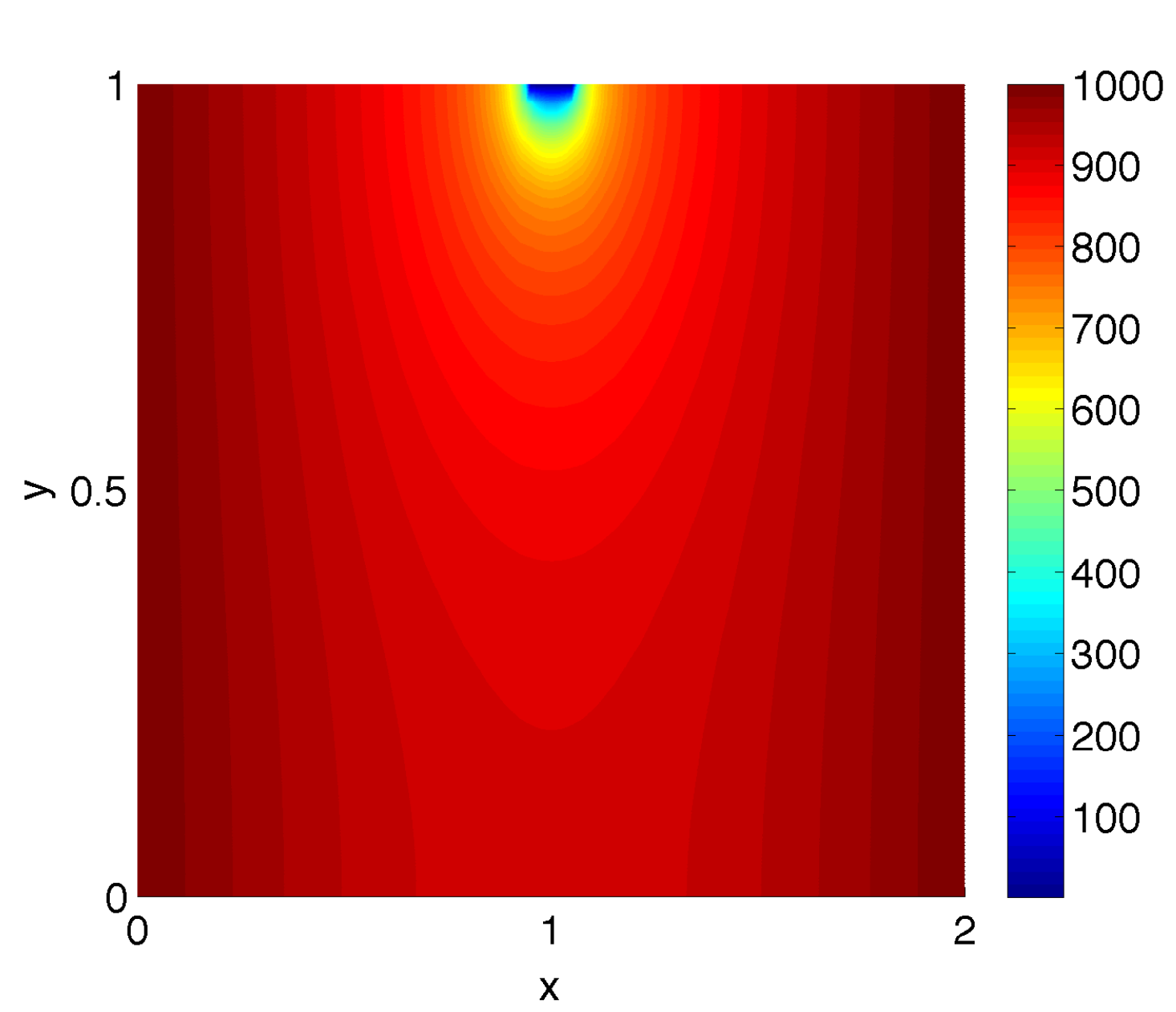}}
  \subfigure[Modified Darcy-Forchheimer Barus]{
  	\includegraphics[scale=0.46]
    {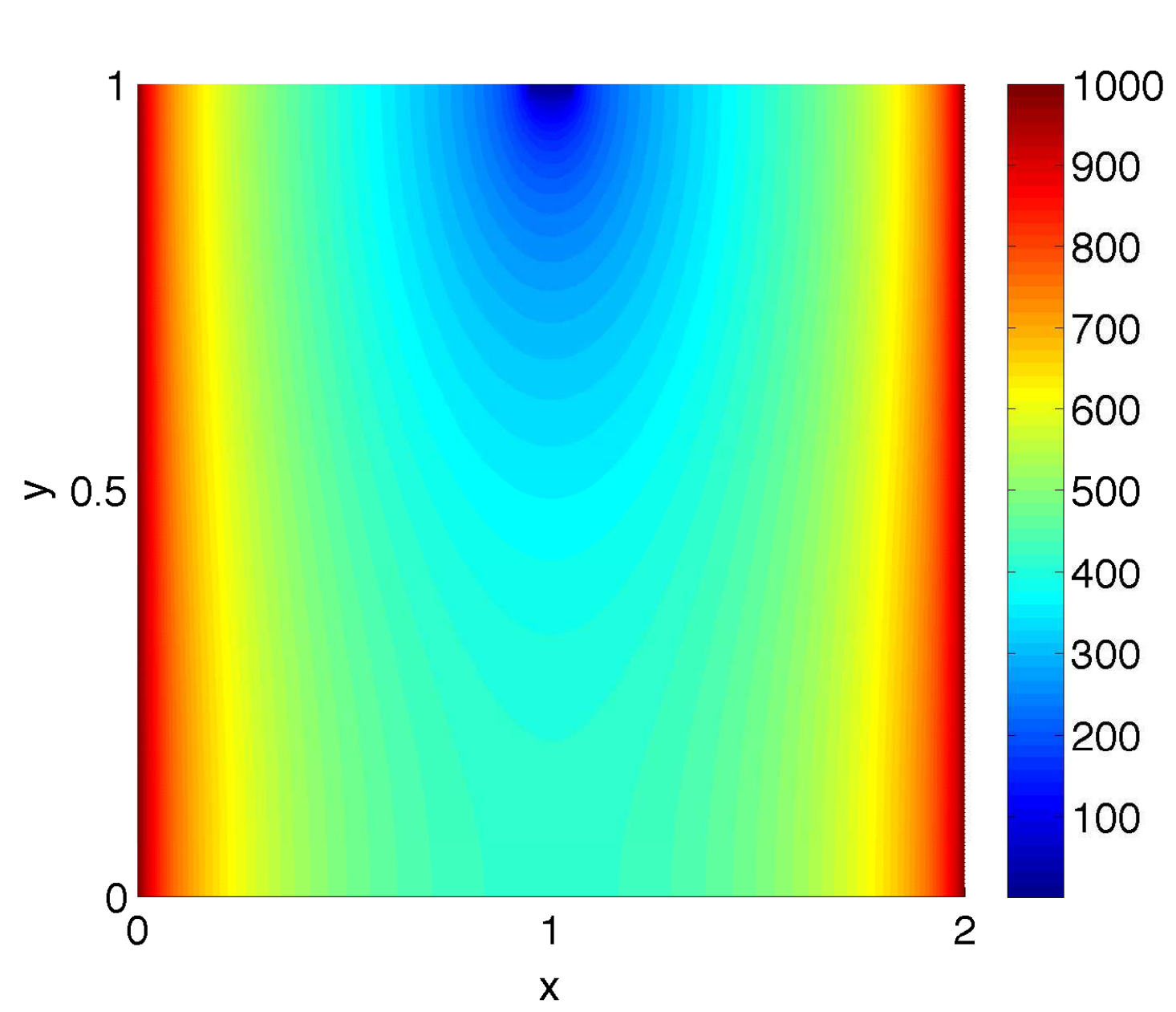}}
  \caption{Oil reservoir problem: pressure contours using LS formalism}
  \label{Fig:Oil_reservoir_pressure_LS}
\end{figure}
\begin{figure}
  \centering
  \subfigure[Darcy model]{
  	\includegraphics[scale=0.46]
    {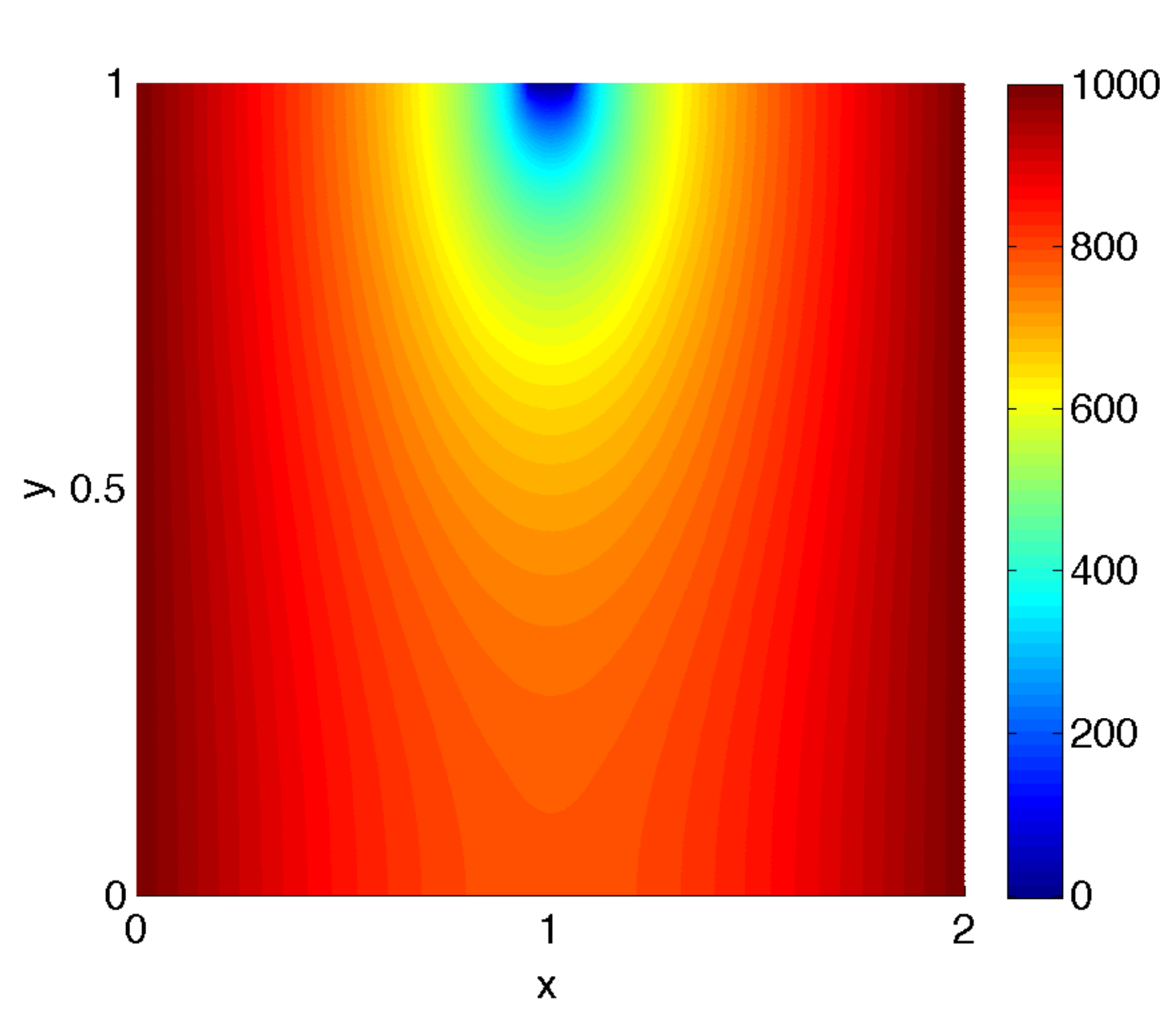}}
  \subfigure[Modified Barus]{
  	\includegraphics[scale=0.46]
    {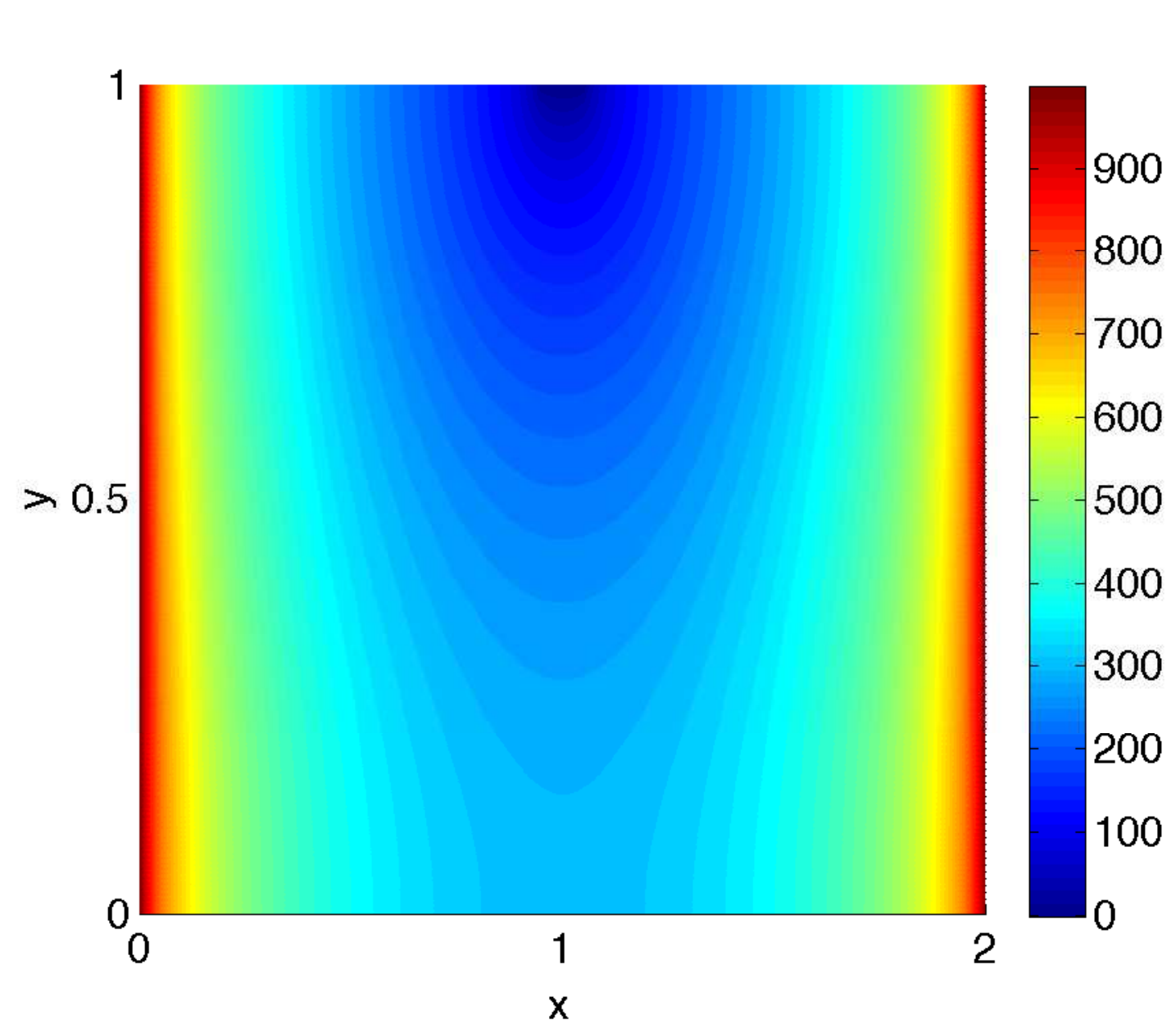}}
  \subfigure[Darcy-Forchheimer]{
  	\includegraphics[scale=0.46]
    {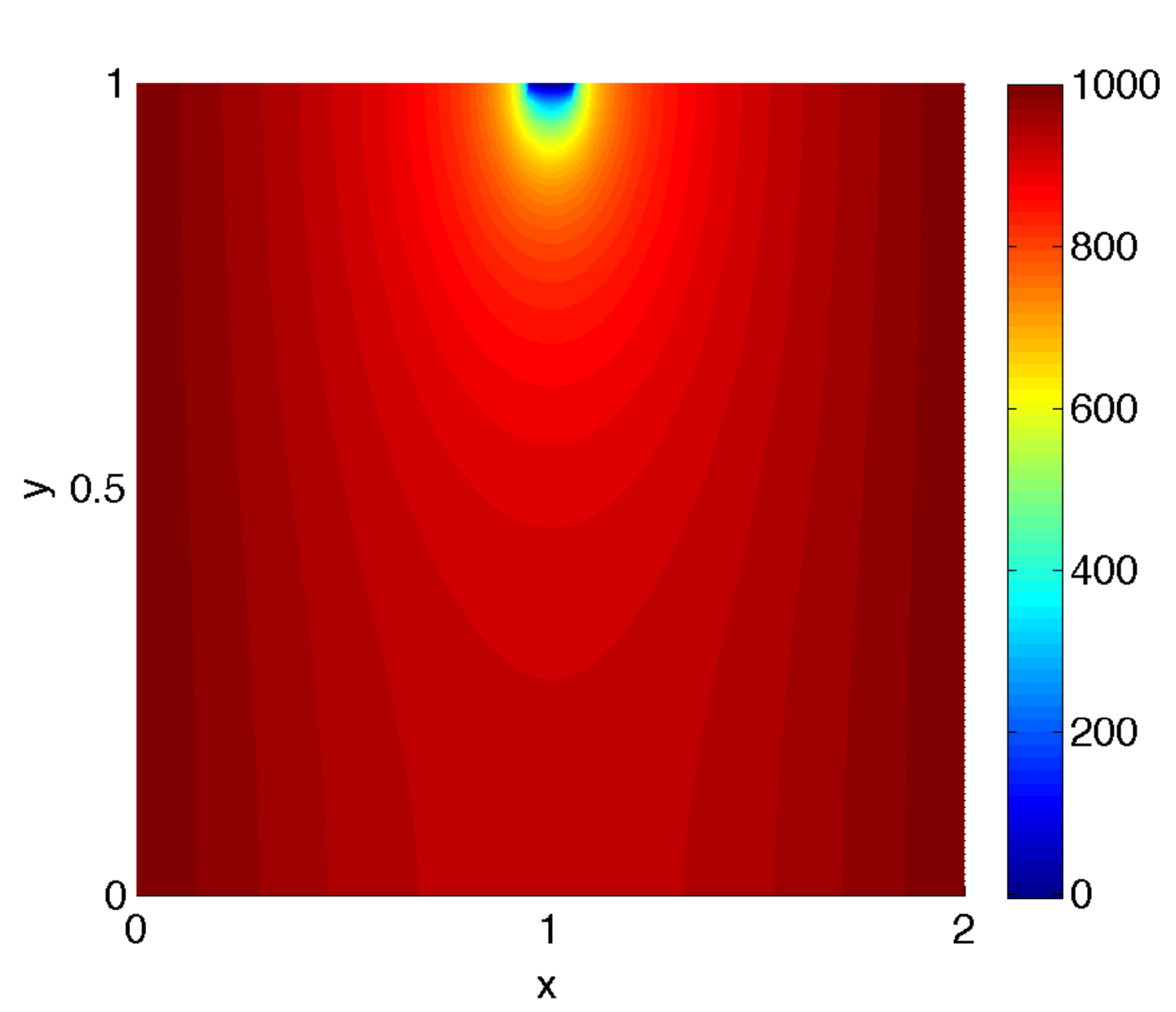}}
  \subfigure[Modified Darcy-Forchheimer Barus]{
  	\includegraphics[scale=0.46]
    {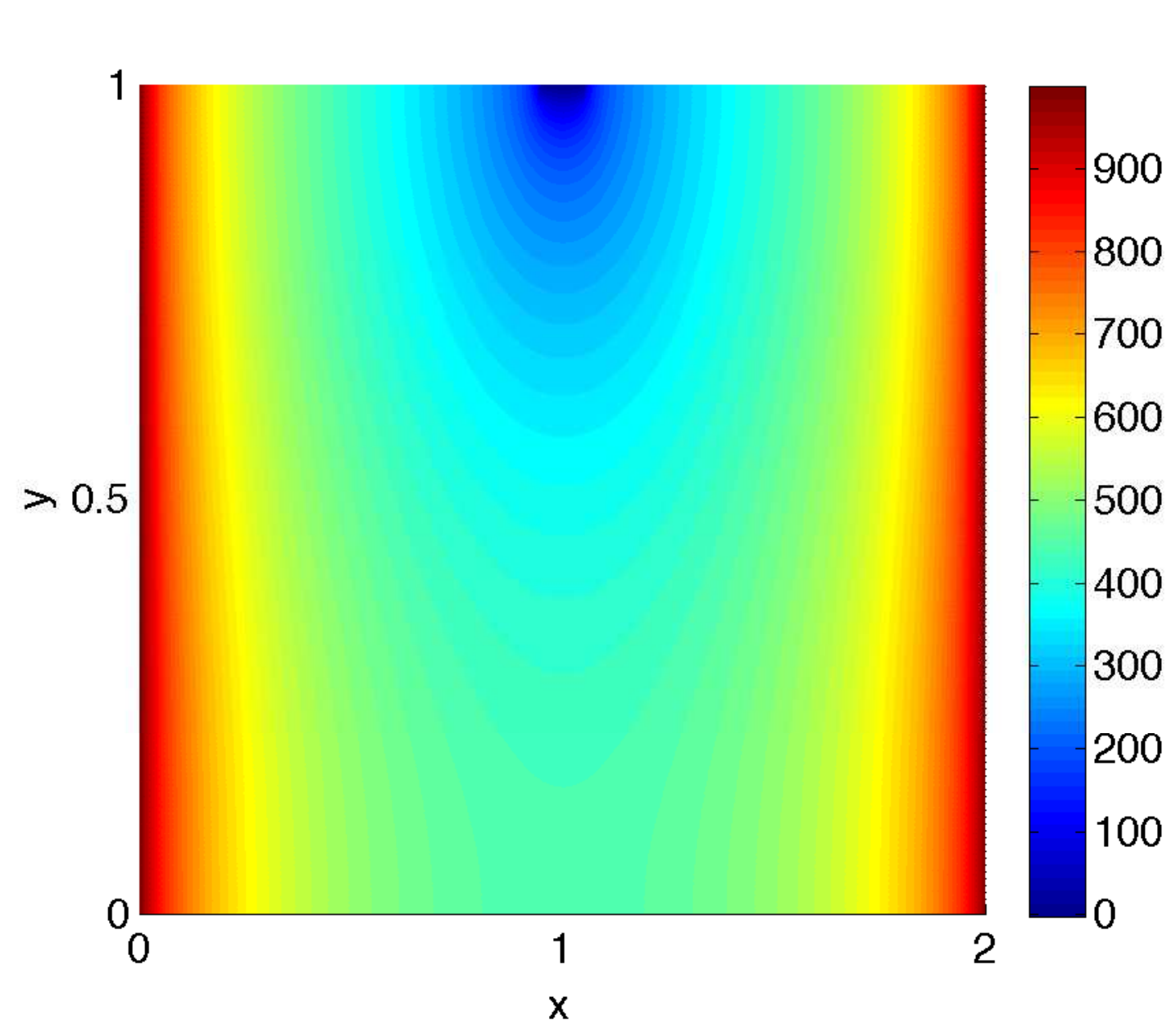}}
  \caption{Oil reservoir problem: pressure contours using VMS formalism}
  \label{Fig:Oil_reservoir_pressure_VMS}
\end{figure}
\begin{figure}
  \centering
   \subfigure[LS formalism]{
     	\includegraphics[scale=0.46]
  	{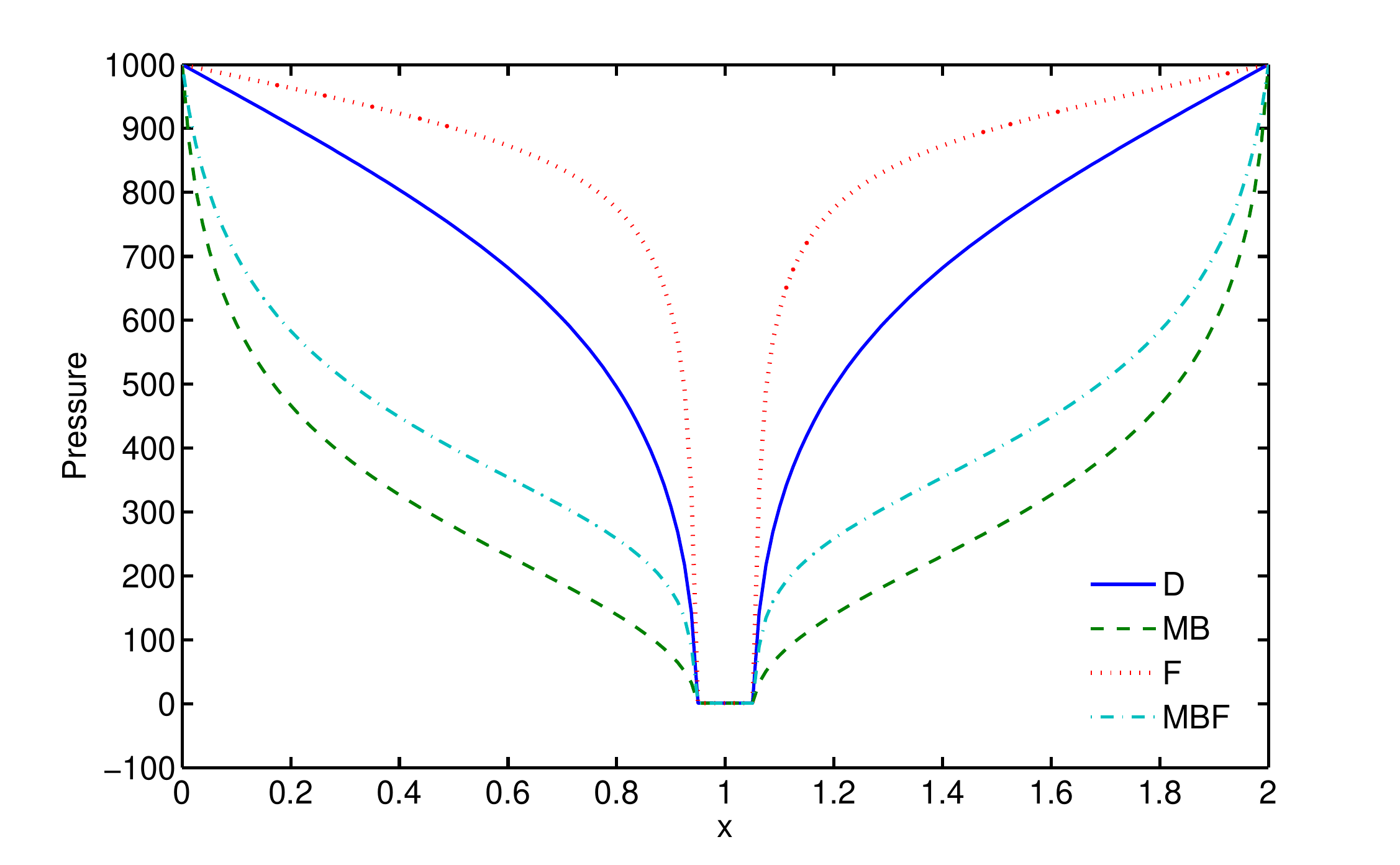}}
  \subfigure[VMS formalism]{
  	\includegraphics[scale=0.46]
  	{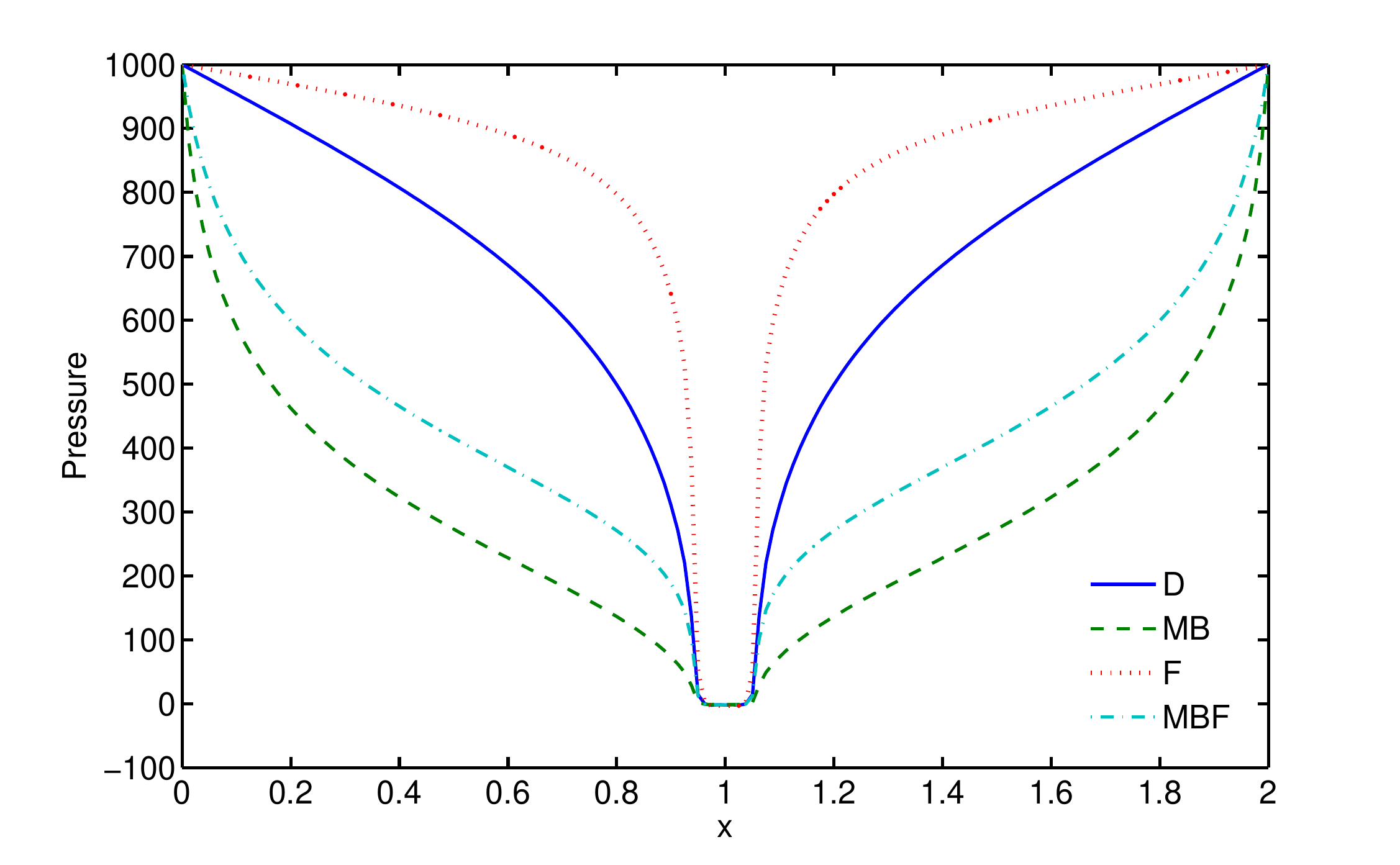}}
  \caption{Oil reservoir problem: comparison of pressure profiles at $y=1$}
  \label{Fig:oil_reservoir_surface}
\end{figure}
\begin{figure}
  \centering
   \subfigure[LS formalism]{
  	\includegraphics[scale=0.46]
  	{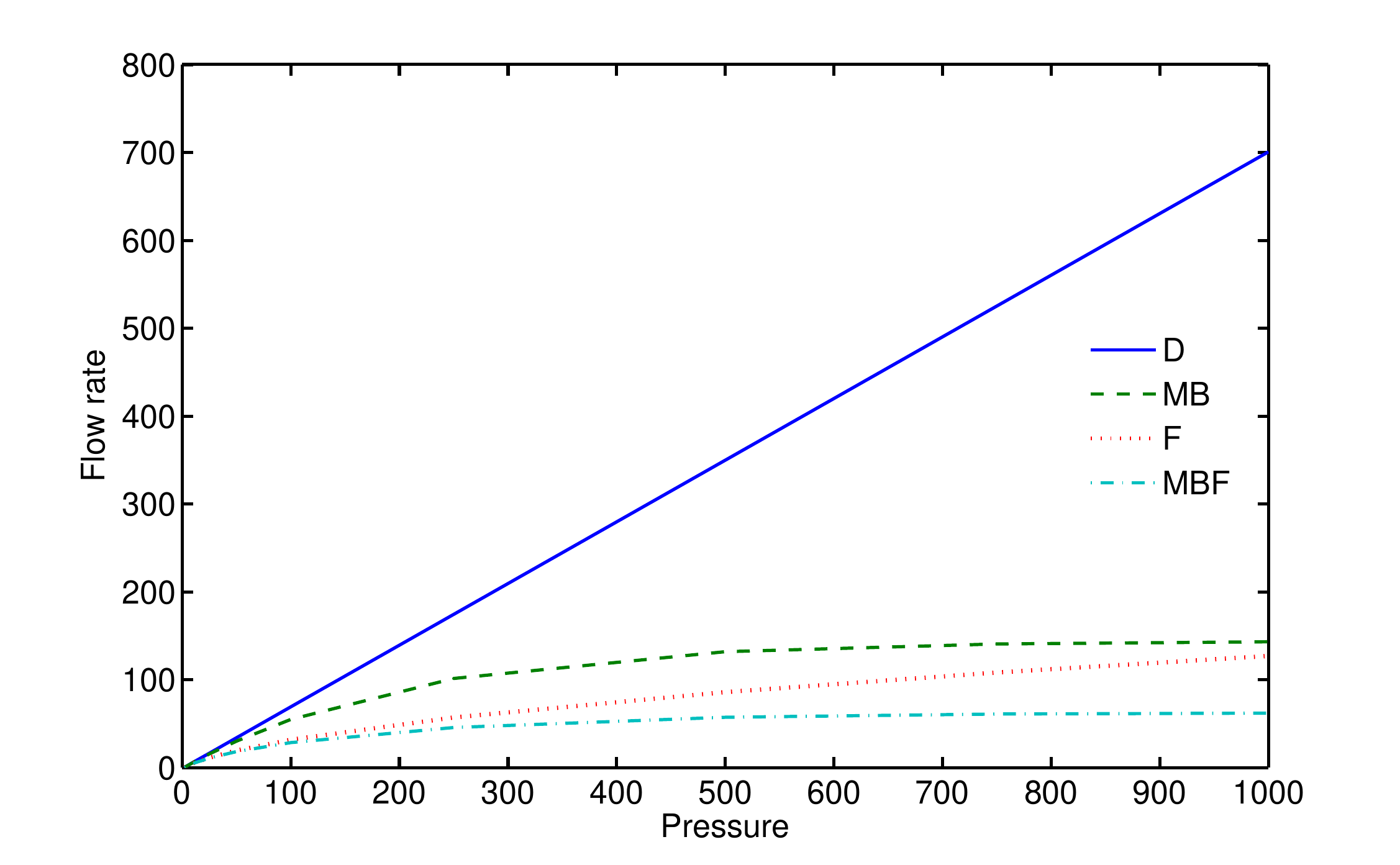}}
  \subfigure[VMS formalism]{
  	\includegraphics[scale=0.46]
  	{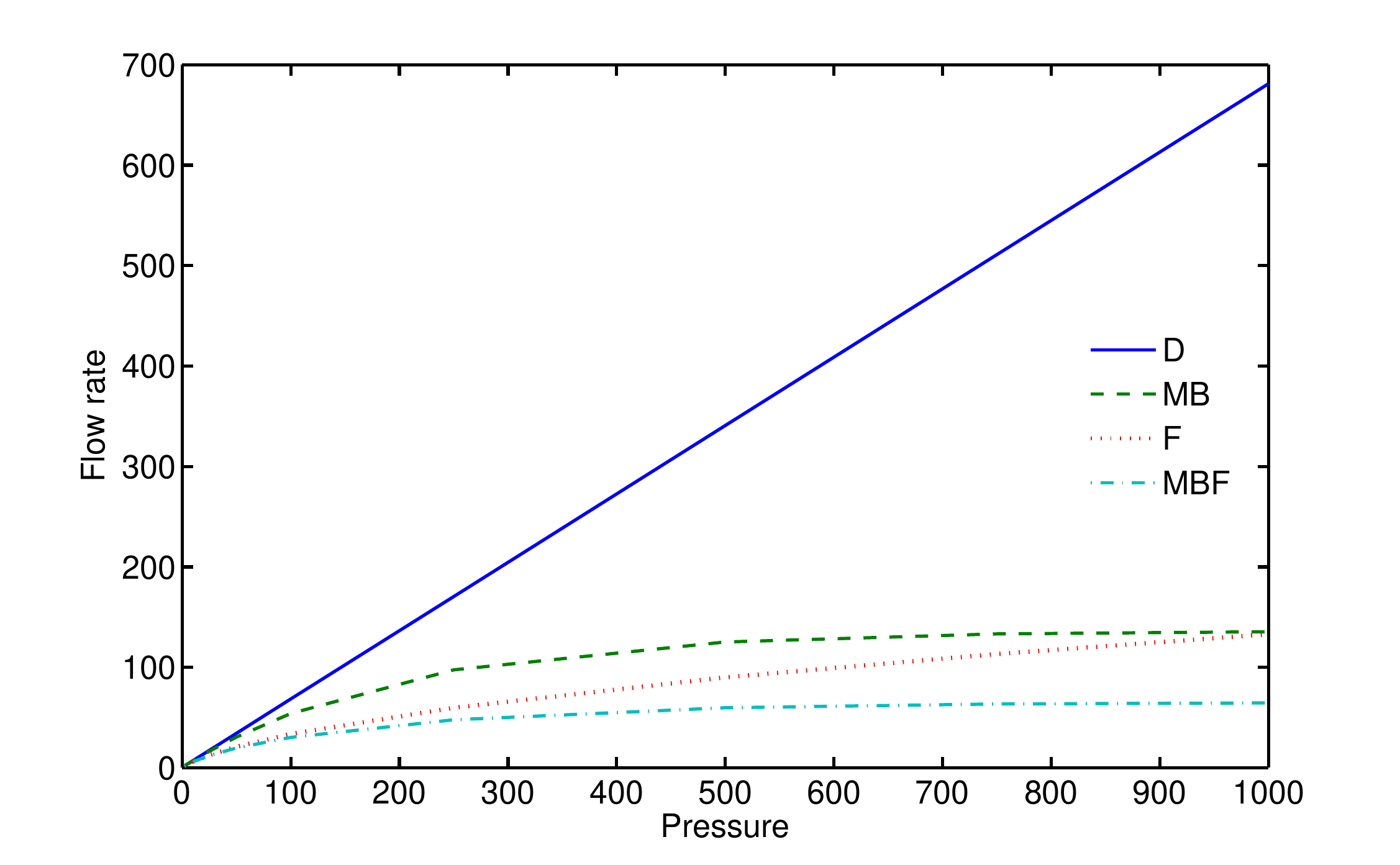}}
  \caption{Oil reservoir problem: comparison of injection pressures vs. flow rates}
  \label{Fig:oil_reservoir_flowrate}
\end{figure}
\begin{figure}
  \centering
  \subfigure[Darcy model]{
  	\includegraphics[scale=0.46]
    {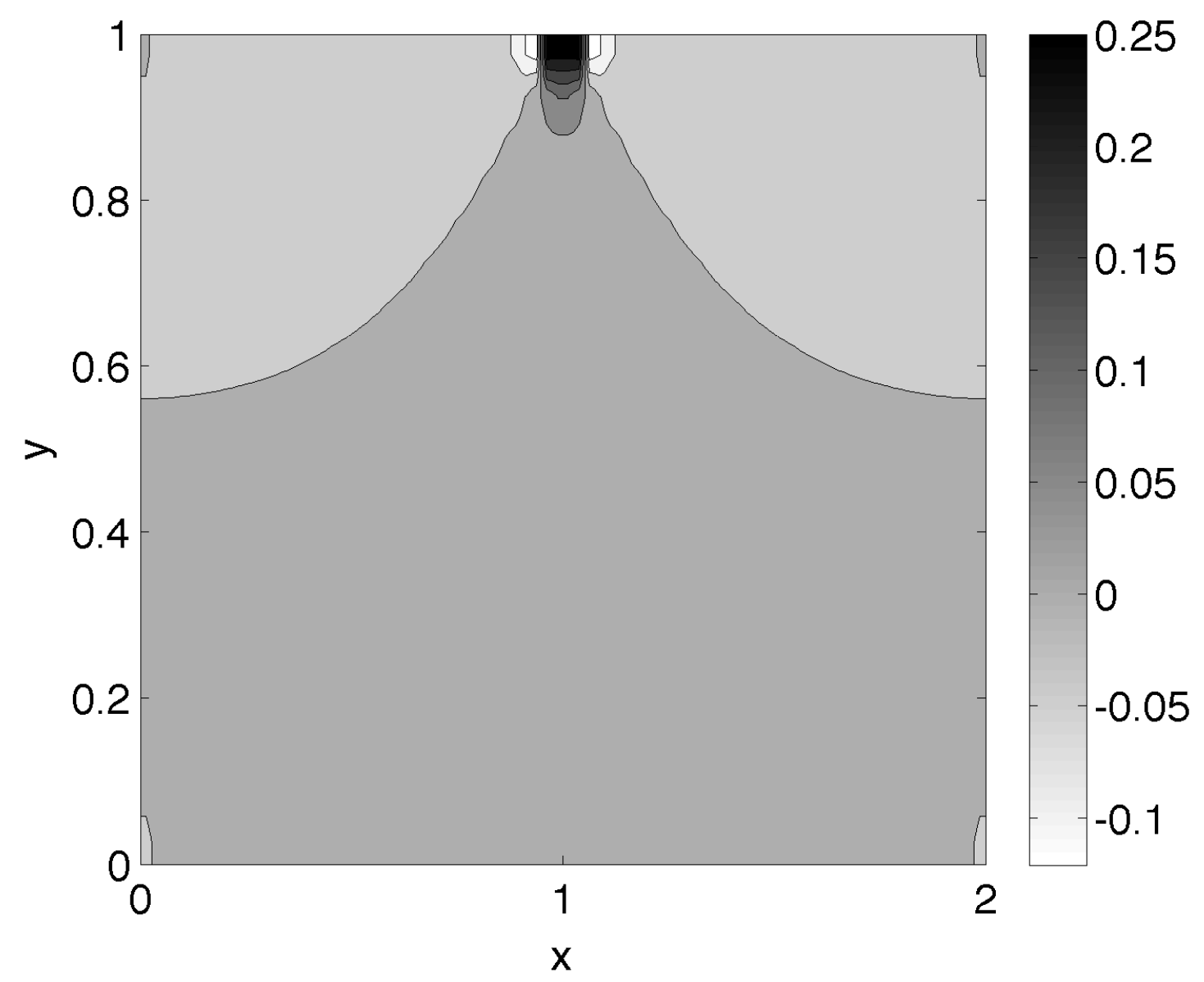}}
  \subfigure[Modified Barus]{
  	\includegraphics[scale=0.46]
    {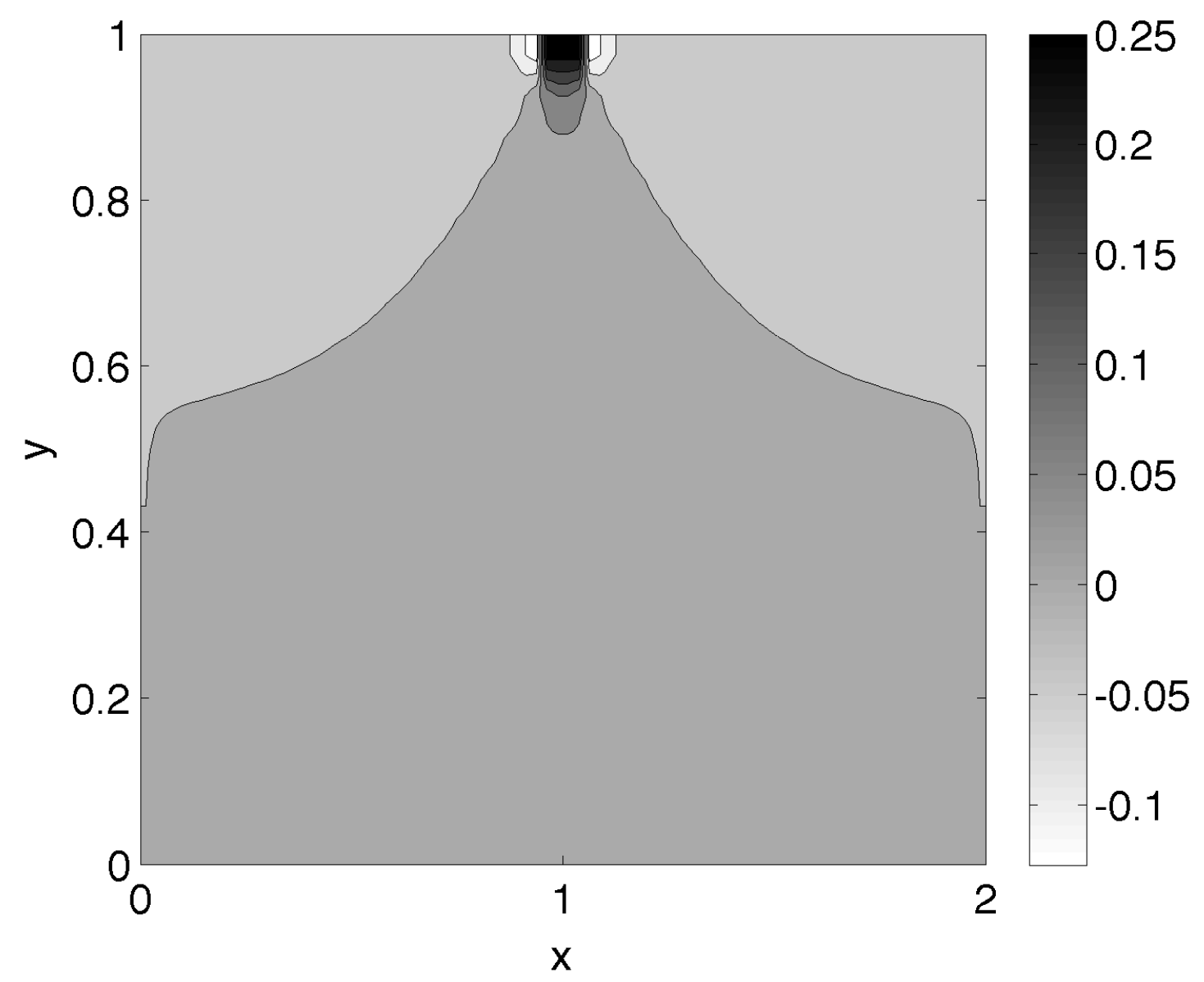}}
  \subfigure[Darcy-Forchheimer]{
  	\includegraphics[scale=0.46]
    {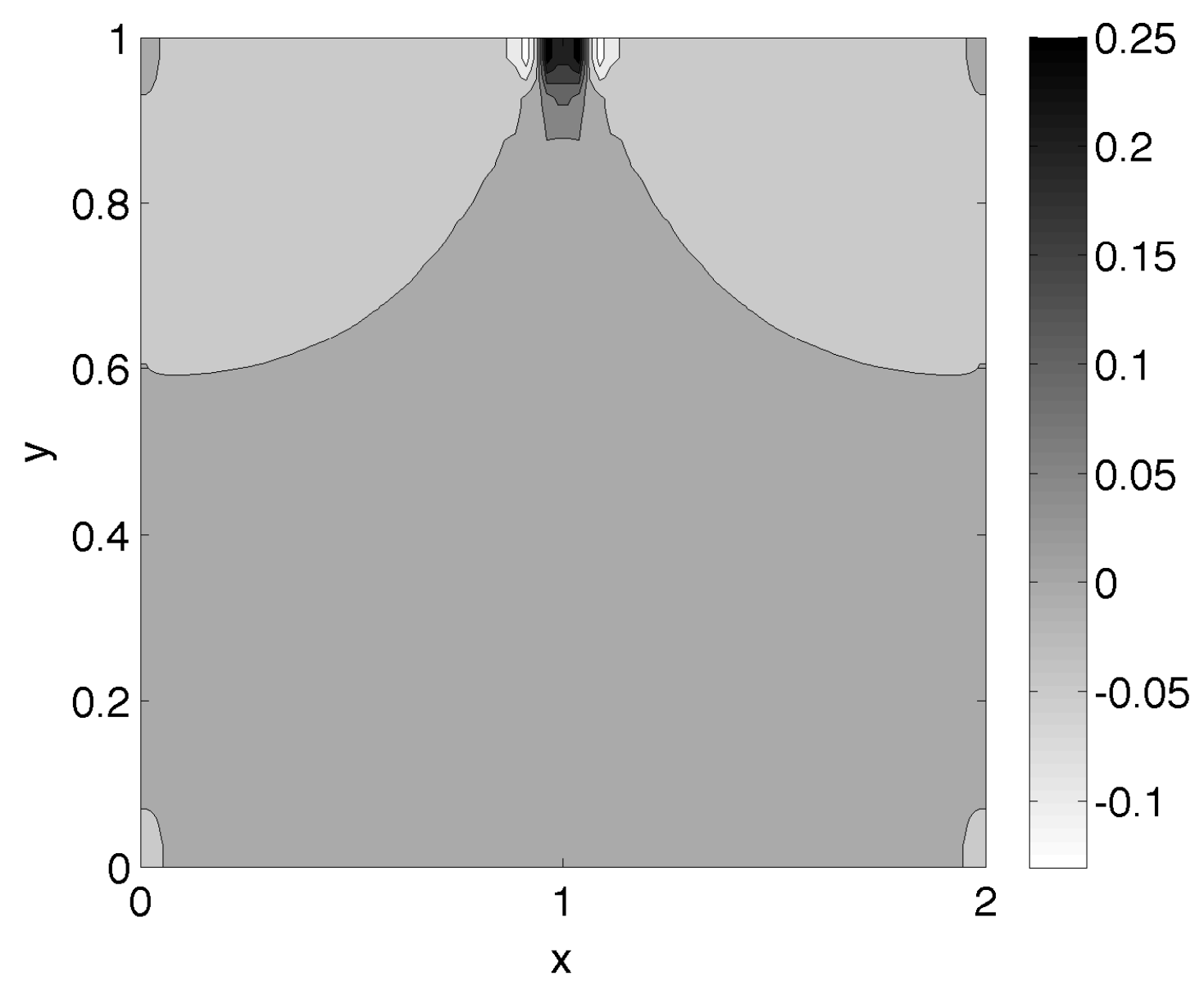}}
  \subfigure[Modified Darcy-Forchheimer Barus]{
  	\includegraphics[scale=0.46]
    {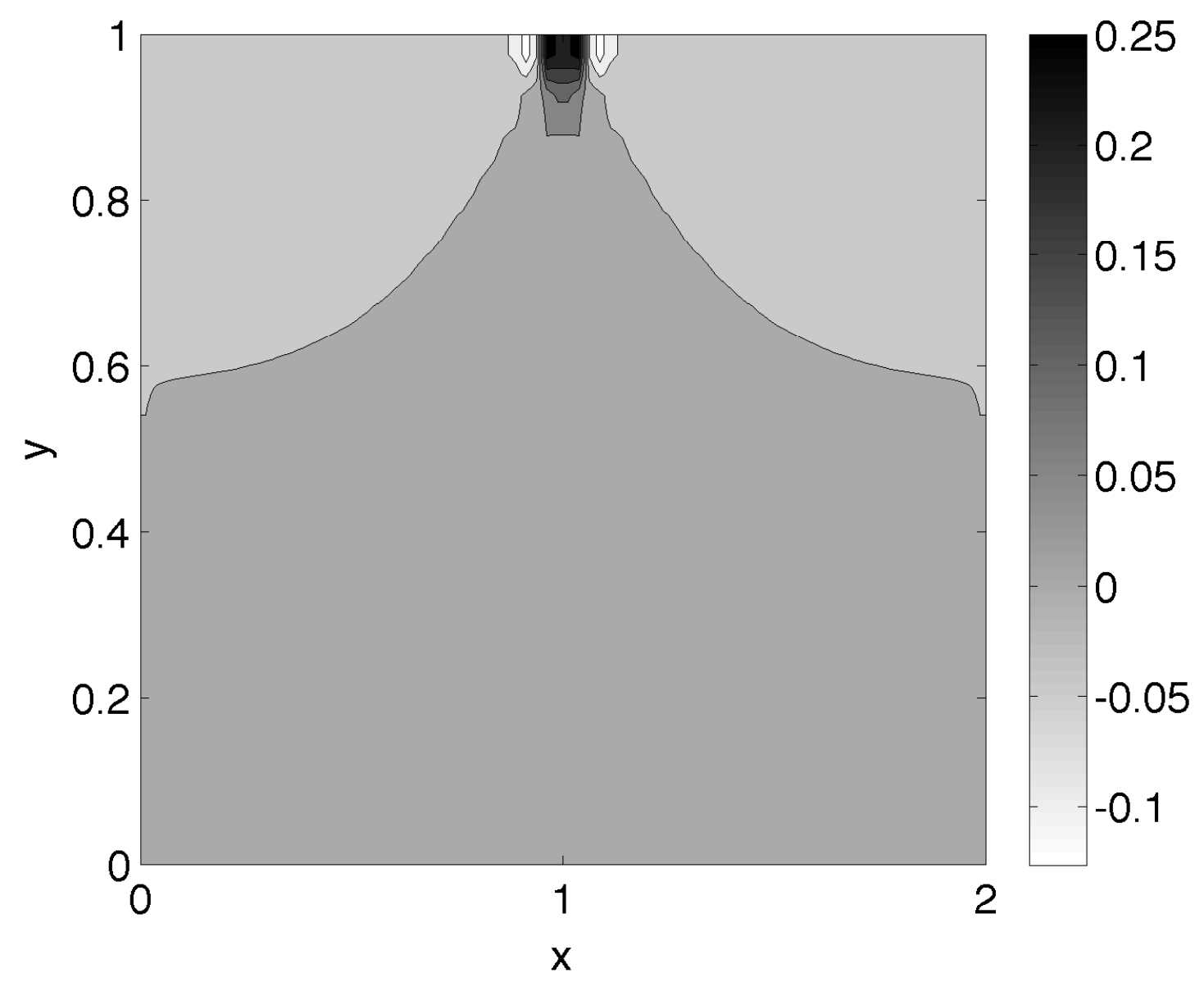}}
  \caption{Oil reservoir problem: ratios of local mass balance error over total predicted flux using LS formalism}
  \label{Fig:Oil_reservoir_mass_error_LS}
\end{figure}
\begin{figure}
  \centering
  \subfigure[Darcy model]{
  	\includegraphics[scale=0.46]
    {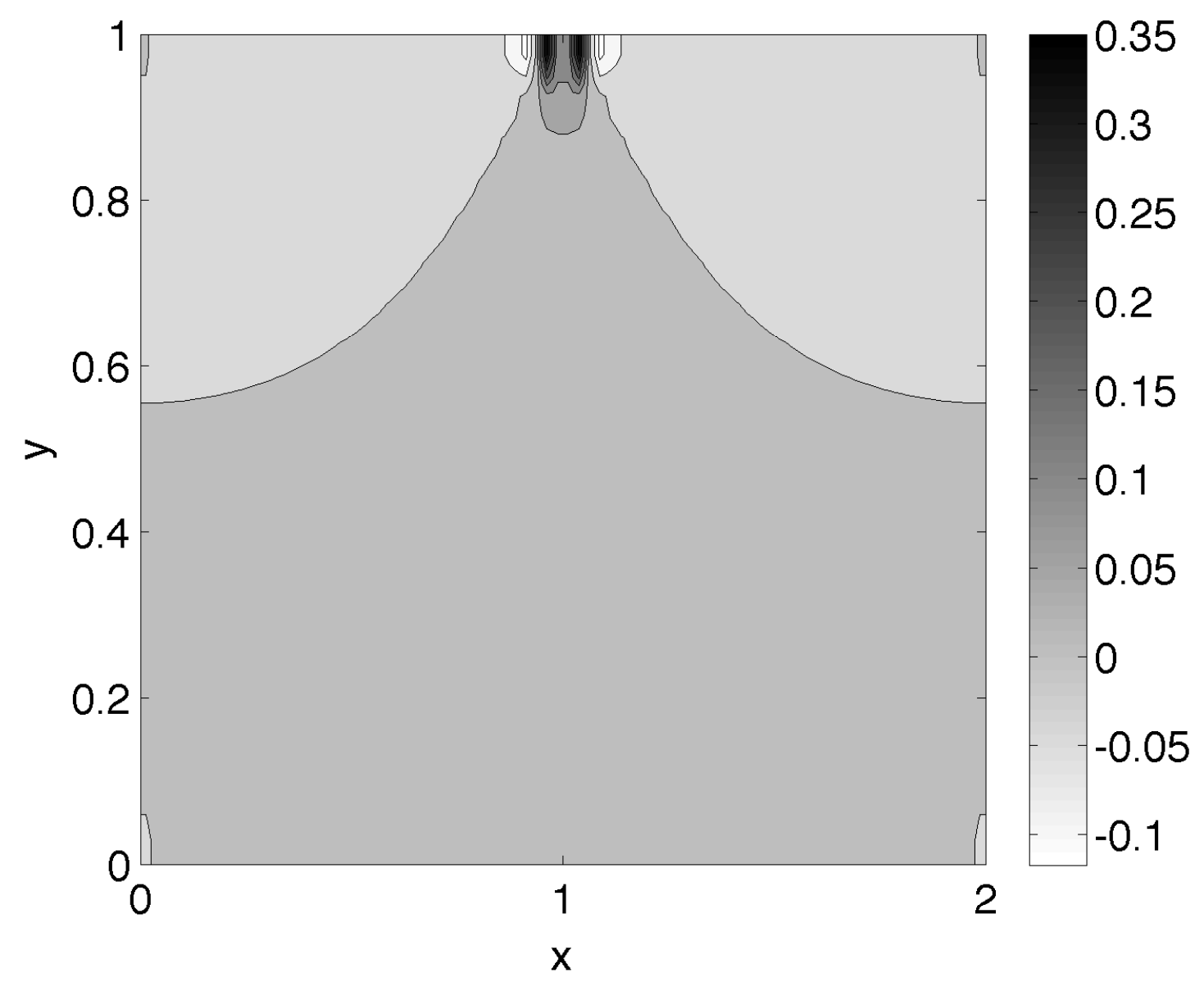}}
  \subfigure[Modified Barus]{
  	\includegraphics[scale=0.46]
    {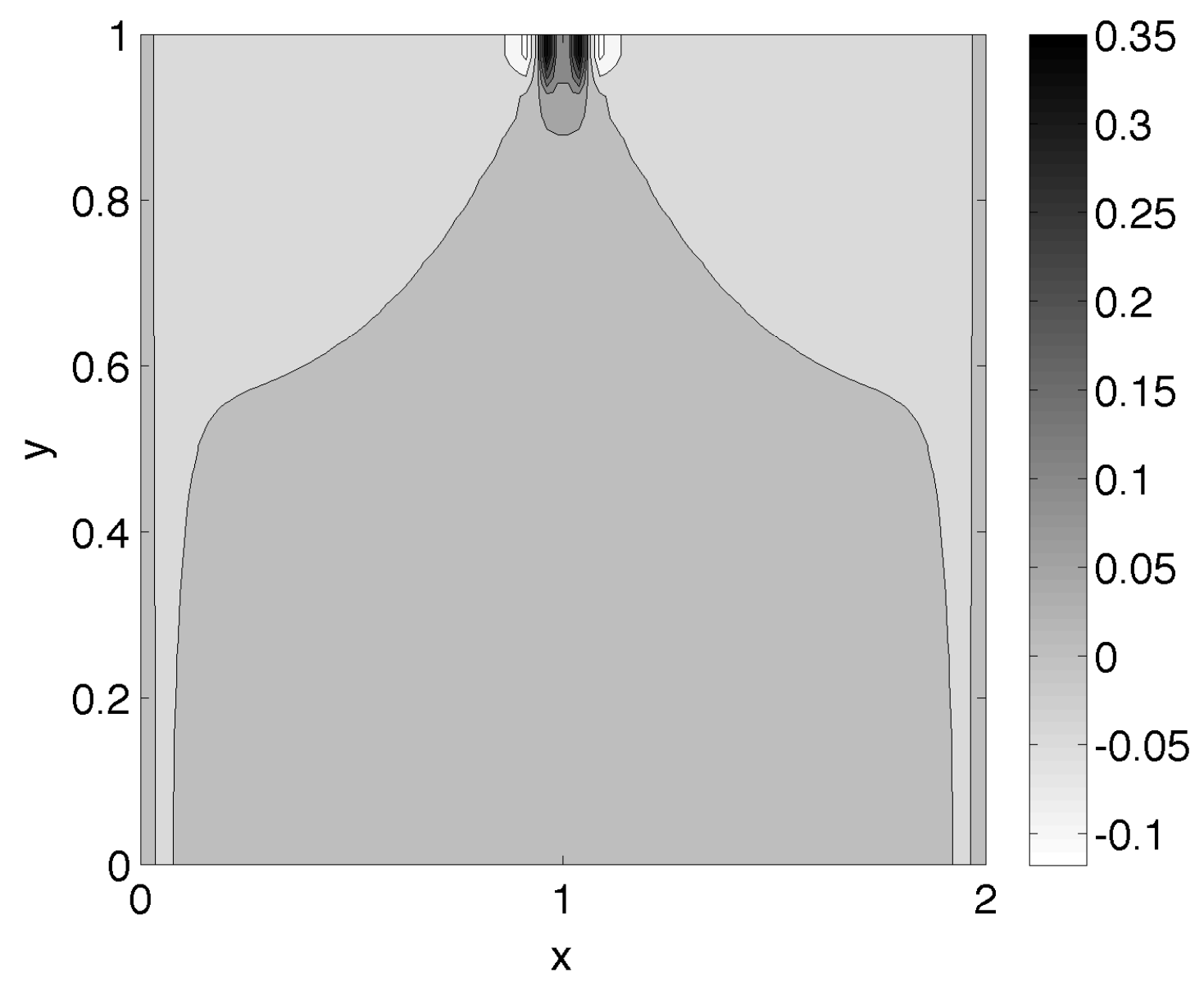}}
  \subfigure[Darcy-Forchheimer]{
  	\includegraphics[scale=0.46]
    {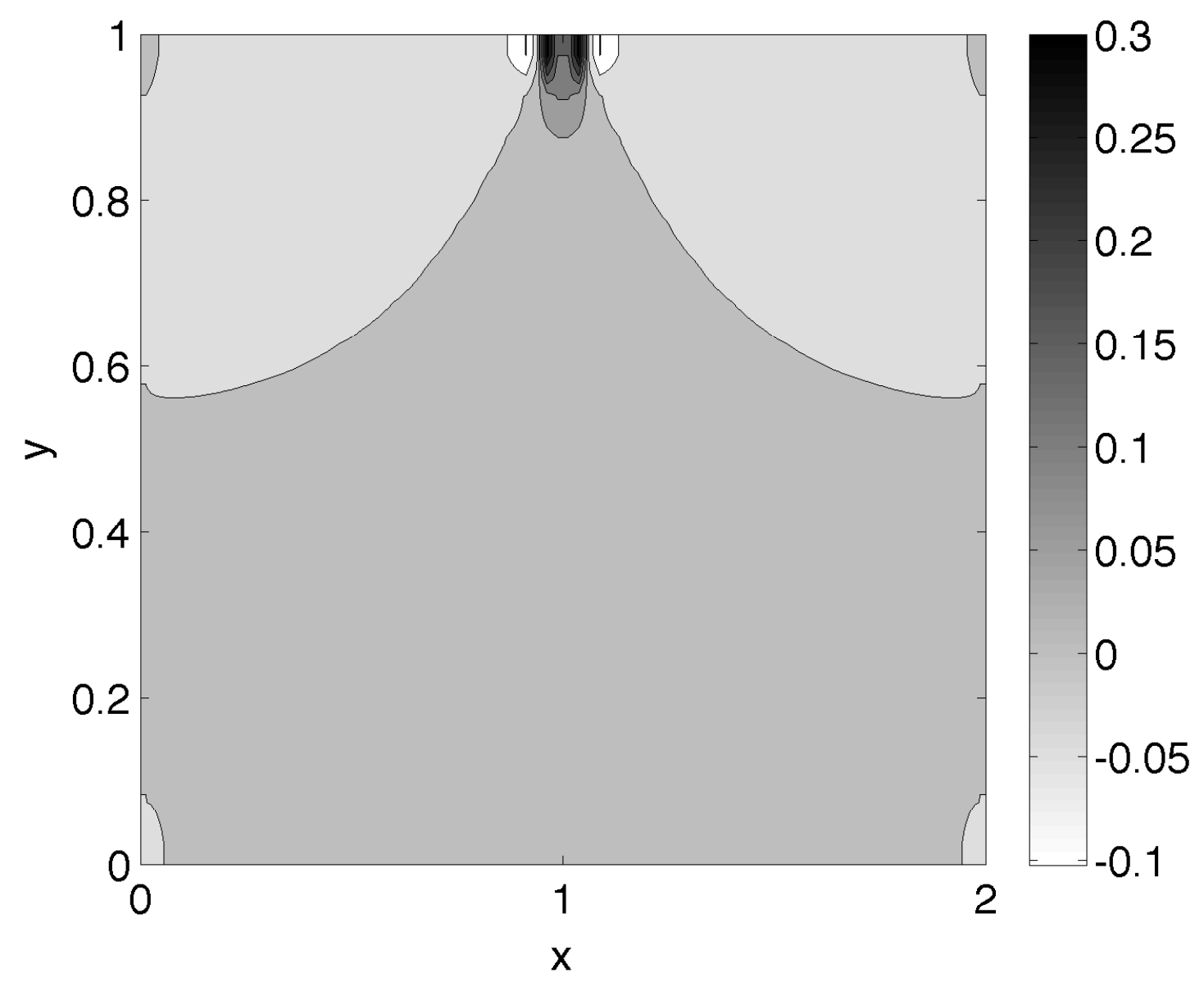}}
  \subfigure[Modified Darcy-Forchheimer Barus]{
  	\includegraphics[scale=0.46]
    {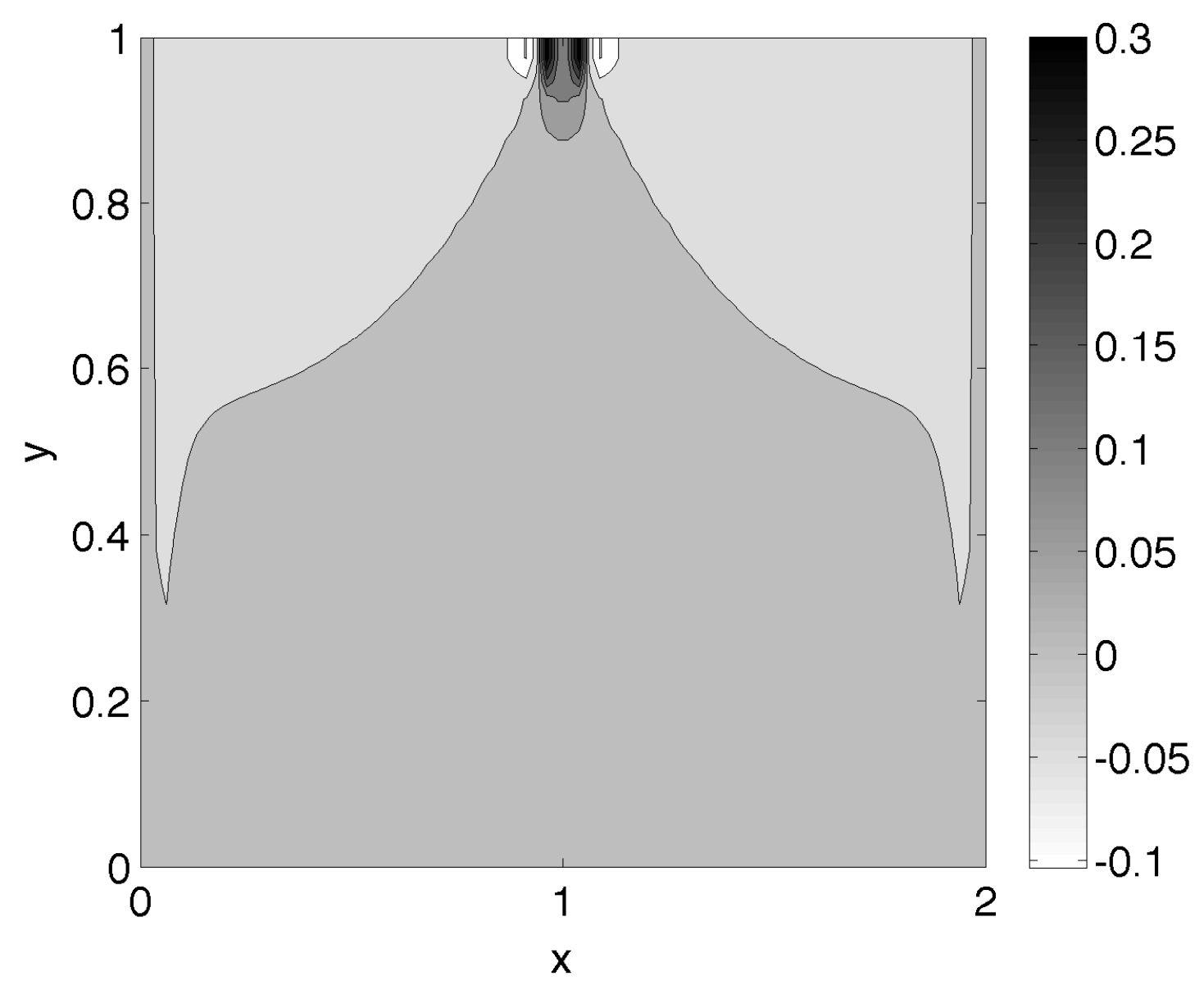}}
  \caption{Oil reservoir problem: ratios of local mass balance error over total predicted flux using VMS formalism}
  \label{Fig:Oil_reservoir_mass_error_VMS}
\end{figure}
\clearpage
\begin{figure}
  \centering
  \includegraphics[scale=0.6]{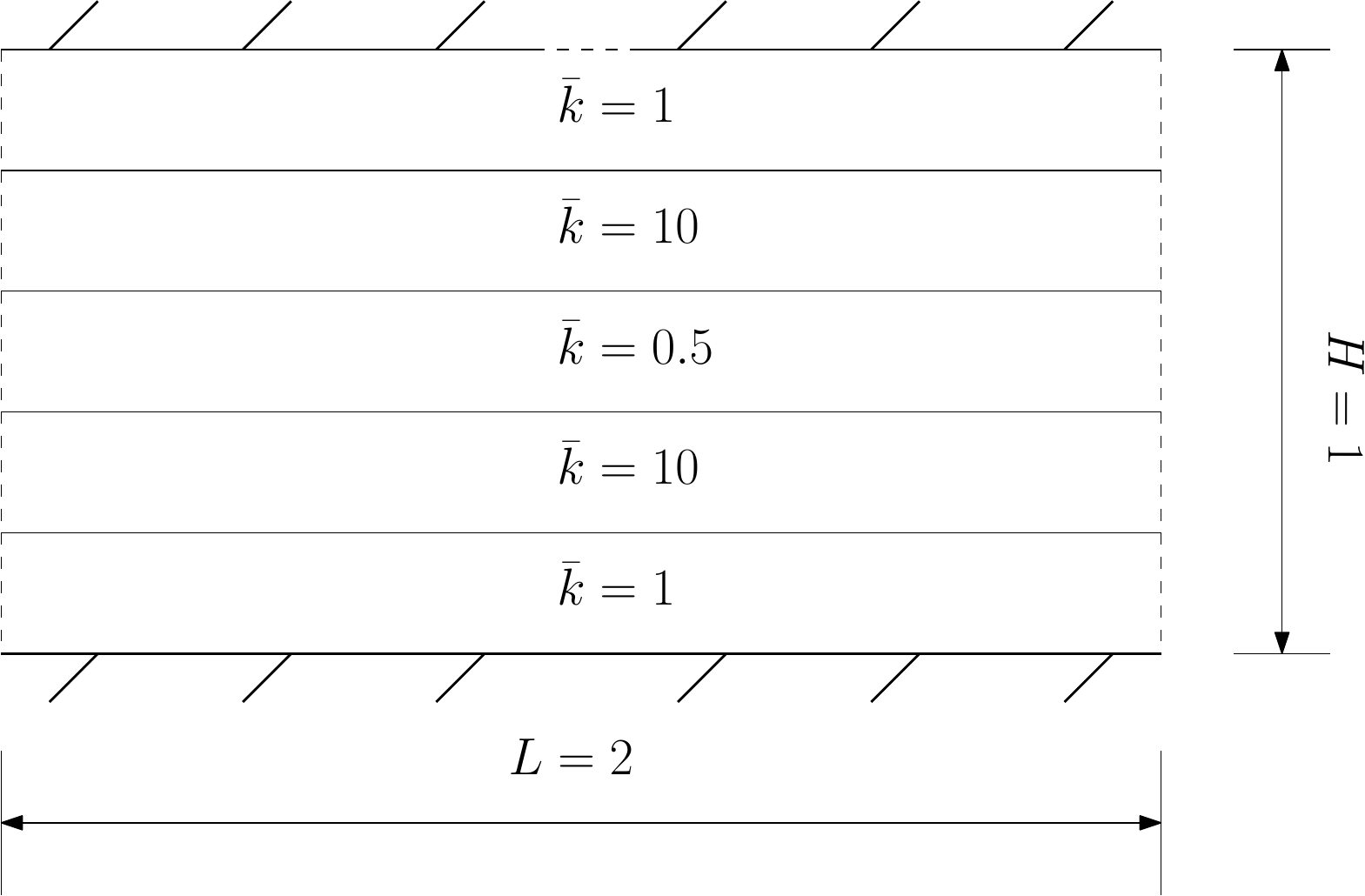}
  \caption{Layered reservoir problem: A pictorial description.}
   \label{Fig:layers}
\end{figure}
\begin{figure}
  \centering
  \includegraphics[scale=0.45]
  	{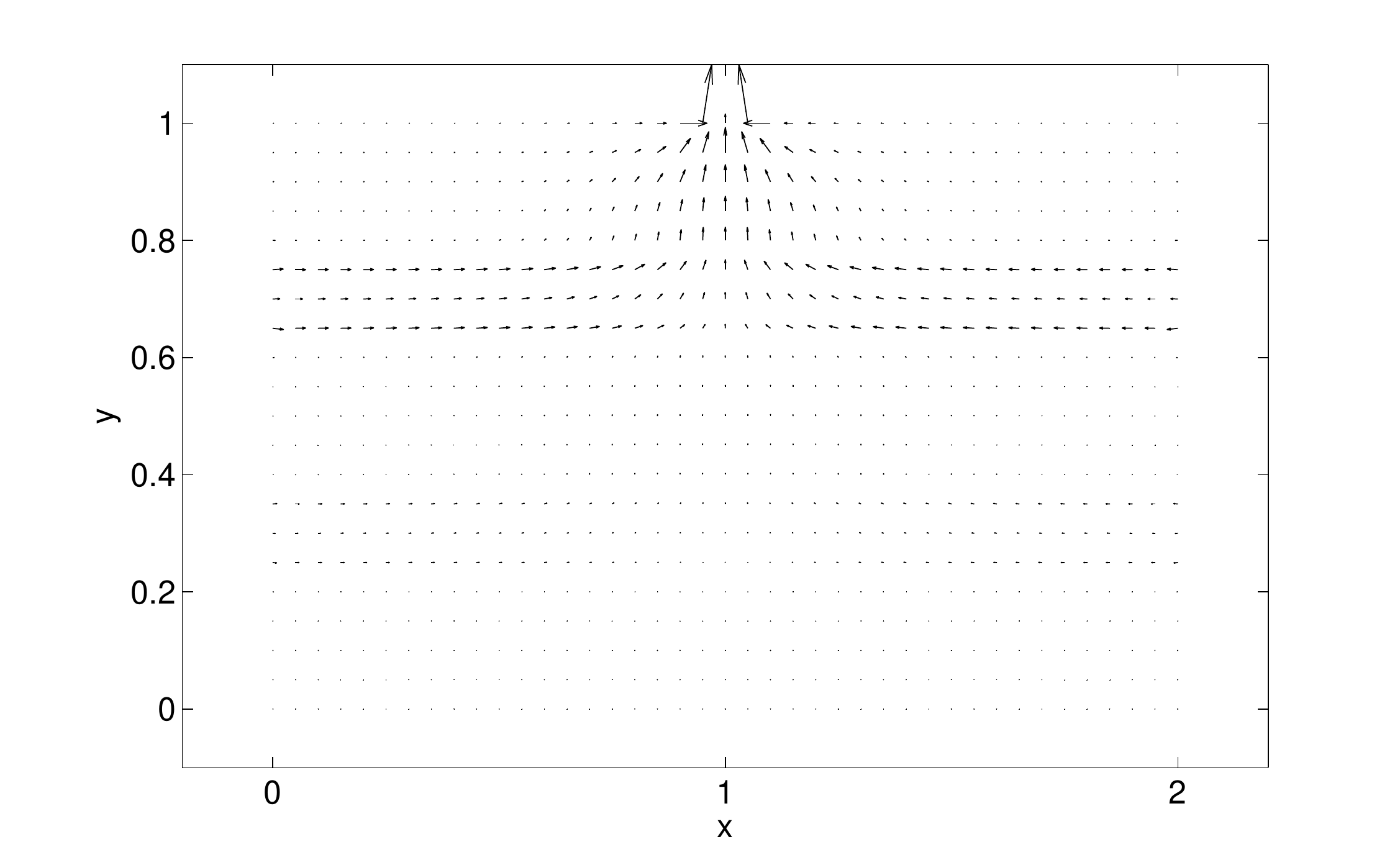}
  \caption{Layered reservoir problem: qualitative velocity vector field}
  \label{Fig:layered_reservoir_quiver}
\end{figure}
\begin{figure}
  \centering
  \subfigure[Darcy model]{
  	\includegraphics[scale=0.46]
    {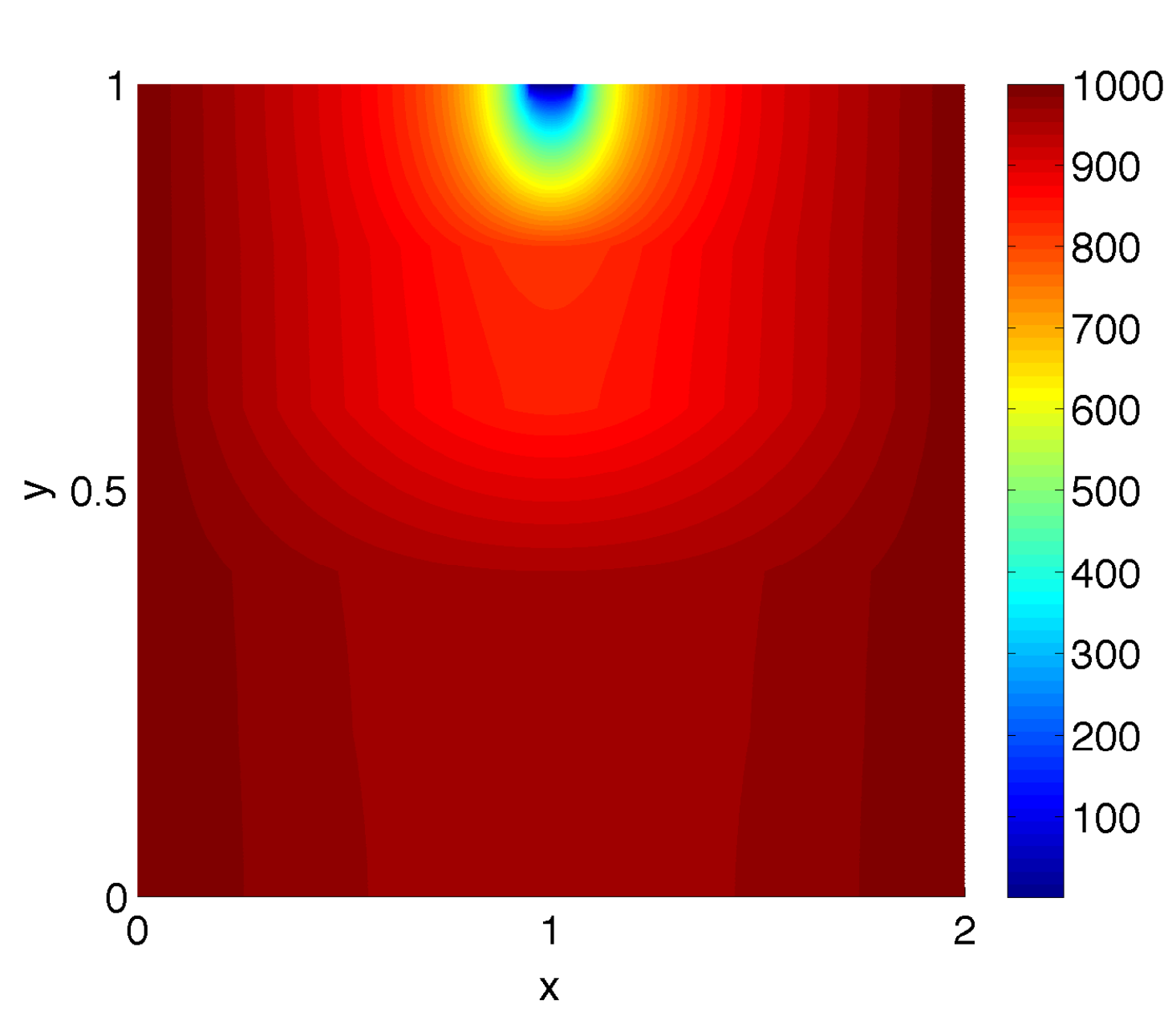}}
  \subfigure[Modified Barus]{
  	\includegraphics[scale=0.46]
    {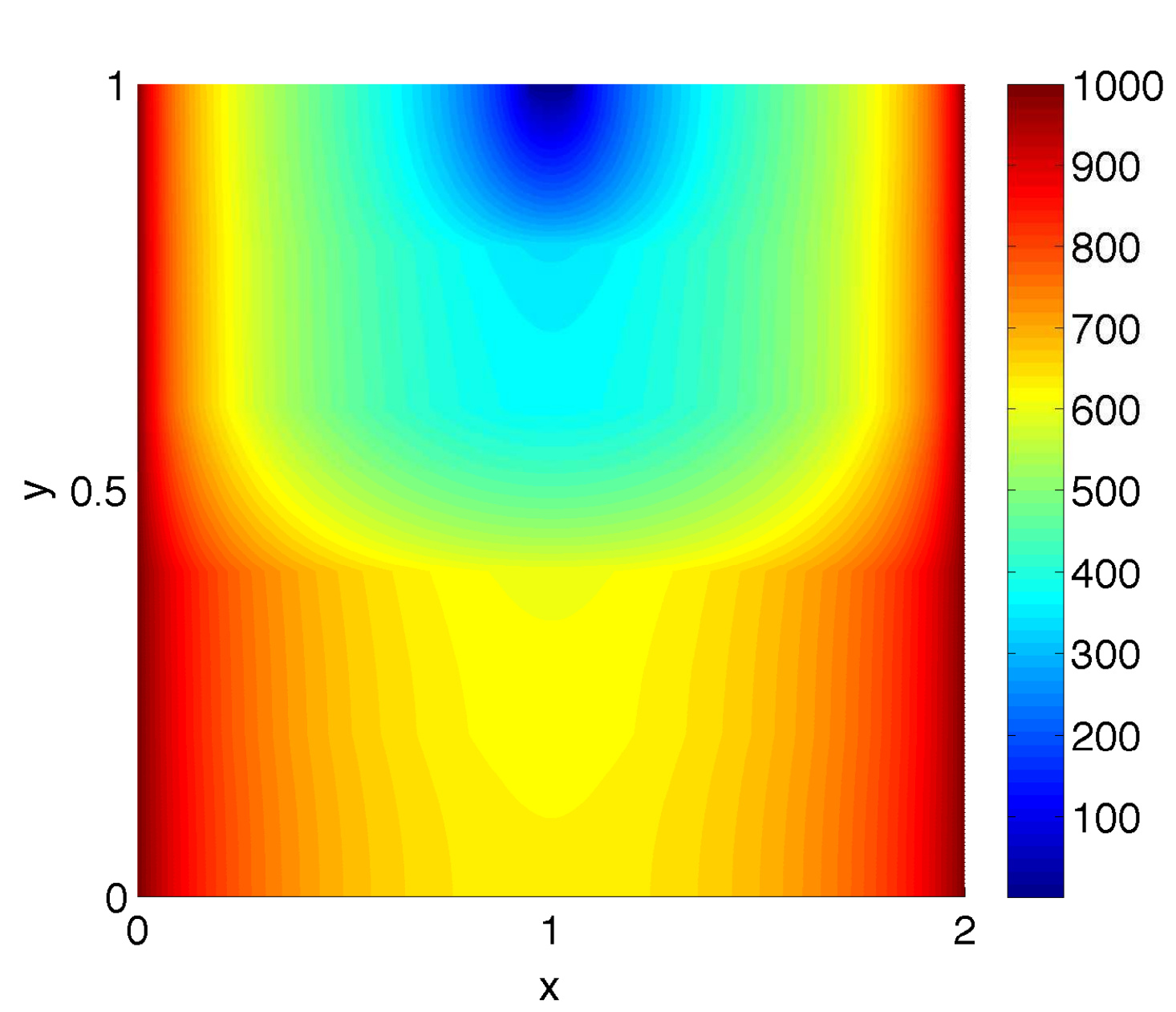}}
  \subfigure[Darcy-Forchheimer]{
  	\includegraphics[scale=0.46]
    {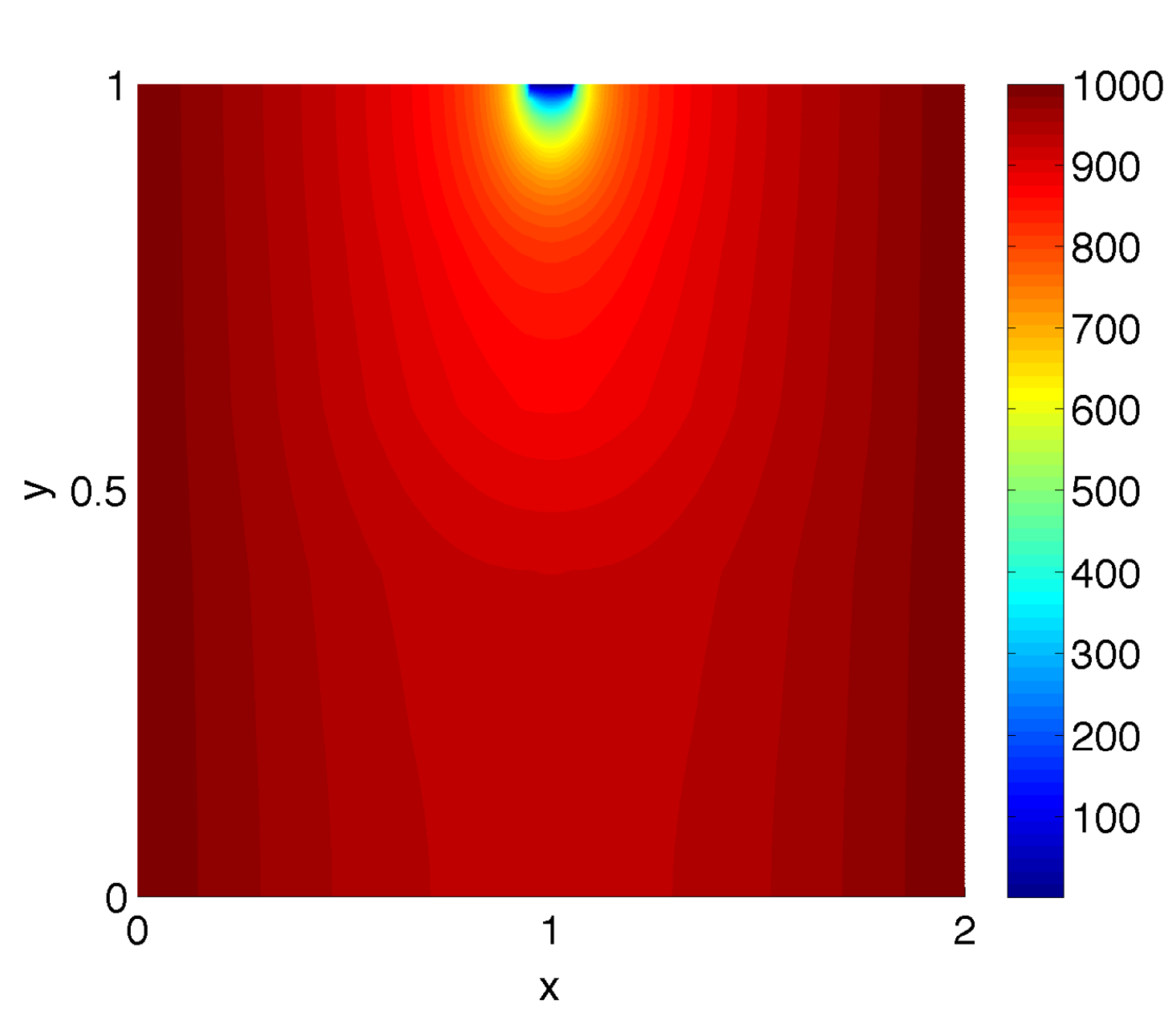}}
  \subfigure[Modified Darcy-Forchheimer Barus]{
  	\includegraphics[scale=0.46]
    {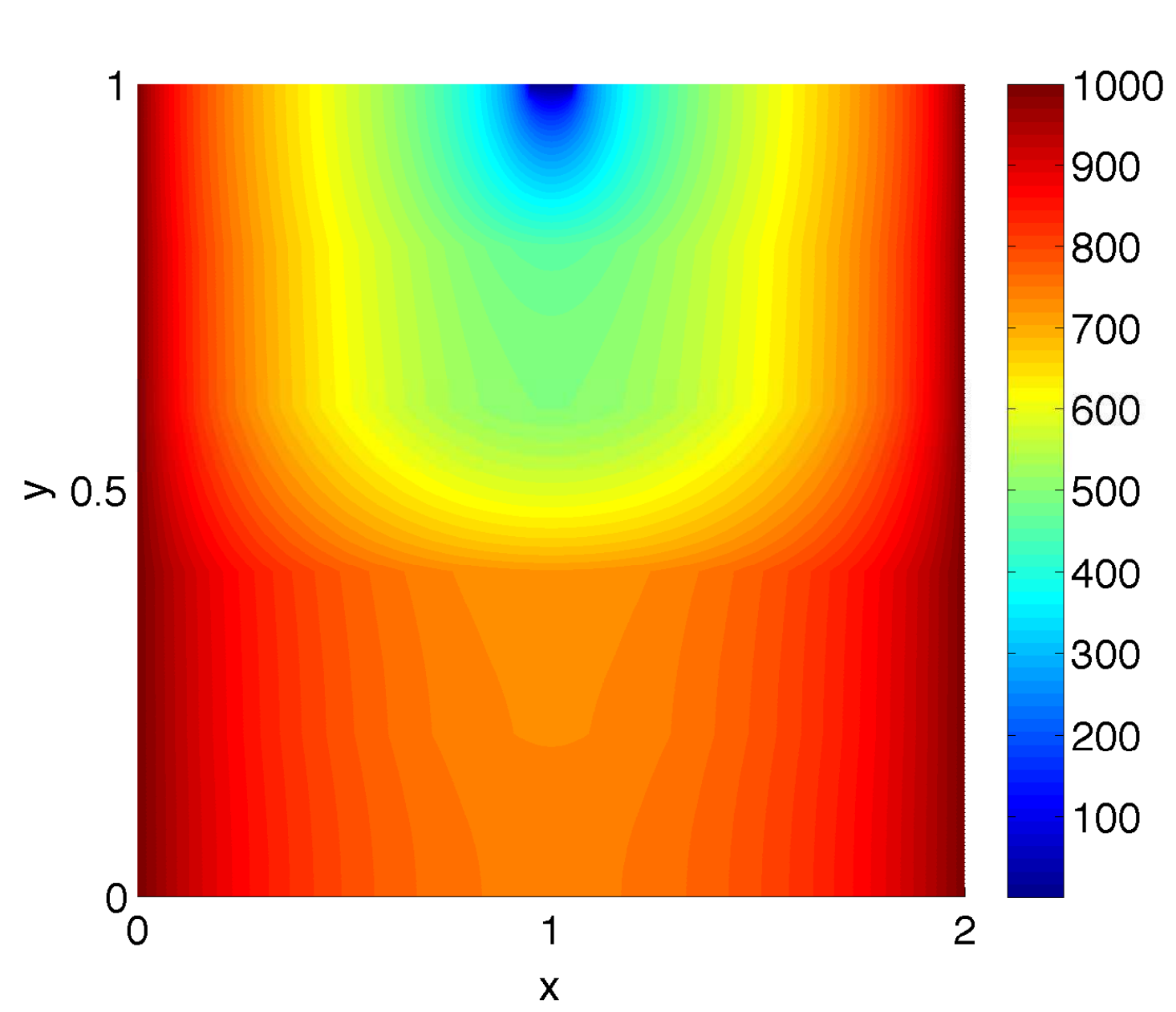}}
  \caption{Layered reservoir problem: pressure contours using LS formalism}
  \label{Fig:Layered_reservoir_pressure_LS}
\end{figure}
\begin{figure}
  \centering
  \subfigure[Darcy model]{
  	\includegraphics[scale=0.46]
    {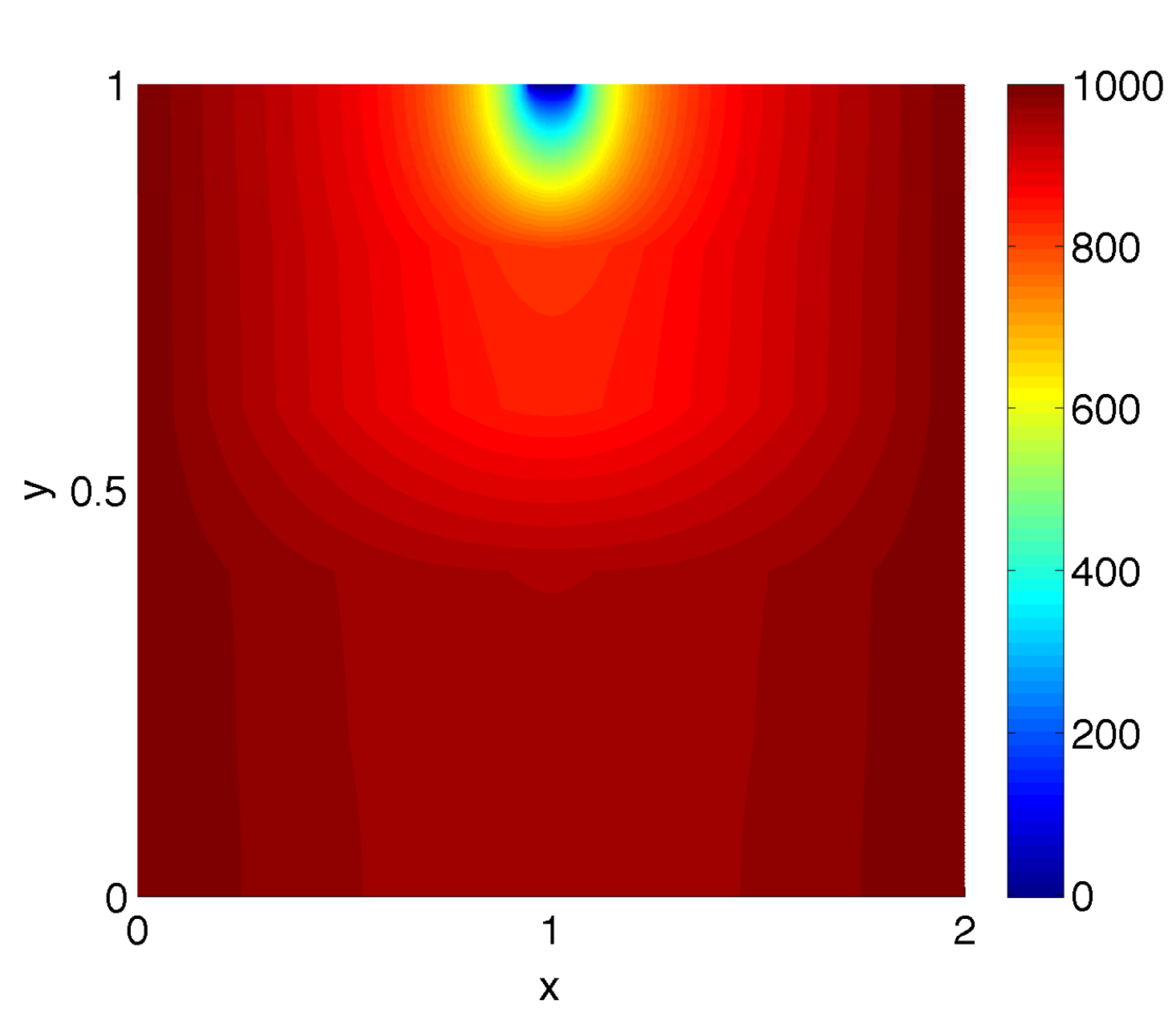}}
  \subfigure[Modified Barus]{
  	\includegraphics[scale=0.46]
    {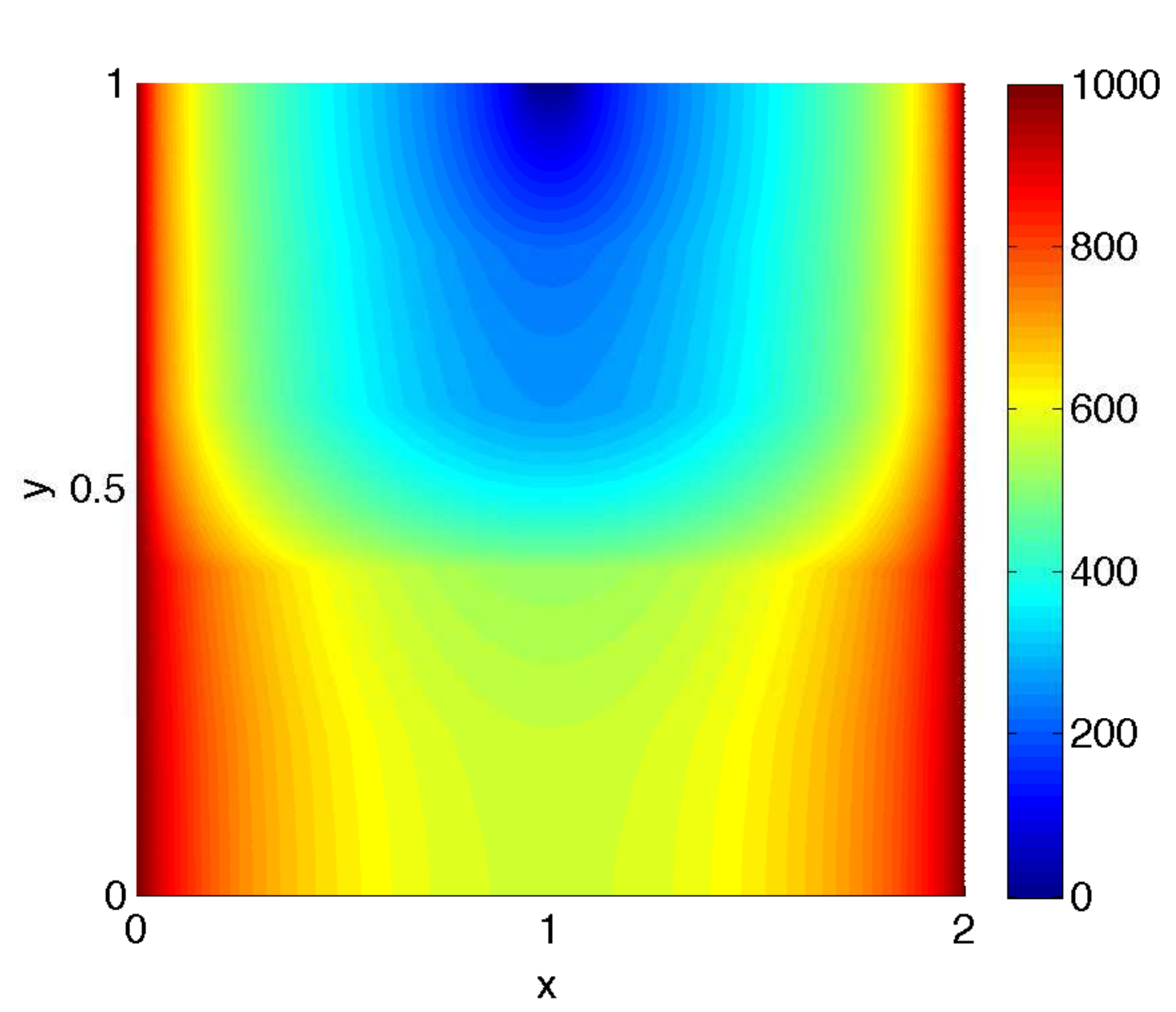}}
  \subfigure[Darcy-Forchheimer]{
  	\includegraphics[scale=0.46]
    {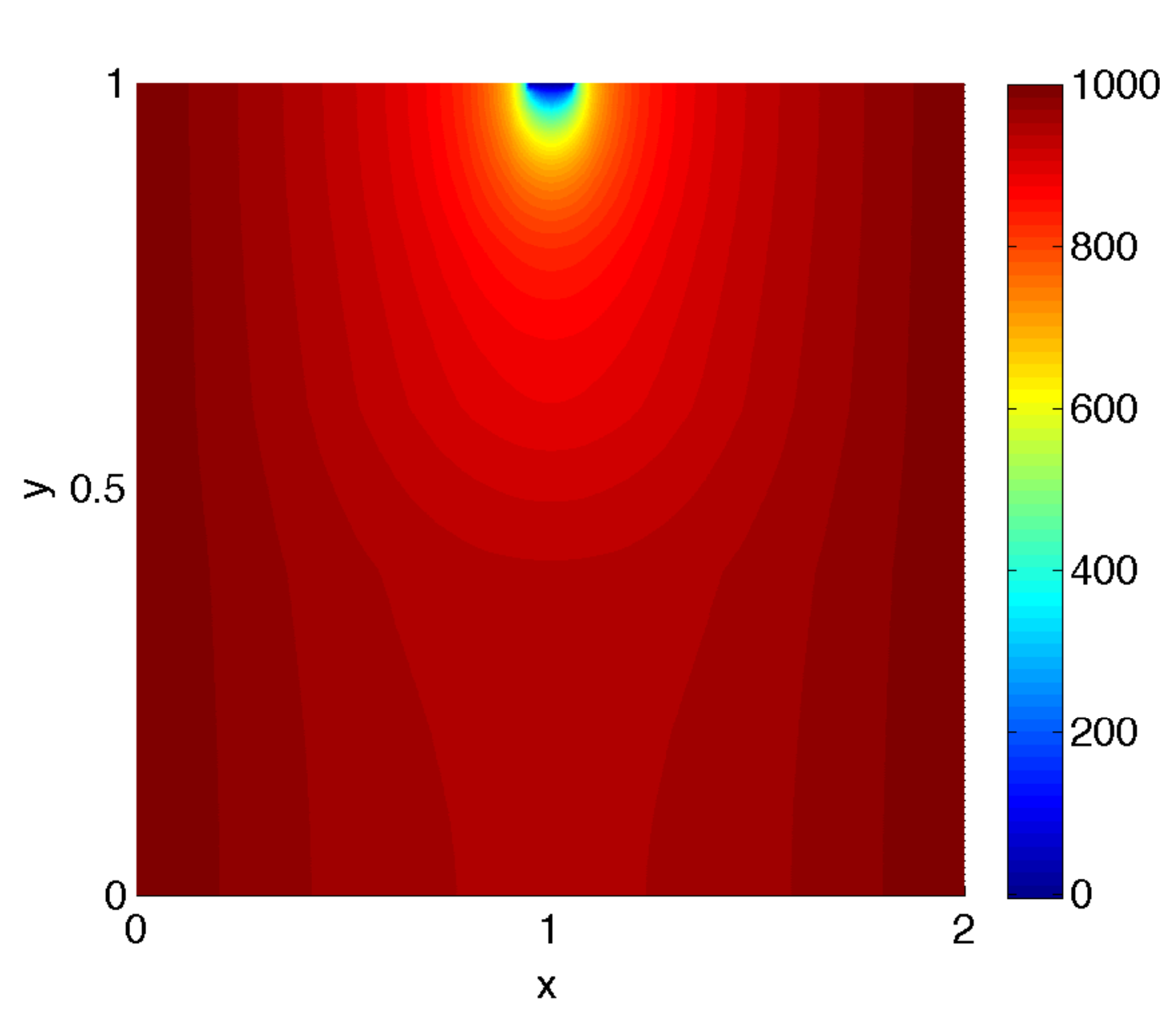}}
  \subfigure[Modified Darcy-Forchheimer Barus]{
  	\includegraphics[scale=0.46]
    {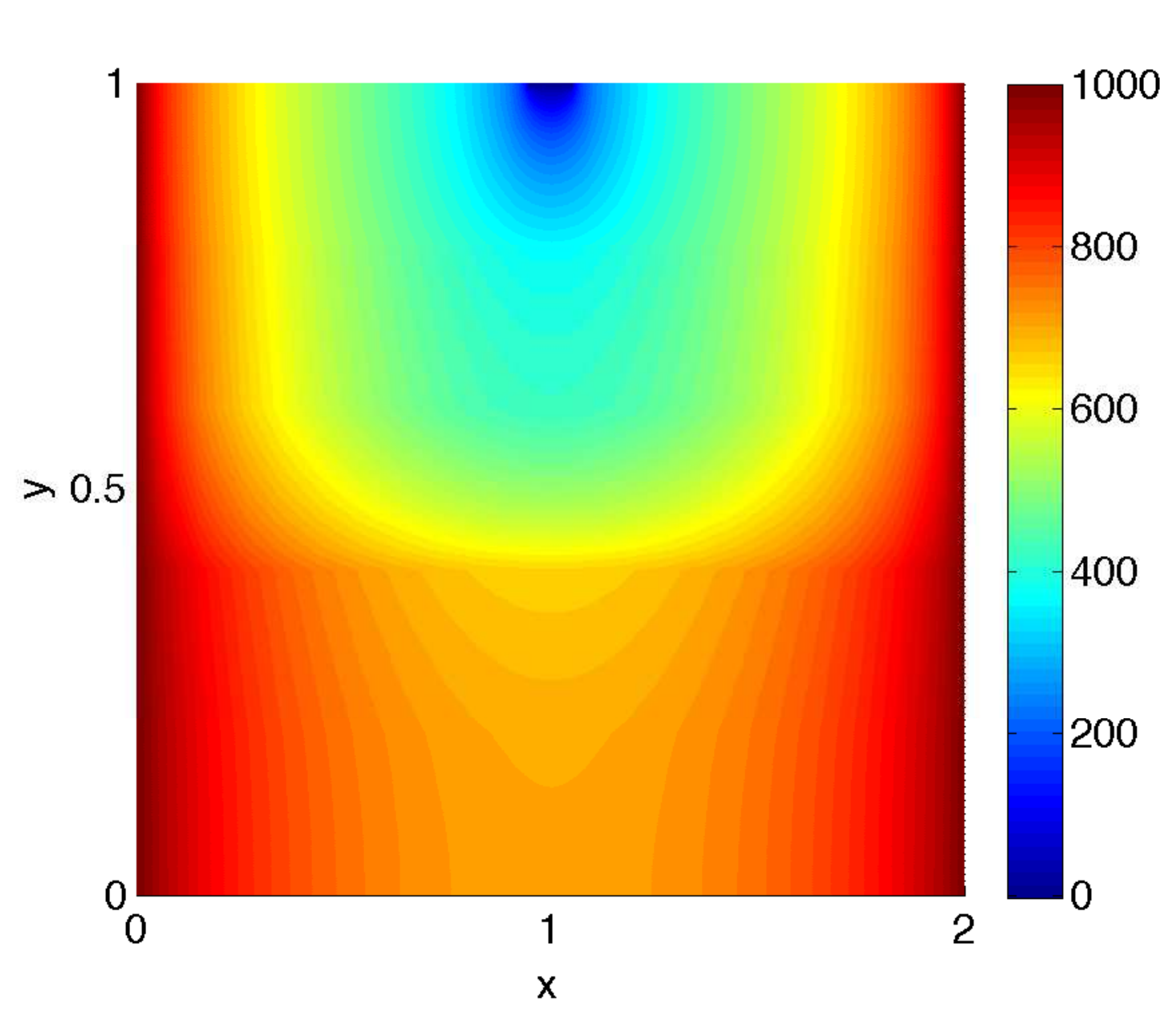}}
  \caption{Layered reservoir problem: pressure contours using VMS formalism}
  \label{Fig:Layered_reservoir_pressure_VMS}
\end{figure}
\begin{figure}
  \centering
  \subfigure[Darcy model]{
  	\includegraphics[scale=0.46]
    {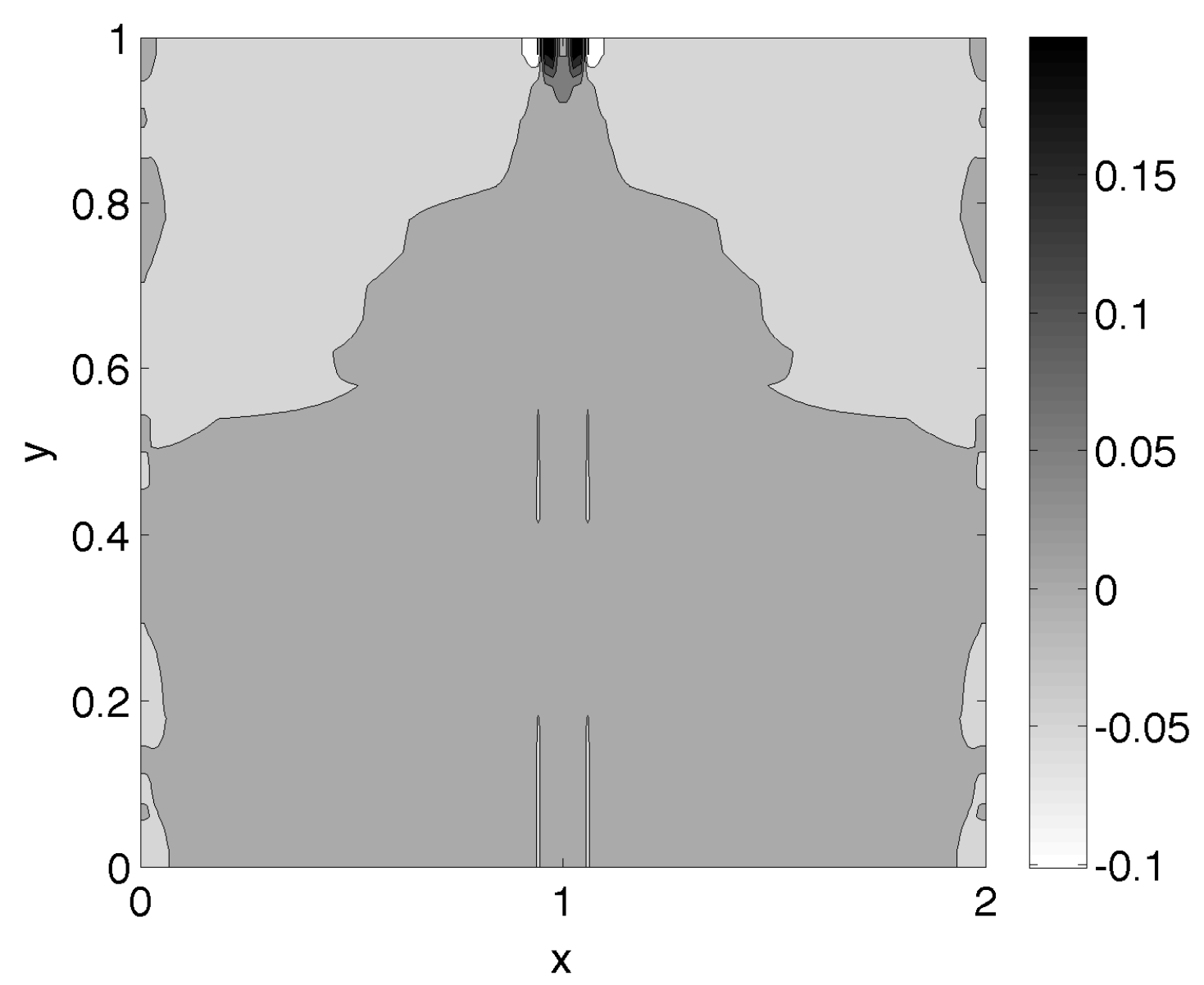}}
  \subfigure[Modified Barus]{
  	\includegraphics[scale=0.46]
    {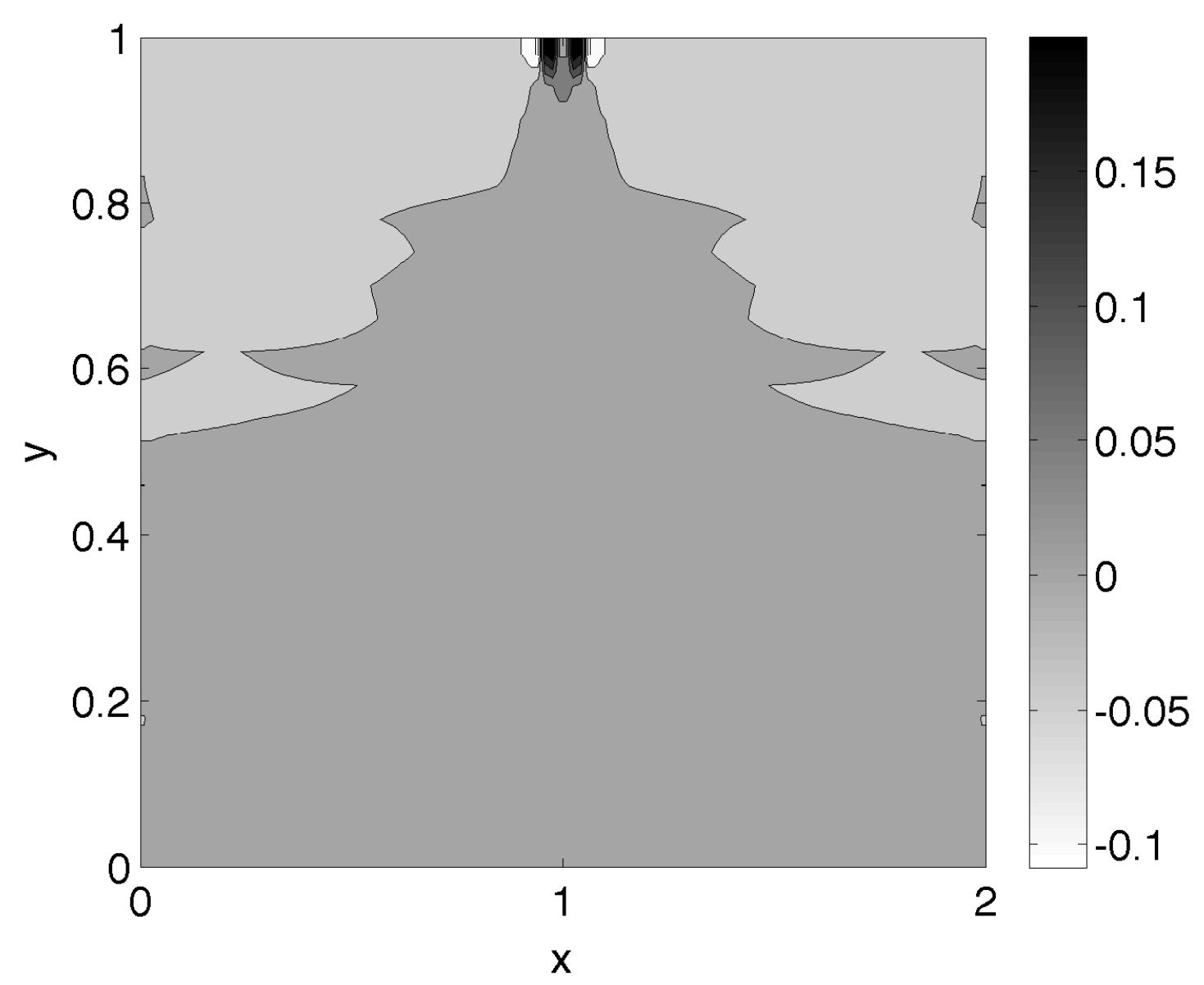}}
  \subfigure[Darcy-Forchheimer]{
  	\includegraphics[scale=0.46]
    {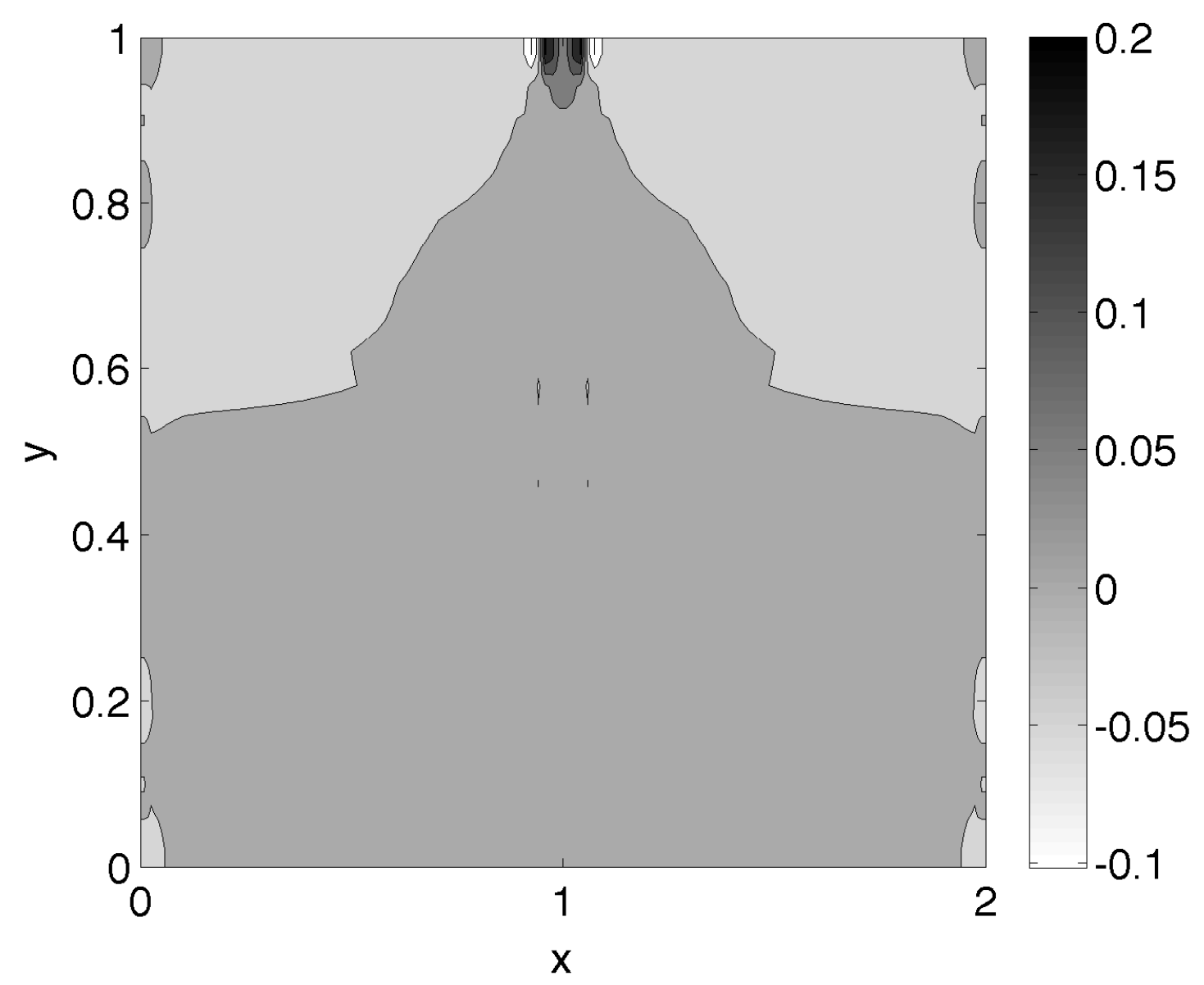}}
  \subfigure[Modified Darcy-Forchheimer Barus]{
  	\includegraphics[scale=0.46]
    {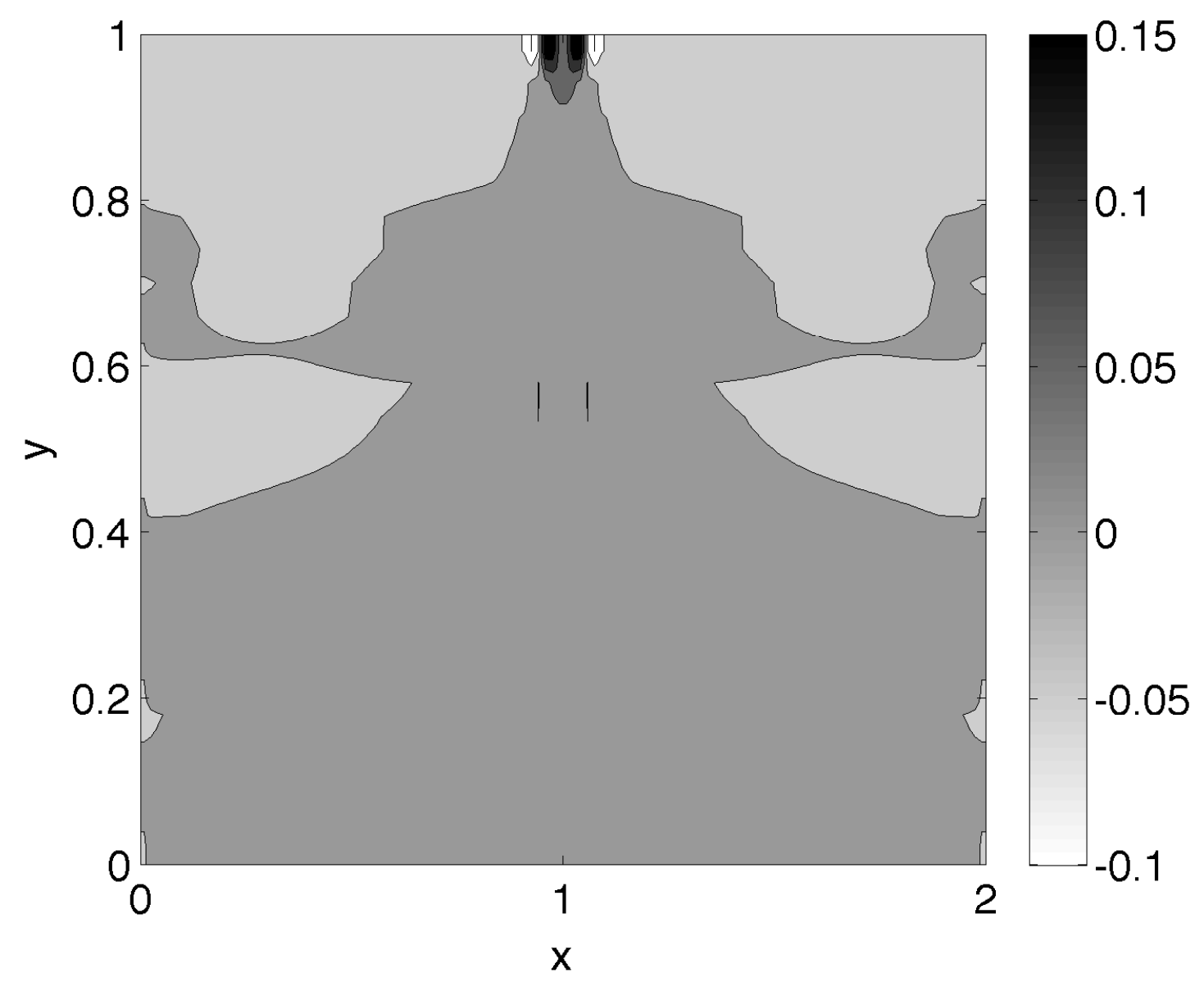}}
  \caption{Layered reservoir problem: ratios of local mass balance error over total predicted flux using LS formalism}
  \label{Fig:Layered_reservoir_mass_error_LS}
\end{figure}
\begin{figure}
  \centering
  \subfigure[Darcy model]{
  	\includegraphics[scale=0.46]
    {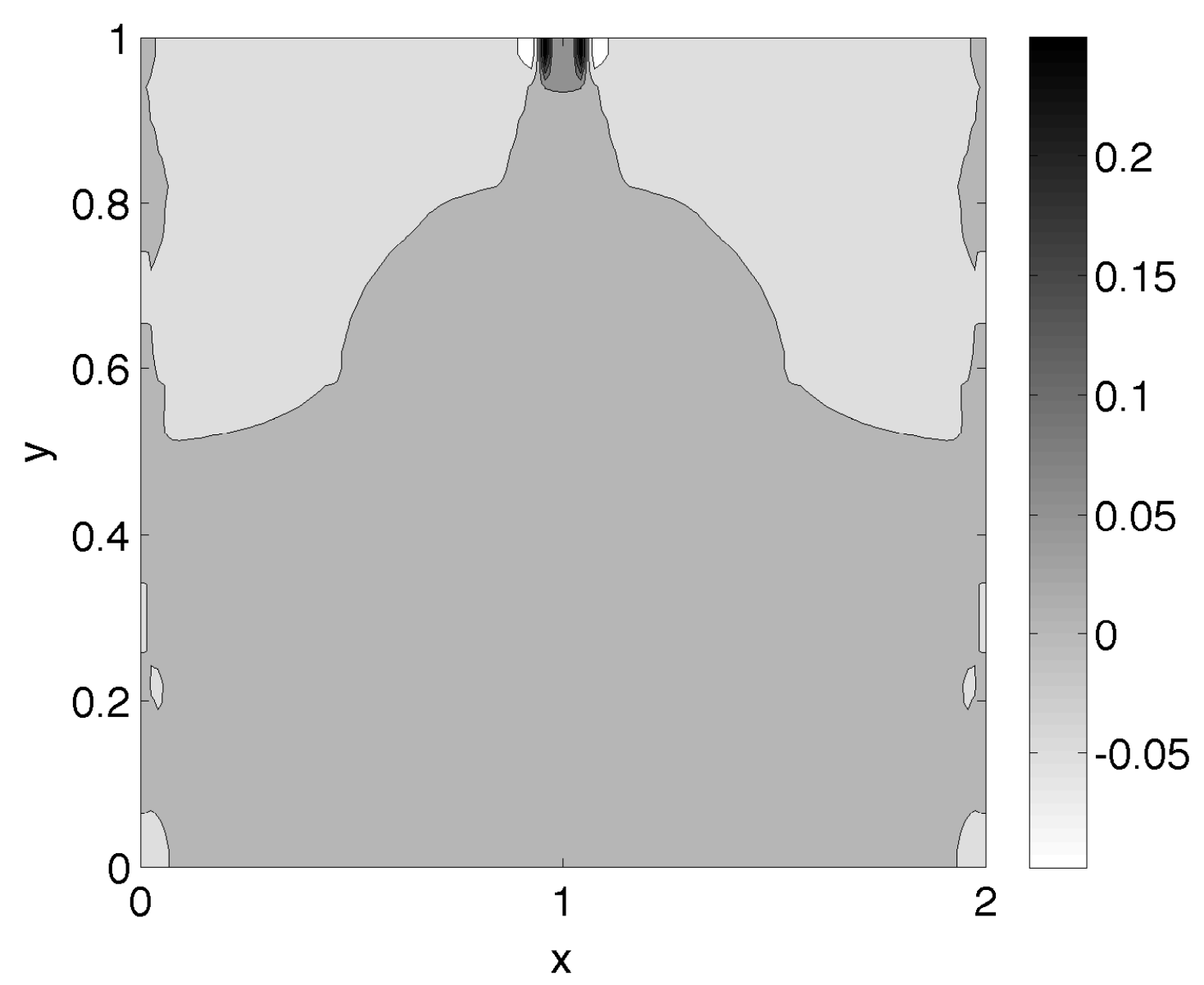}}
  \subfigure[Modified Barus]{
  	\includegraphics[scale=0.46]
    {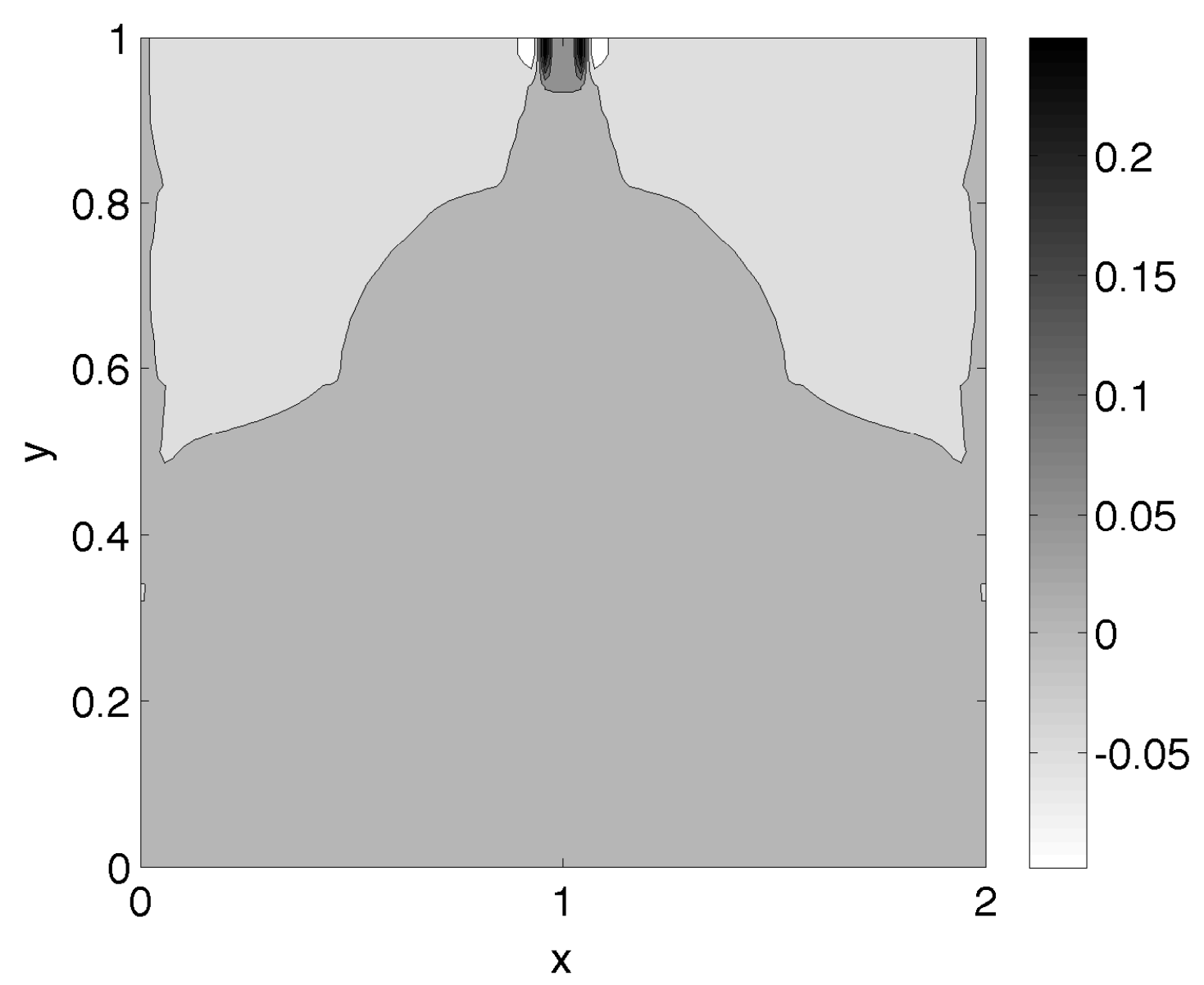}}
  \subfigure[Darcy-Forchheimer]{
  	\includegraphics[scale=0.46]
    {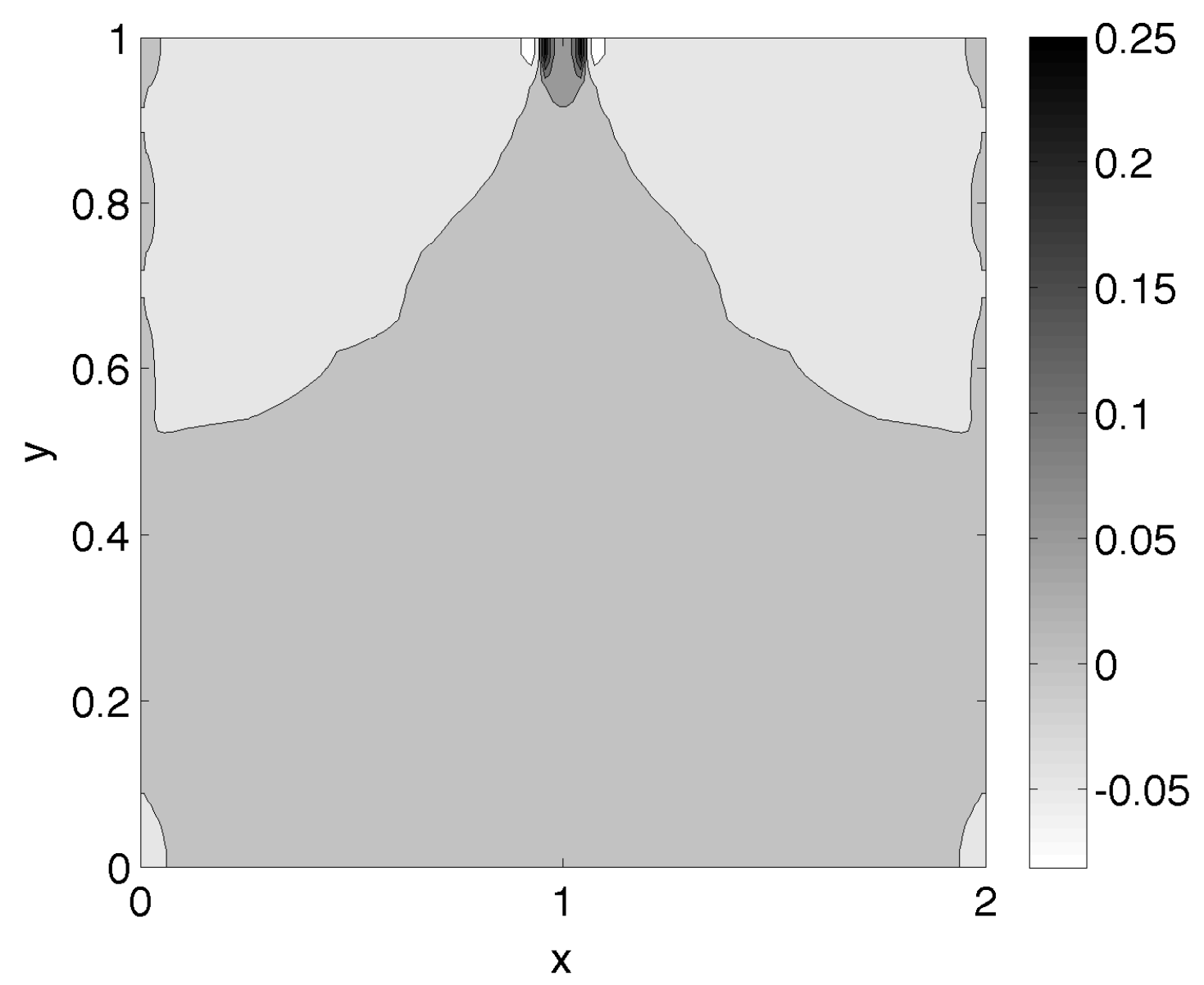}}
  \subfigure[Modified Darcy-Forchheimer Barus]{
  	\includegraphics[scale=0.46]
    {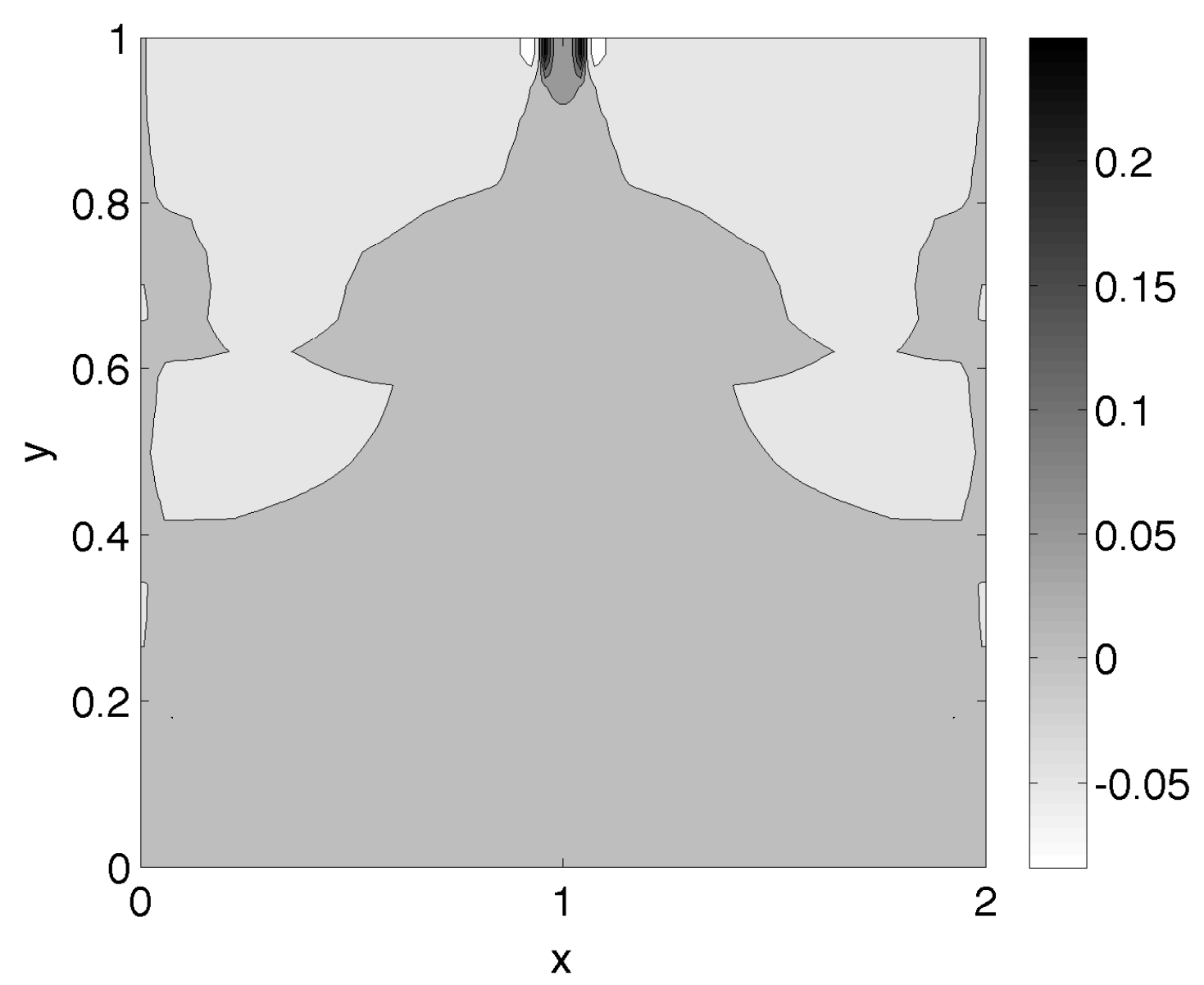}}
  \caption{Layered reservoir problem: ratios of local mass balance error over total predicted flux using VMS formalism}
  \label{Fig:Layered_reservoir_mass_error_VMS}
\end{figure}
\clearpage
\begin{figure}
  \centering
  \includegraphics[scale=1]{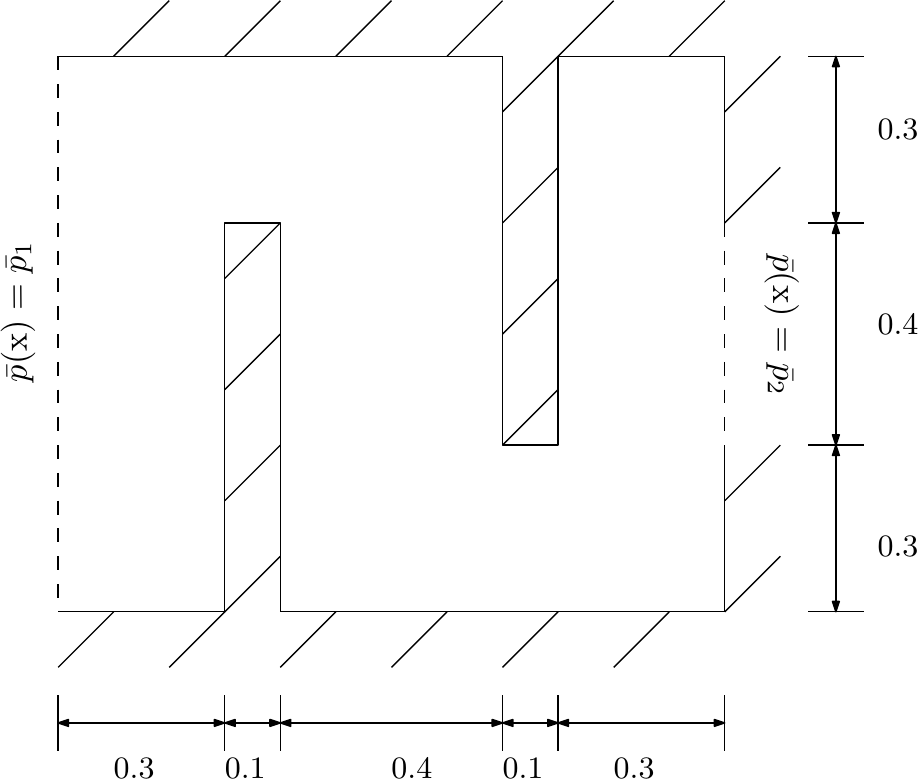}
  \caption{Staggered impervious zones problem: 
    A pictorial description. \label{Fig:staggered}}
\end{figure}
\begin{figure}
  \centering
  \includegraphics[scale=0.46]
  	{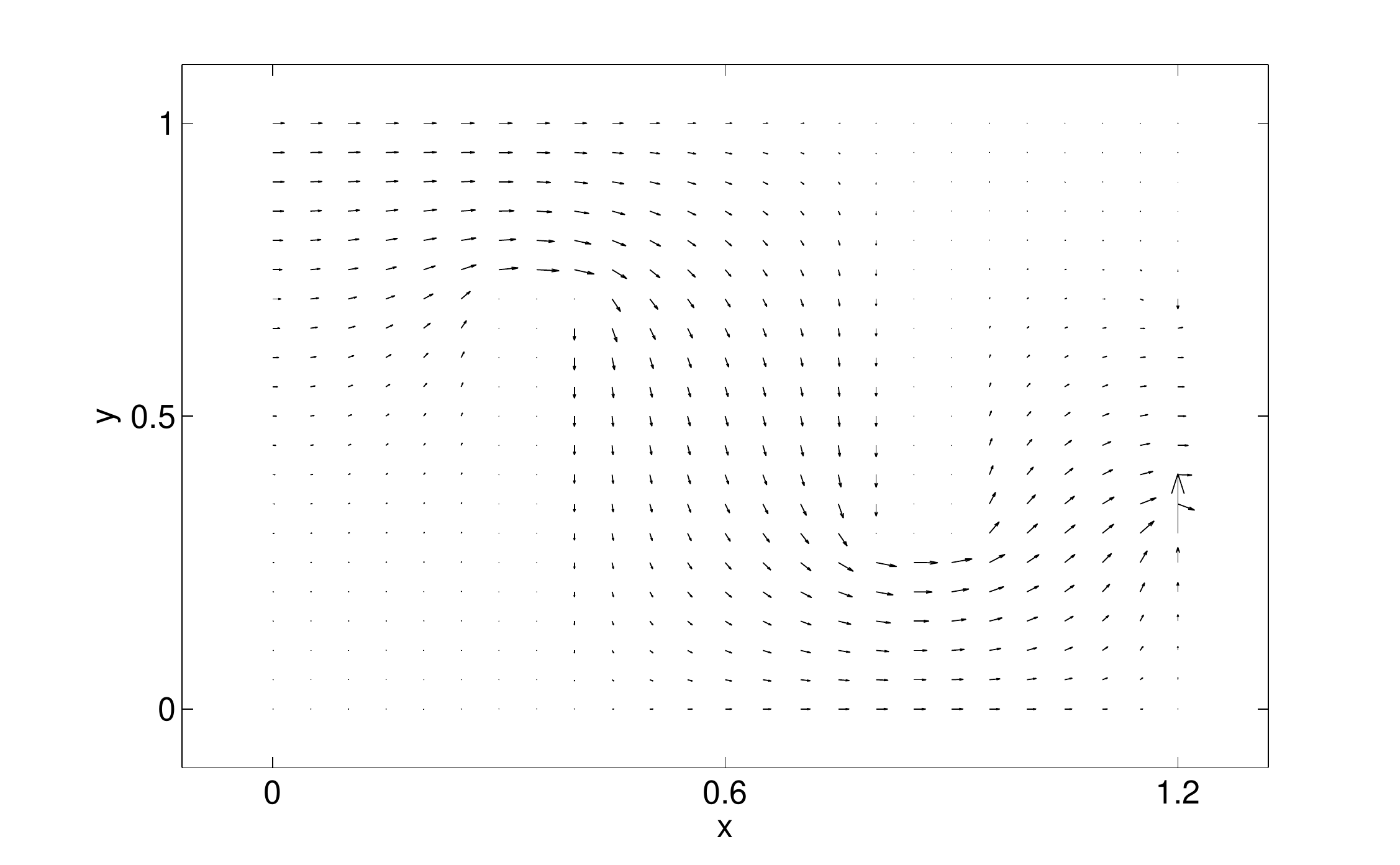}
  \caption{Staggered impervious zones problem: qualitative velocity vector field}
  \label{Fig:staggered_reservoir_quiver}
\end{figure}
\begin{figure}
  \centering
  \subfigure[Darcy model]{
  	\includegraphics[scale=0.46]
    {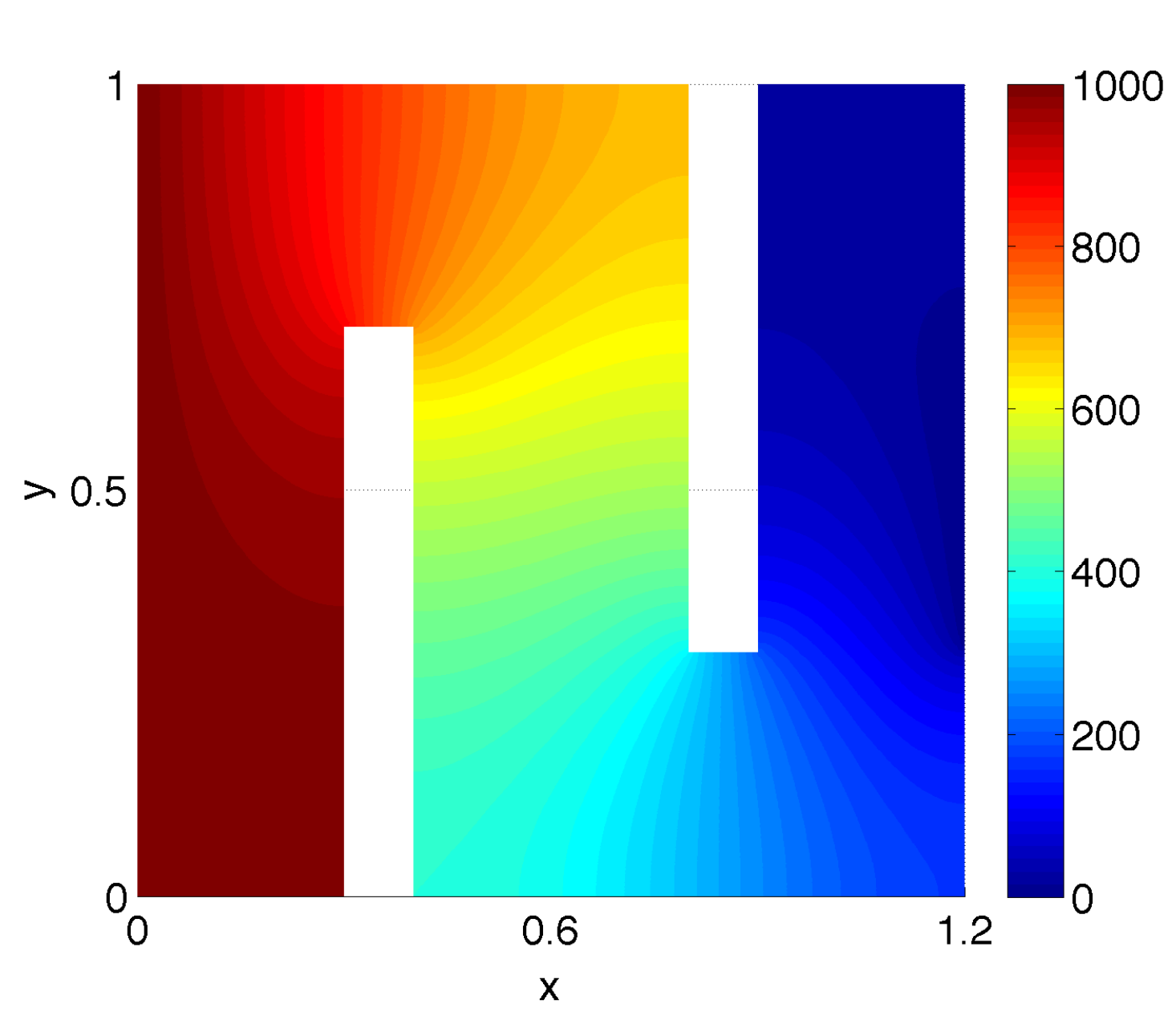}}
  \subfigure[Modified Barus]{
  	\includegraphics[scale=0.46]
    {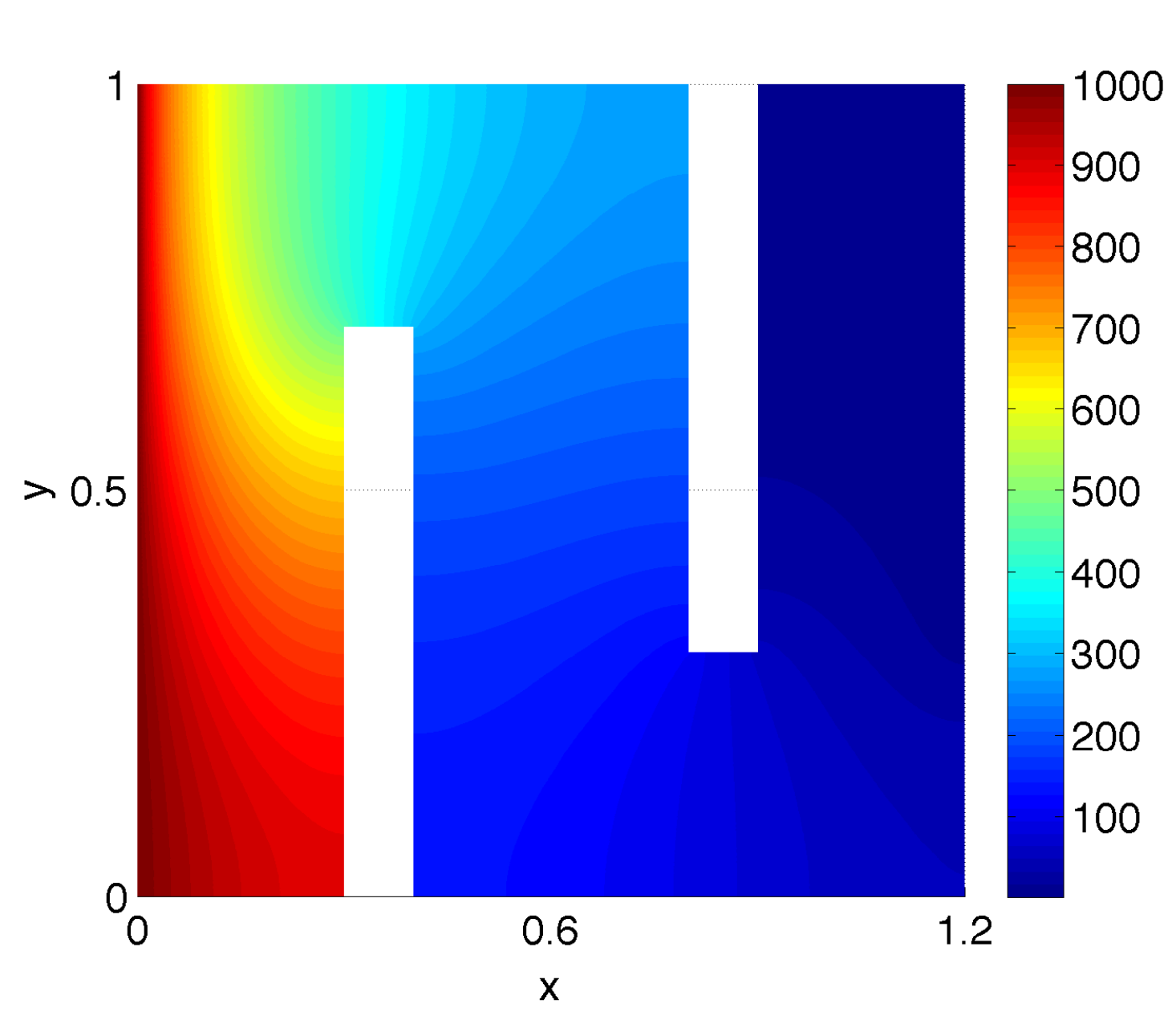}}
  \subfigure[Darcy-Forchheimer]{
  	\includegraphics[scale=0.46]
    {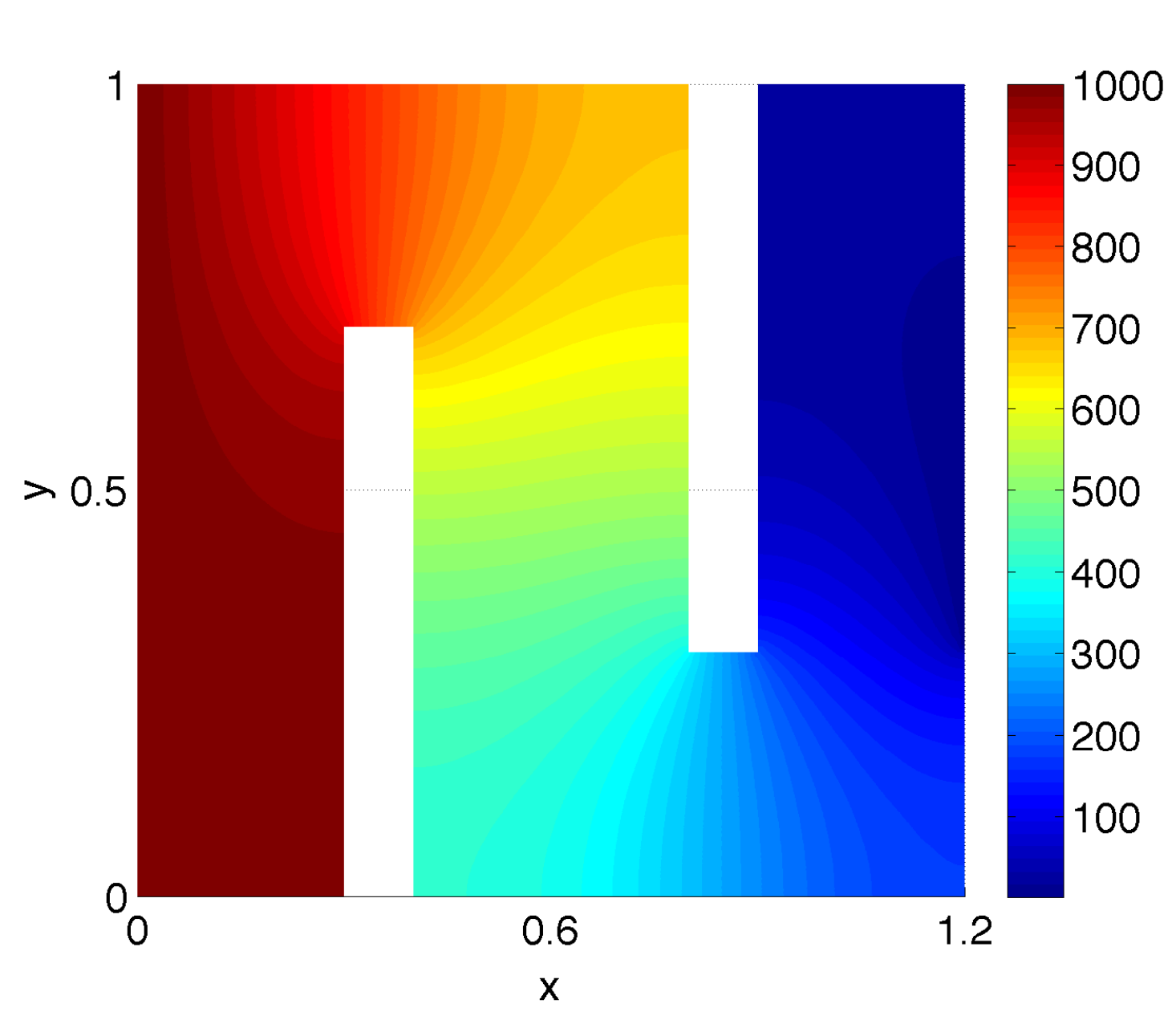}}
  \subfigure[Modified Darcy-Forchheimer Barus]{
  	\includegraphics[scale=0.46]
    {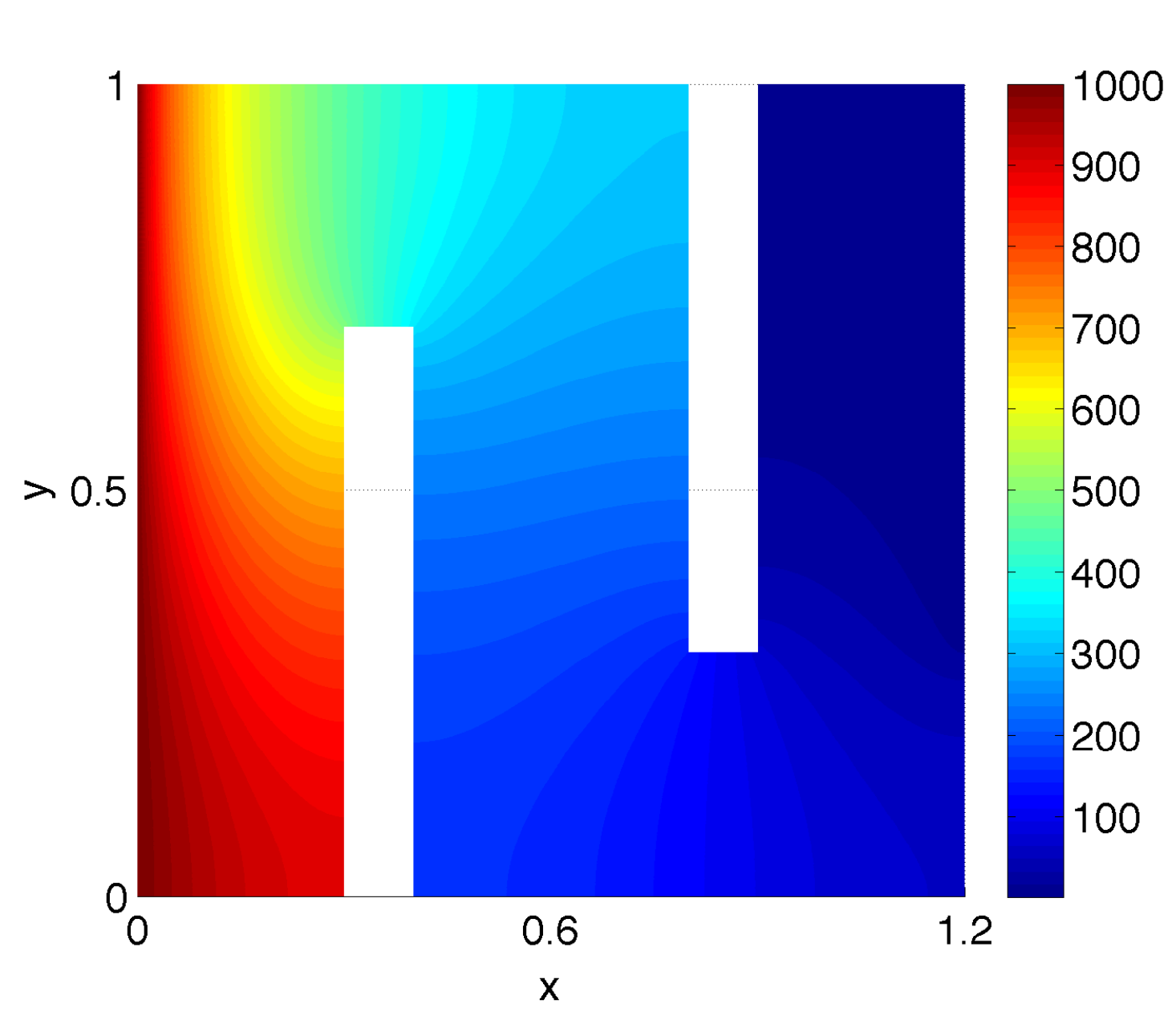}}
  \caption{Staggered impervious zones problem: pressure contours using LS formalism}
  \label{Fig:Staggered_reservoir_pressure_LS}
\end{figure}
\begin{figure}
  \centering
  \subfigure[Darcy model]{
  	\includegraphics[scale=0.46]
    {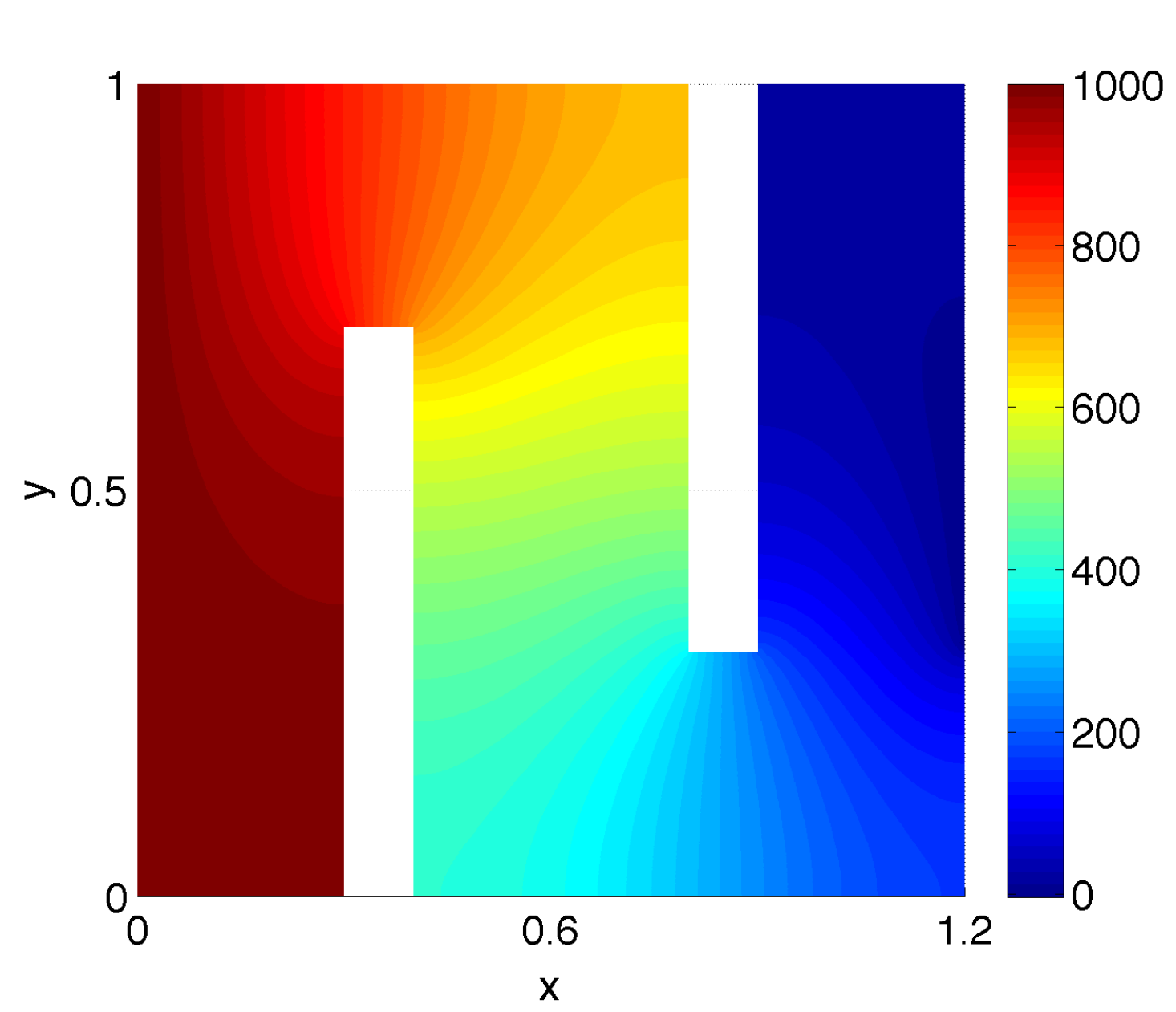}}
  \subfigure[Modified Barus]{
  	\includegraphics[scale=0.46]
    {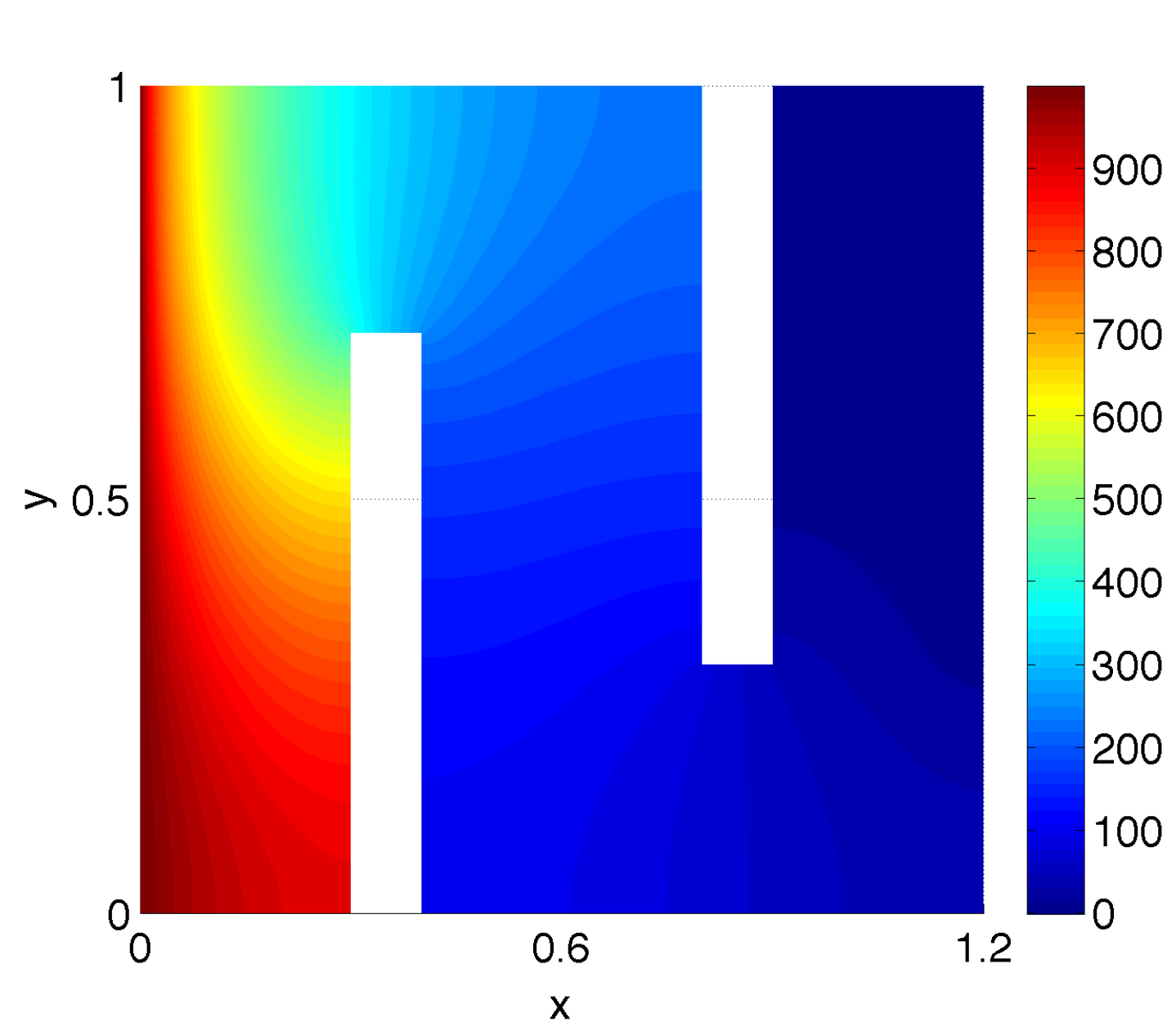}}
  \subfigure[Darcy-Forchheimer]{
  	\includegraphics[scale=0.46]
    {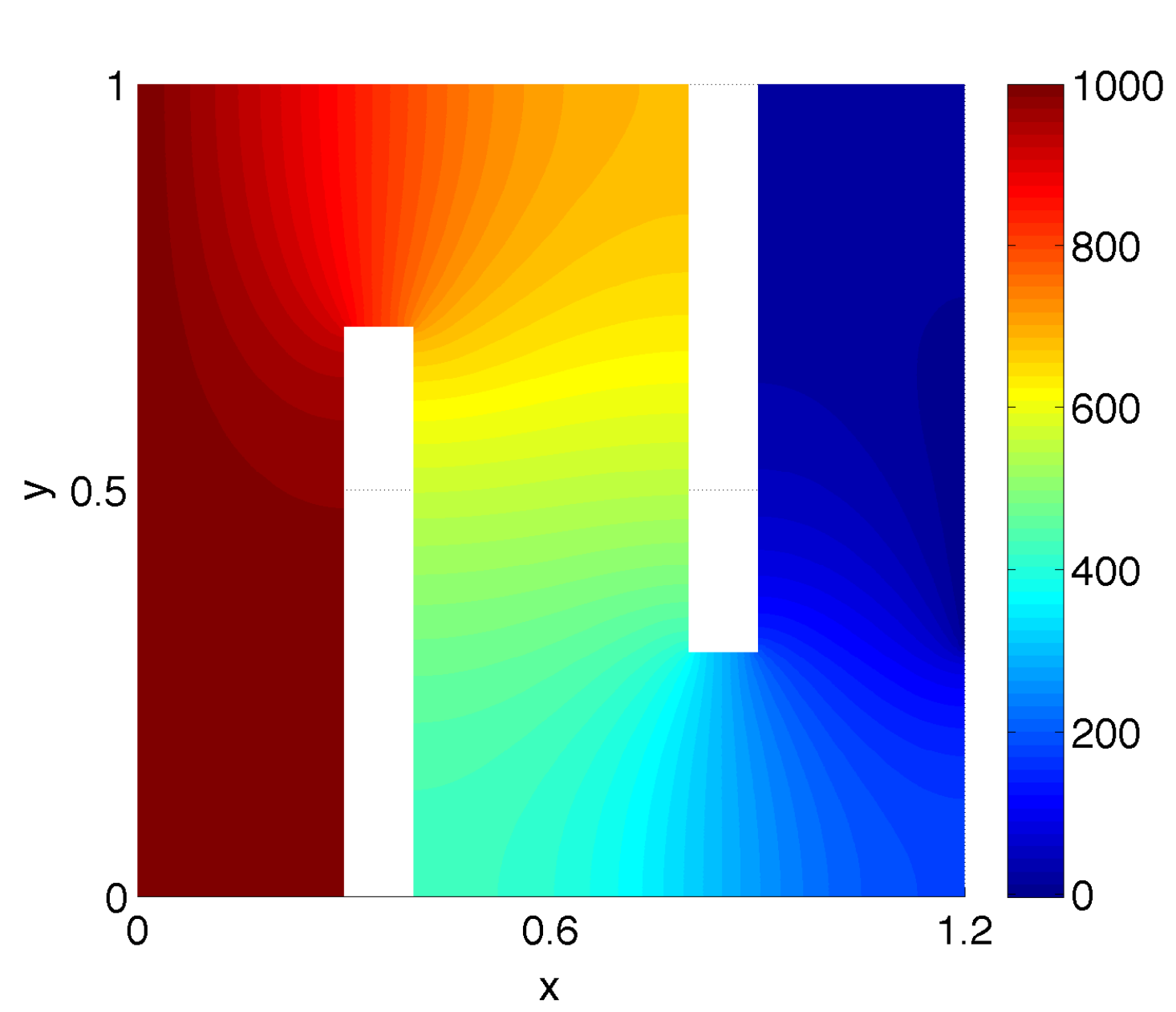}}
  \subfigure[Modified Darcy-Forchheimer Barus]{
  	\includegraphics[scale=0.46]
    {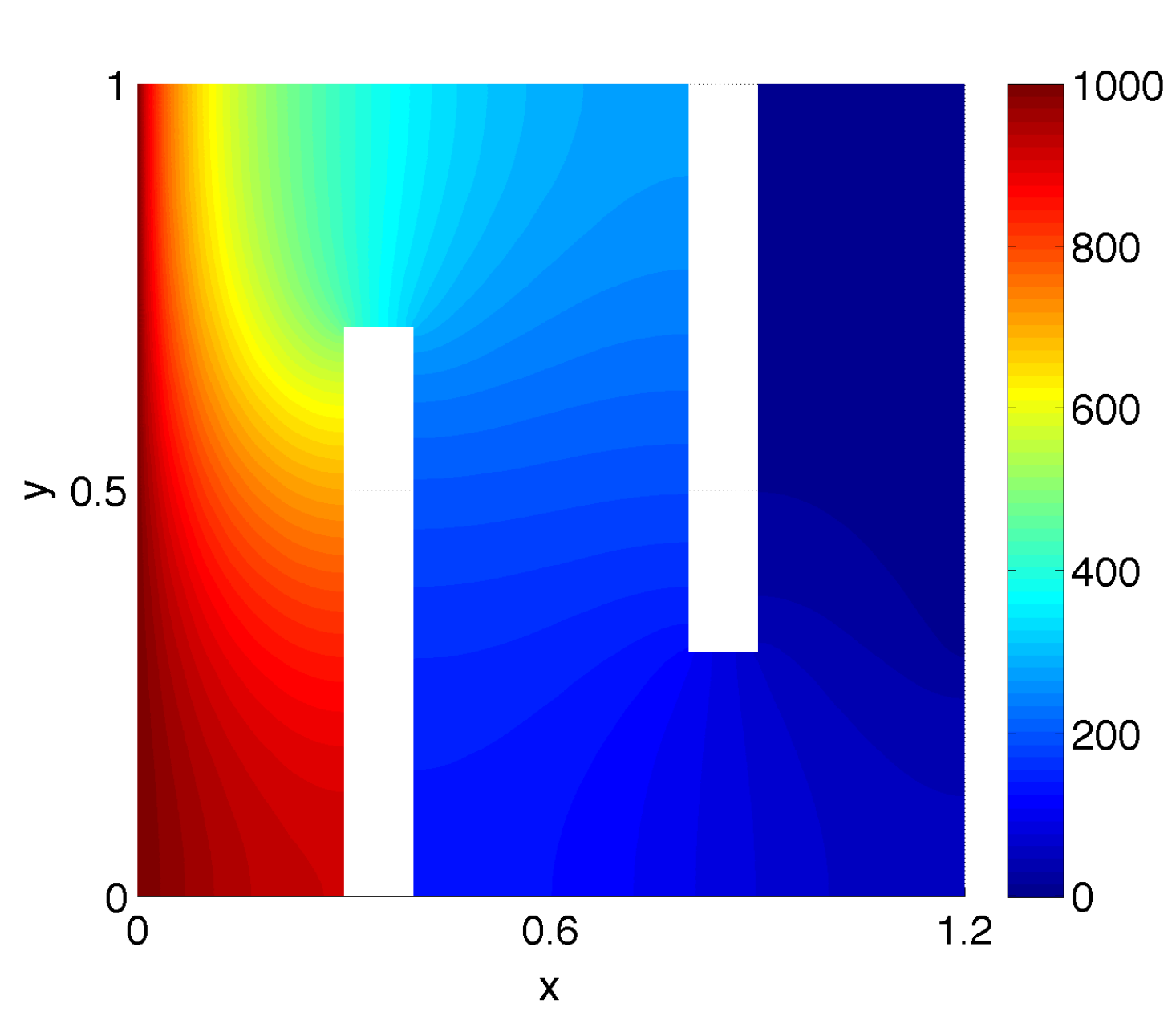}}
  \caption{Staggered impervious zones problem: pressure contours using VMS formalism}
  \label{Fig:Staggered_reservoir_pressure_VMS}
\end{figure}
\begin{figure}
  \centering
  \subfigure[Darcy model]{
  	\includegraphics[scale=0.46]
    {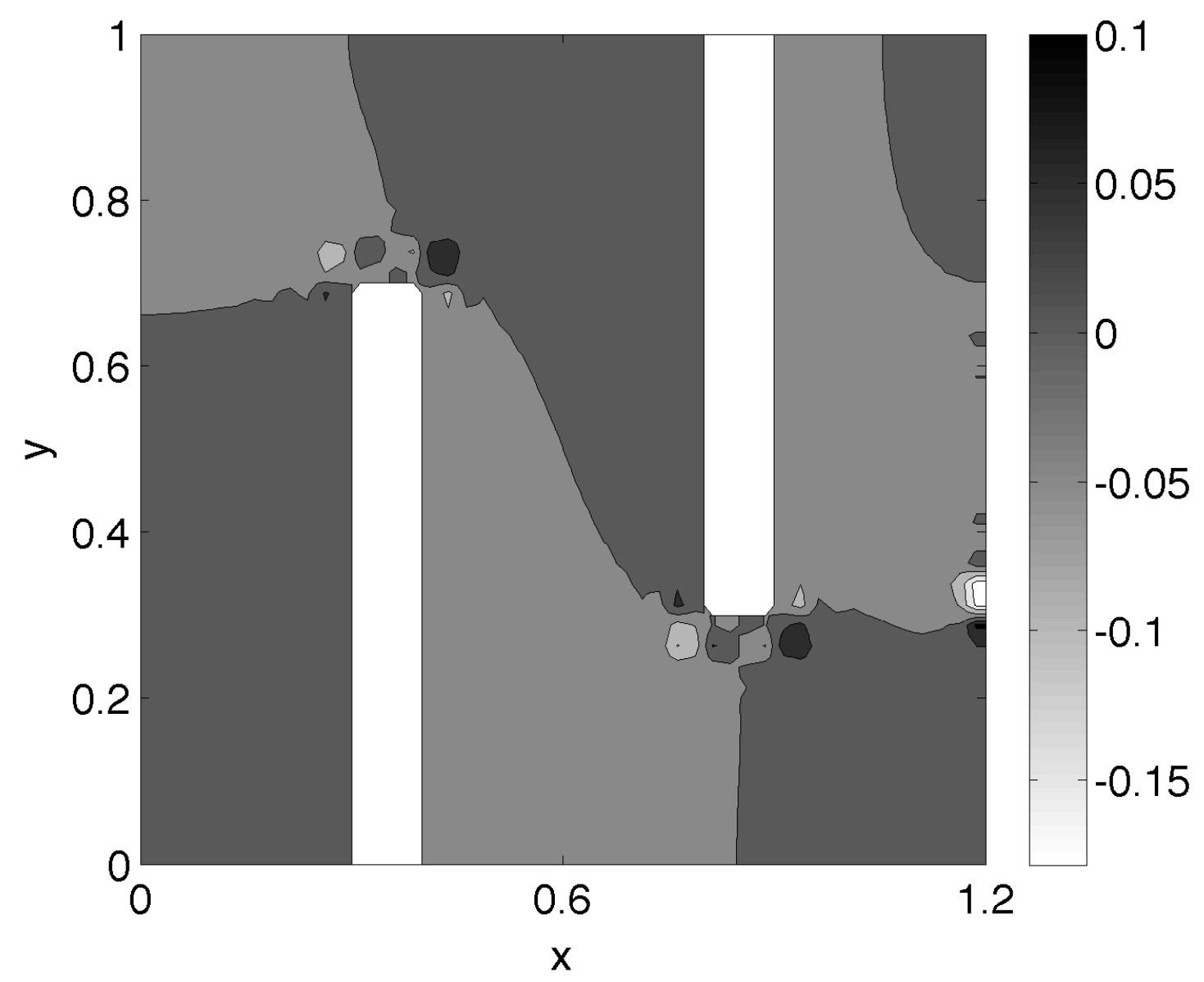}}
  \subfigure[Modified Barus]{
  	\includegraphics[scale=0.46]
    {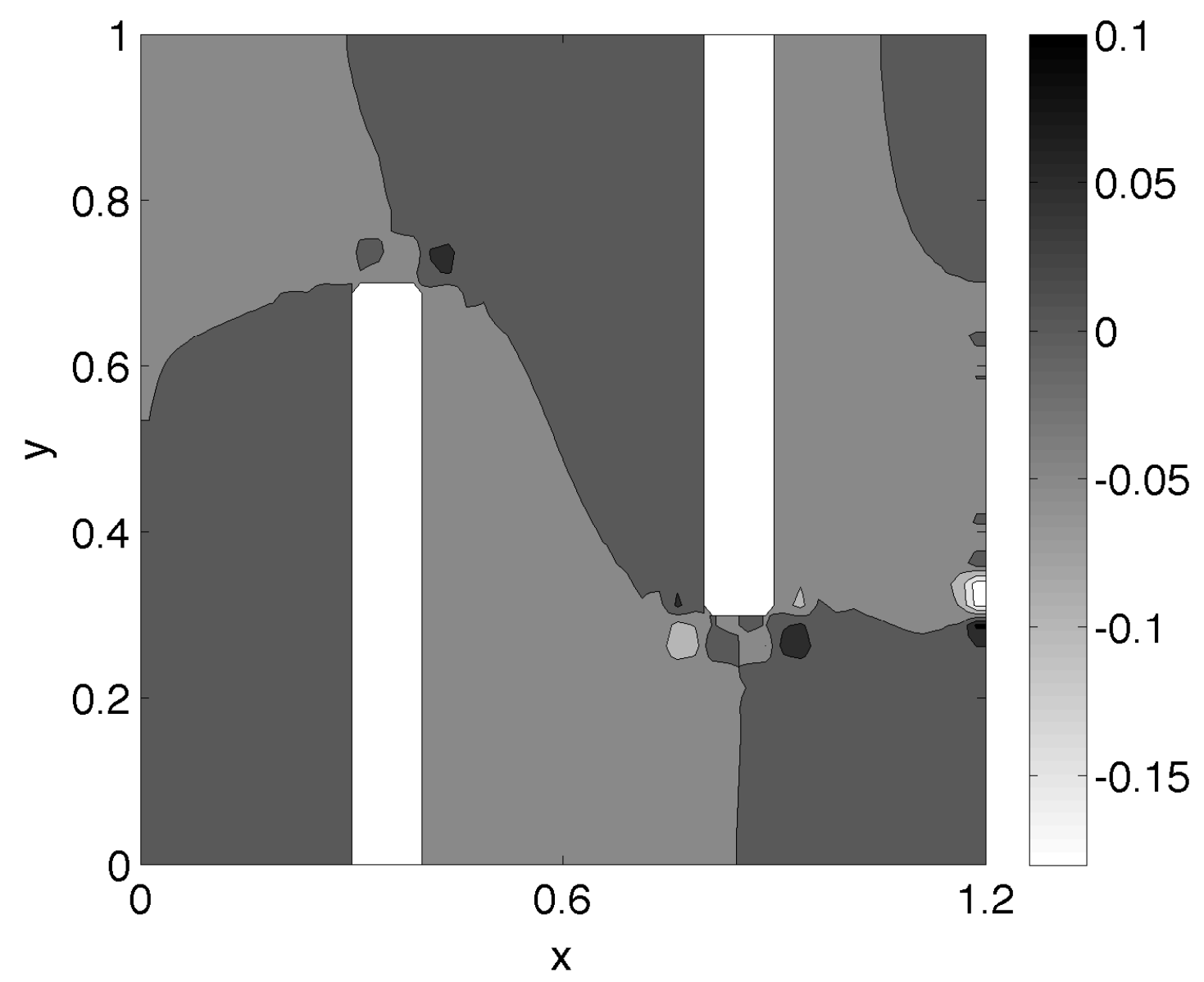}}
  \subfigure[Darcy-Forchheimer]{
  	\includegraphics[scale=0.46]
    {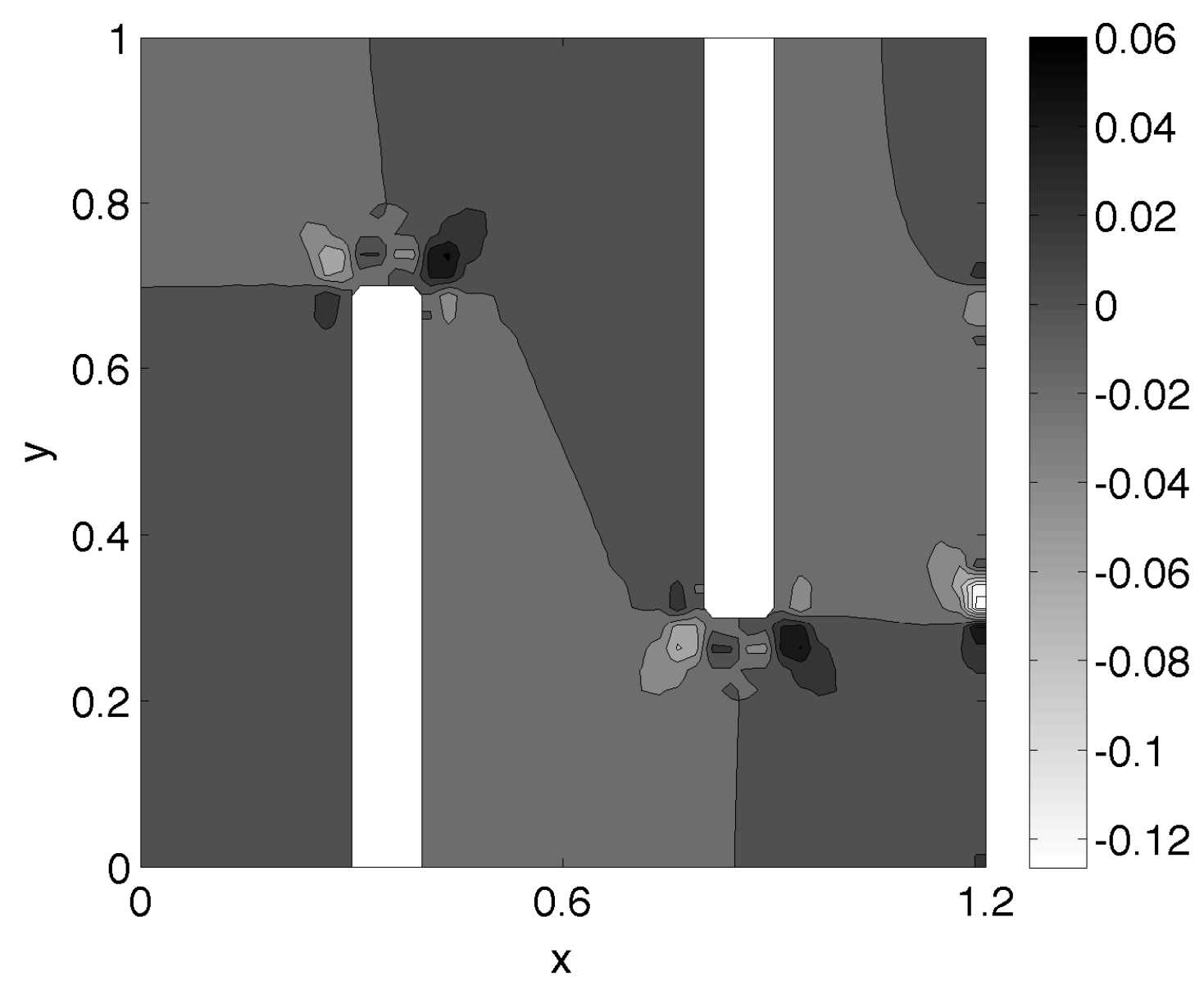}}
  \subfigure[Modified Darcy-Forchheimer Barus]{
  	\includegraphics[scale=0.46]
    {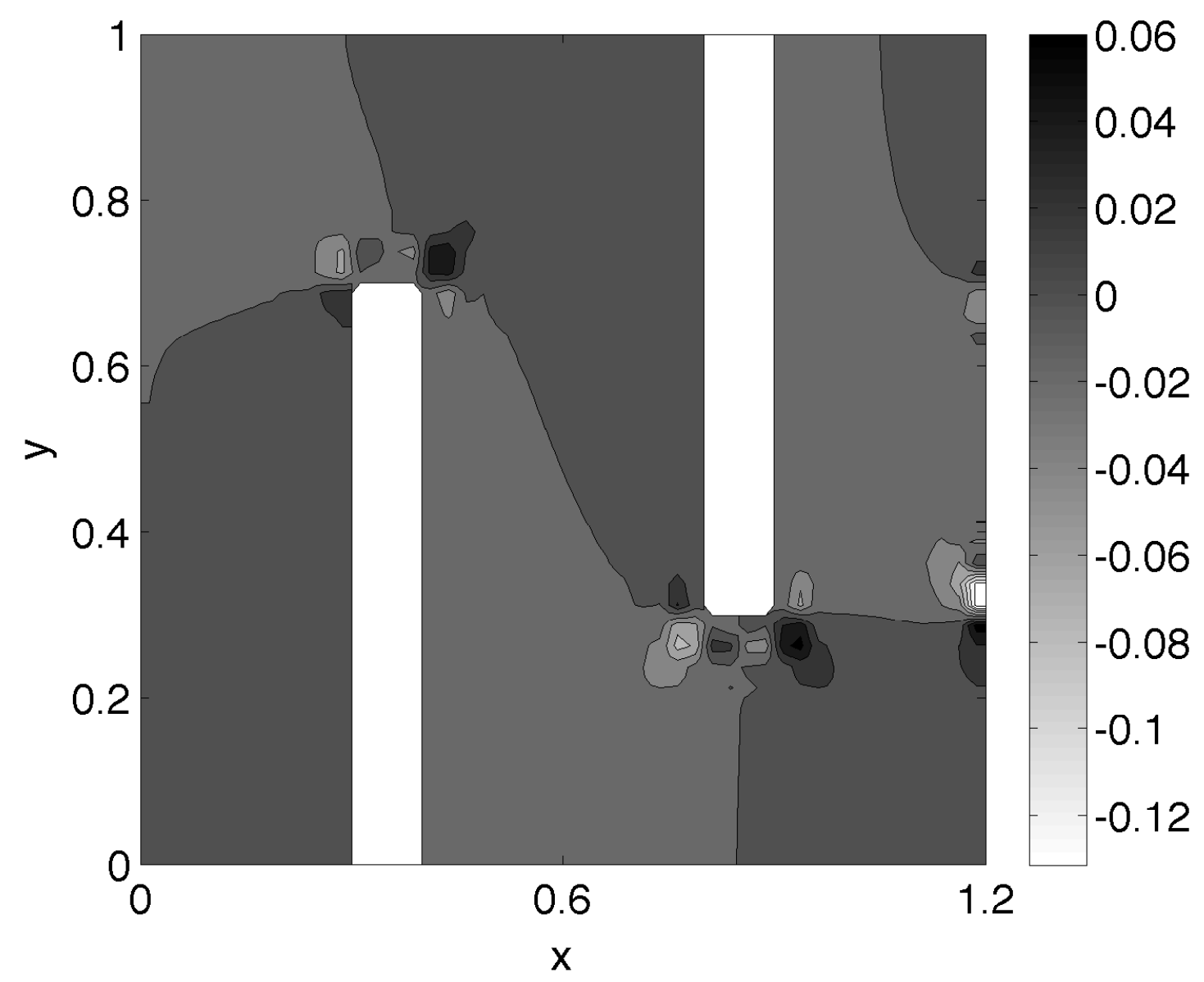}}
  \caption{Staggered impervious zones problem: ratios of local mass balance error over total predicted flux using LS formalism}
  \label{Fig:Staggered_reservoir_mass_error_LS}
\end{figure}
\begin{figure}
  \centering
  \subfigure[Darcy model]{
  	\includegraphics[scale=0.46]
    {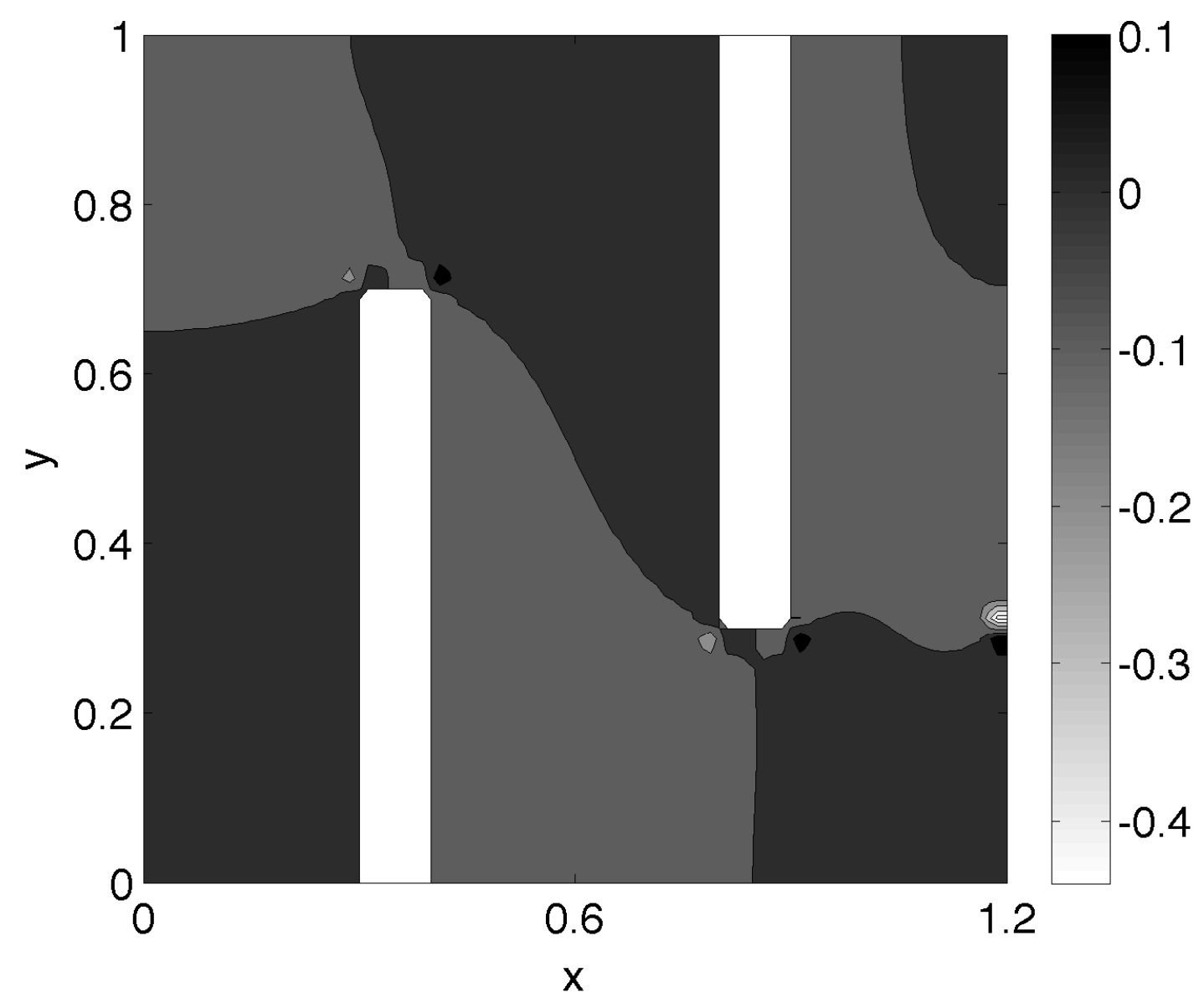}}
  \subfigure[Modified Barus]{
  	\includegraphics[scale=0.46]
    {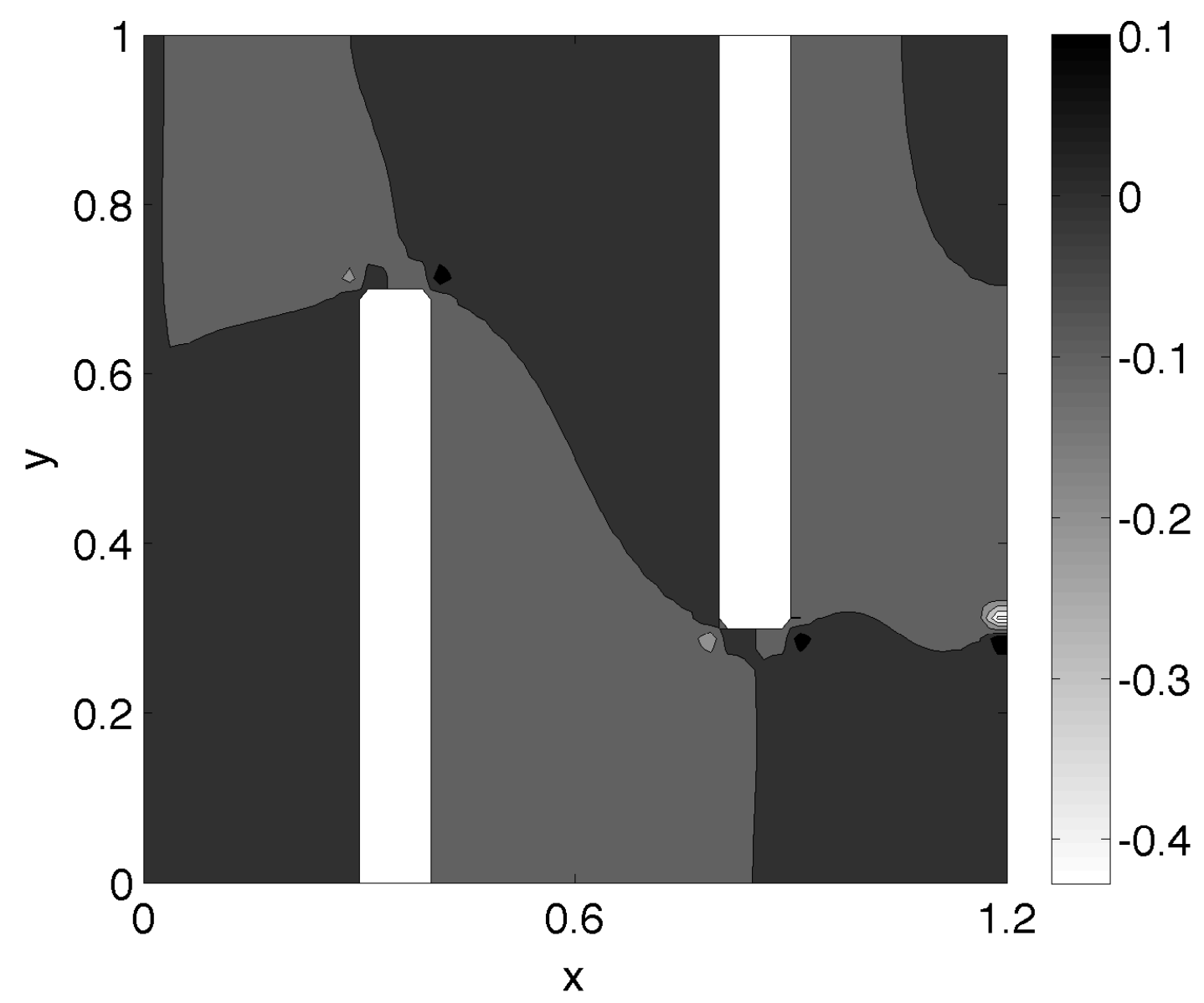}}
  \subfigure[Darcy-Forchheimer]{
  	\includegraphics[scale=0.46]
    {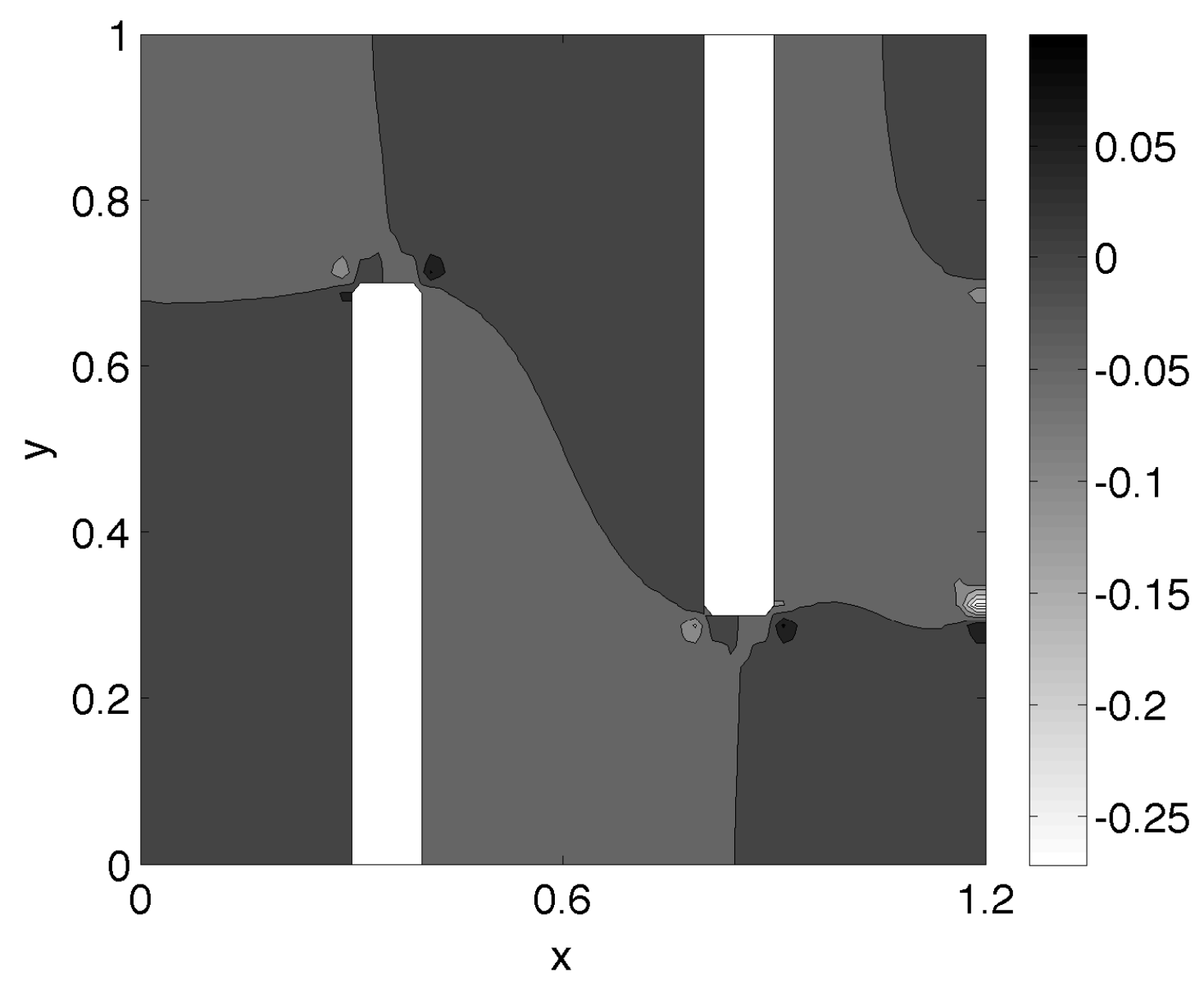}}
  \subfigure[Modified Darcy-Forchheimer Barus]{
  	\includegraphics[scale=0.46]
    {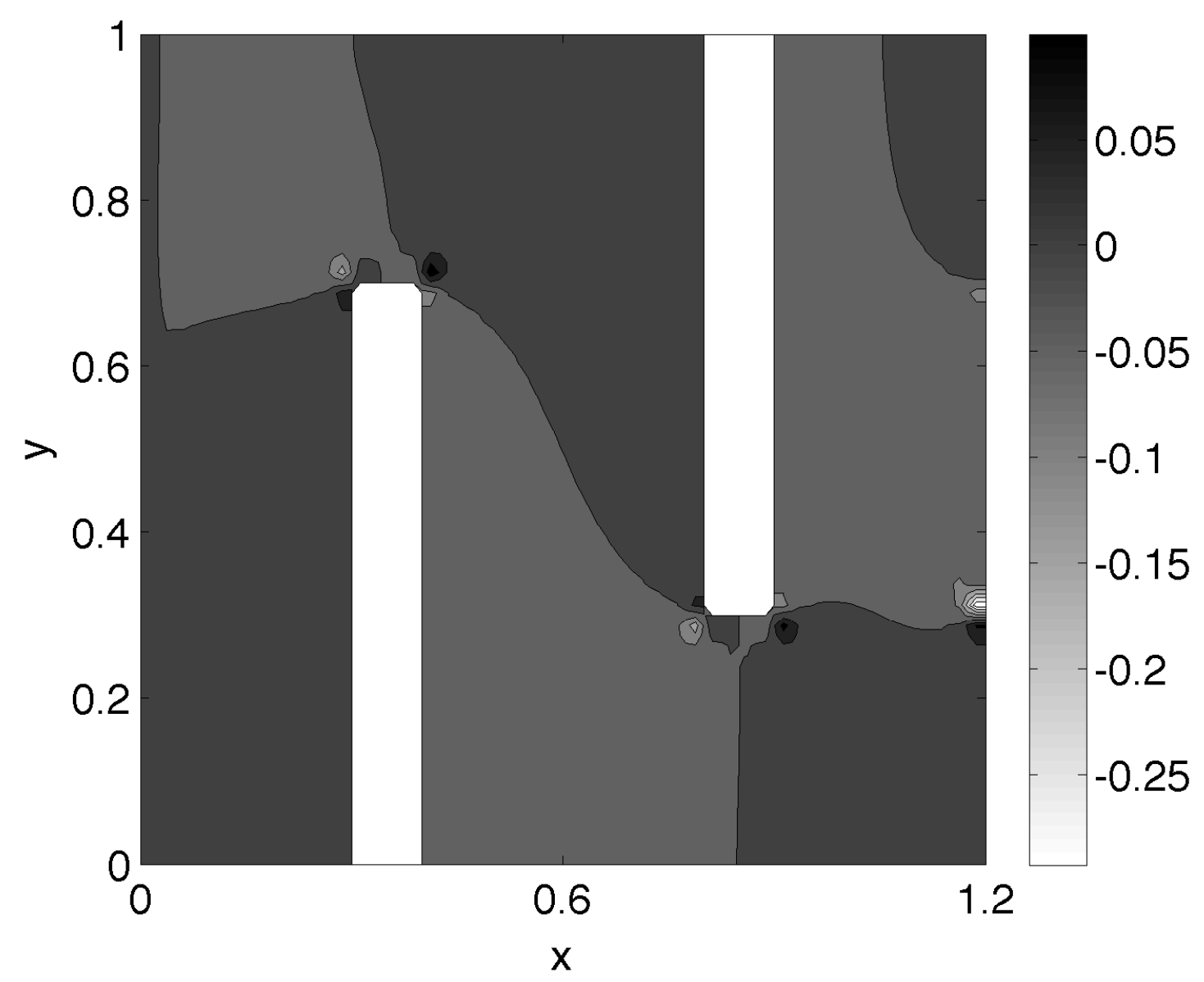}}
  \caption{Staggered impervious zones problem: ratios of local mass balance error over total predicted flux using VMS formalism}
  \label{Fig:Staggered_reservoir_mass_error_VMS}
\end{figure}
\end{document}